\newtheorem{lemma}{Lemma}[section]
\newtheorem{proposition}{Proposition}[section]
\newtheorem{theorem}{Theorem}[section]
\newtheorem{corollary}{Corollary}[theorem]
\newtheorem{conjecture}{Conjecture}[section]
\theoremstyle{definition}
\newtheorem{definition}{Definition}[section]
\theoremstyle{remark}
\newtheorem{remark}{Remark}[section]
\newtheorem{example}{Example}[section]
\newcommand\cyrroman{
\renewcommand\rmdefault{wncyr}
\renewcommand\sfdefault{wncyss}
\renewcommand\encodingdefault{OT2}
\normalfont
\selectfont}
\DeclareTextFontCommand{\cyrrm}{\cyrroman}
\def\cprime{\char"7E}
\begin{document}

\title[Principalization algorithm]
{Principalization algorithm\\via class group structure}

\author{Daniel C. Mayer}
\address{Naglergasse 53\\8010 Graz\\Austria}
\email{algebraic.number.theory@algebra.at}
\urladdr{http://www.algebra.at}
\thanks{Research supported by the
Austrian Science Fund,
Grant Nr. J0497-PHY}

\subjclass[2000]{Primary 11R29, 11R11, 11R16, 11R20; Secondary 20D15}
\keywords{\(3\)-class groups, principalization of \(3\)-classes,
quadratic fields, \(S_3\)-fields,
metabelian \(3\)-groups, coclass graphs}

\date{April 26, 2013}


\begin{abstract}
For an algebraic number field \(K\)
with \(3\)-class group \(\mathrm{Cl}_3(K)\) of type \((3,3)\),
the structure of the \(3\)-class groups \(\mathrm{Cl}_3(N_i)\) of the
four unramified cyclic cubic extension fields \(N_i\), \(1\le i\le 4\), of \(K\)
is calculated with the aid of presentations for the
metabelian Galois group \(\mathrm{G}_3^2(K)=\mathrm{Gal}(\mathrm{F}_3^2(K)\vert K)\)
of the second Hilbert \(3\)-class field \(\mathrm{F}_3^2(K)\) of \(K\).
In the case of a quadratic base field \(K=\mathbb{Q}(\sqrt{D})\)
it is shown that the structure of the \(3\)-class groups of the
four \(S_3\)-fields \(N_1,\ldots,N_4\)
frequently determines the
type of principalization of the \(3\)-class group of \(K\)
in \(N_1,\ldots,N_4\).
This provides an alternative to
the classical principalization algorithm by Scholz and Taussky.
The new algorithm, which is easily automatizable and executes very quickly,
is implemented in PARI/GP and
is applied to all \(4\,596\) quadratic fields \(K\) with
\(3\)-class group of type \((3,3)\)
and discriminant \(-10^6<D<10^7\)
to obtain extensive statistics of their principalization types
and the distribution of their second \(3\)-class groups \(\mathrm{G}_3^2(K)\)
on various coclass trees of the
coclass graphs \(\mathcal{G}(3,r)\), \(1\le r\le 6\),
in the sense of Eick, Leedham-Green, and Newman.
\end{abstract}

\maketitle


\section{Introduction}
\label{s:Intro}

The principal ideal theorem,
which has been conjectured by Hilbert in 1898
\cite[p. 14]{Fw},
states that each ideal of a number field \(K\)
becomes principal when it is extended to the Hilbert class field
\(\mathrm{F}^1(K)\) of \(K\),
that is the maximal abelian unramified extension field of \(K\).
Inspired by the Artin-Furtw\"angler proof
\cite{Ar2,Fw}
of the principal ideal theorem,
Scholz and Taussky investigated the principalization
in intermediate fields \(K<N<\mathrm{F}_3^1(K)\) between
a base field \(K\) with \(3\)-class group of type \((3,3)\)
and its Hilbert \(3\)-class field \(\mathrm{F}_3^1(K)\).
They developed an algorithm
for computing the principalization of \(K\)
in its four unramified cyclic cubic extension fields \(N_1,\ldots,N_4\)
for the case of a complex quadratic base field \(K\)
\cite{SoTa}.
This algorithm is probabilistic,
since it decides whether an ideal \(\mathfrak{a}\) of \(K\)
becomes principal in \(N_i\), for some \(1\le i\le 4\),
by testing local cubic residue characters
of a principal ideal cube \((\alpha)=\mathfrak{a}^3\),
associated with the ideal \(\mathfrak{a}\),
and of a fundamental unit \(\varepsilon_i\)
of the non-Galois cubic subfield \(L_i\) of the complex \(S_3\)-field \(N_i\)
with respect to a series of rational test primes \((p_\ell)_{\ell\ge 1}\)
and terminating when a critical test prime occurs
\cite[Algorithm, Step 8, p. 81]{Ma}.
An upper bound for the minimal critical test prime \(p_{\ell_0}\)
cannot be given effectively.
\(p_{\ell_0}\) can only be estimated by means of
Chebotar\"ev's density theorem
\cite{SoTa,HeSm},
and thus causes uncertainty.

An entirely different approach to the principalization problem
will be presented in this article.
It is based on a purely group theoretical connection between
the structure of the abelianizations \(M_i/M_i^\prime\)
of the four maximal normal subgroups \(M_i\)
of an arbitrary metabelian \(3\)-group \(G\)
with abelianization \(G/G^\prime\) of type \((3,3)\)
and the kernels \(\ker(\mathrm{T}_i)\)
of the transfers \(\mathrm{T}_i:G/G^\prime\longrightarrow M_i/M_i^\prime\), \(1\le i\le 4\).
By the Artin reciprocity law of class field theory
\cite{Ar1},
a corresponding number theoretical connection is established between
the structure of the \(3\)-class groups \(\mathrm{Cl}_3(N_i)\)
of the four unramified cyclic cubic extension fields \(N_i\)
of an arbitrary algebraic number field \(K\)
with \(3\)-class group \(\mathrm{Cl}_3(K)\) of type \((3,3)\)
and the principalization kernels \(\ker(\mathrm{j}_{N_i\vert K})\)
of the class extension homomorphisms
\(\mathrm{j}_{N_i\vert K}:\mathrm{Cl}_3(K)\longrightarrow\mathrm{Cl}_3(N_i)\),
\(1\le i\le 4\).
The correspondence is obtained
by applying the group theoretical statements about \(G\) and \(M_1,\ldots,M_4\)
to the second \(3\)-class group
\(\mathrm{G}_3^2(K)=\mathrm{Gal}(\mathrm{F}_3^2(K)\vert K)\) of \(K\)
\cite{Ma1},
that is the Galois group of the second Hilbert \(3\)-class field
\(\mathrm{F}_3^2(K)=\mathrm{F}_3^1(\mathrm{F}_3^1(K))\) of \(K\),
and its maximal subgroups \(\mathrm{Gal}(\mathrm{F}_3^2(K)\vert N_i)\),
\(1\le i\le 4\).
We call the family \(\tau(G)=(M_i/M_i^\prime)_{1\le i\le 4}\)
the \textit{transfer target type} (TTT) of \(G\)
and the family \(\varkappa(G)=(\ker(\mathrm{T}_i))_{1\le i\le 4}\)
the \textit{transfer kernel type} (TKT) of \(G\)
(briefly called transfer type in
\cite[\S\ 2.2, p. 476]{Ma1}).

We begin by comparing
the four little two-stage towers
\(K<\mathrm{F}_3^1(K)<\mathrm{F}_3^1(N_i)\), \(1\le i\le 4\),
and the big two-stage tower
\(K<\mathrm{F}_3^1(K)<\mathrm{F}_3^2(K)\)
of \(3\)-class fields in \S\
\ref{s:TwoStgTow}.
Based on these relationships, \S\
\ref{s:NearHom}
is devoted to proving
that the \(3\)-class groups \(\mathrm{Cl}_3(N_i)\), \(1\le i\le 4\),
have the structure of nearly homocyclic abelian \(3\)-groups of \(3\)-rank two,
provided the index of nilpotency of the second \(3\)-class group
\(G=\mathrm{G}_3^2(K)=\mathrm{Gal}(\mathrm{F}_3^2(K)\vert K)\) is not too small.
The structure of the remaining \(3\)-class groups,
which are partially of \(3\)-rank three as predicted at the beginning of \S\
\ref{s:MaxLowExo},
is determined in \S\
\ref{ss:MaxExo},
if \(G\) is of coclass \(1\).
The central results of this paper
concern groups \(G\) of coclass \(\mathrm{cc}(G)\ge 2\) and are developed successively,
beginning in \S\
\ref{ss:LowExo}
on the general method of proof,
for sporadic groups of coclass \(2\) with bicyclic centre in \S\
\ref{ss:SmpSec}
and with cyclic centre in \S\
\ref{ss:SmlSec},
for groups of coclass \(2\) on coclass trees
\cite{ELNO}
in \S\
\ref{ss:Sec},
and finally, for all groups of coclass \(\mathrm{cc}(G)\ge 3\) in \S\
\ref{ss:Low}.
These results establish theoretical background
for our new principalization algorithm in \S\
\ref{s:CompTech}
which is based on the invariant \(\varepsilon=\varepsilon(K)\),
the number of elementary abelian \(3\)-groups of type \((3,3,3)\)
among the \(3\)-class groups \(\mathrm{Cl}_3(N_i)\), \(1\le i\le 4\).
In view of future generalization
\cite{Ma5}
to base fields \(K\) with
\(\mathrm{Cl}_3(K)\) of type \((9,3)\),
we can also define \(\varepsilon\)
as the number of \(3\)-class groups of elevated \(3\)-rank at least three.
As opposed to Scholz and Taussky's classical algorithm for determining
the \(3\)-principalization over complex quadratic fields
\cite[\S\ 1, pp. 20--31]{SoTa},
which is also described in
\cite[Algorithm, pp. 80--83]{Ma}
and in
\cite[\S\ 3, pp. 9--29, and Appendix A, pp. 96--113]{Br},
and is extended to the \(p\)-principalization with an odd prime \(p\)
over arbitrary quadratic fields in
\cite[\S\ 1, pp. 3--6, and \S\ 3--4, pp. 12--24]{HeSm},
the new \(3\)-principalization algorithm for arbitrary quadratic fields
is easily automatizable and executes very quickly.
In \S\
\ref{s:NumerTab}
an extensive application
of our principalization algorithm via class group structure is presented.




\section{Little and big two-stage towers of \(3\)-class fields}
\label{s:TwoStgTow}

Let \(K\) be an algebraic number field
with \(3\)-class group \(\mathrm{Cl}_3(K)\) of type \((3,3)\).
By class field theory
\cite[Cor. 3.1, p. 838]{Ma3},
there exist
four unramified cyclic cubic extension fields \(N_1,\ldots,N_4\) of \(K\)
within the first Hilbert \(3\)-class field \(\mathrm{F}_3^1(K)\) of \(K\).
Consequently,
the first Hilbert \(3\)-class field \(\mathrm{F}_3^1(N_i)\) of \(N_i\)
is an intermediate field between \(\mathrm{F}_3^1(K)\) and
the second Hilbert \(3\)-class field \(\mathrm{F}_3^2(K)\) of \(K\),
for each \(1\le i\le 4\).

By
\(\Gamma_i=\mathrm{Gal}(\mathrm{F}_3^1(N_i)\vert K)\)
we denote the Galois groups of the four \textit{little two-stage towers} of \(K\),
\(K<\mathrm{F}_3^1(K)\le\mathrm{F}_3^1(N_i)\),
where \(1\le i\le 4\),
and by
\(G=\mathrm{Gal}(\mathrm{F}_3^2(K)\vert K)\)
the Galois group of the \textit{big two-stage tower} of \(K\),
\(K<\mathrm{F}_3^1(K)\le\mathrm{F}_3^2(K)\).

\begin{proposition}
\label{prp:TwoStgTow}

\(G\) and \(\Gamma_i\)
are metabelian \(3\)-groups with abelianizations
\(G/G^\prime\) and \(\Gamma_i/\Gamma_i^\prime\)
of type \((3,3)\).
They are non-abelian,
except for a single-stage tower
\(\mathrm{F}_3^2(K)=\mathrm{F}_3^1(N_i)=\mathrm{F}_3^1(K)\).

\(G\) contains four maximal normal subgroups \(M_1,\ldots,M_4\)
and each \(\Gamma_i\) contains an abelian maximal subgroup
\(A_i=\mathrm{Gal}(\mathrm{F}_3^1(N_i)\vert N_i)\) isomorphic to the
\(3\)-class group \(\mathrm{Cl}_3(N_i)\) of \(N_i\), for \(1\le i\le 4\).

The connection between \(\Gamma_i\) and \(G\),
resp. between \(A_i\) and \(M_i\), for \(1\le i\le 4\), is given by

\begin{equation}
\label{eqn:TwoStgTow}
\begin{array}{rcl}
\Gamma_i & \simeq & G/M_i^\prime\,,\\
A_i      & \simeq & M_i/M_i^\prime\,.
\end{array}
\end{equation}

\end{proposition}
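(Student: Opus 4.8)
The plan is to translate every assertion of the proposition into Galois theory by means of the Artin reciprocity law and the description of a Hilbert \(3\)-class field as the maximal abelian unramified \(3\)-extension of its ground field, and then to isolate the single genuinely group-theoretic point, which sits in the ``non-abelian'' dichotomy. First I would set up the dictionary for \(G\). The extension \(\mathrm{F}_3^2(K)/K\) is a tower of two unramified \(3\)-extensions, hence unramified over \(K\), and it is Galois over \(K\) because \(\mathrm{F}_3^1(K)/K\) is Galois and the Hilbert \(3\)-class field construction is functorial; thus \(G\) is a finite \(3\)-group. The fixed field of \(G^\prime\) is the maximal abelian subextension of \(\mathrm{F}_3^2(K)/K\); being abelian and unramified over \(K\) it lies in, and obviously contains, \(\mathrm{F}_3^1(K)\), hence equals it. Therefore \(G^\prime=\mathrm{Gal}(\mathrm{F}_3^2(K)\vert\mathrm{F}_3^1(K))\) is abelian, since \(\mathrm{F}_3^2(K)=\mathrm{F}_3^1(\mathrm{F}_3^1(K))\) is abelian over \(\mathrm{F}_3^1(K)\); so \(G\) is metabelian, and \(G/G^\prime\simeq\mathrm{Gal}(\mathrm{F}_3^1(K)\vert K)\simeq\mathrm{Cl}_3(K)\) is of type \((3,3)\). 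In particular \(\Phi(G)=G^\prime\), so the maximal subgroups of \(G\) are the preimages of the four index-\(3\) subgroups of the elementary abelian group \(G/G^\prime\); each contains \(G^\prime\) and is therefore normal, and each is of the form \(M_i=\mathrm{Gal}(\mathrm{F}_3^2(K)\vert N_i)\), since \(N_1,\dots,N_4\) are precisely the degree-\(3\) subfields of \(\mathrm{F}_3^1(K)/K\).

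Next I would prove the isomorphisms \eqref{eqn:TwoStgTow}. Restriction of automorphisms from \(\mathrm{F}_3^2(K)\) to the intermediate field \(\mathrm{F}_3^1(N_i)\), which is Galois over \(K\) (being the Hilbert \(3\)-class field of the Galois extension \(N_i/K\)), furnishes a surjection \(G\twoheadrightarrow\Gamma_i\) with kernel \(\mathrm{Gal}(\mathrm{F}_3^2(K)\vert\mathrm{F}_3^1(N_i))\). By the same reasoning as above, the maximal abelian subextension of \(\mathrm{F}_3^2(K)/N_i\) is an abelian unramified \(3\)-extension of \(N_i\) that contains \(\mathrm{F}_3^1(N_i)\), hence equals it, so this kernel is exactly the commutator subgroup \(M_i^\prime\) of \(M_i\); this gives \(\Gamma_i\simeq G/M_i^\prime\). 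Restricting \(M_i\) along the same surjection yields \(\mathrm{Gal}(\mathrm{F}_3^1(N_i)\vert N_i)=A_i\), whence \(A_i\simeq M_i/M_i^\prime\); moreover \(A_i\simeq\mathrm{Cl}_3(N_i)\) is the Artin isomorphism for the Hilbert \(3\)-class field of \(N_i\). Since \([\Gamma_i:A_i]=[N_i:K]=3\) and \(N_i/K\) is abelian, \(A_i\) is an abelian maximal normal subgroup of \(\Gamma_i\). Finally, \(\Gamma_i\simeq G/M_i^\prime\) is a quotient of the metabelian \(3\)-group \(G\), hence is itself a metabelian \(3\)-group, and \(\Gamma_i/\Gamma_i^\prime\simeq(G/M_i^\prime)/(G^\prime/M_i^\prime)\simeq G/G^\prime\) is of type \((3,3)\).

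The remaining step, which I expect to be the only non-formal one, is the ``non-abelian'' dichotomy. Since \(G\) is abelian exactly when \(G^\prime=1\) and \(\Gamma_i\) is abelian exactly when \(M_i^\prime=G^\prime\), it suffices to show that \(G^\prime\neq1\) forces \(M_i^\prime\subsetneq G^\prime\) for every \(i\); in the complementary case \(G^\prime=1\) one has \(G\) abelian and \(\mathrm{F}_3^2(K)=\mathrm{F}_3^1(N_i)=\mathrm{F}_3^1(K)\) for all \(i\), the single exception stated. To prove the implication, write \(M_i=\langle G^\prime,x\rangle\) with \(x^3\in G^\prime\); since \(G^\prime\) is an abelian normal subgroup, a short commutator computation in the metabelian group \(G\) gives \(M_i^\prime=[G^\prime,x]=\{\,\sigma_i(g)g^{-1}:g\in G^\prime\,\}\), where \(\sigma_i\) is the automorphism of \(G^\prime\) induced by conjugation by \(x\). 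As \(\sigma_i^3\) is conjugation by \(x^3\in G^\prime\), it acts trivially on the abelian group \(G^\prime\), so \(\sigma_i\) has order \(1\) or \(3\). If \(\sigma_i=1\), then \(M_i^\prime=1\subsetneq G^\prime\). If \(\sigma_i\) has order \(3\), then the \(3\)-group \(\langle\sigma_i\rangle\) acts on the nontrivial finite \(3\)-group \(G^\prime\), so its fixed-point subgroup is nontrivial; hence the endomorphism \(g\mapsto\sigma_i(g)g^{-1}\) of the finite group \(G^\prime\) has nontrivial kernel and is therefore not surjective, i.e.\ \(M_i^\prime\subsetneq G^\prime\). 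Thus the only genuine work is this elementary dichotomy; everything else is a routine unwinding of the class field theory correspondence.
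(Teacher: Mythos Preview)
Your proof is correct. The class field theory setup---identifying \(G'\) and \(M_i'\) as Galois groups over the appropriate Hilbert \(3\)-class fields, deducing the metabelian structure and the abelianizations of type \((3,3)\), and reading off \(\Gamma_i\simeq G/M_i'\) and \(A_i\simeq M_i/M_i'\) via restriction---matches the paper's argument essentially verbatim.

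The one genuine difference lies in the non-abelian dichotomy, i.e.\ the claim that \(M_i'\subsetneq G'\) whenever \(G'\neq 1\). The paper does not argue this directly but instead invokes \cite[Cor.~3.1 and Cor.~3.2]{Ma1}, which establish the sharper inclusion \(M_i'\le\gamma_3(G)\); since \(\gamma_3(G)\subsetneq\gamma_2(G)=G'\) for a nilpotent non-abelian group, the strict containment follows. Your argument is more elementary and self-contained: writing \(M_i=\langle G',x\rangle\) and using that \(G'\) is abelian and normal, you correctly identify \(M_i'=[G',x]\) as the image of \(g\mapsto\sigma_i(g)g^{-1}\), and then the fixed-point lemma for a \(3\)-group acting on a nontrivial finite \(3\)-group forces this endomorphism to have nontrivial kernel, hence proper image. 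This avoids any external citation and suffices for the proposition as stated. The paper's route, on the other hand, yields the stronger fact \(M_i'\le\gamma_3(G)\), which is exploited repeatedly later (e.g.\ in the proofs of Theorems~\ref{t:MaxHom} and~\ref{t:LowHom}) to compute \(\lvert\Gamma_i\rvert\) precisely; so while your argument is cleaner for the proposition in isolation, the paper's choice is motivated by its downstream use.
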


\begin{proof}

Since \(\mathrm{F}_3^1(K)\)
is the maximal abelian unramified extension field of \(K\)
with a power of \(3\) as relative degree,
the Galois groups \(G=\mathrm{Gal}(\mathrm{F}_3^2(K)\vert K)\)
and \(\Gamma_i=\mathrm{Gal}(\mathrm{F}_3^1(N_i)\vert K)\),
\(1\le i\le 4\), are non-abelian,
except in the degenerate case
\(\mathrm{F}_3^2(K)=\mathrm{F}_3^1(N_i)=\mathrm{F}_3^1(K)\).
That \(\Gamma_i\) is also non-abelian when \(G\) is non-abelian
follows from the relation \(\Gamma_i\simeq G/M_i^\prime\),
which is shown at the end of this proof,
together with Cor. 3.1 and Cor. 3.2 in
\cite[pp. 476 and 480]{Ma1},
where it is proved that \(M_i^\prime\) is contained in
the third member of the lower central series of \(G\)
and thus strictly smaller than \(G^\prime\),
for each \(1\le i\le 4\).
The abelian commutator subgroups of \(G\) and \(\Gamma_i\) are given by
\[G^\prime=\mathrm{Gal}(\mathrm{F}_3^2(K)\vert\mathrm{F}_3^1(K))
\simeq\mathrm{Cl}_3(\mathrm{F}_3^1(K))\]
and
\[\Gamma_i^\prime=\mathrm{Gal}(\mathrm{F}_3^1(N_i)\vert\mathrm{F}_3^1(K))
<A_i=\mathrm{Gal}(\mathrm{F}_3^1(N_i)\vert N_i)\simeq\mathrm{Cl}_3(N_i),\]
by the Artin reciprocity law \cite{Ar1}.
Hence, \(G\) and \(\Gamma_i\) are metabelian \(3\)-groups with abelianizations
\(G/G^\prime\) and \(\Gamma_i/\Gamma_i^\prime\) isomorphic to
\(\mathrm{Gal}(\mathrm{F}_3^1(K)\vert K)\simeq\mathrm{Cl}_3(K)\)
and thus of type \((3,3)\).

Since the four maximal normal subgroups \(M_1,\ldots,M_4\) of \(G\)
are associated with the extensions \(N_1,\ldots,N_4\) by
\(M_i=\mathrm{Gal}(\mathrm{F}_3^2(K)\vert N_i)\),
their commutator subgroups are given by
\[M_i^\prime=\mathrm{Gal}(\mathrm{F}_3^2(K)\vert\mathrm{F}_3^1(N_i))\]
and their abelianizations by
\[M_i/M_i^\prime
=\mathrm{Gal}(\mathrm{F}_3^2(K)\vert N_i)
/\mathrm{Gal}(\mathrm{F}_3^2(K)\vert\mathrm{F}_3^1(N_i))
\simeq A_i=\mathrm{Gal}(\mathrm{F}_3^1(N_i)\vert N_i)\simeq\mathrm{Cl}_3(N_i).\]
Since \(M_i^\prime\) is a characteristic subgroup of \(M_i\),
it is a normal subgroup of \(G\) and we have the relation
\[G/M_i^\prime
=\mathrm{Gal}(\mathrm{F}_3^2(K)\vert K)
/\mathrm{Gal}(\mathrm{F}_3^2(K)\vert\mathrm{F}_3^1(N_i))
\simeq\mathrm{Gal}(\mathrm{F}_3^1(N_i)\vert K)
=\Gamma_i.\]

\end{proof}

In the following \S\
\ref{s:NearHom},
we determine the standard structure of the abelian maximal normal subgroups
\(A_i\simeq\mathrm{Cl}_3(N_i)\)
of \(\Gamma_i\), \(1\le i\le 4\),
for a given second \(3\)-class group \(G\) of \(K\).


\section{Nearly homocyclic \(3\)-class groups of \(3\)-rank two}
\label{s:NearHom}

The concept of a nearly homocyclic abelian \(p\)-group with an arbitrary prime \(p\ge 2\)
appears in
\cite[Thm. 3.4, p. 68]{Bl}
(see our Appendix) and
is treated systematically in
\cite[\S\ 2.4]{Ne1}.
For our purpose, it suffices to consider the special case \(p=3\).

\begin{definition}
\label{d:HomAbl}
By the \textit{nearly homocyclic abelian \(3\)-group}
\(\mathrm{A}(3,n)\) of order \(3^n\),
for an integer \(n\ge 2\),
we understand the abelian group of type
\((3^{q+r},3^q)\),
where \(n=2q+r\) with integers \(q\ge 1\) and \(0\le r<2\).
Additionally, including two degenerate cases, we define that
\(\mathrm{A}(3,1)\) denotes the cyclic group \(C_3\) of order \(3\)
and \(\mathrm{A}(3,0)\) the trivial group \(1\).
\end{definition}

The application of Blackburn's well-known Theorem 3.4 in
\cite[p. 68]{Bl}
to the Galois groups \(\Gamma_i=\mathrm{Gal}(\mathrm{F}_3^1(N_i)\vert K)\)
of the four little two-stage towers \(K<\mathrm{F}_3^1(K)\le\mathrm{F}_3^1(N_i)\)
with abelian maximal normal subgroups
\(A_i=\mathrm{Gal}(\mathrm{F}_3^1(N_i)\vert N_i)\simeq\mathrm{Cl}_3(N_i)\)
will show that, in general,
the \(3\)-class groups \(\mathrm{Cl}_3(N_i)\), \(1\le i\le 4\),
are nearly homocyclic abelian \(3\)-groups
\(\mathrm{A}(3,u+v)\) of type \((3^u,3^v)\) with \(1\le v\le u\le v+1\).
The phrase {\lq in general\rq} is made precise in the following theorems,
where we distinguish second \(3\)-class groups
\(G=\mathrm{Gal}(\mathrm{F}_3^2(K)\vert K)\)
of coclass \(\mathrm{cc}(G)=1\) in \S\
\ref{ss:MaxHom}
and of coclass \(\mathrm{cc}(G)\ge 2\) in \S\
\ref{ss:LowHom},
and use concepts and notation of our papers
\cite{Ma1,Ma2},
as recalled in the sequel.


\subsection{Second \(3\)-class groups \(G\) of coclass \(\mathrm{cc}(G)=1\)}
\label{ss:MaxHom}

Let \(G\) be a metabelian \(3\)-group
of order \(\lvert G\rvert=3^n\) and
nilpotency class \(\mathrm{cl}(G)=m-1\), where \(n=m\ge 3\).
Then \(G\) is of coclass \(\mathrm{cc}(G)=n-\mathrm{cl}(G)=1\)
and the commutator factor group \(G/G^\prime\) of \(G\) is of type \((3,3)\)
\cite{Bl,Mi}.
The lower central series of \(G\) is defined
recursively by \(\gamma_1(G)=G\) and
\(\gamma_j(G)=\lbrack\gamma_{j-1}(G),G\rbrack\) for \(j\ge 2\).
In particular, \(\gamma_2(G)=\lbrack G,G\rbrack=G^\prime\)
denotes the commutator subgroup.

The centralizer
\(\chi_2(G)
=\lbrace g\in G\mid\lbrack g,u\rbrack\in\gamma_4(G)\text{ for all }u\in\gamma_2(G)\rbrace\)
of the two-step factor group \(\gamma_2(G)/\gamma_4(G)\), that is,
\[\chi_2(G)/\gamma_4(G)
=\mathrm{Centralizer}_{G/\gamma_4(G)}(\gamma_2(G)/\gamma_4(G))\,,\]
is the biggest subgroup of \(G\) such that
\(\lbrack\chi_2(G),\gamma_2(G)\rbrack\le\gamma_4(G)\).
It is characteristic, contains the commutator group \(\gamma_2(G)\), and
coincides with \(G\) if and only if \(m=3\).
Let the isomorphism invariant \(k=k(G)\),
the \textit{defect of commutativity} of \(G\), be defined by
\[\lbrack\chi_2(G),\gamma_2(G)\rbrack=\gamma_{m-k}(G)\,,\]
where \(k=0\) for \(3\le m\le 4\),
and \(0\le k\le 1\) for \(m\ge 5\),
according to Miech
\cite[p. 331]{Mi}.

Suppose that generators of \(G=\langle x,y\rangle\) are selected such that
\(x\in G\setminus\chi_2(G)\), if \(m\ge 4\), and \(y\in\chi_2(G)\setminus\gamma_2(G)\).
We define the main commutator
\(s_2=\lbrack y,x\rbrack\in\gamma_2(G)\)
and the higher commutators
\(s_j=\lbrack s_{j-1},x\rbrack=s_{j-1}^{x-1}\in\gamma_j(G)\) for \(j\ge 3\).
Then \(G\) satisfies two relations for third powers of the generators \(x\) and \(y\) of \(G\),

\begin{equation}
\label{eqn:MaxRel}
x^3=s_{m-1}^w\quad\text{ and }\quad
y^3s_2^3s_3=s_{m-1}^z
\quad \text{ with exponents }\quad -1\le w,z\le 1\,,
\end{equation}

\noindent
according to Miech
\cite[Thm. 2, (3), p. 332]{Mi}.
Blackburn uses the notation \(\delta=w\) and \(\gamma=z\)
for these relational parameters
\cite[(36)--(37), p. 84]{Bl}.

Additionally, the group \(G\) satisfies
relations for third powers of higher commutators,
\[s_{j+1}^3s_{j+2}^3s_{j+3}=1\quad\text{ for }1\le j\le m-2\,,\]
and the main commutator relation of Miech
\cite[Thm. 2, (2), p. 332]{Mi},

\begin{equation}
\label{eqn:MaxComRel}
\lbrack y,s_2\rbrack=s_{m-1}^a
\in\lbrack\chi_2(G),\gamma_2(G)\rbrack=\gamma_{m-k}(G)\,,
\end{equation}

\noindent
with exponent \(-1\le a\le 1\).
Blackburn uses the notation \(\beta=a\)
\cite[(33), p. 82]{Bl}.

By \(G_a^m(z,w)\) we denote 
the representative of an isomorphism class of
metabelian \(3\)-groups \(G\) of coclass \(\mathrm{cc}(G)=1\)
and of order \(\lvert G\rvert=3^m\),
which satisfies the relations
(\ref{eqn:MaxComRel})
and
(\ref{eqn:MaxRel})
with a fixed system of parameters
\(a\), \(w\), and \(z\).
Obviously, the defect is \(k=0\) if and only if \(a=0\).

The maximal normal subgroups \(M_i\) of \(G\)
contain the commutator group \(\gamma_2(G)\) of \(G\)
as a normal subgroup of index \(3\) and thus
are of the shape \(M_i=\langle g_i,\gamma_2(G)\rangle\).
We define a fixed order by
\(g_1=y\), \(g_2=x\), \(g_3=xy\) and \(g_4=xy^{-1}\).
The commutator subgroups \(\gamma_2(M_i)\)
are of the general form
\(\gamma_2(M_i)=\langle s_2,\ldots,s_{m-1}\rangle^{g_i-1}\),
according to
\cite[Cor. 3.1, p. 476]{Ma1},
and in particular

\begin{equation}
\label{eqn:MaxComGrp}
\begin{array}{rcl}
\gamma_2(M_1)&=&
\begin{cases}
1,&\text{ if }k=0,\\
\gamma_{m-1}(G),&\text{ if }k=1,
\end{cases}\\
\gamma_2(M_i)&=&\gamma_3(G)\quad\text{ for }2\le i\le 4.
\end{array}
\end{equation}

\begin{theorem}
\label{t:MaxHom}
(Transfer target type \(\tau(G)\) for groups \(G\) with \(\mathrm{cc}(G)=1\), \(\mathrm{cl}(G)\ge 5\))

The structure of the \(3\)-class groups \(\mathrm{Cl}_3(N_i)\)
of the four unramified cyclic cubic extension fields \(N_1,\ldots,N_4\)
of an arbitrary base field \(K\)
having a \(3\)-class group \(\mathrm{Cl}_3(K)\) of type \((3,3)\)
and a second \(3\)-class group \(G=\mathrm{Gal}(\mathrm{F}_3^2(K)\vert K)\)
of coclass \(\mathrm{cc}(G)=1\), order \(\lvert G\rvert=3^n\), and class \(\mathrm{cl}(G)=m-1\),
where \(m=n\ge 3\),
is given by the following nearly homocyclic abelian \(3\)-groups.

\begin{eqnarray}
\label{e:MaxHom}
\mathrm{Cl}_3(N_1)&\simeq&
\begin{cases}
\mathrm{A}(3,m-1),&\text{ if }\lbrack\chi_2(G),\gamma_2(G)\rbrack=1,\ k=0,\ m\ge 5\,,\\
\mathrm{A}(3,m-2),&\text{ if }\lbrack\chi_2(G),\gamma_2(G)\rbrack=\gamma_{m-1}(G),\ k=1,\ m\ge 6\,,
\end{cases}\\
\mathrm{Cl}_3(N_i)&\simeq&\mathrm{A}(3,2)\text{ for }2\le i\le 4,\text{ if }m\ge 4\,.
\end{eqnarray}

\end{theorem}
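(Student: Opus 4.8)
The plan is to reduce the whole statement to a computation inside the metabelian $3$-group $G=\mathrm{G}_3^2(K)$ of coclass $1$, using Proposition~\ref{prp:TwoStgTow} to translate the number-theoretic objects $\mathrm{Cl}_3(N_i)$ into the group-theoretic abelianizations $M_i/M_i^\prime$. By \eqref{eqn:TwoStgTow} we have $\mathrm{Cl}_3(N_i)\simeq A_i\simeq M_i/M_i^\prime$, so it suffices to determine the isomorphism type of $M_i/M_i^\prime=M_i/\gamma_2(M_i)$ for each $i$. Since each $M_i=\langle g_i,\gamma_2(G)\rangle$ with $\lvert M_i\rvert=3^{m-1}$, and since $\gamma_2(M_i)$ is explicitly given by \eqref{eqn:MaxComGrp}, the orders of the target groups are immediate: $\lvert M_1/\gamma_2(M_1)\rvert=3^{m-1}$ if $k=0$ and $3^{m-2}$ if $k=1$ (because $\gamma_{m-1}(G)$ has order $3$), while $\lvert M_i/\gamma_2(M_i)\rvert=3^{m-1}/3^{m-3}=3^2$ for $2\le i\le 4$. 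This already pins down the order of every candidate group $\mathrm{A}(3,N)$; what remains is to show the quotient has $3$-rank exactly two and the near-homocyclic type, i.e.\ that it is $\mathrm{A}(3,N)$ rather than some other abelian group of the same order.

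For the cases $2\le i\le 4$ the argument is short: $M_i/\gamma_2(M_i)$ has order $9$ and is generated by the images of $g_i$ and $s_2$ (the latter generating $\gamma_2(G)/\gamma_3(G)$), and neither image is trivial modulo $\gamma_2(M_i)=\gamma_3(G)$ — otherwise $M_i$ would be cyclic, forcing $G$ to be of maximal class with an abelian maximal subgroup of index $3$ and order $3^{m-1}$ that is cyclic, which contradicts $m\ge 4$ together with the structure of $\gamma_2(G)$. Hence the quotient is $(3,3)=\mathrm{A}(3,2)$, using here the hypothesis $m\ge 4$. For $M_1$ one must work harder: using the relations \eqref{eqn:MaxRel} one computes the third power $g_1^3=y^3$ in $M_1/\gamma_2(M_1)$. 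From $y^3s_2^3s_3=s_{m-1}^z$ and the higher-power relations $s_{j+1}^3s_{j+2}^3s_{j+3}=1$, one expresses $y^3$ modulo $\gamma_2(M_1)$ in terms of the $s_j$; reducing modulo $\gamma_2(M_1)$ (which is $1$ when $k=0$ and $\gamma_{m-1}(G)$ when $k=1$) shows that $y$ has order $3^{q+r}$ and the complementary cyclic factor inside $\gamma_2(G)/\gamma_2(M_1)$ has order $3^q$, where $N=2q+r$ with $N=m-1$ or $m-2$ according as $k=0$ or $k=1$. The key input is that $\gamma_2(G)/\gamma_2(M_1)$ is itself cyclic — generated by $s_2$ modulo third powers via the relations $s_{j+1}^3s_{j+2}^3s_{j+3}=1$ — so the two-generator group $M_1/\gamma_2(M_1)$ is a quotient of a product of two cyclic $3$-groups and the orders computed above force it to be exactly $\mathrm{A}(3,N)$.

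The main obstacle, as usual with these power-relation manipulations, is the bookkeeping in the $M_1$ case when $k=1$ (i.e.\ $a\ne 0$): one must verify that the main commutator relation \eqref{eqn:MaxComRel}, $[y,s_2]=s_{m-1}^a$, does not create extra relations collapsing the rank or shifting the type, and that the defect-one condition is exactly what accounts for the drop from $\mathrm{A}(3,m-1)$ to $\mathrm{A}(3,m-2)$. This is why the hypothesis $m\ge 6$ (rather than $m\ge 5$) is needed in the second sub-case: for $m=5$ with $k=1$ the subgroup $\gamma_{m-1}(G)=\gamma_4(G)$ interacts with $\chi_2(G)$ in a way that is already forced, and the generic structure statement must be excluded there. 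I would organize the proof by first recording $\lvert M_i/\gamma_2(M_i)\rvert$ from \eqref{eqn:MaxComGrp}, then disposing of $i\in\{2,3,4\}$, then doing the $y^3$ computation for $M_1$ separately in the $k=0$ and $k=1$ cases, citing \cite[Cor.~3.1, p.~476]{Ma1} for the shape of $\gamma_2(M_i)$ and Miech \cite{Mi} for the power relations throughout.
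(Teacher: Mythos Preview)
Your approach diverges from the paper's in a substantive way. The paper does \emph{not} compute $M_i/\gamma_2(M_i)$ directly from the Miech relations. Instead it passes to the little-tower group $\Gamma_i\simeq G/\gamma_2(M_i)$ via \eqref{eqn:TwoStgTow}, observes that $\Gamma_i$ is again a metabelian $3$-group of coclass $1$ possessing the abelian maximal subgroup $A_i\simeq M_i/\gamma_2(M_i)$, and then invokes Blackburn's Theorem~3.4 \cite[p.~68]{Bl}: for any $3$-group of maximal class and order $3^\mu$ with $\mu\ge p+2=5$, the (unique) abelian maximal subgroup is $\mathrm{A}(3,\mu-1)$. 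The bounds $m\ge 5$ (case $k=0$, $\Gamma_1\simeq G$) and $m\ge 6$ (case $k=1$, $\Gamma_1\simeq G/\gamma_{m-1}(G)$ of order $3^{m-1}$) are then exactly the threshold $\mu\ge 5$ of Blackburn's theorem, not artifacts of a commutator-relation collapse as you suggest. For $2\le i\le 4$, $\Gamma_i\simeq G/\gamma_3(G)$ has order $3^3$, below Blackburn's threshold, so the paper identifies it as the mainline predecessor $G_0^3(0,0)$ (the Heisenberg group of exponent $3$), all of whose maximal subgroups are of type $(3,3)$.

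Your direct-computation route is in principle feasible, but the sketch contains two concrete errors. First, for $2\le i\le 4$, the alternative to $M_i/\gamma_3(G)\simeq(3,3)$ is that this quotient is $C_9$, which would mean $g_i^3\notin\gamma_3(G)$; it would \emph{not} force $M_i$ itself to be cyclic, so your contradiction does not fire. What you actually need is $x^3=s_{m-1}^w\in\gamma_3(G)$ and $(xy^{\pm 1})^3\in\gamma_3(G)$, which follows from \eqref{eqn:MaxRel} and the power relations once $m\ge 4$ (equivalently, $G/\gamma_3(G)$ has exponent $3$). Second, and more seriously, for $M_1$ with $k=0$ you assert that $\gamma_2(G)/\gamma_2(M_1)$ is cyclic; but $\gamma_2(M_1)=1$ in this case, and $\gamma_2(G)$ has $3$-rank $2$ for $m\ge 4$ (it is generated by $s_2$ and $s_3$, which are independent modulo $\gamma_2(G)^3$). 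The correct reduction to two generators for $M_1$ comes instead from the relation $y^3s_2^3s_3=s_{m-1}^z$, which expresses $s_3$ in terms of $y$ and $s_2$ inside the abelian group $M_1$; but then showing the resulting two-generator abelian group is \emph{nearly homocyclic} (rather than, say, $C_{3^{m-2}}\times C_3$) is precisely the content of Blackburn's theorem, and your sketch does not supply an independent argument for it.
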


\begin{proof}

The metabelian \(3\)-groups
\(\Gamma_i=\mathrm{Gal}(\mathrm{F}_3^1(N_i)\vert K)\),
having an abelian maximal subgroup \(A_i\simeq\mathrm{Cl}_3(N_i)\),
are of coclass \(\mathrm{cc}(G)=1\), according to
Heider and Schmithals
\cite[Kor., p. 9]{HeSm}
or also to
\cite[Cor. 3.1, p. 476, and Cor. 3.2, p. 480]{Ma1},
since we are dealing with \(3\)-groups here.
This is the crucial condition for the applicability of
Blackburn's Theorem 3.4
\cite[p. 68]{Bl}
(see our Appendix).
It is also the reason why we need the connection
between \(G\) and \(\Gamma_1,\ldots,\Gamma_4\).

We begin by investigating the distinguished extension \(N_1\).

Suppose first that \(\lbrack\chi_2(G),\gamma_2(G)\rbrack=1\), that is \(k=0\).
Then \(\gamma_2(M_1)=1\), by
\cite[Cor. 3.1]{Ma1},
the group \(\Gamma_1\) is isomorphic to \(G\),
by formula
(\ref{eqn:TwoStgTow}),
and has the order \(\lvert\Gamma_1\rvert=3^m\).
For \(m\ge 5\),
the abelian normal subgroup \(A_1\) of \(\Gamma_1\) is isomorphic to \(\mathrm{A}(3,m-1)\),
according to Blackburn
\cite[Thm. 3.4]{Bl},
where the lower bound \(p+2=5\) for the index of nilpotency \(m\) is due to the specialisation \(p=3\).

Next we consider the case
\(\lbrack\chi_2(G),\gamma_2(G)\rbrack=\gamma_{m-1}(G)\), that is \(k=1\).
Then \(\gamma_2(M_1)=\gamma_{m-1}(G)\), by
\cite[Cor. 3.1]{Ma1},
the group \(\Gamma_1\) is isomorphic to \(G/\gamma_{m-1}(G)\),
by formula
(\ref{eqn:TwoStgTow}),
and is therefore the immediate predecessor of \(G\)
\cite[p. 182]{Ne1}
on the coclass graph \(\mathcal{G}(3,1)\) in Figure
\ref{fig:CoCl1},
thus being isomorphic to the mainline group \(G_0^{m-1}(0,0)\)
and of order \(\lvert\Gamma_1\rvert=3^{m-1}\).
For \(m\ge 6\), it follows that \(A_1\simeq\mathrm{A}(3,m-2)\), according to
\cite[Thm. 3.4]{Bl}.

However, Blackburn's result cannot be applied to
the other three extensions \(N_i\) with \(2\le i\le 4\),
since the three isomorphic groups \(\Gamma_i\simeq G/\gamma_3(G)\)
\cite[Cor. 3.1]{Ma1}
are of order \(\lvert\Gamma_i\rvert=3^\mu\) with exponent \(\mu=3<5\).
Therefore we must determine the structure of the abelian normal subgroup \(A_i\) of \(\Gamma_i\)
by the following consideration.
In the case \(m\ge 4\), the group \(\gamma_3(G)>1\) is non-trivial.
Since \(\Gamma_i\simeq G/\gamma_3(G)\) is a predecessor of \(G\)
\cite[p. 182]{Ne1}
on the coclass graph \(\mathcal{G}(3,1)\) in Figure
\ref{fig:CoCl1},
it can only be isomorphic to the
extra special \(3\)-group \(G_0^3(0,0)\) of exponent \(3\) on the mainline,
whose four maximal normal subgroups are all abelian of type \((3,3)\)
and are thus isomorphic to \(\mathrm{A}(3,2)\).
\end{proof}

The vertices of the coclass graph \(\mathcal{G}(3,1)\) in Figure
\ref{fig:CoCl1}
represent all isomorphism classes of finite \(3\)-groups \(G\) with coclass \(\mathrm{cc}(G)=1\).
Two vertices are connected by a directed edge \(H\to G\) 
if \(G\) is isomorphic to the last lower central quotient \(H/\gamma_c(H)\)
where \(c=\mathrm{cl}(H)\) denotes the nilpotency class of \(H\),
and \(\lvert H\rvert=3\lvert G\rvert\), i. e. \(\gamma_c(H)\) is cyclic of order \(3\).
The graph \(\mathcal{G}(3,1)\) has also been drawn in
\cite[Fig. 4.3, p. 63]{As},
\cite[pp. 194--195]{LgNm},
\cite[p. 189 f.]{Ne1},
\cite[p. 46]{DEF},
\cite[\S\ 9]{EkFs}.

The two top vertices (contour squares) are abelian.
\(C_9\) is isolated and
\(C_3\times C_3\) is the root of the unique coclass tree \(\mathcal{T}(C_3\times C_3)\)
of \(\mathcal{G}(3,1)\).
All other vertices (full discs) are metabelian,
according to
Blackburn
\cite[Thm. 6, p. 26]{Bl1}.
Groups with defect \(k=0\) are represented by bigger discs than those with \(k=1\).
Numbers in angles denote the identifiers of groups
in the SmallGroup library
\cite{BEO}
and in GAP 4.4
\cite{GAP},
where we omit the orders, which are given on the left hand scale.

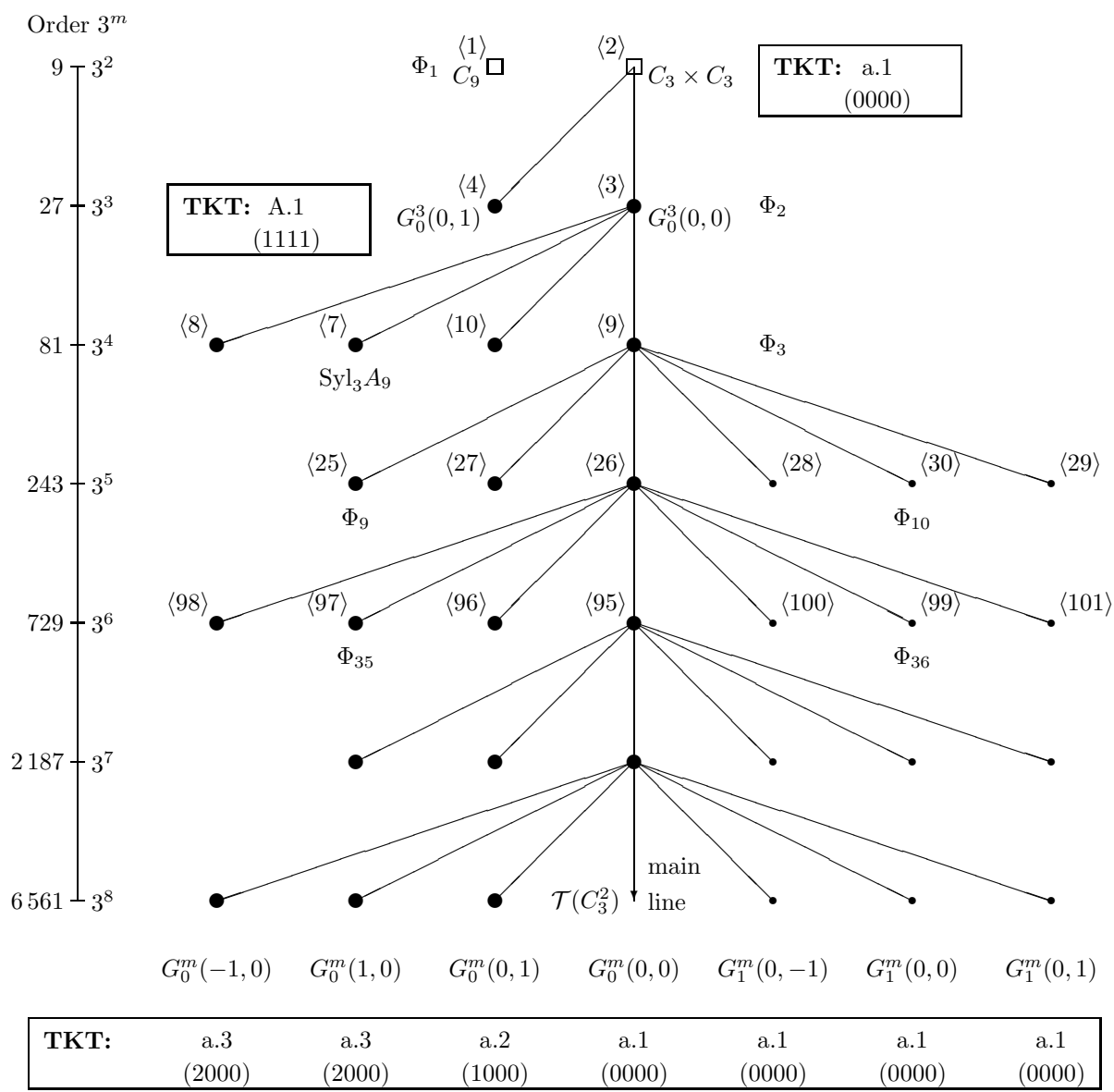
\begin{figure}[ht]
\caption{Root \(C_3\times C_3\) and branches \(\mathcal{B}(j)\), \(2\le j\le 7\), on the coclass graph \(\mathcal{G}(3,1)\)}
\label{fig:CoCl1}

\setlength{\unitlength}{1cm}
\begin{picture}(16,16)(-9,-15)

\put(-8,0.5){\makebox(0,0)[cb]{Order \(3^m\)}}
\put(-8,0){\line(0,-1){12}}
\multiput(-8.1,0)(0,-2){7}{\line(1,0){0.2}}
\put(-8.2,0){\makebox(0,0)[rc]{\(9\)}}
\put(-7.8,0){\makebox(0,0)[lc]{\(3^2\)}}
\put(-8.2,-2){\makebox(0,0)[rc]{\(27\)}}
\put(-7.8,-2){\makebox(0,0)[lc]{\(3^3\)}}
\put(-8.2,-4){\makebox(0,0)[rc]{\(81\)}}
\put(-7.8,-4){\makebox(0,0)[lc]{\(3^4\)}}
\put(-8.2,-6){\makebox(0,0)[rc]{\(243\)}}
\put(-7.8,-6){\makebox(0,0)[lc]{\(3^5\)}}
\put(-8.2,-8){\makebox(0,0)[rc]{\(729\)}}
\put(-7.8,-8){\makebox(0,0)[lc]{\(3^6\)}}
\put(-8.2,-10){\makebox(0,0)[rc]{\(2\,187\)}}
\put(-7.8,-10){\makebox(0,0)[lc]{\(3^7\)}}
\put(-8.2,-12){\makebox(0,0)[rc]{\(6\,561\)}}
\put(-7.8,-12){\makebox(0,0)[lc]{\(3^8\)}}

\put(-0.1,-0.1){\framebox(0.2,0.2){}}
\put(-2.1,-0.1){\framebox(0.2,0.2){}}
\multiput(0,-2)(0,-2){5}{\circle*{0.2}}
\multiput(-2,-2)(0,-2){6}{\circle*{0.2}}
\multiput(-4,-4)(0,-2){5}{\circle*{0.2}}
\multiput(-6,-4)(0,-4){3}{\circle*{0.2}}
\multiput(2,-6)(0,-2){4}{\circle*{0.1}}
\multiput(4,-6)(0,-2){4}{\circle*{0.1}}
\multiput(6,-6)(0,-2){4}{\circle*{0.1}}

\multiput(0,0)(0,-2){5}{\line(0,-1){2}}
\multiput(0,0)(0,-2){6}{\line(-1,-1){2}}
\multiput(0,-2)(0,-2){5}{\line(-2,-1){4}}
\multiput(0,-2)(0,-4){3}{\line(-3,-1){6}}
\multiput(0,-4)(0,-2){4}{\line(1,-1){2}}
\multiput(0,-4)(0,-2){4}{\line(2,-1){4}}
\multiput(0,-4)(0,-2){4}{\line(3,-1){6}}

\put(0,-10){\vector(0,-1){2}}
\put(0.2,-11.5){\makebox(0,0)[lc]{main}}
\put(0.2,-12){\makebox(0,0)[lc]{line}}
\put(-0.2,-12){\makebox(0,0)[rc]{\(\mathcal{T}(C_3^2)\)}}

\put(0.2,0){\makebox(0,0)[lt]{\(C_3\times C_3\)}}
\put(-2.2,0){\makebox(0,0)[rt]{\(C_9\)}}
\put(0.2,-2){\makebox(0,0)[lt]{\(G^3_0(0,0)\)}}
\put(-2.2,-2){\makebox(0,0)[rt]{\(G^3_0(0,1)\)}}
\put(-4,-4.5){\makebox(0,0)[cc]{\(\mathrm{Syl}_3A_9\)}}

\put(-3,0){\makebox(0,0)[cc]{\(\Phi_1\)}}
\put(-2.1,0.1){\makebox(0,0)[rb]{\(\langle 1\rangle\)}}
\put(-0.1,0.1){\makebox(0,0)[rb]{\(\langle 2\rangle\)}}

\put(2,-2){\makebox(0,0)[cc]{\(\Phi_2\)}}
\put(-2.1,-1.9){\makebox(0,0)[rb]{\(\langle 4\rangle\)}}
\put(-0.1,-1.9){\makebox(0,0)[rb]{\(\langle 3\rangle\)}}

\put(2,-4){\makebox(0,0)[cc]{\(\Phi_3\)}}
\put(-6.1,-3.9){\makebox(0,0)[rb]{\(\langle 8\rangle\)}}
\put(-4.1,-3.9){\makebox(0,0)[rb]{\(\langle 7\rangle\)}}
\put(-2.1,-3.9){\makebox(0,0)[rb]{\(\langle 10\rangle\)}}
\put(-0.1,-3.9){\makebox(0,0)[rb]{\(\langle 9\rangle\)}}

\put(-4,-6.5){\makebox(0,0)[cc]{\(\Phi_9\)}}
\put(-4.1,-5.9){\makebox(0,0)[rb]{\(\langle 25\rangle\)}}
\put(-2.1,-5.9){\makebox(0,0)[rb]{\(\langle 27\rangle\)}}
\put(-0.1,-5.9){\makebox(0,0)[rb]{\(\langle 26\rangle\)}}

\put(4,-6.5){\makebox(0,0)[cc]{\(\Phi_{10}\)}}
\put(2.1,-5.9){\makebox(0,0)[lb]{\(\langle 28\rangle\)}}
\put(4.1,-5.9){\makebox(0,0)[lb]{\(\langle 30\rangle\)}}
\put(6.1,-5.9){\makebox(0,0)[lb]{\(\langle 29\rangle\)}}

\put(-4,-8.5){\makebox(0,0)[cc]{\(\Phi_{35}\)}}
\put(-6.1,-7.9){\makebox(0,0)[rb]{\(\langle 98\rangle\)}}
\put(-4.1,-7.9){\makebox(0,0)[rb]{\(\langle 97\rangle\)}}
\put(-2.1,-7.9){\makebox(0,0)[rb]{\(\langle 96\rangle\)}}
\put(-0.1,-7.9){\makebox(0,0)[rb]{\(\langle 95\rangle\)}}

\put(4,-8.5){\makebox(0,0)[cc]{\(\Phi_{36}\)}}
\put(2.1,-7.9){\makebox(0,0)[lb]{\(\langle 100\rangle\)}}
\put(4.1,-7.9){\makebox(0,0)[lb]{\(\langle 99\rangle\)}}
\put(6.1,-7.9){\makebox(0,0)[lb]{\(\langle 101\rangle\)}}

\put(0,-13){\makebox(0,0)[cc]{\(G^m_0(0,0)\)}}
\put(-2,-13){\makebox(0,0)[cc]{\(G^m_0(0,1)\)}}
\put(-4,-13){\makebox(0,0)[cc]{\(G^m_0(1,0)\)}}
\put(-6,-13){\makebox(0,0)[cc]{\(G^m_0(-1,0)\)}}
\put(2,-13){\makebox(0,0)[cc]{\(G^m_1(0,-1)\)}}
\put(4,-13){\makebox(0,0)[cc]{\(G^m_1(0,0)\)}}
\put(6,-13){\makebox(0,0)[cc]{\(G^m_1(0,1)\)}}

\put(2.5,0){\makebox(0,0)[cc]{\textbf{TKT:}}}
\put(3.5,0){\makebox(0,0)[cc]{a.1}}
\put(3.5,-0.5){\makebox(0,0)[cc]{\((0000)\)}}
\put(1.8,-0.7){\framebox(2.9,1){}}
\put(-6,-2){\makebox(0,0)[cc]{\textbf{TKT:}}}
\put(-5,-2){\makebox(0,0)[cc]{A.1}}
\put(-5,-2.5){\makebox(0,0)[cc]{\((1111)\)}}
\put(-6.7,-2.7){\framebox(2.9,1){}}

\put(-8,-14){\makebox(0,0)[cc]{\textbf{TKT:}}}
\put(0,-14){\makebox(0,0)[cc]{a.1}}
\put(-2,-14){\makebox(0,0)[cc]{a.2}}
\put(-4,-14){\makebox(0,0)[cc]{a.3}}
\put(-6,-14){\makebox(0,0)[cc]{a.3}}
\put(2,-14){\makebox(0,0)[cc]{a.1}}
\put(4,-14){\makebox(0,0)[cc]{a.1}}
\put(6,-14){\makebox(0,0)[cc]{a.1}}
\put(0,-14.5){\makebox(0,0)[cc]{\((0000)\)}}
\put(-2,-14.5){\makebox(0,0)[cc]{\((1000)\)}}
\put(-4,-14.5){\makebox(0,0)[cc]{\((2000)\)}}
\put(-6,-14.5){\makebox(0,0)[cc]{\((2000)\)}}
\put(2,-14.5){\makebox(0,0)[cc]{\((0000)\)}}
\put(4,-14.5){\makebox(0,0)[cc]{\((0000)\)}}
\put(6,-14.5){\makebox(0,0)[cc]{\((0000)\)}}
\put(-8.7,-14.7){\framebox(15.4,1){}}

\end{picture}
\end{figure}

The symbols \(\Phi_s\) denote isoclinism families
\cite{Hl,Ef,Jm}.
The principalization types or transfer kernel types, briefly TKT,
\cite[Thm. 2.5, p. 478, and Tbl. 6--7, pp. 492--493]{Ma2}
in the bottom rectangle concern
all vertices located vertically above, except indicated otherwise.
Periodicity of length \(2\)
\cite[p. 275]{EkLg},
\(\mathcal{B}(j)\simeq\mathcal{B}(j+2)\) for \(j\ge 4\),
sets in with branch \(\mathcal{B}(4)\), having root of order \(3^4\).

\medskip
\noindent
For \(G\) of coclass \(\mathrm{cc}(G)=1\),
it remains to investigate the structure of the following \(3\)-class groups:

\begin{itemize}

\item
\(\mathrm{Cl}_3(N_1)\) of order \(3^{m-1}=3^3\)
for \(m=4\) (where \(\lbrack\chi_2(G),\gamma_2(G)\rbrack=1\), \(k=0\)),

\item
\(\mathrm{Cl}_3(N_1)\) of order \(3^{m-2}=3^3\)
for \(m=5\) and \(\lbrack\chi_2(G),\gamma_2(G)\rbrack=\gamma_4(G)>1\), \(k=1\),

\item
all four \(\mathrm{Cl}_3(N_i)\) with \(1\le i\le 4\)
of order \(3^2\)
for \(m=3\) (where \(k=0\)).

\end{itemize}


\subsection{Second \(3\)-class groups \(G\) of coclass \(\mathrm{cc}(G)\ge 2\)}
\label{ss:LowHom}

Metabelian \(3\)-groups \(G\) of coclass \(\mathrm{cc}(G)\ge 2\)
must have at least one bicyclic factor \(\gamma_3(G)/\gamma_4(G)\).
Similarly as in \S\ \(2\) of
\cite{Ma1},
we declare
an isomorphism invariant \(e=e(G)\) of \(G\) by
\(e+1=\min\lbrace 3\le j\le m\mid 1\le\lvert\gamma_j(G)/\gamma_{j+1}(G)\rvert\le 3\rbrace\).
This invariant \(2\le e\le m-1\) characterizes
the first cyclic factor \(\gamma_{e+1}(G)/\gamma_{e+2}(G)\) of the lower central series of \(G\),
except \(\gamma_2(G)/\gamma_3(G)\), which is always cyclic.
We can calculate \(e\)
from the \(3\)-exponent \(n\) of the order \(\lvert G\rvert=3^n\)
and the class \(\mathrm{cl}(G)=m-1\), resp. the index \(m\) of nilpotency, of \(G\)
by the formula \(e=n-m+2\).
Since the coclass of \(G\) is given by \(\mathrm{cc}(G)=n-\mathrm{cl}(G)=n-m+1\),
we have the relation \(e=\mathrm{cc}(G)+1\).

For a group \(G\) of coclass \(\mathrm{cc}(G)\ge 2\)
we need a generalization of the group \(\chi_2(G)\).
Denoting by \(m\) the index of nilpotency of \(G\),
we let \(\chi_j(G)\) with \(2\le j\le m-1\)
be the centralizers
of two-step factor groups \(\gamma_j(G)/\gamma_{j+2}(G)\)
of the lower central series, that is,
the biggest subgroups of \(G\) with the property
\(\lbrack\chi_j(G),\gamma_j(G)\rbrack\le\gamma_{j+2}(G)\).
They form an ascending chain of characteristic subgroups of \(G\),
\(\gamma_2(G)\le\chi_2(G)\le\ldots\le\chi_{m-2}(G)<\chi_{m-1}(G)=G\),
which contain the commutator subgroup \(\gamma_2(G)\), and
\(\chi_j(G)\) coincides with \(G\) if and only if \(j\ge m-1\).
Similarly as in \S\ \(2\) of
\cite{Ma1},
we characterize the smallest two-step centralizer
different from the derived subgroup
by an isomorphism invariant
\(s=s(G)=\min\lbrace 2\le j\le m-1\mid\chi_j(G)>\gamma_2(G)\rbrace\).

The following assumptions 
for a metabelian \(3\)-group \(G\) of coclass \(\mathrm{cc}(G)\ge 2\)
with abelianization \(G/\gamma_2(G)\) of type \((3,3)\)
can always be satisfied, according to
\cite[Satz 3.4.5, p. 94]{Ne1}
(see our appendix).

Let \(G\) be a metabelian \(3\)-group of coclass \(\mathrm{cc}(G)\ge 2\)
with abelianization \(G/\gamma_2(G)\) of type \((3,3)\).
Assume that \(G\) has order \(\lvert G\rvert=3^n\),
class \(\mathrm{cl}(G)=m-1\), and invariant \(e=n-m+2\ge 3\),
where \(4\le m<n\le 2m-3\).
Let generators of \(G=\langle x,y\rangle\) be selected such that
\(\gamma_3(G)=\langle y^3,x^3,\gamma_4(G)\rangle\),
\(x\in G\setminus\chi_s(G)\) if \(s<m-1\),
and \(y\in\chi_s(G)\setminus\gamma_2(G)\).
Suppose that a fixed order of the four maximal normal subgroups of \(G\) is defined by
\(M_i=\langle g_i,\gamma_2(G)\rangle\) with
\(g_1=y\), \(g_2=x\), \(g_3=xy\), and \(g_4=xy^{-1}\).
Let the main commutator of \(G\) be declared by
\(s_2=t_2=\lbrack y,x\rbrack\in\gamma_2(G)\)
and higher commutators recursively by
\(s_j=\lbrack s_{j-1},x\rbrack\), \(t_j=\lbrack t_{j-1},y\rbrack\in\gamma_j(G)\)
for \(j\ge 3\).
Starting with the powers \(\sigma_3=y^3\), \(\tau_3=x^3\in\gamma_3(G)\), let
\(\sigma_j=\lbrack\sigma_{j-1},x\rbrack\), \(\tau_j=\lbrack\tau_{j-1},y\rbrack\in\gamma_j(G)\)
for \(j\ge 4\).
With exponents \(-1\le\alpha,\beta,\gamma,\delta,\rho\le 1\) as parameters, let
the following relations be satisfied

\begin{equation}
\label{eqn:LowRel}
s_2^3=\sigma_4\sigma_{m-1}^{-\rho\beta}\tau_4^{-1},\quad
\ s_3\sigma_3\sigma_4=\sigma_{m-2}^{\rho\beta}\sigma_{m-1}^\gamma\tau_e^\delta,\quad
\ t_3^{-1}\tau_3\tau_4=\sigma_{m-2}^{\rho\delta}\sigma_{m-1}^\alpha\tau_e^\beta,\quad
\ \tau_{e+1}=\sigma_{m-1}^{-\rho}.
\end{equation}

\noindent
Finally, let \(\lbrack\chi_s(G),\gamma_e(G)\rbrack=\gamma_{m-k}(G)\)
with the \textit{defect of commutativity} \(0\le k=k(G)\le 1\) of \(G\).
Then, the defect is \(k=0\) if and only if \(\rho=0\).\\
By \(G_\rho^{m,n}(\alpha,\beta,\gamma,\delta)\) we denote 
the representative of an isomorphism class of
metabelian \(3\)-groups \(G\) with \(G/\gamma_2(G)\) of type \((3,3)\),
of coclass \(\mathrm{cc}(G)=n-m+1\ge 2\),
class \(\mathrm{cl}(G)=m-1\), and order \(\lvert G\rvert=3^n\),
which satisfies the relations
(\ref{eqn:LowRel})
with a fixed system of parameters
\((\alpha,\beta,\gamma,\delta,\rho)\).

\begin{theorem}
\label{t:LowHom}
(Incomplete TTT \((\tau_1(G),\tau_2(G))\) for groups \(G\) with \(\mathrm{cc}(G)\ge 3\), \(\mathrm{cl}(G)\ge 5\))

The structure of the \(3\)-class groups \(\mathrm{Cl}_3(N_i)\)
of the four unramified cyclic cubic extension fields \(N_1,\ldots,N_4\)
of an arbitrary base field \(K\)
having a \(3\)-class group \(\mathrm{Cl}_3(K)\) of type \((3,3)\)
and a second \(3\)-class group \(G=\mathrm{Gal}(\mathrm{F}_3^2(K)\vert K)\)
of coclass \(\mathrm{cc}(G)\ge 2\), order \(\lvert G\rvert=3^n\), class \(\mathrm{cl}(G)=m-1\),
and invariant \(e=n-m+2\ge 3\), where \(4\le m<n\le 2m-3\),
is given by the following nearly homocyclic abelian \(3\)-groups.

\begin{eqnarray}
\label{e:LowHom}
\mathrm{Cl}_3(N_1)&\simeq&
\begin{cases}
\mathrm{A}(3,m-1),&\text{ if }\lbrack\chi_s(G),\gamma_e(G)\rbrack=1,\ k=0,\ m\ge 5\,,\\
\mathrm{A}(3,m-2),&\text{ if }\lbrack\chi_s(G),\gamma_e(G)\rbrack=\gamma_{m-1}(G),\ k=1,\ m\ge 6\,,
\end{cases}\\
\mathrm{Cl}_3(N_2)&\simeq&\mathrm{A}(3,e)\text{ for }e\ge 4\,,\\
\mathrm{Cl}_3(N_i)&\simeq&\mathrm{A}(3,3)\text{ for }3\le i\le 4,
\text{ if }\Gamma_i\not\simeq G_0^4(1,0)\simeq\mathrm{Syl}_3A_9\,.
\end{eqnarray}

\end{theorem}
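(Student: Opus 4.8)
The plan is to reduce the statement about the class groups $\mathrm{Cl}_3(N_i)$ to a purely group-theoretic computation inside $G=\mathrm{G}_3^2(K)$, in complete analogy with the proof of Theorem~\ref{t:MaxHom}. By Proposition~\ref{prp:TwoStgTow} and formula~(\ref{eqn:TwoStgTow}) we have $\mathrm{Cl}_3(N_i)\simeq A_i\simeq M_i/M_i^\prime$, where $A_i$ is the abelian maximal subgroup of $\Gamma_i\simeq G/M_i^\prime$. The groups $\Gamma_i$ are metabelian $3$-groups of coclass $1$ carrying the abelian maximal subgroup $A_i$ (by Heider--Schmithals \cite{HeSm}, or by \cite{Ma1}, since $M_i^\prime$ is strictly contained in $\gamma_2(G)$), so Blackburn's Theorem~3.4 \cite{Bl} (see the Appendix) applies to $\Gamma_i$ as soon as $\lvert\Gamma_i\rvert\ge 3^5$, and then yields $A_i\simeq\mathrm{A}(3,\ell-1)$ whenever $\lvert\Gamma_i\rvert=3^\ell$. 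Thus everything reduces to computing the order of the relative commutator subgroup $\gamma_2(M_i)=M_i^\prime$ for the four generators $g_1=y$, $g_2=x$, $g_3=xy$, $g_4=xy^{-1}$ and, in the residual cases of order $3^4$, to locating $\Gamma_i$ on the coclass graph $\mathcal{G}(3,1)$ of Figure~\ref{fig:CoCl1}.

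For the distinguished extension $N_1$ I would describe $\gamma_2(M_1)$ as the $\langle y\rangle$-submodule of the abelian group $\gamma_2(G)$ generated by $[y,\gamma_2(G)]$, compute $[y,s_2]=t_3^{-1}$, $[y,\sigma_3]=1$, $[y,\tau_3]=\tau_4^{-1}$, and then iterate commutation with $y$. Since $y\in\chi_s(G)$ forces $[y,\gamma_e(G)]=[\chi_s(G),\gamma_e(G)]=\gamma_{m-k}(G)$, the chains break off at level $e$, so $M_1^\prime$ is generated by $t_3,\dots,t_e$ and $\tau_4,\dots,\tau_e$, with the extra element $\tau_{e+1}=\sigma_{m-1}^{-\rho}$ when $\rho\ne 0$. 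The relations~(\ref{eqn:LowRel}), in particular $t_3^{-1}\tau_3\tau_4=\sigma_{m-2}^{\rho\delta}\sigma_{m-1}^\alpha\tau_e^\beta$, give $t_j\equiv\tau_j$ modulo $\gamma_{j+1}(G)$ for $j\ge 3$, so $M_1^\prime$ has $\mathbb{F}_3$-rank one at each of the levels $3,\dots,e$; hence $\lvert M_1^\prime\rvert=3^{e-2}$ when $\rho=0$ ($k=0$) and $\lvert M_1^\prime\rvert=3^{e-1}$ when $\rho\ne 0$ ($k=1$, the latter also giving $\gamma_{m-1}(G)\le M_1^\prime$). With $n=m+e-2$ this yields $\lvert\Gamma_1\rvert=3^m$ in the first case and $3^{m-1}$ in the second, and a short inspection of the lower central series of $\Gamma_1=G/M_1^\prime$ confirms coclass $1$ (and, for $k=1$, that $\Gamma_1$ is the mainline predecessor $G_0^{m-1}(0,0)$). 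Blackburn's theorem then gives $A_1\simeq\mathrm{A}(3,m-1)$ for $m\ge 5$ in the first case, and $A_1\simeq\mathrm{A}(3,m-2)$ for $m\ge 6$ in the second.

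The extensions $N_2$, $N_3$, $N_4$ are handled by the same relative-commutator calculus, the difference lying in how $g_2=x$, $g_3=xy$, $g_4=xy^{-1}$ act on the generators $s_2$, $\sigma_3=y^3$, $\tau_3=x^3$ of $\gamma_2(G)$. For $N_2$ one has $[x,s_2]=s_3^{-1}$, $[x,\sigma_3]=\sigma_4^{-1}$, $[x,\tau_3]=1$; the relations~(\ref{eqn:LowRel}) force $\bar\sigma_j=-\bar s_j$ modulo $\gamma_{j+1}(G)$, so $M_2^\prime$ has rank one at each of the levels $3,\dots,m-1$, giving $\lvert M_2^\prime\rvert=3^{m-3}$, hence $\lvert\Gamma_2\rvert=3^{e+1}$; Blackburn's theorem, now needing $e+1\ge 5$, i.e. $e\ge 4$, yields $A_2\simeq\mathrm{A}(3,e)$. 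For $N_3$ and $N_4$ one gets $[xy^{\pm1},s_2]\equiv s_3^{-1}t_3^{-1}$, $[xy^{\pm1},\sigma_3]\equiv\sigma_4^{-1}$, $[xy^{\pm1},\tau_3]=\tau_4^{-1}$; combining these with the two dependence relations $\bar\sigma_j=-\bar s_j$ and $\bar\tau_j=-\bar t_j$ shows that $M_i^\prime$ has rank one at level $3$, rank two at levels $4,\dots,e$, and rank one at levels $e+1,\dots,m-1$, so $\lvert M_i^\prime\rvert=3^{1+2(e-3)+(m-1-e)}=3^{m+e-6}=3^{n-4}$ and $\lvert\Gamma_i\rvert=3^4$. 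As $3^4<3^5$, Blackburn's theorem is unavailable; instead one uses that among the four metabelian $3$-groups of order $3^4$ and coclass $1$ on $\mathcal{G}(3,1)$ (the isoclinism family $\Phi_3$) only $G_0^4(1,0)\simeq\mathrm{Syl}_3A_9\simeq C_3\wr C_3$ has an abelian maximal subgroup of type $(3,3,3)$, the other three carrying one of type $(9,3)=\mathrm{A}(3,3)$. Hence $A_i\simeq\mathrm{A}(3,3)$ for $3\le i\le 4$ unless $\Gamma_i\simeq G_0^4(1,0)$.

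The substantive part, and the step I expect to be the main obstacle, is the combinatorial bookkeeping behind the orders $\lvert M_i^\prime\rvert$: one must check that the iterated commutators $s_j$, $t_j$, $\sigma_j$, $\tau_j$ are $\mathbb{F}_3$-independent modulo the lower central filtration of $G$ up to exactly the dependencies imposed by the relations~(\ref{eqn:LowRel}), and that each commutator chain terminates at the level prescribed by the invariant $e$ and the defect $k=k(G)$. This is precisely where the coclass-$\ge 2$ refinement of the structure theory of \cite{Ma1,Ma2} is needed; once it is in place, the translation back to $\mathrm{Cl}_3(N_i)$ through the Artin reciprocity law is immediate, and the only delicate residue is the order-$3^4$ exception $\mathrm{Syl}_3A_9$ occurring in the statement for $N_3$ and $N_4$.
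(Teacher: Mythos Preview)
Your proposal is correct and follows the same strategy as the paper's proof: identify $\mathrm{Cl}_3(N_i)\simeq M_i/M_i^\prime$ via Proposition~\ref{prp:TwoStgTow}, observe that each $\Gamma_i\simeq G/M_i^\prime$ is of coclass~$1$, compute $\lvert\Gamma_i\rvert$ from $\lvert M_i^\prime\rvert$, apply Blackburn's Theorem~3.4 when $\lvert\Gamma_i\rvert\ge 3^5$, and handle $\lvert\Gamma_i\rvert=3^4$ by inspecting the four groups in the stem of $\Phi_3$.

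The only substantive difference is one of packaging: the paper simply \emph{quotes} the orders
\[
\lvert\gamma_2(M_1)\rvert=3^{e-2+k},\quad \lvert\gamma_2(M_2)\rvert=3^{m-3},\quad \lvert\gamma_2(M_i)\rvert=3^{n-4}\ (3\le i\le 4)
\]
from \cite[Cor.~3.2]{Ma1}, whereas you reconstruct them by hand via the commutator calculus on the generators $s_j,t_j,\sigma_j,\tau_j$ and the relations~(\ref{eqn:LowRel}). Your rank-counting across the levels of the lower central series is exactly what underlies that corollary, so you are reproving the cited input rather than taking a different route. This buys self-containment at the cost of the bookkeeping you flag as ``the main obstacle''; the paper sidesteps that bookkeeping entirely by citation. (Minor slip: for $g_4=xy^{-1}$ one has $[xy^{-1},s_2]\equiv s_3^{-1}t_3$ rather than $s_3^{-1}t_3^{-1}$, but this does not affect the order count.)
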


\begin{proof}

For each \(1\le i\le 4\), equation
(\ref{eqn:TwoStgTow})
specifies the order of the Galois group of the \(i\)th little two-stage tower
as \(\lvert\Gamma_i\rvert=\lvert G\rvert/\lvert\gamma_2(M_i)\rvert\),
where \(\lvert G\rvert=3^n\) by assumption.
According to
\cite[Cor. 3.2, p. 480]{Ma1}, the orders of the commutator subgroups of the \(M_i\) are
\[\lvert\gamma_2(M_i)\rvert=
\begin{cases}
3^{e-2}&\text{ for }i=1,\ \lbrack\chi_s(G),\gamma_e(G)\rbrack=1,\ k=0,\\
3^{e-1}&\text{ for }i=1,\ \lbrack\chi_s(G),\gamma_e(G)\rbrack=\gamma_{m-1}(G),\ k=1,\\
3^{m-3}&\text{ for }i=2,\\
3^{n-4}&\text{ for }3\le i\le 4.
\end{cases}\]
Using the relation \(e=n-m+2\), we obtain
\[\lvert\Gamma_i\rvert=
\begin{cases}
3^m&\text{ for }i=1,\ \lbrack \chi_s(G),\gamma_e(G)\rbrack =1,\\
3^{m-1}&\text{ for }i=1,\ \lbrack \chi_s(G),\gamma_e(G)\rbrack =\gamma_{m-1}(G),\\
3^{e+1}&\text{ for }i=2,\\
3^4&\text{ for }3\le i\le 4.
\end{cases}\]
Since \(\Gamma_i\),
having an abelian maximal subgroup \(A_i\simeq\mathrm{Cl}_3(N_i)\),
is a metabelian \(3\)-group of coclass \(\mathrm{cc}(G)=1\) by
\cite[Kor., p. 9]{HeSm}
or also by
\cite[Cor. 3.1--3.2, pp. 476 and 480]{Ma1},
the structure of the abelian maximal normal subgroup \(A_i\) of \(\Gamma_i\)
is given by

\begin{eqnarray*}
\label{e:LowHomPrf}
A_1&\simeq&
\begin{cases}
\mathrm{A}(3,m-1),&\text{ if }\lbrack\chi_s(G),\gamma_e(G)\rbrack=1,\ k=0,\ m\ge 5,\\
\mathrm{A}(3,m-2),&\text{ if }\lbrack\chi_s(G),\gamma_e(G)\rbrack=\gamma_{m-1}(G),\ k=1,\ m-1\ge 5,
\end{cases}\\
A_2&\simeq&\mathrm{A}(3,e),\text{ for }e+1\ge 5,\\
A_i&\simeq&\mathrm{A}(3,3)\text{ for }3\le i\le 4,
\text{ if }\Gamma_i\not\simeq G_0^4(1,0)\simeq\mathrm{Syl}_3A_9,
\end{eqnarray*}

\noindent
according to
\cite[Thm. 3.4]{Bl},
for \(1\le i\le 2\),
and by an immediate analysis of the four isomorphism classes of
metabelian \(3\)-groups of order \(3^m\) with index of nilpotency \(m=4\),
for \(3\le i\le 4\).
Representatives for these four isomorphism classes are three groups
\(G_0^4(0,0)\),
\(G_0^4(0,1)\),
\(G_0^4(-1,0)\),
whose abelian maximal normal subgroup is nearly homocyclic of type \((9,3)\),
and the exceptional group
\(G_0^4(1,0)\simeq\mathrm{Syl}_3A_9\),
which has the elementary abelian \(3\)-group of type \((3,3,3)\)
as its abelian maximal normal subgroup.
See the level of order \(3^4=81\), i. e. the stem of \(\Phi_3\), in Figure
\ref{fig:CoCl1}.
\end{proof}

\noindent
For \(G\) of coclass \(\mathrm{cc}(G)\ge 2\), it remains to investigate
the structure of the following \(3\)-class groups:

\begin{itemize}

\item
\(\mathrm{Cl}_3(N_1)\) of order \(3^{m-k-1}=3^3\)
for small values of the index of nilpotency \(m\),
namely \(m=4\) (where \(\lbrack\chi_s(G),\gamma_e(G)\rbrack=1\), \(k=0\)), and
\(m=5\) if \(\lbrack\chi_s(G),\gamma_e(G)\rbrack=\gamma_{m-1}(G)\), \(k=1\),

\item
\(\mathrm{Cl}_3(N_2)\) of order \(3^e=3^3\)
for groups \(G\) of coclass \(\mathrm{cc}(G)=2\) (where \(e=3\)),

\item
\(\mathrm{Cl}_3(N_i)\), \(3\le i\le 4\),
of order \(3^3\)
without restrictions for the parameters \(m\ge 4\) and \(e\ge 3\).

\end{itemize}

\renewcommand{\arraystretch}{1.0}
\begin{table}[ht]
\caption{\(3\)-class groups to be investigated for certain parameters \(m,n,e,k\) of \(G\)}
\label{tab:LowUnk}
\begin{center}
\begin{tabular}{|c|c|c|c|cccc|}
\hline
      \(m\) &      \(n\) &      \(e\) &      \(k\) & \(\mathrm{Cl}_3(N_1)\) & \(\mathrm{Cl}_3(N_2)\) & \(\mathrm{Cl}_3(N_3)\) & \(\mathrm{Cl}_3(N_4)\) \\
\hline
      \(4\) &      \(5\) &      \(3\) &      \(0\) &             \(\times\) &             \(\times\) &             \(\times\) &             \(\times\) \\
      \(5\) &      \(6\) &      \(3\) &      \(1\) &             \(\times\) &             \(\times\) &             \(\times\) &             \(\times\) \\
\hline
      \(5\) &      \(6\) &      \(3\) &      \(0\) &                        &             \(\times\) &             \(\times\) &             \(\times\) \\
  \(\ge 6\) &  \(\ge 7\) &      \(3\) &  \(\ge 0\) &                        &             \(\times\) &             \(\times\) &             \(\times\) \\
\hline
      \(5\) &      \(7\) &      \(4\) &      \(0\) &                        &                        &             \(\times\) &             \(\times\) \\
  \(\ge 6\) &  \(\ge 8\) &  \(\ge 4\) &  \(\ge 0\) &                        &                        &             \(\times\) &             \(\times\) \\
\hline
\end{tabular}
\end{center}
\end{table}

In Table
\ref{tab:LowUnk},
we give an overview of all systems \((m,n,e,k)\) of parameters
of the second \(3\)-class group \(G\) of coclass \(\mathrm{cc}(G)\ge 2\),
for which the \(3\)-class groups \(\mathrm{Cl}_3(N_i)\),
\(1\le i\le 4\), marked by the symbol \(\times\) have to be analyzed.


\section{Searching for \(3\)-class groups of \(3\)-rank three}
\label{s:MaxLowExo}

Several authors, namely Scholz
\cite[p. 218]{So},
Kisilevsky
\cite[Thm. 3, p. 205]{Ki1},
Heider and Schmithals
\cite[Satz 7, p. 11]{HeSm},
and Brink
\cite[pp. 51--52]{Br},
have pointed out the theoretical possibility
that the elementary abelian \(3\)-group of order \(27\)
can occur as the \(3\)-class group \(\mathrm{Cl}_3(N)\)
of an unramified cyclic cubic extension field \(N\)
of a base field \(K\) with \(3\)-class group \(\mathrm{Cl}_3(K)\) of type \((3,3)\).
The most explicit result among these statements is due to Heider and Schmithals.
They prove that the occurrence of an
elementary abelian \(3\)-class group \(\mathrm{Cl}_3(N)\) of order \(27\)
is restricted to extensions \(N\vert K\) satisfying condition \((\mathrm{B})\)
in the sense of Taussky
\cite[p. 435]{Ta},
that is, having a partial principalization without fixed point
\cite[\S\ 2.2, p. 476]{Ma2}.

Since the nearly homocyclic abelian \(3\)-groups \(\mathrm{A}(3,n)\) with \(n\ge 2\),
which generally occur as \(\mathrm{Cl}_3(N)\),
according to the theorems
\ref{t:MaxHom}
and
\ref{t:LowHom},
are of \(3\)-rank two,
the elementary abelian \(3\)-group of type \((3,3,3)\)
is the unique possiblity for \(\mathrm{Cl}_3(N)\)
to be of \(3\)-rank three.
In contrast, many other possibilities arise when
\(\mathrm{Cl}_3(K)\) is of type \((9,3)\)
\cite{Ma5}.

Unfortunately, it was impossible to find a numerical example,
let alone a general criterion,
for the occurrence of \(3\)-class groups of type \((3,3,3)\)
in the bibliography, up to now.
At first it was completely unknown,
whether \(3\)-class groups of type \((3,3,3)\) exist at all,
if they appear sporadically or stochastically,
or if their occurrence is ruled by deterministic laws.

In the present paper we systematically analyze this question
by means of presentations
for the second \(3\)-class group \(G=\mathrm{Gal}(\mathrm{F}_3^2(K)\vert K)\),
which have been given by Blackburn
\cite[pp. 82--84]{Bl}
in the case of coclass \(\mathrm{cc}(G)=1\)
and by Nebelung
\cite[Satz 3.4.5, p. 94]{Ne1}
(see our appendix)
in the case of coclass \(\mathrm{cc}(G)\ge 2\).
We arrive at the surprising result that
the transfer target type (TTT) \(\tau(K)\),
resp. the number \(\varepsilon=\varepsilon(K)\)
of elementary abelian \(3\)-groups of type \((3,3,3)\)
among the \(3\)-class groups \(\mathrm{Cl}_3(N_i)\), \(1\le i\le 4\),
is connected with
the transfer kernel type (TKT) \(\varkappa(K)\),
that is the principalization type
of \(3\)-classes of \(K\) in \(N_i\), \(1\le i\le 4\),
by strict rules.

These connections offer new algorithmic possibilities
for computing the principalization type of \(3\)-classes
of a \textit{quadratic} base field \(K=\mathbb{Q}(\sqrt{D})\)
with \(3\)-class group of type \((3,3)\),
independently from the classical algorithm for complex quadratic fields
by Scholz and Taussky
\cite{SoTa}
and its modification for real quadratic fields
by Heider and Schmithals
\cite{HeSm}.
The new algorithm is based on determining the structure
of the \(3\)-class groups \(\mathrm{Cl}_3(N)\)
of the unramified cyclic cubic extensions \(N\vert K\),
that is, of four \(S_3\)-fields \(N_1,\ldots,N_4\) of absolute degree six,
and is described in \S\
\ref{s:CompTech}.

With the aid of an implementation of this algorithm
in the number theoretical computer algebra system PARI/GP
\cite{PARI},
we have computed the principalization type of \(3\)-classes
of all \(4\,596\) quadratic base fields \(K\)
with \(3\)-class group of type \((3,3)\)
and discriminant \(-10^6<D<10^7\).
The resulting extensive statistics of principalization types and
second \(3\)-class groups \(G=\mathrm{Gal}(\mathrm{F}_3^2(K)\vert K)\)
will be presented in \S\
\ref{s:NumerTab}.

\smallskip
As in \S\
\ref{s:NearHom},
we distinguish second \(3\)-class groups
\(G=\mathrm{Gal}(\mathrm{F}_3^2(K)\vert K)\)
of coclass \(\mathrm{cc}(G)=1\) in \S\
\ref{ss:MaxExo}
and of coclass \(\mathrm{cc}(G)\ge 2\) in \S\
\ref{ss:LowExo}.


\subsection{Second \(3\)-class groups \(G\) of coclass \(\mathrm{cc}(G)=1\)}
\label{ss:MaxExo}

\begin{theorem}
\label{t:MaxExo}
(Transfer target type \(\tau(G)\) for groups \(G\) with \(\mathrm{cc}(G)=1\), \(\mathrm{cl}(G)\le 4\))

The structure of the \(3\)-class groups \(\mathrm{Cl}_3(N_i)\)
of the four unramified cyclic cubic extensions \(N_i\) \((1\le i\le 4)\)
of an arbitrary base field \(K\)
with \(3\)-class group \(\mathrm{Cl}_3(K)\) of type \((3,3)\)
and second \(3\)-class group \(G=\mathrm{Gal}(\mathrm{F}_3^2(K)\vert K)\)
of coclass \(\mathrm{cc}(G)=1\) and order \(\lvert G\rvert=3^m\)
ist given by the following array
for small indices of nilpotency \(3\le m=\mathrm{cl}(G)+1\le 5\).\\
In the case \(m=3\), it is supposed that the generating element \(y\) of the maximal normal subgroup
\(M_1=\mathrm{Gal}(\mathrm{F}_3^2(K)\vert N_1)=\langle y,s_2\rangle\) of \(G\)
has order \(3\).

\begin{eqnarray}
\label{e:MaxExo}
\mathrm{Cl}_3(N_1)&\simeq&
\begin{cases}
\mathrm{A}(3,2),&\text{ if }m=3,\\
\mathrm{A}(3,3),&\text{ if }m=4,\ G\not\simeq G_0^4(1,0)\simeq\mathrm{Syl}_3A_9,\\
C_3\times C_3\times C_3,
&\text{ if }m=4,\ G\simeq G_0^4(1,0)\simeq\mathrm{Syl}_3A_9,\\
\mathrm{A}(3,3),&\text{ if }k=1,\ m=5,
\end{cases}\\
\mathrm{Cl}_3(N_i)&\simeq&
\begin{cases}
\mathrm{A}(3,2),&\text{ if }G\simeq G_0^3(0,0),\\
C_9,&\text{ if }G\simeq G_0^3(0,1),
\end{cases}\text{ for }2\le i\le 4,\text{ if }m=3.
\end{eqnarray}

\end{theorem}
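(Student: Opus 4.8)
The plan is to reduce everything to computations inside the second \(3\)-class group \(G\). By Proposition \ref{prp:TwoStgTow}, \(\mathrm{Cl}_3(N_i)\simeq M_i/M_i^\prime\), where \(M_1,\dots,M_4\) are the maximal normal subgroups \(M_i=\langle g_i,\gamma_2(G)\rangle\) of \(G\); the commutator subgroups \(M_i^\prime=\gamma_2(M_i)\) are given by (\ref{eqn:MaxComGrp}), and the multiplicative structure of each \(M_i\) is controlled by Miech's relations (\ref{eqn:MaxComRel}), (\ref{eqn:MaxRel}) together with the power relations \(s_{j+1}^3s_{j+2}^3s_{j+3}=1\). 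Since Theorem \ref{t:MaxHom} already settles \(\mathrm{Cl}_3(N_1)\) for \(m\ge 5\) when \(k=0\) and for \(m\ge 6\) when \(k=1\), and settles \(\mathrm{Cl}_3(N_i)\) for \(2\le i\le 4\) whenever \(m\ge 4\), I only have to deal here with \(m\in\{3,4\}\) and \((k,m)=(1,5)\) for \(N_1\), and with \(m=3\) for \(N_2,N_3,N_4\).

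First I would treat \(m=3\). Then \(\mathrm{cl}(G)=2\), so \(\gamma_3(G)=1\), every \(M_i\) is abelian of order \(9\) by (\ref{eqn:MaxComGrp}), and \(\gamma_2(G)=\langle s_2\rangle\) is cyclic of order \(3\). For \(N_1\): \(M_1=\langle y,s_2\rangle\) with \(y\) of order \(3\) by the standing hypothesis and \(y\notin\langle s_2\rangle\), hence \(M_1\simeq\mathrm{A}(3,2)\). For \(2\le i\le 4\): \(M_i=\langle g_i,s_2\rangle\) with \(g_i\in\{x,xy,xy^{-1}\}\), and using \(s_2^3=1\) and the class-two cube identity \((ab)^3=a^3b^3\lbrack b,a\rbrack^3\) the element \(g_i^3\) collapses to an expression in \(x^3\) and \(y^3\) alone. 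By (\ref{eqn:MaxRel}) this is trivial when \(G\simeq G_0^3(0,0)\), giving \(M_i\simeq\mathrm{A}(3,2)\), and equals \(s_2\) (up to inversion) when \(G\simeq G_0^3(0,1)\), giving \(M_i=\langle g_i\rangle\simeq C_9\).

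Next I would handle \(N_1\) for \(m\in\{4,5\}\). For \(m=4\) one has \(k=0\), so \(M_1=\langle y,s_2,s_3\rangle\) is abelian of order \(27\); the power relations together with \(\gamma_4(G)=1\) force \(s_2^3=s_3^3=1\), whence \(\gamma_2(G)\simeq\mathrm{A}(3,2)\), and (\ref{eqn:MaxRel}) gives \(y^3=s_3^{z-1}\). When \(z=1\), that is \(G\simeq G_0^4(1,0)\simeq\mathrm{Syl}_3A_9\), one gets \(y^3=1\) and \(M_1=\langle y\rangle\times\gamma_2(G)\simeq C_3\times C_3\times C_3\); for each of the other three isomorphism types \(z\in\{0,-1\}\), so \(\langle y\rangle\simeq C_9\) meets \(\gamma_2(G)\) in \(\langle s_3\rangle\) and \(M_1=\langle y\rangle\times\langle s_2\rangle\simeq\mathrm{A}(3,3)\). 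For \((k,m)=(1,5)\) I would reuse the coclass-graph argument from the proof of Theorem \ref{t:MaxHom}: here \(M_1^\prime=\gamma_4(G)\), so by (\ref{eqn:TwoStgTow}) \(\Gamma_1\simeq G/\gamma_4(G)\) is the immediate predecessor of \(G\) on \(\mathcal{G}(3,1)\), namely the mainline group \(G_0^4(0,0)\), whose abelian maximal normal subgroup \(A_1\simeq\mathrm{Cl}_3(N_1)\) is nearly homocyclic of type \((9,3)\) by the case analysis of the four metabelian \(3\)-groups of order \(3^4\) carried out in the proof of Theorem \ref{t:LowHom}.

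The only place where care is needed is the bookkeeping for \(m=4\): one must read the value of the parameter \(z\) (equivalently, the order of \(y\)) off the defining relations for each of the four isomorphism classes \(G_0^4(z,w)\) and check that \(y\) has order \(3\) precisely for the exceptional group \(G_0^4(1,0)\cong\mathrm{Syl}_3A_9\). One must also ensure that the normalized generators \(x\in G\setminus\chi_2(G)\), \(y\in\chi_2(G)\setminus\gamma_2(G)\) are actually available; in the degenerate case \(m=3\) the centralizer \(\chi_2(G)\) equals \(G\), which is exactly why the theorem has to impose separately that \(y\) may be chosen of order \(3\). Everything else is a short direct computation with the given presentations.
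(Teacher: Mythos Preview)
Your proposal is correct and follows essentially the same approach as the paper: both reduce to identifying the abelianizations \(M_i/M_i^\prime\) for the finitely many isomorphism classes of metabelian \(3\)-groups of coclass \(1\) with \(3\le m\le 5\), and for \((k,m)=(1,5)\) both pass to the predecessor \(G/\gamma_4(G)\simeq G_0^4(0,0)\) on \(\mathcal{G}(3,1)\). The only difference is stylistic: where the paper invokes the known structure of the maximal subgroups of the groups \(G_0^3(0,0)\), \(G_0^3(0,1)\), \(G_0^4(z,w)\) as black boxes, you rederive this information explicitly from Miech's relations (\ref{eqn:MaxRel}) and the power relations, computing \(y^3=s_3^{z-1}\) for \(m=4\) and using the class-\(2\) cube identity for \(m=3\); this is slightly more self-contained but otherwise equivalent.
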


\begin{proof}

We make use of the well-known properties
of the six isomorphism classes of metabelian \(3\)-groups
\(G_a^m(z,w)\) of coclass \(\mathrm{cc}(G)=1\)
with the smallest indices of nilpotency \(3\le m\le 4\).

We start with the distinguished extension \(N_1\).

First, let \(\lbrack\chi_2(G),\gamma_2(G)\rbrack=1\), that is \(k=0\).
Then the group \(\Gamma_1\) is isomorphic to \(G\)
and is of order \(\lvert\Gamma_1\rvert=3^m\),
according to equation
(\ref{eqn:TwoStgTow})
and
\cite[Cor. 3.1, p. 476]{Ma1}.

For \(m=4\), the abelian maximal normal subgroup \(A_1\simeq M_1\)
is either nearly homocyclic of type \((9,3)\) and thus isomorphic to \(\mathrm{A}(3,3)\),
if \(G\) belongs to one of the three isomorphism classes
\(G_0^4(0,0)\), \(G_0^4(0,1)\), \(G_0^4(-1,0)\),
or elementary abelian of type \((3,3,3)\),
if \(G\) is isomorphic to \(G_0^4(1,0)\),
the \(3\)-Sylow subgroup \(\mathrm{Syl}_3A_9\)
of the alternating group of degree \(9\).
See the stem of \(\Phi_3\) in Figure
\ref{fig:CoCl1}.

However, for \(m=3\) the structure
of the abelian maximal normal subgroup \(A_1\simeq M_1\)
can only be nearly homocyclic of type \((3,3)\) and thus isomorphic to \(\mathrm{A}(3,2)\),
in the case of the extra special group \(G\simeq G_0^3(0,0)\)
of exponent \(3\)
with four abelian maximal normal subgroups of type \((3,3)\),
as well as in the case of the extra special group \(G\simeq G_0^3(0,1)\)
of exponent \(9\)
with one abelian maximal normal subgroup
\(A_1\simeq M_1=\langle y,s_2\rangle\) of type \((3,3)\),
which is distinguished by our choice of the generating element \(y\),
and three further cyclic maximal normal subgroups of order \(9\).
See the level of order \(3^3=27\), i. e. the stem of \(\Phi_2\), in Figure
\ref{fig:CoCl1}.

Now we consider
\(\lbrack\chi_2(G),\gamma_2(G)\rbrack=\gamma_{m-1}(G)\), that is \(k=1\),
with \(m=5\) and thus \(\gamma_{4}(G)>1\).
According to equation
(\ref{eqn:TwoStgTow})
and
\cite[Cor. 3.1, p. 476]{Ma1},
the group \(\Gamma_1\simeq G/\gamma_{4}(G)\)
is the immediate predecessor of \(G\)
\cite[p. 182]{Ne1}
on the coclass graph \(\mathcal{G}(3,1)\).
Thus it is isomorphic to \(G_0^4(0,0)\)
having the unique abelian maximal normal subgroup \(A_1\simeq\mathrm{A}(3,3)\).
See the vertex \(\langle 81,9\rangle\) in Figure
\ref{fig:CoCl1}.

For the other three extensions \(N_i\) with \(2\le i\le 4\),
all groups \(\Gamma_i\simeq G/\gamma_3(G)\simeq G\)
\cite[Cor. 3.1, p. 476]{Ma1}
coincide with \(G\), since \(\gamma_3(G)=1\) is trivial for \(m=3\).
The structure of the abelian maximal normal subgroup \(A_i\simeq M_i\)
is either of type \((3,3)\) and thus isomorphic to \(\mathrm{A}(3,2)\),
if \(G\) is isomorphic to \(G_0^3(0,0)\),
or cyclic of order \(9\),
if \(G\) is isomorphic to \(G_0^3(0,1)\),
taking into account that \(M_i=\langle xy^{i-2},s_2\rangle=\langle xy^{i-2}\rangle\)
with elements \(xy^{i-2}\) of order \(9\) whose third power coincides with \(s_2\).
See the vertices \(\langle 27,3\rangle\) and \(\langle 27,4\rangle\) in Figure
\ref{fig:CoCl1}.
\end{proof}

\begin{corollary}
\label{c:MaxExo}
(TTT \(\tau(G)\) of \(C_3\times C_3\) and
stem groups \(G\) in isoclinism families \(\Phi_2,\Phi_3,\Phi_{10}\))\\
Table \ref{tab:MaxExo} gives
the structure of \(3\)-class groups \(\mathrm{Cl}_3(N_i)\), \(1\le i\le 4\),
and invariant \(\varepsilon=\#\lbrace 1\le i\le 4\mid\mathrm{Cl}_3(N_i)\simeq (3,3,3)\rbrace\),
for 3-groups \(G\in\mathcal{G}(3,1)\) of small nilpotency class \(1\le\mathrm{cl}(G)=m-1\le 4\)
in dependence on the principalization or transfer kernel type (TKT) \(\varkappa\)
\cite[Thm. 2.4--2.5, p. 478]{Ma2}.

\end{corollary}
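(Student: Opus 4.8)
The plan is to assemble Table~\ref{tab:MaxExo} directly from the three preceding results — Theorem~\ref{t:MaxHom} (for $m\ge 5$), Theorem~\ref{t:MaxExo} (for $m\le 4$, including the exceptional $\mathrm{Syl}_3A_9$ case), and the known classification of the transfer kernel types along $\mathcal{G}(3,1)$ recalled in Figure~\ref{fig:CoCl1} and cited from \cite[Thm.~2.4--2.5, p.~478]{Ma2}. First I would fix, for each isoclinism family $\Phi_2$, $\Phi_3$, $\Phi_{10}$ and the abelian root $C_3\times C_3$, a representative stem group $G$ together with its parameters $(m,k)$ and its TKT $\varkappa$, reading the latter off the labels in Figure~\ref{fig:CoCl1}. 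For each such $G$ I then read off the quadruple $\bigl(\mathrm{Cl}_3(N_1),\ldots,\mathrm{Cl}_3(N_4)\bigr)$ from the appropriate theorem: for $\Phi_2$ (the extra special groups $G_0^3(0,0)$ and $G_0^3(0,1)$, $m=3$) and for $\Phi_3$ ($m=4$, the four isomorphism classes $G_0^4(0,0)$, $G_0^4(0,1)$, $G_0^4(-1,0)$, $G_0^4(1,0)\simeq\mathrm{Syl}_3A_9$) the values come verbatim from Theorem~\ref{t:MaxExo}, while for the higher stem groups in $\Phi_{10}$ (and the $k=1$ members with $m=5$) they come from Theorem~\ref{t:MaxHom} applied with the relevant defect $k$.

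Next I would compute the invariant $\varepsilon=\#\{1\le i\le 4\mid\mathrm{Cl}_3(N_i)\simeq(3,3,3)\}$ for each row. The point to make explicit is that $\mathrm{A}(3,n)$ is nearly homocyclic of $3$-rank two for every $n\ge 2$, hence never of type $(3,3,3)$; likewise $C_9$ and $C_3\times C_3$ are not. Consequently, by Theorems~\ref{t:MaxHom} and~\ref{t:MaxExo}, the elementary abelian group of type $(3,3,3)$ occurs among the $\mathrm{Cl}_3(N_i)$ \emph{only} in the single case $G\simeq G_0^4(1,0)\simeq\mathrm{Syl}_3A_9$, and there it occurs exactly once, as $\mathrm{Cl}_3(N_1)$ — since the other three $\Gamma_i\simeq G/\gamma_3(G)$ reduce to the extra special group $G_0^3(0,0)$ whose maximal subgroups are of type $(3,3)$. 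Thus $\varepsilon=1$ for that row and $\varepsilon=0$ for all others in the table, which I would record in the final column.

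Finally I would tabulate the pairing of $\varkappa$ with the TTT quadruple and with $\varepsilon$, using the standard names for the principalization types (a.1, a.2, a.3, a.3$^*$, A.1, etc.) as fixed in \cite[Tbl.~6--7, pp.~492--493]{Ma2} and displayed in Figure~\ref{fig:CoCl1}; for the abelian root $C_3\times C_3$ I record the trivial (one-stage-tower) situation $\mathrm{Cl}_3(N_i)\simeq 1$. I expect the only real point requiring care — the ``main obstacle,'' such as it is — to be bookkeeping: making sure that the generator convention $g_1=y$, $g_2=x$, $g_3=xy$, $g_4=xy^{-1}$ used in \S\ref{ss:MaxHom} is consistently aligned with the ordering of the $N_i$ underlying the TKT labels in \cite{Ma2}, so that the distinguished extension $N_1$ in each theorem matches the first entry of the tabulated TKT, and that for $m=3$ the extra hypothesis on $\operatorname{ord}(y)$ in Theorem~\ref{t:MaxExo} is respected when choosing the $\Phi_2$ representatives. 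Once the labelling is pinned down, every entry of Table~\ref{tab:MaxExo} is an immediate read-off, and no further computation is needed.
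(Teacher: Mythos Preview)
Your overall strategy is exactly what the paper intends: the corollary carries no separate proof in the paper, and Table~\ref{tab:MaxExo} is simply assembled by reading off Theorem~\ref{t:MaxHom}, Theorem~\ref{t:MaxExo}, and the TKT labels in Figure~\ref{fig:CoCl1}. Two small slips in your bookkeeping, however, would produce wrong entries.

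First, for the abelian root \(G=C_3\times C_3\) (\(m=2\)) you write ``\(\mathrm{Cl}_3(N_i)\simeq 1\).'' That is not right. Here \(M_i\simeq C_3\) and \(M_i^\prime=1\), so by Proposition~\ref{prp:TwoStgTow} one has \(\mathrm{Cl}_3(N_i)\simeq M_i/M_i^\prime\simeq C_3\); equivalently, \(\mathrm{F}_3^1(N_i)=\mathrm{F}_3^1(K)\) is a degree-\(3\) extension of \(N_i\). The table correctly records \((3)\), not the trivial group.

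Second, for the \(\Phi_{10}\) row (\(m=5\), \(k=1\)) you say the entries ``come from Theorem~\ref{t:MaxHom} applied with the relevant defect \(k\).'' But Theorem~\ref{t:MaxHom} only determines \(\mathrm{Cl}_3(N_1)\) for \(k=1\) when \(m\ge 6\); the case \(m=5\), \(k=1\) is precisely one of the residual cases handled by Theorem~\ref{t:MaxExo}, which gives \(\mathrm{Cl}_3(N_1)\simeq\mathrm{A}(3,3)\). Only \(\mathrm{Cl}_3(N_i)\) for \(2\le i\le 4\) comes from Theorem~\ref{t:MaxHom} in that row. With these two corrections your plan reproduces the table verbatim.
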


\renewcommand{\arraystretch}{1.0}
\begin{table}[ht]
\caption{TTT \(\tau(G)\) for \(G\) of coclass \(\mathrm{cc}(G)=1\)
and index of nilpotency \(2\le m\le 5\)}
\label{tab:MaxExo}
\begin{center}
\begin{tabular}{|cc|lc|c|cccc|c|}
\hline
 \(m\) & \(k\) & Type           & \(\varkappa\) & \(\mathrm{Cl}_3(\mathrm{F}_3^1(K))\) & \(\mathrm{Cl}_3(N_1)\) & \(\mathrm{Cl}_3(N_2)\) & \(\mathrm{Cl}_3(N_3)\) & \(\mathrm{Cl}_3(N_4)\) & \(\varepsilon\) \\
\hline                          
 \(2\) & \(0\) & a.1            & \((0000)\)    &                                \(1\) &                \((3)\) &                \((3)\) &                \((3)\) &                \((3)\) &           \(0\) \\
\hline                          
 \(3\) & \(0\) & a.1            & \((0000)\)    &                              \((3)\) &              \((3,3)\) &              \((3,3)\) &              \((3,3)\) &              \((3,3)\) &           \(0\) \\
 \(3\) & \(0\) & A.1            & \((1111)\)    &                              \((3)\) &              \((3,3)\) &                \((9)\) &                \((9)\) &                \((9)\) &           \(0\) \\
\hline                          
 \(4\) & \(0\) & a.1            & \((0000)\)    &                            \((3,3)\) &              \((9,3)\) &              \((3,3)\) &              \((3,3)\) &              \((3,3)\) &           \(0\) \\
 \(4\) & \(0\) & a.2            & \((1000)\)    &                            \((3,3)\) &              \((9,3)\) &              \((3,3)\) &              \((3,3)\) &              \((3,3)\) &           \(0\) \\
 \(4\) & \(0\) & a.3            & \((2000)\)    &                            \((3,3)\) &              \((9,3)\) &              \((3,3)\) &              \((3,3)\) &              \((3,3)\) &           \(0\) \\
 \(4\) & \(0\) & a.3\({}^\ast\) & \((2000)\)    &                            \((3,3)\) &            \((3,3,3)\) &              \((3,3)\) &              \((3,3)\) &              \((3,3)\) &           \(1\) \\
\hline
 \(5\) & \(1\) & a.1            & \((0000)\)    &                            \((9,3)\) &              \((9,3)\) &              \((3,3)\) &              \((3,3)\) &              \((3,3)\) &           \(0\) \\
\hline
\end{tabular}
\end{center}
\end{table}

\begin{example}
\label{ex:MaxExo}
The first occurrences of second \(3\)-class groups
\(G=\mathrm{Gal}(\mathrm{F}_3^2(K)\vert K)\) of coclass \(\mathrm{cc}(G)=1\)
with invariants \(m=n=4\), \(e=2\)
for real quadratic fields \(K=\mathbb{Q}(\sqrt{D})\)
with discriminant \(0<D<10^7\)
and \(3\)-class group of type \((3,3)\)
turned out to be the following.
The smallest discriminant \(D\)
with TKT \(\mathrm{a.3}\), resp. \(\mathrm{a.2}\),
is \(32\,009\), resp. \(72\,329\), according to
\cite[Tbl. 7, p. 24]{HeSm}.
The smallest discriminant \(D\) with TKT \(\mathrm{a.3}{}^\ast\),
is \(142\,097\), known from
\cite[Part IV]{Ma0}.
However,
its special feature \(\varepsilon=1\), resp. \(\mathrm{Cl}_3(N_1)\simeq C_3\times C_3\times C_3\),
was unknown up to \(2009\).
\end{example}

\begin{conjecture}
\label{cnj:MaxTrm}
For quadratic fields,
the TKT \(\mathrm{a.1}\)
cannot occur with a second \(3\)-class group
of defect \(k=0\),
i. e., an infinitely capable vertex
located on the mainline of coclass graph \(\mathcal{G}(3,1)\).
\end{conjecture}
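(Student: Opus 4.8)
\medskip\noindent\textbf{Approach to Conjecture \ref{cnj:MaxTrm}.}
The plan is to argue by contradiction: suppose \(K=\mathbb{Q}(\sqrt{D})\) is a quadratic field with \(\mathrm{Cl}_3(K)\) of type \((3,3)\), transfer kernel type \(\mathrm{a.1}=(0000)\), and second \(3\)-class group \(G=\mathrm{Gal}(\mathrm{F}_3^2(K)\vert K)\) a mainline vertex \(G\simeq G_0^m(0,0)\) of \(\mathcal{G}(3,1)\), so the defect of commutativity is \(k=0\). Type \(\mathrm{a.1}\) means every \(3\)-class of \(K\) principalizes in each \(S_3\)-field \(N_i\), i.e. \(\ker(\mathrm{j}_{N_i\vert K})=\mathrm{Cl}_3(K)\) and hence \(\mathrm{j}_{N_i\vert K}=0\); consequently the composite \(\mathrm{j}_{N_i\vert K}\circ\mathrm{N}_{N_i\vert K}=1+\rho+\rho^2\), with \(\rho\) a generator of \(\mathrm{Gal}(N_i\vert K)\), annihilates \(\mathrm{Cl}_3(N_i)\), so each \(\mathrm{Cl}_3(N_i)\) is a module over \(\mathbb{Z}_3[\rho]/(1+\rho+\rho^2)\simeq\mathbb{Z}_3[\zeta_3]\), the ring of \(3\)-adic Eisenstein integers. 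Theorem \ref{t:MaxHom} supplies the target structure \(\mathrm{Cl}_3(N_1)\simeq\mathrm{A}(3,m-1)\), cyclic over \(\mathbb{Z}_3[\zeta_3]\), and \(\mathrm{Cl}_3(N_i)\simeq\mathrm{A}(3,2)\) for \(2\le i\le 4\); this is the arithmetic side of the picture.

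The second ingredient is the action of \(\mathrm{Gal}(K\vert\mathbb{Q})=\langle\sigma\rangle\). Since \(\mathrm{F}_3^2(K)\vert\mathbb{Q}\) is Galois, \(\sigma\) lifts to an automorphism of \(G\) that acts as inversion on \(G/G^\prime\simeq\mathrm{Cl}_3(K)\), because \(\mathfrak{a}\cdot\sigma\mathfrak{a}=(\mathrm{N}\mathfrak{a})\) is principal; in particular \(\sigma\) fixes each of the four subgroups of order \(3\) of \(\mathrm{Cl}_3(K)\), hence each \(N_i\) is normal over \(\mathbb{Q}\) — these are the \(S_3\)-fields of the statement. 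For \(D<0\) one has the stronger statement of Koch and Venkov that \(\sigma\) can be taken of order \(2\) and that \(G\) admits a balanced presentation \(r(G)=\dim H^2(G,\mathbb{F}_3)=2=d(G)\), so \(G\) is a Schur \(\sigma\)-group; for \(D>0\) the unit rank only yields \(r(G)\le d(G)+1=3\), while \(\sigma\) still acts as inversion on the abelianization. On each \(\mathrm{Cl}_3(N_i)\) the induced \(S_3\)-module structure has \(\rho\) acting as \(\zeta_3\) and \(\sigma\) acting \(\mathbb{Z}_3\)-semilinearly with \(\sigma\rho\sigma^{-1}=\rho^{-1}\), i.e. as complex conjugation on \(\mathbb{Z}_3[\zeta_3]\) twisted by a unit.

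The heart of the proof — and the main obstacle — is to derive a contradiction from the rigidity of the mainline vertex \(G_0^m(0,0)\). One candidate obstruction is a relation count: compute \(r(G_0^m(0,0))\) from the power--commutator presentation (\ref{eqn:MaxComRel})--(\ref{eqn:MaxRel}) specialized to \(a=w=z=0\) and show it exceeds \(3\) (resp.\ \(2\) when \(D<0\)) for the relevant indices \(m\). A more robust route is to show that no \(\mathbb{Z}_3\)-semilinear involution \(\sigma\) as above is compatible simultaneously with the Eisenstein-module structures on all four \(\mathrm{Cl}_3(N_i)\), with the ``maximally unbalanced'' transfer target type \(\big(\mathrm{A}(3,m-1),(3,3),(3,3),(3,3)\big)\) of the mainline vertex, with the norm index \([\mathrm{E}_K:\mathrm{E}_K\cap\mathrm{N}_{N_i\vert K}N_i^{\times}]=1\) forced by total capitulation through Chevalley's ambiguous class number formula for the unramified cyclic cubic \(N_i\vert K\), and with Scholz's reflection theorem relating \(\mathbb{Q}(\sqrt{D})\) and \(\mathbb{Q}(\sqrt{-3D})\). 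Making either argument uniform in \(m\) and valid for both signs of \(D\) is precisely what has so far resisted a complete proof; in the meantime the conjecture is confirmed computationally for all \(4\,596\) quadratic fields tabulated in \S\ \ref{s:NumerTab}.
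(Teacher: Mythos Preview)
The statement in question is a \emph{conjecture}, and the paper gives no proof of it; it is stated as an open problem, supported only by the numerical evidence of \S\ \ref{s:NumerTab} (see also the remark after Table~\ref{tab:RealMax}). There is therefore no ``paper's own proof'' to compare against.

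Your write-up is not a proof either, and to your credit you say so explicitly in the final paragraph. What you have assembled is a reasonable inventory of the standard constraints one would try to exploit --- the \(\mathbb{Z}_3[\zeta_3]\)-module structure forced by total capitulation, the Galois involution \(\sigma\) acting as inversion on \(G/G^\prime\), relation-rank bounds of Koch--Venkov/Shafarevich type, Chevalley's ambiguous class number formula, and Scholz reflection --- but you do not carry any of these to a contradiction. In particular, the two ``candidate obstructions'' you propose are both left as programmes: you neither compute \(r(G_0^m(0,0))\) and show it exceeds the allowed bound, nor exhibit the claimed incompatibility of the \(\sigma\)-action with the transfer target type. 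Since the conjecture remains open (the paper itself treats it as such, and later references it as the ``weak leaf conjecture'' in Conjecture~\ref{cnj:SmpSecQdr} and Corollary~\ref{c:SecQdr}), an honest submission here is exactly what you wrote at the end: a discussion of plausible lines of attack together with the computational confirmation, not a proof.
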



\subsection{Second \(3\)-class groups \(G\) of coclass \(\mathrm{cc}(G)\ge 2\)}
\label{ss:LowExo}

Suppose the order of a \(3\)-class group
\(\mathrm{Cl}_3(N_i)\simeq M_i/\gamma_2(M_i)\) with \(1\le i\le 4\)
has turned out to be \(27\).
The basic idea for deciding whether this \(3\)-class group
is elementary abelian of type \((3,3,3)\) or nearly homocyclic of type \((9,3)\)
consists in estimating the order of the cosets \(\overline{\upsilon}=\upsilon\gamma_2(M_i)\)
of the generators
\(\upsilon\in\lbrace g_i,s_2,\sigma_3,\ldots,\sigma_{m-1},\tau_3,\ldots,\tau_e\rbrace\)
of \(M_i=\langle g_i,\gamma_2(G)\rangle\)
\cite[Thm. 3.3, Proof, pp. 478--479]{Ma1}
with respect to the commutator subgroup \(\gamma_2(M_i)\).
If all these orders are bounded from above by \(3\),
then we have an elementary abelian \(3\)-class group of type \((3,3,3)\),
otherwise a nearly homocyclic \(3\)-class group of type \((9,3)\).

To reduce the investigation to the most important generator \(g_i\),
we first summarize general facts
concerning the columns of Table
\ref{tab:LowUnk},
that is, the four \(3\)-class groups \(\mathrm{Cl}_3(N_i)\), \(1\le i\le 4\),
in the following three \S\S\ 
\ref{sss:LowFst}--\ref{sss:LowTrd}.


\subsubsection{The distinguished \(3\)-class group \(\mathrm{Cl}_3(N_1)\simeq M_1/\gamma_2(M_1)\)}
\label{sss:LowFst}

According to
\cite[Cor. 3.2, p. 480]{Ma1},
the maximal subgroup \(M_1<G=\langle x,y\rangle\)
with generator \(y\), distinguished by the conditions 
\(\gamma_3(G)=\langle y^3,x^3,\gamma_4(G)\rangle\) and
\(y\in\chi_s(G)\setminus\gamma_2(G)\), has the following properties.

\begin{eqnarray*}
\label{e:LowFst}
M_1&=&\langle y,\gamma_2(G)\rangle=\langle y,s_2,\sigma_3,\ldots,\sigma_{m-1},\tau_3,\ldots,\tau_e\rangle,\\
\gamma_2(M_1)&=&\langle t_3,\tau_4,\ldots,\tau_{e+1}\rangle.
\end{eqnarray*}

\noindent
Since \(\tau_4,\ldots,\tau_e\in\gamma_2(M_1)\) for \(e\ge 4\),
the order of the cosets \(\overline{\tau_4},\ldots,\overline{\tau_e}\) equals \(1\).

\noindent
The relation \(\tau_3^3\tau_4^3\tau_5=1\) for third powers
\cite[Thm. 3.3, Proof, p. 478]{Ma1}
implies \(\tau_3^3=\tau_4^{-3}\tau_5^{-1}\in\gamma_2(M_1)\) and \(\mathrm{ord}(\overline{\tau_3})\le 3\),
for any \(e\ge 3\).

\noindent
For the order of the cosets
\(\overline{s_2},\overline{\sigma_3},\ldots,\overline{\sigma_{m-1}}\),
we cannot ensure the upper bound \(3\), in general.
However,
in the two cases \(m=4\), \(n=5\), \(\rho=0\) and \(m=5\), \(n=6\), \(\rho=\pm 1\)
to be investigated, according to Table
\ref{tab:LowUnk},
this estimate is possible.
For \(m=4\), \(n=5\), \(\rho=0\), we have
the nilpotency relation \(\sigma_4=\sigma_m=1\),
\(\sigma_3^3=\sigma_4^{-3}\sigma_5^{-1}=1\), and
\(s_2^3=\sigma_4\tau_4^{-1}=1\) by
(\ref{eqn:LowRel}),
since \(e=3\), \(\rho=0\), and thus \(\tau_4=\tau_{e+1}=1\).
For \(m=5\), \(n=6\), \(\rho=\pm 1\), we have
the nilpotency relation \(\sigma_5=\sigma_m=1\),
\(\sigma_4^3=\sigma_5^{-3}\sigma_6^{-1}=1\),
\(\sigma_3^3=\sigma_4^{-3}\sigma_5^{-1}=1\), and
\(s_2^3=\sigma_4\sigma_{m-1}^{-\rho\beta}\tau_4^{-1}=\sigma_4^{1-\rho(\beta-1)}\) by
(\ref{eqn:LowRel}),
where \(\sigma_4=\sigma_{m-1}=\tau_{e+1}^{-\rho}=\tau_4^{-\rho}\in\gamma_2(M_1)\).

\noindent
Consequently, we must only determine the order of the coset of the generator
\(y\) with third power \(y^3=\sigma_3\).


\subsubsection{The distinguished \(3\)-class group \(\mathrm{Cl}_3(N_2)\simeq M_2/\gamma_2(M_2)\)}
\label{sss:LowSnd}

By
\cite[Cor. 3.2, p. 480]{Ma1},
the maximal subgroup \(M_2<G=\langle x,y\rangle\)
with generator \(x\), distinguished by the conditions 
\(\gamma_3(G)=\langle y^3,x^3,\gamma_4(G)\rangle\) and
\(x\in G\setminus\chi_s(G)\) if \(s<m-1\),
has the following properties.

\begin{eqnarray*}
\label{e:LowSnd}
M_2&=&\langle x,\gamma_2(G)\rangle=\langle x,s_2,\sigma_3,\ldots,\sigma_{m-1},\tau_3,\ldots,\tau_e\rangle,\\
\gamma_2(M_2)&=&\langle s_3,\sigma_4,\ldots,\sigma_{m-1}\rangle.
\end{eqnarray*}

\noindent
Since \(\sigma_4,\ldots,\sigma_{m-1}\in\gamma_2(M_2)\) for \(m\ge 5\),
the order of the cosets
\(\overline{\sigma_4},\ldots,\overline{\sigma_{m-1}}\) equals \(1\).

\noindent
The relation \(\sigma_3^3\sigma_4^3\sigma_5=1\) for third powers
\cite[Thm. 3.3, Proof, p. 478]{Ma1}
implies \(\sigma_3^3=\sigma_4^{-3}\sigma_5^{-1}\in\gamma_2(M_1)\) and \(\mathrm{ord}(\overline{\sigma_3})\le 3\),
for any \(m\ge 4\).

\noindent
Since only the case \(e=3\) is to be investigated, according to Table
\ref{tab:LowUnk},
we have 
\(\tau_3^3=\tau_4^{-3}=\tau_{e+1}^{-3}=\sigma_{m-1}^{3\rho}\in\gamma_2(M_2)\) by
(\ref{eqn:LowRel}),
because \(\rho=0\) for \(m=4\) and \(\sigma_{m-1}\in\gamma_2(M_2)\) for \(m\ge 5\).

\noindent
Finally, we have
\(s_2^3=\sigma_4\sigma_{m-1}^{-\rho\beta}\tau_4^{-1}
=\sigma_4\sigma_{m-1}^{-\rho(\beta-1)}\in\gamma_2(M_2)\),
since \(\sigma_4\in\gamma_2(M_2)\) for \(m\ge 4\),
\(\rho=0\) for \(m=4\),
and \(\sigma_{m-1}\in\gamma_2(M_2)\) for \(m\ge 5\),
whence \(\mathrm{ord}(\overline{s_2})\le 3\).

\noindent
Therefore, it only remains to investigate the order of the coset of the generator
\(x\) with third power \(x^3=\tau_3\).


\subsubsection{The other \(3\)-class groups \(\mathrm{Cl}_3(N_i)\simeq M_i/\gamma_2(M_i)\), \(3\le i\le 4\)}
\label{sss:LowTrd}

According to
\cite[Cor. 3.2, p. 480]{Ma1},
the maximal subgroups \(M_i<G=\langle x,y\rangle\) with \(3\le i\le 4\)
with generators \(xy\) and \(xy^{-1}\), having third powers
\((xy)^3,(xy^{-1})^3\in\zeta_{1+k}(G)\) in the first or second centre of \(G\)
\cite[Lem. 3.4.11, p. 105]{Ne1},
have the following properties.

\begin{eqnarray*}
\label{e:LowTrd}
M_3&=&\langle xy,\gamma_2(G)\rangle=\langle xy,s_2,\sigma_3,\ldots,\sigma_{m-1},\tau_3,\ldots,\tau_e\rangle,\\
\gamma_2(M_3)&=&\langle s_3t_3,\gamma_4(G)\rangle,\\
M_4&=&\langle xy^{-1},\gamma_2(G)\rangle=\langle xy^{-1},s_2,\sigma_3,\ldots,\sigma_{m-1},\tau_3,\ldots,\tau_e\rangle,\\
\gamma_2(M_4)&=&\langle s_3t_3^{-1},\gamma_4(G)\rangle,\\
\gamma_4(G)&=&\langle \sigma_4,\ldots,\sigma_{m-1},\tau_4,\ldots,\tau_e\rangle.
\end{eqnarray*}

\noindent
Since
\(\sigma_4,\ldots,\sigma_{m-1},\tau_4,\ldots,\tau_e\in\gamma_4(G)<\gamma_2(M_i)\)
for \(e\ge 4\), \(m\ge 5\),
the order of the cosets
\(\overline{\sigma_4},\ldots,\overline{\sigma_{m-1}}\) and
\(\overline{\tau_4},\ldots,\overline{\tau_e}\)
equals \(1\).

\noindent
Due to the relations
\(\sigma_3^3\sigma_4^3\sigma_5=1\) and \(\tau_3^3\tau_4^3\tau_5=1\)
for third powers
\cite[Thm. 3.3, Proof, p. 478]{Ma1},
we have
\(\mathrm{ord}(\overline{\sigma_3})\le 3\)
and \(\mathrm{ord}(\overline{\tau_3})\le 3\).

\noindent
Finally, we have
\(s_2^3=\sigma_4\sigma_{m-1}^{-\rho\beta}\tau_4^{-1}\in\gamma_2(M_i)\) by
(\ref{eqn:LowRel}),
since \(\sigma_4,\tau_4\in\gamma_2(M_i)\) for \(m\ge 4\), \(e\ge 3\),
\(\rho=0\) for \(m=4\)
and \(\sigma_{m-1}\in\gamma_2(M_i)\) for \(m\ge 5\),
whence \(\mathrm{ord}(\overline{s_2})\le 3\).

\noindent
Thus,
it only remains to determine the order of the coset of the generator
\(xy\), resp. \(xy^{-1}\),
for \(i=3\), resp. \(i=4\).
We summarize the results of \S\S\
\ref{sss:LowFst}--\ref{sss:LowTrd}
in Lemma
\ref{l:LowExo}.

\begin{lemma}
\label{l:LowExo}

For each of the four \(3\)-class groups
\(\mathrm{Cl}_3(N_i)\simeq M_i/\gamma_2(M_i)\), \(1\le i\le 4\),
the decision
whether \(\mathrm{Cl}_3(N_i)\) is of type \((9,3)\) or of type \((3,3,3)\),
in the case of \(3\)-class number \(\mathrm{h}_3(N_i)=3^3\),
exclusively depends on the order of the generator \(g_i\)
of \(M_i=\langle g_i,\gamma_2(G)\rangle\)
with respect to the commutator subgroup \(\gamma_2(M_i)\).\\
The order of all the generators
\(s_2,\sigma_3,\ldots,\sigma_{m-1},\tau_3,\ldots,\tau_e\)
of \(\gamma_2(G)\)
with respect to \(\gamma_2(M_i)\)
is uniformly bounded from above by \(3\).

\end{lemma}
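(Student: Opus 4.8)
The plan is to assemble this lemma from the three case analyses already carried out in \S\S\ \ref{sss:LowFst}--\ref{sss:LowTrd}, reorganizing them into one uniform statement valid for all four indices \(i\).

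First I would fix, for each \(1\le i\le 4\), the generating systems of \(M_i\) and of its commutator subgroup \(\gamma_2(M_i)\) recorded in \S\S\ \ref{sss:LowFst}--\ref{sss:LowTrd} (these come from \cite[Cor. 3.2, p. 480]{Ma1} and \cite[Thm. 3.3, Proof, pp. 478--479]{Ma1}). From these, \(M_i/\gamma_2(M_i)\simeq\mathrm{Cl}_3(N_i)\) is generated by the cosets of \(g_i\) and of \(s_2,\sigma_3,\ldots,\sigma_{m-1},\tau_3,\ldots,\tau_e\). Since this quotient is abelian of \(3\)-rank at most three and, by the hypothesis \(\mathrm{h}_3(N_i)=3^3\), of order \(27\), it is either of type \((9,3)\) or of type \((3,3,3)\), and the alternative is decided exactly by whether some generating coset has order \(9\).

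Second, I would show that for each \(i\) every coset listed above, with the single exception of \(\overline{g_i}\), has order dividing \(3\). This uses three recurring mechanisms: (a) membership — for \(e\ge 4\) (resp. \(m\ge 5\)) the elements \(\tau_4,\ldots,\tau_e\) (resp. \(\sigma_4,\ldots,\sigma_{m-1}\)) already lie in \(\gamma_2(M_i)\), using \(\gamma_4(G)<\gamma_2(M_i)\) for \(i\in\{3,4\}\); (b) the relations \(s_{j+1}^3s_{j+2}^3s_{j+3}=1\) for third powers, which give \(\sigma_3^3=\sigma_4^{-3}\sigma_5^{-1}\) and \(\tau_3^3=\tau_4^{-3}\tau_5^{-1}\) and hence \(\mathrm{ord}(\overline{\sigma_3})\le 3\), \(\mathrm{ord}(\overline{\tau_3})\le 3\); (c) the presentation relations (\ref{eqn:LowRel}), which write \(s_2^3\) as a word in \(\sigma_4,\sigma_{m-1},\tau_4\), together with \(\tau_{e+1}=\sigma_{m-1}^{-\rho}\) and the nilpotency relations \(\sigma_m=1\), to conclude \(s_2^3\in\gamma_2(M_i)\) in precisely the parameter ranges flagged in Table \ref{tab:LowUnk}, namely \((m,n)=(4,5)\) with \(\rho=0\), \((m,n)=(5,6)\) with \(\rho=\pm 1\), and the larger ranges already covered by (a). By the defining conditions on \(g_i\) (third powers \(y^3=\sigma_3\) for \(i=1\), \(x^3=\tau_3\) for \(i=2\), and \((xy)^3,(xy^{-1})^3\in\zeta_{1+k}(G)\) for \(i=3,4\)), the coset \(\overline{g_i}\) is indeed the only one not automatically bounded.

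The step I expect to be the main obstacle is (c): keeping track of which of \(\sigma_4,\sigma_{m-1},\tau_4\) actually belongs to the specific \(\gamma_2(M_i)\) at hand, since these subgroups differ with \(i\) (for instance \(\gamma_2(M_1)=\langle t_3,\tau_4,\ldots,\tau_{e+1}\rangle\) visibly contains \(\tau_4\) but not obviously \(\sigma_4\), so one must route through \(\tau_{e+1}=\sigma_{m-1}^{-\rho}\)), and the ``small'' cases \(m=4,5\) have to be treated by hand using the nilpotency relations rather than by membership. Once this is checked uniformly, both assertions of the lemma drop out at once: the \(\gamma_2(G)\)-generators contribute only elementary abelian factors to \(M_i/\gamma_2(M_i)\), so whether \(\mathrm{Cl}_3(N_i)\) is of type \((9,3)\) or \((3,3,3)\) is governed solely by \(\mathrm{ord}(\overline{g_i})\).
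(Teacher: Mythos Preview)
Your proposal is correct and follows exactly the paper's approach: the paper's proof of this lemma is nothing more than the sentence ``We summarize the results of \S\S\ \ref{sss:LowFst}--\ref{sss:LowTrd} in Lemma \ref{l:LowExo}'', and your plan is precisely that compilation, using the same three mechanisms (membership of higher \(\sigma_j,\tau_j\) in \(\gamma_2(M_i)\), the third-power relations to handle \(\overline{\sigma_3},\overline{\tau_3}\), and the presentation relations (\ref{eqn:LowRel}) together with the nilpotency and \(\tau_{e+1}=\sigma_{m-1}^{-\rho}\) to handle \(\overline{s_2}\)) in the same parameter ranges. One cosmetic point: the third-power relations you invoke should be written as \(\sigma_j^3\sigma_{j+1}^3\sigma_{j+2}=1\) and \(\tau_j^3\tau_{j+1}^3\tau_{j+2}=1\) (the coclass \(\ge 2\) analogues from \cite[Thm.~3.3]{Ma1}), not the coclass-\(1\) form \(s_{j+1}^3s_{j+2}^3s_{j+3}=1\); your derived consequences are already the correct ones.
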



After the preliminaries in the last three sections
we come to the details of the rows of Table
\ref{tab:LowUnk}
in the following four sections.

\begin{figure}[ht]
\caption{Sporadic groups and roots of coclass trees on the coclass graph \(\mathcal{G}(3,2)\)}
\label{fig:Typ33CoCl2}

\setlength{\unitlength}{1cm}
\begin{picture}(16,15)(-1,-12)
\put(0,2.5){\makebox(0,0)[cb]{Order \(3^n\)}}
\put(0,2){\line(0,-1){12}}
\multiput(-0.1,2)(0,-2){7}{\line(1,0){0.2}}
\put(-0.2,2){\makebox(0,0)[rc]{\(9\)}}
\put(0.2,2){\makebox(0,0)[lc]{\(3^2\)}}
\put(-0.2,0){\makebox(0,0)[rc]{\(27\)}}
\put(0.2,0){\makebox(0,0)[lc]{\(3^3\)}}
\put(-0.2,-2){\makebox(0,0)[rc]{\(81\)}}
\put(0.2,-2){\makebox(0,0)[lc]{\(3^4\)}}
\put(-0.2,-4){\makebox(0,0)[rc]{\(243\)}}
\put(0.2,-4){\makebox(0,0)[lc]{\(3^5\)}}
\put(-0.2,-6){\makebox(0,0)[rc]{\(729\)}}
\put(0.2,-6){\makebox(0,0)[lc]{\(3^6\)}}
\put(-0.2,-8){\makebox(0,0)[rc]{\(2\,187\)}}
\put(0.2,-8){\makebox(0,0)[lc]{\(3^7\)}}
\put(-0.2,-10){\makebox(0,0)[rc]{\(6\,561\)}}
\put(0.2,-10){\makebox(0,0)[lc]{\(3^8\)}}

\put(2.2,2.2){\makebox(0,0)[lc]{\(C_3\times C_3\)}}
\put(1.9,1.9){\framebox(0.2,0.2){}}
\put(2,2){\line(0,-1){2}}
\put(2,0){\circle*{0.2}}
\put(2.2,0.2){\makebox(0,0)[lc]{\(G^3_0(0,0)\)}}

\put(2,0){\line(1,-4){1}}
\put(2,0){\line(1,-2){2}}
\put(2,0){\line(1,-1){4}}
\put(2,0){\line(3,-2){6}}
\put(2,0){\line(2,-1){8}}
\put(2,0){\line(5,-2){10}}
\put(2,0){\line(3,-1){12}}
\put(7,-1){\makebox(0,0)[lc]{Edges of depth \(2\) forming the interface}}
\put(8,-1.5){\makebox(0,0)[lc]{between \(\mathcal{G}(3,1)\) and \(\mathcal{G}(3,2)\)}}

\put(2,-4){\makebox(0,0)[cc]{\(\Phi_6\)}}
\multiput(3,-4)(1,0){2}{\circle{0.2}}
\put(3.1,-3.9){\makebox(0,0)[lb]{\(\langle 5\rangle\)}}
\put(4.1,-3.9){\makebox(0,0)[lb]{\(\langle 7\rangle\)}}
\multiput(6,-4)(2,0){5}{\circle{0.2}}
\put(6.1,-3.9){\makebox(0,0)[lb]{\(\langle 9\rangle\)}}
\put(8.1,-3.9){\makebox(0,0)[lb]{\(\langle 4\rangle\)}}
\put(10.1,-3.9){\makebox(0,0)[lb]{\(\langle 3\rangle\)}}
\put(12.1,-3.9){\makebox(0,0)[lb]{\(\langle 6\rangle\)}}
\put(14.1,-3.9){\makebox(0,0)[lb]{\(\langle 8\rangle\)}}

\put(2,-5){\makebox(0,0)[cc]{\textbf{TKT:}}}
\put(3,-5){\makebox(0,0)[cc]{D.10}}
\put(3,-5.5){\makebox(0,0)[cc]{\((2241)\)}}
\put(4,-5){\makebox(0,0)[cc]{D.5}}
\put(4,-5.5){\makebox(0,0)[cc]{\((4224)\)}}
\put(1.5,-5.75){\framebox(3,1){}}

\put(5.5,-6.5){\makebox(0,0)[cc]{\(\Phi_{43}\)}}
\put(6,-4){\line(0,-1){2}}
\put(5.9,-5.9){\makebox(0,0)[rc]{\(\langle 57\rangle\)}}
\put(6,-4){\line(1,-4){0.5}}
\multiput(6,-6)(0.5,0){2}{\circle{0.1}}
\multiput(6,-6)(0.5,0){2}{\line(0,-1){2}}
\multiput(5.95,-8.05)(0.5,0){2}{\framebox(0.1,0.1){}}

\put(7.5,-6.5){\makebox(0,0)[cc]{\(\Phi_{42}\)}}
\put(8,-4){\line(0,-1){2}}
\put(7.9,-5.9){\makebox(0,0)[rc]{\(\langle 45\rangle\)}}
\put(8,-4){\line(1,-4){0.5}}
\put(8,-4){\line(1,-2){1}}
\put(8,-4){\line(3,-4){1.5}}
\multiput(8,-6)(0.5,0){4}{\circle{0.1}}
\multiput(8,-6)(0.5,0){2}{\line(0,-1){2}}
\multiput(7.95,-8.05)(0.5,0){2}{\framebox(0.1,0.1){}}
\multiput(7.9,-7.9)(0.5,0){2}{\makebox(0,0)[rc]{\(4\ast\)}}

\put(5,-8.5){\makebox(0,0)[cc]{\textbf{TKT:}}}
\put(6.25,-8.5){\makebox(0,0)[cc]{G.19}}
\put(6.25,-9){\makebox(0,0)[cc]{\((2143)\)}}
\put(8.75,-8.5){\makebox(0,0)[cc]{H.4}}
\put(8.75,-9){\makebox(0,0)[cc]{\((4443)\)}}
\put(4.5,-9.25){\framebox(5,1){}}


\put(11.25,-6.5){\makebox(0,0)[cc]{\(\Phi_{40},\Phi_{41}\)}}
\multiput(10,-4)(0,-2){2}{\line(0,-1){2}}
\multiput(10,-6)(0,-2){2}{\circle{0.2}}
\put(10.1,-5.9){\makebox(0,0)[lc]{\(\langle 40\rangle\)}}
\put(10,-8){\vector(0,-1){2}}
\put(10,-10){\makebox(0,0)[ct]{\(\mathcal{T}(\langle 729,40\rangle)\)}}
\put(10,-4){\line(3,-4){1.5}}
\put(11.5,-6){\circle{0.1}}
\put(11.4,-5.9){\makebox(0,0)[rc]{\(6\ast\)}}

\multiput(12,-4)(0,-2){2}{\line(0,-1){2}}
\multiput(12,-6)(0,-2){2}{\circle{0.2}}
\put(12.1,-5.9){\makebox(0,0)[lc]{\(\langle 49\rangle\)}}
\put(12,-8){\vector(0,-1){2}}
\put(12,-10){\makebox(0,0)[ct]{\(\mathcal{T}(\langle 243,6\rangle)\)}}

\put(13,-6.5){\makebox(0,0)[cc]{\(\Phi_{23}\)}}

\multiput(14,-4)(0,-2){2}{\line(0,-1){2}}
\multiput(14,-6)(0,-2){2}{\circle{0.2}}
\put(14.1,-5.9){\makebox(0,0)[lc]{\(\langle 54\rangle\)}}
\put(14,-8){\vector(0,-1){2}}
\put(14,-10){\makebox(0,0)[ct]{\(\mathcal{T}(\langle 243,8\rangle)\)}}

\put(9,-11){\makebox(0,0)[cc]{\textbf{TKT:}}}
\put(10,-11){\makebox(0,0)[cc]{b.10}}
\put(10,-11.5){\makebox(0,0)[cc]{\((0043)\)}}
\put(12,-11){\makebox(0,0)[cc]{c.18}}
\put(12,-11.5){\makebox(0,0)[cc]{\((0313)\)}}
\put(14,-11){\makebox(0,0)[cc]{c.21}}
\put(14,-11.5){\makebox(0,0)[cc]{\((0231)\)}}
\put(8.5,-11.75){\framebox(6,1){}}

\end{picture}
\end{figure}

To get an adequate view of \S\S\
\ref{ss:SmpSec}--\ref{ss:Sec}
it is useful to visualize that part of coclass graph \(\mathcal{G}(3,2)\)
which consists of \(3\)-groups \(G\) of coclass \(\mathrm{cc}(G)=2\)
with abelianization \(G/\gamma_2(G)\) of type \((3,3)\)
and small order \(\lvert G\rvert=3^n\) in Figure
\ref{fig:Typ33CoCl2}.
The groups \(C_3\times C_3\) and \(G^3_0(0,0)\) form
the top of the mainline of \(\mathcal{G}(3,1)\) in Figure
\ref{fig:CoCl1}.
The edges of depth \(2\) neither belong to \(\mathcal{G}(3,1)\) nor to \(\mathcal{G}(3,2)\).
The top of \(\mathcal{G}(3,2)\) at the level of order \(3^5=243\)
consists of two isolated vertices \(\langle 5\rangle\), \(\langle 7\rangle\),
two roots \(\langle 9\rangle\), \(\langle 4\rangle\) of finite trees,
a root \(\langle 3\rangle\) of an infinite tree,
and two roots \(\langle 6\rangle\), \(\langle 8\rangle\) of coclass trees.
Only the mainlines of infinite trees are shown.
Vertices denoted by contour circles are metabelian
\cite[p. 189 ff.]{Ne1}.
Groups with defect \(k=0\) are represented by bigger circles than those with \(k=1\).
Vertices denoted by small contour squares are non-metabelian
\cite[Fig. 4.6--4.7, p. 74]{As}.
The symbol \(n\ast\) denotes a batch of \(n\) siblings below a common parent.
Numbers in angles denote the identifiers of groups in the SmallGroup library
\cite{BEO}
and in GAP 4.4
\cite{GAP},
where we omit the orders, which are given on the left hand scale.
The symbols \(\Phi_s\) denote isoclinism families
\cite{Hl,Ef,Jm}.
The principalization or transfer kernel types, briefly TKT,
\cite[Tbl. 6--7, p. 492--493]{Ma2}
in rectangles concern the vertices located vertically above.


\subsection{Groups \(G\) of coclass \(\mathrm{cc}(G)=2\) with bicyclic centre and \(m=4\), \(n=5\)}
\label{ss:SmpSec}

This section corresponds to the first row of Table
\ref{tab:LowUnk}.
Here we must investigate the abelianizations of all four maximal subgroups \(M_i\), \(1\le i\le 4\).
These \(7\) groups \(\langle 243,i\rangle\), \(3\le i\le 9\),
form the stem of Hall's isoclinism family \(\Phi_6\)
\cite[p. 139]{Hl},
\cite[4.1, p. 618, and 4.5 (6), pp. 620--621]{Jm},
\cite[pp. 182--183]{Bg}
and satisfy the following special relations, by
(\ref{eqn:LowRel}):

\begin{eqnarray*}
\label{e:SmpSecRel}
n&=&5=2m-3,\\
s=e&=&n-m+2=3=m-1,\\
\lbrack \chi_s(G),\gamma_e(G)\rbrack &=&\lbrack G,\gamma_3(G)\rbrack =\gamma_4(G)=1,\ k=0,\ \rho=0,\\
\sigma_4&=&1,\tau_4=1,\\
\sigma_3^3&=&1,\tau_3^3=1,\\
s_2^3&=&1,\\
\gamma_2(G)&=&\langle s_2\rangle\times\langle\sigma_3\rangle\times\langle\tau_3\rangle\text{ of type }(3,3,3),\\
\gamma_3(G)&=&\langle\sigma_3\rangle\times\langle\tau_3\rangle=\zeta_1(G)\text{ of type }(3,3).
\end{eqnarray*}

\noindent
The commutator subgroups of the maximal normal subgroups are given by
\cite[Cor. 3.2, p. 480]{Ma1}
and
(\ref{eqn:LowRel}):

\begin{eqnarray*}
\label{e:SmpSecGrp}
\gamma_2(M_1)&=&\langle t_3\rangle=\langle\sigma_3^{-\alpha}\tau_3^{1-\beta}\rangle,\\
\gamma_2(M_2)&=&\langle s_3\rangle=\langle\sigma_3^{\gamma-1}\tau_3^\delta\rangle,\\
\gamma_2(M_3)&=&\langle s_3t_3\rangle=\langle\sigma_3^{\gamma-\alpha-1}\tau_3^{\delta-\beta+1}\rangle,\\
\gamma_2(M_4)&=&\langle s_3t_3^{-1}\rangle=\langle\sigma_3^{\alpha+\gamma-1}\tau_3^{\beta+\delta-1}\rangle.
\end{eqnarray*}

\renewcommand{\arraystretch}{1.1}
\begin{table}[ht]
\caption{Parameters, third powers, and generators for \(m=4\), \(n=5\), \(e=3\), \(k=0\)}
\label{tab:SmpSecTyp}
\begin{center}
\begin{tabular}{|l|rrrr|cccc|cccc|}
\hline
 Type & \(\alpha\) & \(\beta\) & \(\gamma\) & \(\delta\) &      \(y^3\) &    \(x^3\) &                   \((xy)^3\) &         \((xy^{-1})^3\) &           \(t_3\) &                 \(s_3\) &                \(s_3t_3\) &              \(s_3t_3^{-1}\) \\
\hline
 D.10 &      \(0\) &     \(0\) &     \(-1\) &      \(1\) & \(\sigma_3\) & \(\tau_3\) &      \(\sigma_3^{-1}\tau_3\) & \(\sigma_3\tau_3^{-1}\) &        \(\tau_3\) & \(\sigma_3^{-2}\tau_3\) & \(\sigma_3^{-2}\tau_3^2\) &            \(\sigma_3^{-2}\) \\
 D.5  &      \(1\) &     \(1\) &     \(-1\) &      \(1\) & \(\sigma_3\) & \(\tau_3\) &                 \(\tau_3^2\) &          \(\sigma_3^2\) & \(\sigma_3^{-1}\) & \(\sigma_3^{-2}\tau_3\) &                \(\tau_3\) &      \(\sigma_3^{-1}\tau_3\) \\
 G.19 &      \(0\) &    \(-1\) &     \(-1\) &      \(0\) & \(\sigma_3\) & \(\tau_3\) & \(\sigma_3^{-1}\tau_3^{-1}\) & \(\sigma_3\tau_3^{-1}\) &      \(\tau_3^2\) &       \(\sigma_3^{-2}\) & \(\sigma_3^{-2}\tau_3^2\) & \(\sigma_3^{-2}\tau_3^{-2}\) \\
 H.4  &      \(1\) &     \(1\) &      \(1\) &      \(1\) & \(\sigma_3\) & \(\tau_3\) &       \(\sigma_3^2\tau_3^2\) &                   \(1\) & \(\sigma_3^{-1}\) &              \(\tau_3\) &   \(\sigma_3^{-1}\tau_3\) &           \(\sigma_3\tau_3\) \\
\hline
 b.10 &      \(0\) &     \(0\) &      \(0\) &      \(0\) & \(\sigma_3\) & \(\tau_3\) &                        \(1\) &                   \(1\) &        \(\tau_3\) &       \(\sigma_3^{-1}\) &   \(\sigma_3^{-1}\tau_3\) & \(\sigma_3^{-1}\tau_3^{-1}\) \\
 c.18 &      \(0\) &    \(-1\) &      \(0\) &      \(1\) & \(\sigma_3\) & \(\tau_3\) &                        \(1\) &         \(\tau_3^{-2}\) &      \(\tau_3^2\) & \(\sigma_3^{-1}\tau_3\) &         \(\sigma_3^{-1}\) & \(\sigma_3^{-1}\tau_3^{-1}\) \\
 c.21 &      \(0\) &     \(0\) &      \(0\) &      \(1\) & \(\sigma_3\) & \(\tau_3\) &                   \(\tau_3\) &         \(\tau_3^{-1}\) &        \(\tau_3\) & \(\sigma_3^{-1}\tau_3\) & \(\sigma_3^{-1}\tau_3^2\) &            \(\sigma_3^{-1}\) \\
\hline
\end{tabular}
\end{center}
\end{table}

\noindent
In Table
\ref{tab:SmpSecTyp}
we calculate the third powers \(g_i^3\)
of the generators \(g_1=y,g_2=x,g_3=xy,g_4=xy^{-1}\)
of the maximal normal subgroups \(M_i=\langle g_i,\gamma_2(G)\rangle\)
and the generators of the commutator subgroups \(\gamma_2(M_i)\) with \(1\le i\le 4\)
for each of the \(7\) isomorphism classes of groups \(G\) with \(m=4\), \(n=5\),
and parameters \(\alpha,\beta,\gamma,\delta\) given by
\cite[pp. 1--3]{Ne2}.
The principalization types
\cite[Satz 6.14, p. 208]{Ne1},
\cite[Tbl. 6--7, pp. 492--493]{Ma2}
of these isomorphism classes are all different.

Generally, according to
\cite[Lem. 3.4.11, p. 105]{Ne1},
the third powers of \(g_3\) and \(g_4\) are given by\\
\((xy)^3=\sigma_3^{\alpha+\gamma}\tau_3^{\beta+\delta}\)
and
\((xy^{-1})^3=\sigma_3^{\alpha-\gamma}\tau_3^{\beta-\delta}\).

The order of the coset of \(g_i\in M_i\) with respect to \(\gamma_2(M_i)\)
is bounded from above by \(3\), if and only if the third power \(g_i^3\)
is contained in \(\gamma_2(M_i)\).

\begin{theorem}
\label{t:SmpSecStr}
(Transfer target type \(\tau(G)\) of stem groups \(G\) in isoclinism family \(\Phi_6\))

Let \(K\) be a number field
with \(3\)-class group \(\mathrm{Cl}_3(K)\) of type \((3,3)\).
Suppose that the second \(3\)-class group
\(G=\mathrm{Gal}(\mathrm{F}_3^2(K)\vert K)\) of \(K\)
is of order \(\lvert G\rvert=3^n\)
and of class \(\mathrm{cl}(G)=m-1\), where \(m=4\) and \(n=5\), i. e.,
that \(G\) is one of the seven top vertices of coclass graph \(\mathcal{G}(3,2)\) in Figure \ref{fig:Typ33CoCl2},
with invariant \(e=3\) and bicyclic centre \(\zeta_1(G)\).\\
Then the structure of the \(3\)-class groups
of the first Hilbert \(3\)-class field \(\mathrm{F}_3^1(K)\) of \(K\) and
of the four unramified cyclic cubic extensions \(N_1,\ldots,N_4\) of \(K\)
is given by Table
\ref{tab:SmpSecStr},
in dependence on the principalization type \(\varkappa\) of \(K\).
The invariant \(\varepsilon=\varepsilon(K)\) denotes the
number of elementary abelian \(3\)-class groups \(\mathrm{Cl}_3(N_i)\) of type \((3,3,3)\),
for each principalization type.
\end{theorem}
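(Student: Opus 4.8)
The plan is to reduce Theorem~\ref{t:SmpSecStr} to a short, finite computation of collinearity in the plane $\mathbb{F}_3^2$, carried out once for each of the seven isomorphism classes recorded in Table~\ref{tab:SmpSecTyp}.

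First I would fix the global picture. By Proposition~\ref{prp:TwoStgTow} and the Artin reciprocity law one has $\mathrm{Cl}_3(\mathrm{F}_3^1(K))\simeq G^\prime=\gamma_2(G)$ and $\mathrm{Cl}_3(N_i)\simeq M_i/\gamma_2(M_i)$ for $1\le i\le 4$, where $M_i=\langle g_i,\gamma_2(G)\rangle$ with $g_1=y$, $g_2=x$, $g_3=xy$, $g_4=xy^{-1}$. For the parameter system of this section, $m=4$, $n=5=2m-3$, $s=e=3$, $\rho=0$, $k=0$, the relations~(\ref{eqn:LowRel}) collapse to the special relations displayed at the start of \S\ \ref{ss:SmpSec}; in particular $\gamma_2(G)=\langle s_2\rangle\times\langle\sigma_3\rangle\times\langle\tau_3\rangle$ is elementary abelian of type $(3,3,3)$ and $\gamma_3(G)=\langle\sigma_3\rangle\times\langle\tau_3\rangle\simeq\mathbb{F}_3^2$ is the bicyclic centre $\zeta_1(G)$. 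Thus $\mathrm{Cl}_3(\mathrm{F}_3^1(K))\simeq(3,3,3)$ for all seven groups. Each $M_i$ has order $3^4$, and by \cite[Cor.~3.2, p.~480]{Ma1} its commutator subgroup $\gamma_2(M_i)$ is the cyclic subgroup of $\gamma_3(G)$ generated by the element listed in the last four columns of Table~\ref{tab:SmpSecTyp}; checking from that table that this generator is nontrivial in each of the $4\times 7$ cases shows $\lvert\gamma_2(M_i)\rvert=3$, hence $\lvert\mathrm{Cl}_3(N_i)\rvert=3^4/3=27$ throughout.

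Next I would invoke Lemma~\ref{l:LowExo}: since the $3$-class number is $3^3$ and the cosets of $s_2,\sigma_3,\tau_3$ modulo $\gamma_2(M_i)$ all have order at most $3$, the group $\mathrm{Cl}_3(N_i)\simeq M_i/\gamma_2(M_i)$ is elementary abelian of type $(3,3,3)$ exactly when the coset of $g_i$ has order at most $3$, that is, exactly when $g_i^3\in\gamma_2(M_i)$, and is the nearly homocyclic group $\mathrm{A}(3,3)$ of type $(9,3)$ otherwise. Now $g_1^3=\sigma_3$, $g_2^3=\tau_3$, and $g_3^3=(xy)^3=\sigma_3^{\alpha+\gamma}\tau_3^{\beta+\delta}$, $g_4^3=(xy^{-1})^3=\sigma_3^{\alpha-\gamma}\tau_3^{\beta-\delta}$ by \cite[Lem.~3.4.11, p.~105]{Ne1}, while $\gamma_2(M_i)$ is the line in $\gamma_3(G)\simeq\mathbb{F}_3^2$ spanned by the exponent vector of $t_3$, $s_3$, $s_3t_3$, resp.\ $s_3t_3^{-1}$. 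Hence for each $i$ the membership $g_i^3\in\gamma_2(M_i)$ is the \emph{collinearity} of two explicit vectors in $\mathbb{F}_3^2$, decided by the vanishing of a single $2\times 2$ determinant over $\mathbb{F}_3$.

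Finally I would run this determinant test through the seven rows of Table~\ref{tab:SmpSecTyp}. The seven isomorphism classes are pairwise distinguished by their transfer kernel type \cite[Satz~6.14, p.~208]{Ne1}, \cite[Tbl.~6--7, pp.~492--493]{Ma2}, so prescribing $\varkappa$ determines the quadruple $(\alpha,\beta,\gamma,\delta)$ and therefore, through the four determinant evaluations, the four structures $\mathrm{Cl}_3(N_i)$ together with the count $\varepsilon=\#\lbrace 1\le i\le 4\mid\mathrm{Cl}_3(N_i)\simeq(3,3,3)\rbrace$; this is precisely what Table~\ref{tab:SmpSecStr} records. The only genuine labour is the bookkeeping of these $28$ collinearity checks, and the one point to handle with care is the correct identification of each $\gamma_2(M_i)$ as a one-dimensional subspace of the centre $\gamma_3(G)$ (so that $\lvert\gamma_2(M_i)\rvert=3$ rather than $1$); once that is in place, every case is an immediate arithmetic verification modulo $3$.
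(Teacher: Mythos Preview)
Your proposal is correct and follows essentially the same route as the paper: identify \(\mathrm{Cl}_3(\mathrm{F}_3^1(K))\simeq\gamma_2(G)\simeq(3,3,3)\), reduce each \(\mathrm{Cl}_3(N_i)\simeq M_i/\gamma_2(M_i)\) of order \(27\) to the single test \(g_i^3\in\gamma_2(M_i)\) via Lemma~\ref{l:LowExo}, and then read off the answer from the data of Table~\ref{tab:SmpSecTyp}. Your phrasing of the membership test as a collinearity/determinant condition in \(\gamma_3(G)\simeq\mathbb{F}_3^2\) is a pleasant reformulation of exactly the equivalence the paper states, and your explicit remark that the generators of \(\gamma_2(M_i)\) are nontrivial (so \(\lvert\gamma_2(M_i)\rvert=3\)) makes a point the paper leaves implicit.
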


\renewcommand{\arraystretch}{1.0}
\begin{table}[ht]
\caption{\(3\)-class groups of type \((3,3,3)\) for \(m=4\), \(n=5\), \(e=3\), \(k=0\)}
\label{tab:SmpSecStr}
\begin{center}
\begin{tabular}{|lc|c|cccc|c|}
\hline
 Type & \(\varkappa\) & \(\mathrm{Cl}_3(\mathrm{F}_3^1(K))\) & \(\mathrm{Cl}_3(N_1)\) & \(\mathrm{Cl}_3(N_2)\) & \(\mathrm{Cl}_3(N_3)\) & \(\mathrm{Cl}_3(N_4)\) & \(\varepsilon\) \\
\hline
 D.10 & \((2241)\)    &                          \((3,3,3)\) &              \((9,3)\) &              \((9,3)\) &            \((3,3,3)\) &              \((9,3)\) &           \(1\) \\
 D.5  & \((4224)\)    &                          \((3,3,3)\) &            \((3,3,3)\) &              \((9,3)\) &            \((3,3,3)\) &              \((9,3)\) &           \(2\) \\
 G.19 & \((2143)\)    &                          \((3,3,3)\) &              \((9,3)\) &              \((9,3)\) &              \((9,3)\) &              \((9,3)\) &           \(0\) \\
 H.4  & \((4443)\)    &                          \((3,3,3)\) &            \((3,3,3)\) &            \((3,3,3)\) &              \((9,3)\) &            \((3,3,3)\) &           \(3\) \\
\hline
 b.10 & \((0043)\)    &                          \((3,3,3)\) &              \((9,3)\) &              \((9,3)\) &            \((3,3,3)\) &            \((3,3,3)\) &           \(2\) \\
 c.18 & \((0313)\)    &                          \((3,3,3)\) &              \((9,3)\) &              \((9,3)\) &            \((3,3,3)\) &              \((9,3)\) &           \(1\) \\
 c.21 & \((0231)\)    &                          \((3,3,3)\) &              \((9,3)\) &              \((9,3)\) &              \((9,3)\) &              \((9,3)\) &           \(0\) \\
\hline
\end{tabular}
\end{center}
\end{table}

\begin{proof}
The structure of the \(3\)-class group \(\mathrm{Cl}_3(\mathrm{F}_3^1(K))\)
of the first Hilbert \(3\)-class field of \(K\) can be obtained
from the parameters \(m=4\) and \(e=3\) by means of the following two isomorphisms
from \(\mathrm{Cl}_3(\mathrm{F}_3^1(K))\)
to the commutator subgroup \(\gamma_2(G)\) of \(G=\mathrm{Gal}(\mathrm{F}_3^2(K)\vert K)\),
according to
\cite{Ar1}
and
\cite[Satz 4.2.4, p. 131]{Ne1}
\[\mathrm{Cl}_3(\mathrm{F}_3^1(K))
\simeq\mathrm{Gal}(\mathrm{F}_3^2(K)\vert\mathrm{F}_3^1(K))\simeq\gamma_2(G)
\simeq\mathrm{A}(3,m-2)\times\mathrm{A}(3,e-2)=\mathrm{A}(3,2)\times\mathrm{A}(3,1)\]
The structure of the \(3\)-class groups \(\mathrm{Cl}_3(N_i)\)
is a consequence of Table
\ref{tab:SmpSecTyp},
since we have the following isomorphism, according to
\cite{Ar1}
\[\mathrm{Cl}_3(N_i)\simeq\mathrm{Gal}(\mathrm{F}_3^1(N_i)\vert N_i)
\simeq\mathrm{Gal}(\mathrm{F}_3^2(K)\vert N_i)/\mathrm{Gal}(\mathrm{F}_3^2(K)\vert \mathrm{F}_3^1(N_i))
\simeq M_i/\gamma_2(M_i).\]
Taking into consideration the preliminaries in \S\S\
\ref{sss:LowFst}--\ref{sss:LowTrd},
resp. in Lemma
\ref{l:LowExo},
we use the equivalence of the following statements.

\begin{eqnarray*}
\label{e:SmpSecStr}
\mathrm{Cl}_3(N_1)\simeq\mathrm{A}(3,3)&\iff&g_1^3=y^3\not\in\langle t_3\rangle=\gamma_2(M_1),\\
\mathrm{Cl}_3(N_2)\simeq\mathrm{A}(3,3)&\iff&g_2^3=x^3\not\in\langle s_3\rangle=\gamma_2(M_2),\\
\mathrm{Cl}_3(N_3)\simeq\mathrm{A}(3,3)&\iff&g_3^3=(xy)^3\not\in\langle s_3t_3\rangle=\gamma_2(M_3),\\
\mathrm{Cl}_3(N_4)\simeq\mathrm{A}(3,3)&\iff&g_4^3=(xy^{-1})^3\not\in\langle s_3t_3^{-1}\rangle=\gamma_2(M_4).
\end{eqnarray*}
\end{proof}

\begin{corollary}
\label{c:SmpSecRul}
For each of these seven isomorphism classes of the second \(3\)-class group \(G\),
a \(3\)-class group \(\mathrm{Cl}_3(N_i)\), \(1\le i\le 4\), is of type \((3,3,3)\)
if and only if the norm class group \(\mathrm{Norm}_{N_i\vert K}(\mathrm{Cl}_3(N_i))\)
becomes principal either in none or in three of the extensions \(N_\ell\), \(1\le\ell\le 4\).\\
The extensions with \(3\)-class group of type \((3,3,3)\)
always satisfy the condition \((\mathrm{B})\) of Taussky
\cite{Ta},
i. e., they have a partial principalization without fixed point, as predicted by
\cite[Satz 7, p. 11]{HeSm}.
\end{corollary}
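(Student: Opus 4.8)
The plan is to reduce the statement to group theory via the Artin reciprocity law \cite{Ar1}, along the lines of the proof of Theorem \ref{t:SmpSecStr}, and then to read off both assertions from Table \ref{tab:SmpSecStr} together with the explicit transfer kernel types of the seven groups. Under the isomorphisms \(\mathrm{Cl}_3(K)\simeq G/\gamma_2(G)\) and \(\mathrm{Cl}_3(N_\ell)\simeq M_\ell/\gamma_2(M_\ell)\), the norm map \(\mathrm{Norm}_{N_i\vert K}\) corresponds to the corestriction \(M_i/\gamma_2(M_i)\to G/\gamma_2(G)\), whose image is the subgroup \(M_i/\gamma_2(G)\) of order \(3\) in \(G/\gamma_2(G)\) (the \emph{norm group} of \(N_i\)), while the class extension \(\mathrm{j}_{N_\ell\vert K}\) corresponds to the Artin transfer \(\mathrm{T}_\ell\). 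Hence ``\(\mathrm{Norm}_{N_i\vert K}(\mathrm{Cl}_3(N_i))\) becomes principal in \(N_\ell\)'' is equivalent to \(M_i/\gamma_2(G)\subseteq\ker(\mathrm{T}_\ell)\), which in the TKT notation \(\varkappa=(\varkappa(1),\ldots,\varkappa(4))\) of \cite[Tbl. 6--7, pp. 492--493]{Ma2} reads \(\varkappa(\ell)\in\{0,i\}\). Thus the right-hand side of the first assertion says that the cardinality \(\#\{1\le\ell\le 4\mid\varkappa(\ell)\in\{0,i\}\}\) equals \(0\) or \(3\).

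For the left-hand side, recall from Lemma \ref{l:LowExo} and Theorem \ref{t:SmpSecStr} that \(\mathrm{Cl}_3(N_i)\simeq(3,3,3)\) if and only if \(g_i^3\in\gamma_2(M_i)\), and that for the seven groups under consideration the third powers \(g_i^3\) and generators of \(\gamma_2(M_i)\) are those listed in Table \ref{tab:SmpSecTyp}; equivalently, the columns of Table \ref{tab:SmpSecStr} already record for each type exactly which indices \(i\) give \(\mathrm{Cl}_3(N_i)\simeq(3,3,3)\). Comparing this list against the transfer kernel types of the seven groups (Figure \ref{fig:Typ33CoCl2} and \cite[Tbl. 6--7]{Ma2}), the first assertion follows from a direct check of the \(7\times 4=28\) pairs \((G,i)\). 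A partial conceptual explanation of the coincidence: for \(\ell\ne i\) the coset \(g_i\gamma_2(G)\) generates \(G/M_\ell\) of order \(3\), so the transfer collapses to \(\mathrm{T}_\ell(g_i\gamma_2(G))=g_i^3\,\gamma_2(M_\ell)\), which is again controlled by Table \ref{tab:SmpSecTyp}; only the diagonal value \(\mathrm{T}_i(g_i\gamma_2(G))\), i.e. \(\varkappa(i)\), has to be taken from the tables directly.

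For the statement about Taussky's condition \((\mathrm{B})\) \cite{Ta}, suppose \(\mathrm{Cl}_3(N_i)\simeq(3,3,3)\). By the equivalence above, \(\#\{\ell\mid\varkappa(\ell)\in\{0,i\}\}\) is \(0\) or \(3\). If it is \(0\), then in particular \(\varkappa(i)\notin\{0,i\}\); if it is \(3\) --- which for these groups occurs only in the columns H.4 at \(i=4\), b.10 at \(i=3\) and \(i=4\), and c.18 at \(i=3\) --- one checks that the unique index \(\ell\) with \(\varkappa(\ell)\notin\{0,i\}\) is \(\ell=i\), so again \(\varkappa(i)\notin\{0,i\}\). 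Consequently \(\ker(\mathrm{j}_{N_i\vert K})=\ker(\mathrm{T}_i)\) is a proper nontrivial (order-\(3\)) subgroup of \(\mathrm{Cl}_3(K)\) distinct from the norm group \(M_i/\gamma_2(G)\) of \(N_i\), i.e. \(N_i\vert K\) has a partial principalization without fixed point, which is precisely condition \((\mathrm{B})\); this reproves \cite[Satz 7, p. 11]{HeSm} in the present situation. The main difficulty here is not depth but strict bookkeeping --- keeping the dictionary ``norm group \(=M_i/\gamma_2(G)\)'', ``principalization in \(N_\ell\) \(=\) containment in \(\ker(\mathrm{T}_\ell)\)'', ``fixed point \(\Leftrightarrow\varkappa(i)=i\)'' consistent throughout, and, for the count-\(3\) subcase, confirming that the exceptional index is indeed \(i\).
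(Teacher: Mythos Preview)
Your proposal is correct and follows the same approach as the paper: both reduce the claim to a case-by-case inspection of the seven principalization types \(\varkappa\) against Table~\ref{tab:SmpSecStr}. The paper's proof is terser (it calls the result an ``immediate consequence'' and illustrates only the types H.4 and b.10), whereas you spell out the Artin-reciprocity dictionary explicitly, translate the condition into \(\#\{\ell:\varkappa(\ell)\in\{0,i\}\}\in\{0,3\}\), and add the useful observation that for \(\ell\ne i\) the transfer reads \(\mathrm{T}_\ell(g_i\gamma_2(G))=g_i^3\,\gamma_2(M_\ell)\), tying the count directly to the same third powers that control the structure of \(\mathrm{Cl}_3(N_i)\).
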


\begin{proof}
This is an immediate consequence of the principalization types \(\varkappa\)
of these seven isomorphism classes. Let
\(\mathrm{j}_{N_\ell\vert K}:\mathrm{Cl}_p(K)\longrightarrow\mathrm{Cl}_p(N_\ell)\), \(1\le\ell\le 4\),
denote the class extension homomorphisms
\cite[\S\ 2.3, p. 477]{Ma2}.
We demonstrate two cases.

For type H.4, \(\varkappa=(4443)\), we have\\
\(\mathrm{Norm}_{N_i\vert K}(\mathrm{Cl}_3(N_i))\cap\ker(\mathrm{j}_{N_\ell\vert K})=1\),
for \(i=1,2\) and any \(1\le\ell\le 4\),\\
\(\mathrm{Norm}_{N_3\vert K}(\mathrm{Cl}_3(N_3))=\ker(\mathrm{j}_{N_4\vert K})\), and
\(\mathrm{Norm}_{N_4\vert K}(\mathrm{Cl}_3(N_4))=\ker(\mathrm{j}_{N_\ell\vert K})\),
for \(1\le\ell\le 3\).

For type b.10, \(\varkappa=(0043)\), we have\\
\(\mathrm{Norm}_{N_i\vert K}(\mathrm{Cl}_3(N_i))<\ker(\mathrm{j}_{N_\ell\vert K})=\mathrm{Cl}_3(K)\),
for any \(1\le i\le 4\) and \(\ell=1,2\),\\
and additionally
\(\mathrm{Norm}_{N_3\vert K}(\mathrm{Cl}_3(N_3))=\ker(\mathrm{j}_{N_4\vert K})\) and
\(\mathrm{Norm}_{N_4\vert K}(\mathrm{Cl}_3(N_4))=\ker(\mathrm{j}_{N_3\vert K})\).
\end{proof}

\begin{corollary}
\label{c:SmpSecQdr}
If \(K\) is a quadratic base field with \(G\in\Phi_6\),
then the three total principalization types
\(\mathrm{b.10}\), \(\mathrm{c.21}\), \(\mathrm{c.18}\)
are impossible, due to class number relations,
and the remaining four partial principalization types
\(\mathrm{D.10}\), \(\mathrm{G.19}\), \(\mathrm{H.4}\), \(\mathrm{D.5}\)
are characterized uniquely by the invariant \(\varepsilon\).
\end{corollary}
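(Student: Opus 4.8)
The plan is to deal with the two assertions separately: the second is a one-line reading of Table~\ref{tab:SmpSecStr}, and the first — discarding the three total types — carries the weight.

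\emph{Separation by $\varepsilon$.} Once $\mathrm{b.10}$, $\mathrm{c.18}$, $\mathrm{c.21}$ are excluded, Table~\ref{tab:SmpSecStr} of Theorem~\ref{t:SmpSecStr} records $\varepsilon=1$ for $\mathrm{D.10}$, $\varepsilon=0$ for $\mathrm{G.19}$, $\varepsilon=3$ for $\mathrm{H.4}$, and $\varepsilon=2$ for $\mathrm{D.5}$; since $0,1,2,3$ are pairwise distinct, the map sending a type to its value of $\varepsilon$ is injective on the four partial types, so $\varepsilon$ determines the type. This reduction is indispensable rather than cosmetic: in the very same table $\mathrm{b.10}$, $\mathrm{c.18}$, $\mathrm{c.21}$ would contribute $\varepsilon=2,1,0$ once more, so without first removing them $\varepsilon$ fails to separate.

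\emph{Discarding the total types.} The three total types have transfer kernel types $(0043)$, $(0313)$, $(0231)$; each possesses a zero entry — in fact two — and therefore predicts complete principalization of $\mathrm{Cl}_3(K)$ in (at least) two of the four extensions $N_1,\dots,N_4$. For a quadratic base field $K=\mathbb{Q}(\sqrt D)$ this cannot happen. Indeed, complex conjugation inverts $\mathrm{Cl}_3(K)$ — because $\mathfrak{a}\,\overline{\mathfrak{a}}$ is principal for every ideal $\mathfrak{a}$ — hence fixes each of the four index-$3$ subgroups of $\mathrm{Cl}_3(K)$, hence fixes each $N_i$ and acts on $\mathrm{Gal}(N_i/K)\cong C_3$ by inversion; so every $N_i$ is an $S_3$-field over $\mathbb{Q}$ with a non-Galois cubic subfield $L_i$. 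I would then invoke the class number relations for the tower $\mathbb{Q}<K<N_i$, namely the Brauer relation $\zeta_{N_i}\zeta_{\mathbb{Q}}^2=\zeta_K\zeta_{L_i}^2$ together with the analytic class number formula, and Chevalley's ambiguous class number formula for the unramified extension $N_i/K$, in combination with the capitulation criteria that Scholz--Taussky~\cite{SoTa} (and Heider--Schmithals~\cite{HeSm} for $D>0$) attach to them. As carried out there, these relations confine the $3$-part of the $S_3$-unit index of $N_i$ to a narrow range and, through that index, forbid the simultaneous complete capitulation of $\mathrm{Cl}_3(K)$ in two distinct $N_i$ over a quadratic base; in particular $\mathrm{b.10}$, $\mathrm{c.18}$, $\mathrm{c.21}$ are impossible. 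By Theorem~\ref{t:SmpSecStr} the seven possibilities for $G\in\Phi_6$ are these three together with $\mathrm{D.10}$, $\mathrm{G.19}$, $\mathrm{H.4}$, $\mathrm{D.5}$; deleting the former leaves precisely the latter four, which the first step then distinguishes by $\varepsilon$.

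The genuine obstacle is the exclusion step itself. The $S_3$-class number relation is routine, but the decisive information lives in the unit index of the group generated by the units of $K$ and of the three conjugate cubic subfields inside $\mathcal{O}_{N_i}^\times$, and turning a constraint on that index — via Chevalley's formula — into the impossibility of complete capitulation in two of the $N_i$ is exactly where the quadratic hypothesis enters, through the smallness ($0$ or $1$) of the unit rank of $\mathcal{O}_K^\times$; the cases $D<0$ and $D>0$ must be handled separately, as in \cite{SoTa,HeSm}. Everything else in the corollary is bookkeeping from Theorem~\ref{t:SmpSecStr} and Table~\ref{tab:SmpSecStr}.
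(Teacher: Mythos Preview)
Your separation-by-$\varepsilon$ paragraph is fine and matches the paper. The exclusion step, however, has a factual slip that derails the argument. You write that each of $(0043)$, $(0313)$, $(0231)$ ``possesses a zero entry --- in fact two'', and then build the contradiction on forbidding simultaneous complete capitulation in \emph{two} distinct $N_i$. But only $\mathrm{b.10}=(0043)$ has two zeros; $\mathrm{c.18}=(0313)$ and $\mathrm{c.21}=(0231)$ each have exactly one zero, at position~$1$. So your ``two distinct $N_i$'' mechanism does not touch $\mathrm{c.18}$ or $\mathrm{c.21}$ at all, and the exclusion is incomplete.

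The paper's argument is both simpler and uniform across the three types. What the three share is $\varkappa(1)=0$: complete principalization of $\mathrm{Cl}_3(K)$ in the distinguished extension $N_1$. For a quadratic base field this forces $K$ to be real, and then $N_1$ is an $S_3$-field of type~$\alpha$ in the sense of \cite[Prop.~4.3--4.4]{Ma1}, which entails that the $3$-exponent of $\mathrm{h}_3(N_1)$ is \emph{even}. But Table~\ref{tab:SmpSecStr} gives $\mathrm{Cl}_3(N_1)\simeq\mathrm{A}(3,3)=(9,3)$ for all three types, so $\mathrm{h}_3(N_1)=3^3$ has odd $3$-exponent --- contradiction. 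No Brauer relation, no Chevalley formula, no unit-index analysis is needed: the parity criterion from \cite{Ma1} does the job directly, and it applies to each type individually rather than requiring a pair of zeros.
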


\begin{proof}
For principalization types
\(\mathrm{b.10}\), \(\mathrm{c.21}\), \(\mathrm{c.18}\)
with singulet \(\varkappa(1)=0\)
\cite[\S\ 2.2--2.3, p. 475--478]{Ma2},
the entire \(3\)-class group \(\mathrm{Cl}_3(K)\) becomes principal in \(N_1\).
Hence, a quadratic base field \(K\) must be real,
and the unramified cyclic cubic extension \(N_1\)
must be an \(S_3\)-field of type \(\alpha\)
with even \(3\)-exponent of the \(3\)-class number \(\mathrm{h}_3(N_1)\),
in contradiction to \(\mathrm{Cl}_3(N_1)\simeq\mathrm{A}(3,3)\)
\cite[Prop. 4.3--4.4, p. 484--485]{Ma1}.
\end{proof}

\begin{example}
\label{ex:SmpSecQdr}
The first occurrences of second \(3\)-class groups
\(G=\mathrm{Gal}(\mathrm{F}_3^2(K)\vert K)\) of coclass \(\mathrm{cc}(G)=2\)
with invariants \(m=4\), \(n=5\), \(e=3\) and bicyclic centre
among quadratic fields \(K=\mathbb{Q}(\sqrt{D})\)
with discriminant \(-10^6<D<0\), resp. \(0<D<10^7\),
and \(3\)-class group of type \((3,3)\)
turned out to be the following.

\begin{itemize}
\item
The smallest value \(\lvert D\rvert\) of the discriminant
of a complex quadratic field \(K\)
with principalization type \(\mathrm{D.10}\), resp. \(\mathrm{D.5}\),
is \(4\,027\) \cite[pp. 22--25]{SoTa}, resp. \(12\,131\) \cite[Tbl. 3, p. 19]{HeSm}.
\item
The smallest discriminant \(D\) of a real quadratic field \(K\)
with principalization type \(\mathrm{D.10}\), resp. \(\mathrm{D.5}\),
is \(422\,573\), resp. \(631\,769\).
Both were unknown until \(2006\), resp. \(2009\).
\end{itemize}

\end{example}

\begin{conjecture}
\label{cnj:SmpSecQdr}
For quadratic base fields,
the principalization types \(\mathrm{G.19}\) and \(\mathrm{H.4}\)
cannot occur with invariants \(m=4\), \(n=5\), and \(k=0\)
of the corresponding second \(3\)-class groups
\(\langle 243,9\rangle\), \(\langle 243,4\rangle\),
since they have terminal metabelian descendants (Figure
\ref{fig:Typ33CoCl2})
of the same principalization type
with invariants \(m=5\), \(n=6\), and bigger defect \(k=1\).
We call this the weak or restricted \textit{leaf conjecture}.
(See Thm. 1.4 and Cnj. 3.1 in
\cite{Ma4}.)
\end{conjecture}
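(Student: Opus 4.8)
The plan is to treat the complex and the real quadratic case separately and, in both, to combine the extra structure that a second $3$-class group inherits from the quadratic automorphism with the elementary abelian relations of the two candidate groups recorded in Table~\ref{tab:SmpSecTyp}.

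Conjugation by $\mathrm{Gal}(\mathrm{F}_3^2(K)\vert\mathbb{Q})$ equips $G=\mathrm{Gal}(\mathrm{F}_3^2(K)\vert K)$ with an automorphism $\sigma$ of order $2$ acting on $G/G'\simeq\mathrm{Cl}_3(K)$ as inversion, so $G$ is a $\sigma$-group. For a complex quadratic base field it is moreover classical \cite{SoTa} that $G$ has balanced generator and relation rank $d_1(G)=d_2(G)=2$, hence is a metabelian Schur $\sigma$-group. Now starting from Nebelung's parametrised power-commutator presentation \cite[Satz 3.4.5, p. 94]{Ne1} with $m=4$, $n=5$, $\rho=0$ (defect $k=0$) and the parameter vectors $(\alpha,\beta,\gamma,\delta)$ of types $\mathrm{G.19}$ and $\mathrm{H.4}$ \cite[pp. 1--3]{Ne2}, I would compute the Schur multiplier of the two roots $\langle 243,9\rangle$ and $\langle 243,4\rangle$ and show that both have relation rank strictly larger than $2$; neither is therefore a Schur $\sigma$-group, which excludes them as $G=\mathrm{Gal}(\mathrm{F}_3^2(K)\vert K)$ over a complex quadratic~$K$. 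A complementary verification that the order-$3^6$ terminal metabelian descendants with defect $k=1$ and the same transfer kernel type \emph{do} admit a balanced $\sigma$-compatible presentation then explains why the conjecture permits exactly those groups.

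The hard part will be the real quadratic case. Here the easy mechanism behind Corollary~\ref{c:SmpSecQdr} --- a total principalization $\varkappa(i)=0$ forcing the corresponding $S_3$-field $N_i$ to be of type $\alpha$ with an even $3$-exponent of $\mathrm{h}_3(N_i)$, which contradicts $\mathrm{Cl}_3(N_i)\simeq\mathrm{A}(3,3)$ by \cite[Prop. 4.3--4.4, pp. 484--485]{Ma1} --- is unavailable, since the transfer kernel types $(2143)$ of $\mathrm{G.19}$ and $(4443)$ of $\mathrm{H.4}$ have no zero entry. Moreover, for a real quadratic base field $G$ need only be a metabelian \emph{quotient} of the full tower group $\mathrm{Gal}(\mathrm{F}_3^\infty(K)\vert K)$, so the balanced-presentation constraint no longer applies to $G$ directly. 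Excluding a metabelian quotient isomorphic to one of the two roots therefore seems to require either a finer analysis of the $\sigma$-module structure of the Schur multiplier $\mathrm{H}_2(G)$, forcing the extra relator associated with the defect $k=0$ to be $\sigma$-anti-invariant in a manner incompatible with the remaining relations of $\langle 243,9\rangle$, $\langle 243,4\rangle$, or a genuinely arithmetic input of the kind developed in \cite{Ma4}. It is exactly this gap that keeps the statement at the level of a conjecture, in agreement with the numerical data in the range $-10^6<D<10^7$ and with Theorem~1.4 and Conjecture~3.1 of \cite{Ma4}.
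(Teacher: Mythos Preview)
The statement you are attempting to prove is a \emph{conjecture} in the paper (Conjecture~\ref{cnj:SmpSecQdr}, the ``weak leaf conjecture''), not a theorem; the paper offers no proof, only heuristic motivation and a reference to \cite{Ma4}. You seem to recognise this yourself in the final paragraph, so your write-up is really a sketch of a possible attack rather than a proof, and should be labelled as such.

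That said, your complex quadratic argument contains a genuine gap even as a sketch. The balanced presentation condition $d_1=d_2=2$ (the Schur $\sigma$-group property) applies to the \emph{full} $3$-class tower group $\mathrm{Gal}(\mathrm{F}_3^\infty(K)\vert K)$, not to the second $3$-class group $G=\mathrm{G}_3^2(K)=\mathrm{Gal}(\mathrm{F}_3^2(K)\vert K)$ which is the object of the conjecture. If the tower has length at least $3$, then $G$ is only the metabelianisation of the tower group and need not itself be balanced. Thus computing the Schur multiplier of $\langle 243,9\rangle$ and $\langle 243,4\rangle$ and finding relation rank larger than $2$ does not by itself exclude them as values of $\mathrm{G}_3^2(K)$; what you would actually have to rule out is the existence of any Schur $\sigma$-group whose metabelianisation is one of these two groups. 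That requires analysing their full (non-metabelian) descendant trees, which is precisely the content of the deeper results alluded to in \cite{Ma4}. Your instinct that the Schur $\sigma$-group framework is the right tool is correct and is indeed the approach of the subsequent literature, but the argument as you have written it conflates $\mathrm{G}_3^2(K)$ with $\mathrm{G}_3^\infty(K)$.
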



\subsection{Groups \(G\) of coclass \(\mathrm{cc}(G)=2\) with cyclic centre and \(m=5\), \(n=6\)}
\label{ss:SmlSec}

This section corresponds to the second row of Table
\ref{tab:LowUnk}.
Again, the abelianizations of all maximal subgroups \(M_i\), \(1\le i\le 4\),
have to be analyzed.
These groups form the stem of Easterfield's isoclinism families
\(\Phi_{40},\Phi_{41},\Phi_{42},\Phi_{43}\)
\cite{Ef},
\cite[4.1, p. 619, and 4.6 (40)--(43), p. 636]{Jm}
and satisfy the following special relations, by
(\ref{eqn:LowRel}):

\begin{eqnarray*}
\label{e:SmlSecRel1}
n&=&6=2m-4,\\
s&=&4=m-1,\\
e&=&n-m+2=3=m-2,\\
\lbrack \chi_s(G),\gamma_e(G)\rbrack &=&\lbrack G,\gamma_3(G)\rbrack =\gamma_4(G)=\gamma_{m-1}(G)>1,\ k=1,\ \rho=\pm 1,\\
\sigma_5&=&1,\tau_5=1,\\
\sigma_4^3&=&1,\tau_4^3=1,\\
\sigma_3^3&=&\sigma_4^{-3}=1,\tau_3^3=\tau_4^{-3}=1,\\
s_2^3&=&\sigma_4^{1-\rho(\beta-1)},\\
s_2^3&=&1\iff\beta=0,\rho=-1\text{ or }\beta=-1,\rho=1,\\
\tau_4&=&\sigma_4^{-\rho},\\
\end{eqnarray*}

\begin{eqnarray*}
\label{e:SmlSecRel2}
\gamma_2(G)&=&\langle s_2,\sigma_3,\sigma_4,\tau_3\rangle\text{ of type }
\begin{cases}
(9,3,3),&\text{ if }s_2^3\ne 1,\\
(3,3,3,3),&\text{ if }s_2^3=1,
\end{cases}\\
\gamma_3(G)&=&\langle\sigma_3,\sigma_4,\tau_3\rangle\text{ of type }(3,3,3),\\
\gamma_4(G)&=&\langle\sigma_4\rangle=\zeta_1(G)\text{ of type }(3).
\end{eqnarray*}

\noindent
The commutator subgroups of the maximal normal subgroups are given by
\cite[Cor. 3.2, p. 480]{Ma1}
and
(\ref{eqn:LowRel}):

\begin{eqnarray*}
\label{e:SmlSecGrp}
\gamma_2(M_1)&=&\langle t_3,\sigma_4\rangle=\langle\sigma_3^{-\rho\delta}\sigma_4^{\rho-\alpha}\tau_3^{1-\beta},\sigma_4\rangle
=\langle\sigma_3^{-\rho\delta}\tau_3^{1-\beta},\sigma_4\rangle,\\
\gamma_2(M_2)&=&\langle s_3,\sigma_4\rangle=\langle\sigma_3^{\rho\beta-1}\sigma_4^{\gamma-1}\tau_3^\delta,\sigma_4\rangle
=\langle\sigma_3^{\rho\beta-1}\tau_3^\delta,\sigma_4\rangle,\\
\gamma_2(M_3)&=&\langle s_3t_3,\sigma_4\rangle=\langle\sigma_3^{\rho(\beta-\delta)-1}\tau_3^{\delta-\beta+1},\sigma_4\rangle,\\
\gamma_2(M_4)&=&\langle s_3t_3^{-1},\sigma_4\rangle=\langle\sigma_3^{\rho(\beta+\delta)-1}\tau_3^{\beta+\delta-1},\sigma_4\rangle.
\end{eqnarray*}

The dependencies
on the parameters \(\alpha,\gamma\) disappear,
since they occur in the exponent of \(\sigma_4\in\gamma_4(G)\),
but each \(\gamma_2(M_i)\) with \(1\le i\le 4\) contains
\(\gamma_4(G)=\langle\sigma_4\rangle=\langle\tau_4\rangle\).

\noindent
In Table
\ref{tab:SmlSecTyp}
we calculate the third powers \(g_i^3\)
of the generators \(g_1=y,g_2=x,g_3=xy,g_4=xy^{-1}\)
of the maximal normal subgroups \(M_i=\langle g_i,\gamma_2(G)\rangle\)
and the generators of the commutator subgroups \(\gamma_2(M_i)\), \(1\le i\le 4\), modulo \(\sigma_4\)
for each of the \(12\) isomorphism classes of groups \(G\) with \(m=5\), \(n=6\), \(\rho=\pm 1\)
\cite[pp. 4--7]{Ne2}.
Several of these isomorphism classes have the same principalization type and the same parameters
\(\beta,\delta,\rho\),
as indicated by the second column of Table
\ref{tab:SmlSecTyp}.

Generally, according to
\cite[Lem. 3.4.11, p. 105]{Ne1},
the third powers of \(g_3\) and \(g_4\) modulo \(\sigma_4\) are given by\\
\((xy)^3=\sigma_3^{\rho(\beta+\delta)}\sigma_4^{\alpha+\gamma+\rho(\beta+\delta)}\tau_3^{\beta+\delta}
\equiv\sigma_3^{\rho(\beta+\delta)}\tau_3^{\beta+\delta}\)
and
\((xy^{-1})^3=\sigma_3^{\rho(\delta-\beta)}\sigma_4^{\alpha-\gamma+\rho\beta}\tau_3^{\beta-\delta}
\equiv\sigma_3^{\rho(\delta-\beta)}\tau_3^{\beta-\delta}\).

\begin{table}[ht]
\caption{Parameters, third powers, and generators for \(m=5\), \(n=6\), \(e=3\), \(k=1\)}
\label{tab:SmlSecTyp}
\begin{center}
\begin{tabular}{|l|c|rrr|cccc|cccc|}
\hline
 Type & isom. cl. & \(\beta\) & \(\delta\) & \(\rho\) &      \(y^3\) &    \(x^3\) &        \(\overline{(xy)^3}\) & \(\overline{(xy^{-1})^3}\) & \(\overline{t_3}\) & \(\overline{s_3}\) &     \(\overline{s_3t_3}\) &   \(\overline{s_3t_3^{-1}}\) \\
\hline
 G.19 &         2 &    \(-1\) &      \(0\) &    \(1\) & \(\sigma_3\) & \(\tau_3\) & \(\sigma_3^{-1}\tau_3^{-1}\) &    \(\sigma_3\tau_3^{-1}\) &       \(\tau_3^2\) &  \(\sigma_3^{-2}\) & \(\sigma_3^{-2}\tau_3^2\) & \(\sigma_3^{-2}\tau_3^{-2}\) \\
 H.4  &         4 &     \(1\) &      \(1\) &    \(1\) & \(\sigma_3\) & \(\tau_3\) &       \(\sigma_3^2\tau_3^2\) &                      \(1\) &  \(\sigma_3^{-1}\) &         \(\tau_3\) &   \(\sigma_3^{-1}\tau_3\) &           \(\sigma_3\tau_3\) \\
\hline
 b.10 &         3 &     \(0\) &      \(0\) &   \(-1\) & \(\sigma_3\) & \(\tau_3\) &                        \(1\) &                      \(1\) &         \(\tau_3\) &  \(\sigma_3^{-1}\) &   \(\sigma_3^{-1}\tau_3\) & \(\sigma_3^{-1}\tau_3^{-1}\) \\
 b.10 &         3 &     \(0\) &      \(0\) &    \(1\) & \(\sigma_3\) & \(\tau_3\) &                        \(1\) &                      \(1\) &         \(\tau_3\) &  \(\sigma_3^{-1}\) &   \(\sigma_3^{-1}\tau_3\) & \(\sigma_3^{-1}\tau_3^{-1}\) \\
\hline
\end{tabular}
\end{center}
\end{table}

The order of the coset of \(g_i\in M_i\) with respect to \(\gamma_2(M_i)\)
is bounded from above by \(3\),
if and only if the third power \(g_i^3\) is contained in \(\gamma_2(M_i)\).

\begin{theorem}
\label{t:SmlSecStr}
(TTT \(\tau(G)\) of stem groups \(G\) in isoclinism families \(\Phi_{40},\Phi_{41},\Phi_{42},\Phi_{43}\))

Let \(K\) be a number field
with \(3\)-class group \(\mathrm{Cl}_3(K)\) of type \((3,3)\).
Suppose that
the second \(3\)-class group \(G=\mathrm{Gal}(\mathrm{F}_3^2(K)\vert K)\)
is of order \(\lvert G\rvert=3^n\)
and class \(\mathrm{cl}(G)=m-1\), where \(n=6\) and \(m=5\),
such that \(\lbrack \chi_s(G),\gamma_e(G)\rbrack =\gamma_{m-1}(G)\), \(k=1\), i.e.,
that \(G\) is one of the twelve vertices with defect \(k=1\)
of coclass graph \(\mathcal{G}(3,2)\) in Figure
\ref{fig:Typ33CoCl2},
with invariant \(e=3\) and cyclic centre \(\zeta_1(G)\).\\
Then the structure of the \(3\)-class groups
of the first Hilbert \(3\)-class field \(\mathrm{F}_3^1(K)\) of \(K\) and
of the four unramified cyclic cubic extensions \(N_1,\ldots,N_4\) of \(K\)
is given by Table \ref{tab:SmlSecStr},
in dependence on the principalization type \(\varkappa\) of \(K\)
and on the relational parameters \(\beta,\rho\) of \(G\).
The invariant \(\varepsilon\) denotes
the number of \(3\)-class groups \(\mathrm{Cl}_3(N_i)\) of type \((3,3,3)\),
for each principalization type.
\end{theorem}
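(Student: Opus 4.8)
The plan is to run exactly the argument already used for Theorem \ref{t:SmpSecStr}, now for the twelve isomorphism classes with $m=5$, $n=6$, $\rho=\pm 1$, $k=1$ collected in \S\ \ref{ss:SmlSec}. First I would determine $\mathrm{Cl}_3(\mathrm{F}_3^1(K))$. By the Artin reciprocity law \cite{Ar1} together with \cite[Satz 4.2.4, p. 131]{Ne1} one has $\mathrm{Cl}_3(\mathrm{F}_3^1(K))\simeq\mathrm{Gal}(\mathrm{F}_3^2(K)\vert\mathrm{F}_3^1(K))\simeq\gamma_2(G)$, and the power relations \eqref{eqn:LowRel} in the specialisation of \S\ \ref{ss:SmlSec} show that $\gamma_2(G)=\langle s_2\rangle\times\langle\sigma_3\rangle\times\langle\tau_3\rangle$ has order $3^{n-2}=3^4$ with $\mathrm{ord}(s_2)=9$ when $s_2^3\ne 1$ and $\mathrm{ord}(s_2)=3$ when $s_2^3=1$. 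Since $s_2^3=\sigma_4^{1-\rho(\beta-1)}$ is trivial exactly for $(\beta,\rho)\in\{(0,-1),(-1,1)\}$, this produces the type $(9,3,3)$, resp.\ $(3,3,3,3)$, according to $\beta,\rho$, which is the source of the $\beta,\rho$-dependence appearing in Table \ref{tab:SmlSecStr}.

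For the four extensions $N_i$ I would again invoke \cite{Ar1} to obtain $\mathrm{Cl}_3(N_i)\simeq M_i/\gamma_2(M_i)$, a group of order $3^3$ by the order computation in the proof of Theorem \ref{t:LowHom}. Since $\mathrm{Cl}_3(N_i)$ has $3$-rank at most three, a group of order $27$ is either $\mathrm{A}(3,3)$ or elementary abelian of type $(3,3,3)$, and by Lemma \ref{l:LowExo} the alternative is decided solely by whether the third power of the distinguished generator $g_i$ lies in $\gamma_2(M_i)$. Because $\gamma_4(G)=\langle\sigma_4\rangle\le\gamma_2(M_i)$ for every $i$, this test may be carried out modulo $\sigma_4$, inside $\gamma_3(G)/\langle\sigma_4\rangle=\langle\overline{\sigma_3}\rangle\times\langle\overline{\tau_3}\rangle$ of type $(3,3)$, where $\gamma_2(M_i)$ reduces to the cyclic order-$3$ subgroup generated by the $i$th entry $\overline{t_3}$, $\overline{s_3}$, $\overline{s_3t_3}$, $\overline{s_3t_3^{-1}}$ of Table \ref{tab:SmlSecTyp}, and $g_i^3$ to the corresponding entry $\sigma_3$, $\tau_3$, $\overline{(xy)^3}$, $\overline{(xy^{-1})^3}$ of that table.

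I would then settle each case by this membership test: $\mathrm{Cl}_3(N_i)\simeq(3,3,3)$ iff $\overline{g_i^3}$ lies in the prescribed cyclic subgroup of $\langle\overline{\sigma_3}\rangle\times\langle\overline{\tau_3}\rangle$, and otherwise $\mathrm{Cl}_3(N_i)\simeq\mathrm{A}(3,3)$; reading off the answers and counting the $(3,3,3)$'s fills in the columns of Table \ref{tab:SmlSecStr} and the invariant $\varepsilon$. The reduction from twelve isomorphism classes to the handful of rows of Table \ref{tab:SmlSecTyp} is legitimate because, as noted in \S\ \ref{ss:SmlSec}, the parameters $\alpha,\gamma$ enter only through $\sigma_4$-exponents of $g_i^3$ and of the generators of $\gamma_2(M_i)$ and therefore disappear modulo $\sigma_4$, so the outcome depends only on $(\beta,\delta,\rho)$, hence only on the principalization type together with $\beta,\rho$; one then also checks, via the number-of-classes column of Table \ref{tab:SmlSecTyp}, that the tabulated rows genuinely exhaust all twelve classes. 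It is worth recording, as a sanity check, that the resulting $\mathrm{Cl}_3(N_i)$ and $\varepsilon$ coincide with those of the homonymous principalization types in Table \ref{tab:SmpSecStr}; only $\mathrm{Cl}_3(\mathrm{F}_3^1(K))$ changes, reflecting that these groups are descendants of the $\Phi_6$ stem.

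The only genuinely delicate points, beyond the bookkeeping, are: (i) justifying the $(9,3,3)$ versus $(3,3,3,3)$ dichotomy for $\gamma_2(G)$, which requires verifying from \eqref{eqn:LowRel} that $s_2$, $\sigma_3$, $\tau_3$ generate $\gamma_2(G)$ as an internal direct product of the stated orders — in particular that $s_2$ is not absorbed into $\langle\sigma_3,\tau_3,\sigma_4\rangle$; and (ii) confirming completeness of the modulo-$\sigma_4$ membership computations over all twelve classes. Everything else is the routine arithmetic in the $(3,3)$-group $\langle\overline{\sigma_3}\rangle\times\langle\overline{\tau_3}\rangle$, already prepared by Table \ref{tab:SmlSecTyp} and by the general formulas $(xy)^3\equiv\sigma_3^{\rho(\beta+\delta)}\tau_3^{\beta+\delta}$, $(xy^{-1})^3\equiv\sigma_3^{\rho(\delta-\beta)}\tau_3^{\beta-\delta}$ modulo $\sigma_4$.
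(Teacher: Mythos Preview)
Your proposal is correct and follows essentially the same route as the paper: identify \(\mathrm{Cl}_3(\mathrm{F}_3^1(K))\simeq\gamma_2(G)\) via Artin and Nebelung, then settle each \(\mathrm{Cl}_3(N_i)\) by the membership test of Lemma~\ref{l:LowExo} using Table~\ref{tab:SmlSecTyp}. The paper's proof is terser on the first point: rather than analysing generators of \(\gamma_2(G)\) directly, it invokes \cite[Satz 4.2.4, p.~131]{Ne1} (Theorem~\ref{thm:NebelungDerivedSubgroup} in the Appendix), which gives \(\gamma_2(G)\simeq\mathrm{A}(3,3)\times\mathrm{A}(3,1)\) in the regular case and \(\mathrm{A}(3,2)\times\mathrm{A}(3,2)\) in the irregular case, the latter characterised for \(m=5\) by \(0\ne\rho=\beta-1\), i.e.\ \(\rho\equiv\beta-1\pmod{3}\), which is exactly your condition \((\beta,\rho)\in\{(0,-1),(-1,1)\}\).

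One slip to fix: your sentence ``\(\gamma_2(G)=\langle s_2\rangle\times\langle\sigma_3\rangle\times\langle\tau_3\rangle\) has order \(3^4\)'' is only correct when \(s_2^3\ne 1\). When \(s_2^3=1\) the three cyclic factors give order \(3^3\), not \(3^4\); you need \(\sigma_4\) as a fourth independent generator, as in the paper's display \(\gamma_2(G)=\langle s_2,\sigma_3,\sigma_4,\tau_3\rangle\) in \S\ref{ss:SmlSec}. You are evidently aware of this (your caveat (i) mentions \(\sigma_4\)), but the stated direct-product decomposition should be amended accordingly. Citing Nebelung's structure theorem, as the paper does, sidesteps this bookkeeping entirely.
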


\begin{table}[ht]
\caption{\(3\)-class groups of type \((3,3,3)\) for \(m=5\), \(n=6\), \(e=3\), \(k=1\)}
\label{tab:SmlSecStr}
\begin{center}
\begin{tabular}{|lc|rr|c|cccc|c|}
\hline
 Type & \(\varkappa\) & \(\beta\) & \(\rho\) & \(\mathrm{Cl}_3(\mathrm{F}_3^1(K))\) & \(\mathrm{Cl}_3(N_1)\) & \(\mathrm{Cl}_3(N_2)\) & \(\mathrm{Cl}_3(N_3)\) & \(\mathrm{Cl}_3(N_4)\) & \(\varepsilon\) \\
\hline
 G.19 & \((2143)\)    &    \(-1\) &    \(1\) &                        \((3,3,3,3)\) &              \((9,3)\) &              \((9,3)\) &              \((9,3)\) &              \((9,3)\) &           \(0\) \\
 H.4  & \((4443)\)    &     \(1\) &    \(1\) &                          \((9,3,3)\) &            \((3,3,3)\) &            \((3,3,3)\) &              \((9,3)\) &            \((3,3,3)\) &           \(3\) \\
\hline
 b.10 & \((0043)\)    &     \(0\) &   \(-1\) &                        \((3,3,3,3)\) &              \((9,3)\) &              \((9,3)\) &            \((3,3,3)\) &            \((3,3,3)\) &           \(2\) \\
 b.10 & \((0043)\)    &     \(0\) &    \(1\) &                          \((9,3,3)\) &              \((9,3)\) &              \((9,3)\) &            \((3,3,3)\) &            \((3,3,3)\) &           \(2\) \\
\hline
\end{tabular}
\end{center}
\end{table}

\begin{proof}
Similarly as in the proof of Theorem
\ref{t:SmpSecStr},
the structure of the \(3\)-class group \(\mathrm{Cl}_3(\mathrm{F}_3^1(K))\)
of the first Hilbert \(3\)-class field of \(K\)
is a consequence of \(m=5\), \(e=3\), and the isomorphisms in
\cite{Ar1}
and
\cite[Satz 4.2.4, p. 131]{Ne1}
\[\mathrm{Cl}_3(\mathrm{F}_3^1(K))\simeq\gamma_2(G)
\simeq
\begin{cases}
\mathrm{A}(3,m-2)\times\mathrm{A}(3,e-2)=\mathrm{A}(3,3)\times\mathrm{A}(3,1)
\text{ for }\rho\not\equiv\beta-1\pmod{3},\\
\mathrm{A}(3,m-3)\times\mathrm{A}(3,e-1)=\mathrm{A}(3,2)\times\mathrm{A}(3,2)
\text{ for }\rho\equiv\beta-1\pmod{3}.
\end{cases}\]
The structure of the \(3\)-class groups \(\mathrm{Cl}_3(N_i)\simeq M_i/\gamma_2(M_i)\)
follows from Table
\ref{tab:SmlSecTyp},
if we take into consideration the preparations in \S\S\
\ref{sss:LowFst}--\ref{sss:LowTrd},
resp. in Lemma
\ref{l:LowExo}.
\end{proof}

\begin{corollary}
\label{c:SmlSecRul}
For each of these twelve isomorphism classes of the second \(3\)-class group \(G\),
a \(3\)-class group \(\mathrm{Cl}_3(N_i)\), \(1\le i\le 4\),
is  of type \((3,3,3)\) if and only if
the norm class group \(\mathrm{Norm}_{N_i\vert K}(\mathrm{Cl}_3(N_i))\)
becomes principal either in none or in three of the extensions
\(N_\ell\), \(1\le\ell\le 4\).\\
The extensions with \(3\)-class group of type \((3,3,3)\)
always satisfy the condition \((\mathrm{B})\) of Taussky
\cite{Ta},
i. e., they have a partial principalization without fixed point.
\end{corollary}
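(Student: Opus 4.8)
The plan is to proceed exactly as in the proof of Corollary~\ref{c:SmpSecRul}: once Theorem~\ref{t:SmlSecStr} and Table~\ref{tab:SmlSecStr} are available, the assertion becomes a purely combinatorial statement about the three principalization patterns $\varkappa$ that occur among the twelve isomorphism classes, namely $\mathrm{G.19}$ with $(\beta,\rho)=(-1,1)$, $\mathrm{H.4}$ with $(\beta,\rho)=(1,1)$, and $\mathrm{b.10}$ with $(\beta,\rho)\in\{(0,-1),(0,1)\}$. No new structural input beyond Theorem~\ref{t:SmlSecStr} is needed.

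First I would record the group-theoretic dictionary furnished by the Artin reciprocity law~\cite{Ar1} together with Proposition~\ref{prp:TwoStgTow}. Writing $H_1,\dots,H_4$ for the four subgroups of order $3$ of $\mathrm{Cl}_3(K)\simeq G/\gamma_2(G)$, indexed so that $H_j$ is the image of $M_j/\gamma_2(G)$ and hence corresponds to $N_j$, the norm class group $\mathrm{Norm}_{N_j\vert K}(\mathrm{Cl}_3(N_j))$ is identified with $H_j$, while the principalization kernel $\ker(\mathrm{j}_{N_\ell\vert K})$ equals $H_{\varkappa(\ell)}$, with the convention $H_0=\mathrm{Cl}_3(K)$ in the case of a total principalization. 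Consequently the statement "$\mathrm{Norm}_{N_j\vert K}(\mathrm{Cl}_3(N_j))$ becomes principal in $N_\ell$" is equivalent to $\varkappa(\ell)\in\{0,j\}$, and the quantity to be compared with the structure of $\mathrm{Cl}_3(N_j)$ is the integer $c_j=\#\{\,1\le\ell\le 4\mid\varkappa(\ell)\in\{0,j\}\,\}$, so that the claimed rule is precisely $\mathrm{Cl}_3(N_j)\simeq(3,3,3)\iff c_j\in\{0,3\}$, and $\varepsilon(K)=\#\{\,j\mid c_j\in\{0,3\}\,\}$.

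Then I would run through the four rows of Table~\ref{tab:SmlSecStr}. For $\mathrm{G.19}$, $\varkappa=(2143)$ is a fixed-point-free permutation, so $c_j=1$ for every $j$; no value lies in $\{0,3\}$, consistent with $\varepsilon=0$. For $\mathrm{H.4}$, $\varkappa=(4443)$ gives $(c_1,c_2,c_3,c_4)=(0,0,1,3)$, matching $\mathrm{Cl}_3(N_1),\mathrm{Cl}_3(N_2),\mathrm{Cl}_3(N_4)\simeq(3,3,3)$ and $\mathrm{Cl}_3(N_3)\simeq(9,3)$. For $\mathrm{b.10}$, $\varkappa=(0043)$ in both parameter cases gives $(c_1,c_2,c_3,c_4)=(2,2,3,3)$, matching $\mathrm{Cl}_3(N_3),\mathrm{Cl}_3(N_4)\simeq(3,3,3)$ and $\mathrm{Cl}_3(N_1),\mathrm{Cl}_3(N_2)\simeq(9,3)$. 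In each row the set $\{\,j\mid c_j\in\{0,3\}\,\}$ coincides with $\{\,j\mid\mathrm{Cl}_3(N_j)\simeq(3,3,3)\,\}$ as read off from Table~\ref{tab:SmlSecStr}, which is the asserted equivalence. For the last sentence I would simply observe that for every such $j$ one has $\varkappa(j)\notin\{0,j\}$ — for $\mathrm{H.4}$ these are $j\in\{1,2,4\}$ with $\varkappa(j)\in\{4,4,3\}$, and for $\mathrm{b.10}$ they are $j\in\{3,4\}$ with $\varkappa(j)\in\{4,3\}$ — so the principalization of $\mathrm{Cl}_3(K)$ in $N_j$ is partial ($\varkappa(j)\neq 0$) and has no fixed point ($\varkappa(j)\neq j$); this is exactly condition $(\mathrm{B})$ of Taussky~\cite{Ta}, as predicted by \cite[Satz 7, p. 11]{HeSm}.

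The only delicate point is the bookkeeping. What must be handled carefully is that the labelling of $\varkappa$ is the one normalized by $g_1=y$, $g_2=x$, $g_3=xy$, $g_4=xy^{-1}$ and by the defining conditions on $x,y$ in \S\ \ref{ss:SmlSec}, and that it is applied consistently with the column order of Tables~\ref{tab:SmlSecTyp} and~\ref{tab:SmlSecStr}; and for type $\mathrm{b.10}$, where $\varkappa$ is not a permutation, one must remember that both indices $\ell$ with $\varkappa(\ell)=0$ contribute to $c_j$ for \emph{every} $j$, so that the counts $c_1=c_2=2$ fall outside $\{0,3\}$ while $c_3=c_4=3$ do not. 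Beyond this clerical care the argument is immediate from Theorem~\ref{t:SmlSecStr}.
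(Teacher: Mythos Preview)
Your proposal is correct and follows essentially the same approach as the paper, which simply states that the corollary ``is an immediate consequence of the principalization types \(\varkappa\) of these twelve isomorphism classes.'' You have spelled out this immediate consequence in full detail, setting up the counting invariant \(c_j\) explicitly and verifying each row of Table~\ref{tab:SmlSecStr}; the paper leaves this routine check to the reader (just as it demonstrated only two sample cases in the proof of the analogous Corollary~\ref{c:SmpSecRul}).
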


\begin{proof}
This is an immediate consequence of the principalization types \(\varkappa\)
of these twelve isomorphism classes.
\end{proof}

\begin{corollary}
\label{c:SmlSecQdr}
If \(K\) is a quadratic base field with \(G\in\Phi_s\),
for some \(s\in\lbrace 40,41,42,43\rbrace\),
then the total principalization type
\(\mathrm{b.10}\) is impossible,
due to class number relations,
and the remaining two partial principalization types
\(\mathrm{G.19}\) and \(\mathrm{H.4}\)
are characterised uniquely by the invariant \(\varepsilon\).\\
Furthermore, only the second \(3\)-class groups
\(G=\langle 729,57\rangle\) and \(G=\langle 729,45\rangle\) are possible.
\end{corollary}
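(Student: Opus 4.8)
The plan is to deduce all three assertions from Theorem~\ref{t:SmlSecStr} and its Table~\ref{tab:SmlSecStr}. By hypothesis $G=\mathrm{Gal}(\mathrm{F}_3^2(K)\vert K)$ is one of the twelve metabelian stem groups of the families $\Phi_{40},\Phi_{41},\Phi_{42},\Phi_{43}$, and within these families the only principalization types that occur are $\mathrm{b.10}$ for $G\in\Phi_{40}\cup\Phi_{41}$, $\mathrm{H.4}$ for $G\in\Phi_{42}$, and $\mathrm{G.19}$ for $G\in\Phi_{43}$; for each of them the structure of the four $3$-class groups $\mathrm{Cl}_3(N_i)$ is read off from Table~\ref{tab:SmlSecStr}.

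First I would rule out the total principalization type $\mathrm{b.10}$, exactly as in the proof of Corollary~\ref{c:SmpSecQdr}. Since $\mathrm{b.10}$ has transfer kernel $\varkappa=(0043)$ with singulet $\varkappa(1)=0$, the entire $3$-class group $\mathrm{Cl}_3(K)$ becomes principal in $N_1$; for a quadratic base field this forces $K$ to be real and $N_1$ to be an $S_3$-field of type $\alpha$, so that the $3$-exponent of $\mathrm{h}_3(N_1)$ is even, by \cite[Prop.~4.3--4.4, pp.~484--485]{Ma1} and \cite[Satz~7, p.~11]{HeSm}. But Table~\ref{tab:SmlSecStr} gives $\mathrm{Cl}_3(N_1)\simeq(9,3)$, hence $\mathrm{h}_3(N_1)=3^3$ has odd $3$-exponent, a contradiction. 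So no quadratic $K$ can have its second $3$-class group in $\Phi_{40}$ or $\Phi_{41}$, and only the partial types $\mathrm{G.19}$ and $\mathrm{H.4}$ survive. The second assertion is then immediate, since Table~\ref{tab:SmlSecStr} gives $\varepsilon=0$ for $\mathrm{G.19}$ and $\varepsilon=3$ for $\mathrm{H.4}$, so $\varepsilon$ takes only these two values and determines the type.

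For the third assertion I would invoke the action of $\mathrm{Gal}(K\vert\mathbb{Q})$ on $G$. As $\mathrm{F}_3^2(K)\vert\mathbb{Q}$ is Galois, the nontrivial automorphism of $K\vert\mathbb{Q}$ lifts to some $\sigma_0\in\mathrm{Gal}(\mathrm{F}_3^2(K)\vert\mathbb{Q})$ with $\sigma_0^2\in G$, and conjugation by $\sigma_0$ induces on $G/\gamma_2(G)\simeq\mathrm{Cl}_3(K)$ the inversion $\bar g\mapsto\bar g^{-1}$, because $\mathfrak{a}\,\mathfrak{a}^{\sigma_0}$ is the extension to $K$ of a rational ideal, hence principal, for every ideal $\mathfrak{a}$ of $K$. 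Thus the second $3$-class group of a quadratic field must admit an automorphism acting as $-1$ on its abelianization and whose square lies in $\mathrm{Inn}(G)$, i.e.\ it must be a $\sigma$-group. It then remains to decide which of the two metabelian $\mathrm{G.19}$-groups of order $3^6$ (the vertex $\langle 729,57\rangle\in\Phi_{43}$ and its sibling on $\mathcal{G}(3,2)$ in Figure~\ref{fig:Typ33CoCl2}) and which of the four metabelian $\mathrm{H.4}$-groups of order $3^6$ (the vertex $\langle 729,45\rangle\in\Phi_{42}$ and its three siblings) are $\sigma$-groups. Applying the formal generator inversion $x\mapsto x^{-1}$, $y\mapsto y^{-1}$ to the presentation $G_\rho^{5,6}(\alpha,\beta,\gamma,\delta)$ and tracking its effect on the parameter system $(\alpha,\beta,\gamma,\delta,\rho)$, one finds that the sibling groups are \emph{chiral}: the inverted presentation defines a non-isomorphic group, so no automorphism inverts their abelianization, whereas $\langle 729,57\rangle$ and $\langle 729,45\rangle$ do admit such an automorphism. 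This leaves precisely these two groups, as claimed.

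The main obstacle is this last verification---computing how generator inversion permutes the admissible parameter tuples of \cite[pp.~4--7]{Ne2} and confirming that only $\langle 729,57\rangle$ and $\langle 729,45\rangle$ are preserved up to isomorphism. Everything else reduces to Theorem~\ref{t:SmlSecStr}, Table~\ref{tab:SmlSecStr}, and the argument already carried out in Corollary~\ref{c:SmpSecQdr}.
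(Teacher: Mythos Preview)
Your argument for the first two assertions matches the paper's: the paper also reduces the impossibility of type $\mathrm{b.10}$ to the class-number parity contradiction from Corollary~\ref{c:SmpSecQdr} (it invokes both $\varkappa(1)=\varkappa(2)=0$, but your use of $\varkappa(1)=0$ alone already suffices), and the separation of $\mathrm{G.19}$ and $\mathrm{H.4}$ by $\varepsilon\in\{0,3\}$ is immediate from Table~\ref{tab:SmlSecStr}. For the third assertion the paper gives no argument whatsoever---it simply cites Thm.~3.14 of \cite{Ma4}---so your $\sigma$-group approach via the $\mathrm{Gal}(K\vert\mathbb{Q})$-action on $G$ is actually more informative than the paper's own proof, and is presumably what that reference contains.

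One caution on your sketch of the final verification: the substitution $(x,y)\mapsto(x^{-1},y^{-1})$ on the free group always induces an isomorphism between $G$ and the group presented by the ``inverted'' relations, so the phrase ``the inverted presentation defines a non-isomorphic group'' cannot be taken literally. What you actually need is whether $G$ has an \emph{automorphism} inducing $-I$ on $G/G^\prime$; equivalently, whether for some $c_1,c_2\in G^\prime$ the pair $(x^{-1}c_1,\,y^{-1}c_2)$ is again a normal admissible generating pair realising the \emph{same} parameter tuple $(\alpha,\beta,\gamma,\delta,\rho)$ up to the equivalences in \cite{Ne2}. Tracking this over all admissible re-choices of generators---or, equivalently, computing the image of $\mathrm{Aut}(G)\to\mathrm{GL}_2(\mathbb{F}_3)$ for each of the six candidate groups---is the genuine content of the step you correctly flag as the main obstacle.
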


\begin{proof}
For the principalization type \(\mathrm{b.10}\)
with singulets \(\varkappa(1)=\varkappa(2)=0\),
the entire \(3\)-class group \(\mathrm{Cl}_3(K)\) becomes principal in \(N_1,N_2\).
In the case of a quadratic base field \(K\) this yields a similar contradiction
to \(\mathrm{Cl}_3(N_1)\simeq\mathrm{Cl}_3(N_2)\simeq\mathrm{A}(3,3)\)
as in the proof of Corollary
\ref{c:SmpSecQdr}.
For the last assertion,
we refer to Thm. 3.14 in
\cite{Ma4}.
\end{proof}

\begin{example}
\label{ex:SmlSecQdr}
The first occurrences of second \(3\)-class groups
\(G=\mathrm{Gal}(\mathrm{F}_3^2(K)\vert K)\) of coclass \(\mathrm{cc}(G)=2\)
with invariants \(m=5\), \(n=6\), \(e=3\), \(\rho=\pm 1\) and cyclic centre
among quadratic fields \(K=\mathbb{Q}(\sqrt{D})\)
with discriminant \(-10^6<D<0\), resp. \(0<D<10^7\),
and \(3\)-class group of type \((3,3)\)
turned out to be the following.

\begin{itemize}
\item
The smallest value \(\lvert D\rvert\) of the discriminant
of a complex quadratic field \(K\)
with principalization type \(\mathrm{H.4}\), resp. \(\mathrm{G.19}\),
is \(3\,896\), resp. \(12\,067\)
\cite[Tbl. 3, p. 19]{HeSm}.
\item
The smallest discriminant \(D\) of a real quadratic field \(K\)
with principalization type \(\mathrm{G.19}\), resp. \(\mathrm{H.4}\),
is \(214\,712\), resp. \(957\,013\).
Both were unknown until \(2006\), resp. \(2009\).
\end{itemize}

\end{example}


\subsection{All other groups \(G\) of coclass \(\mathrm{cc}(G)=2\) with \(m\ge 5\), \(n\ge 6\)}
\label{ss:Sec}
This section corresponds to the third and fourth row of Table
\ref{tab:LowUnk}.
Here we have to analyze the abelianizations of three maximal subgroups \(M_i\), \(2\le i\le 4\).
These groups satisfy the following general relations.
For \(m\ge 6\), we have \(\sigma_{m-2},\sigma_{m-1}\in\gamma_4(G)\).
For \(m=5\), the case \(\rho=\pm 1\) has been investigated
in the preceding section already,
and we only have to consider the remaining possibility \(\rho=0\).
Since \(\sigma_{m-1}\in\gamma_4(G)\), also for \(m=5\),
the following congruences modulo \(\gamma_4(G)\) are valid, generally.
The apparent dependencies on parameters \(\alpha,\gamma,\rho\) vanish.
Here we use
\cite[Lem. 3.4.11, p. 105]{Ne1}
and
(\ref{eqn:LowRel}).

\begin{eqnarray*}
\label{e:SecGnrRel}
(xy)^3&=&\sigma_{m-2}^{\rho(\beta+\delta)}\sigma_{m-1}^{\alpha+\gamma+\rho(\beta+\delta)}\tau_3^{\beta+\delta}
\equiv\tau_3^{\beta+\delta}\pmod{\gamma_4(G)},\\
(xy^{-1})^3&=&\sigma_{m-2}^{\rho(\delta-\beta)}\sigma_{m-1}^{\alpha-\gamma+\rho\beta}\tau_3^{\beta-\delta}
\equiv\tau_3^{\beta-\delta}\pmod{\gamma_4(G)},\\
t_3&=&\tau_3\tau_4\tau_3^{-\beta}\sigma_{m-2}^{-\rho\delta}\sigma_{m-1}^{-\alpha}
\equiv\tau_3^{1-\beta}\pmod{\gamma_4(G)},\\
s_3&=&\sigma_3^{-1}\sigma_4^{-1}\sigma_{m-2}^{\rho\beta}\sigma_{m-1}^\gamma\tau_3^\delta
\equiv\sigma_3^{-1}\tau_3^\delta\pmod{\gamma_4(G)},\\
s_3t_3&\equiv&\sigma_3^{-1}\tau_3^{\delta-\beta+1}\pmod{\gamma_4(G)},\\
s_3t_3^{-1}&\equiv&\sigma_3^{-1}\tau_3^{\beta+\delta-1}\pmod{\gamma_4(G)}.
\end{eqnarray*}

\noindent
In Table
\ref{tab:SecTyp}
we calculate the third powers \(g_i^3\)
of the generators
\(g_1=y,g_2=x,g_3=xy,g_4=xy^{-1}\)
of the maximal normal subgroups \(M_i=\langle g_i,\gamma_2(G)\rangle\)
and the generators of the commutator subgroups
\(\gamma_2(M_i)\), \(1\le i\le 4\), modulo \(\gamma_4(G)\)
for each of the isomorphism classes of groups \(G\) with \(m\ge 5\), \(n\ge 6\), \(e=3\).
Several of these classes have the same principalization type
and the same parameters \(\beta,\delta\).
The left number of isomorphism classes concerns the single case \(m=5\)
\cite[pp. 8--9]{Ne2},
the right number odd values of \(m\ge 7\)
\cite[pp. 8--9 and pp. 16--19]{Ne2}
and the middle number even values of \(m\ge 6\)
\cite[pp. 10--12 and pp. 13--15]{Ne2}.

\begin{table}[ht]
\caption{Parameters, third powers, and generators for \(m\ge 5\), \(n\ge 6\), \(e=3\)}
\label{tab:SecTyp}
\begin{center}
\begin{tabular}{|l|ccc|rr|cccc|cccc|}
\hline
 Type & \multicolumn{3}{|c|}{isom. cl.} & \(\beta\) & \(\delta\) &      \(y^3\) &    \(x^3\) & \(\overline{(xy)^3}\) & \(\overline{(xy^{-1})^3}\) & \(\overline{t_3}\) &      \(\overline{s_3}\) &     \(\overline{s_3t_3}\) &   \(\overline{s_3t_3^{-1}}\) \\
\hline
 b.10 &          1 & 7 & 9              &     \(0\) &      \(0\) & \(\sigma_3\) & \(\tau_3\) &                 \(1\) &                      \(1\) &         \(\tau_3\) &       \(\sigma_3^{-1}\) &   \(\sigma_3^{-1}\tau_3\) & \(\sigma_3^{-1}\tau_3^{-1}\) \\
 d.19 &          1 & 2 & 1              &     \(0\) &      \(0\) & \(\sigma_3\) & \(\tau_3\) &                 \(1\) &                      \(1\) &         \(\tau_3\) &       \(\sigma_3^{-1}\) &   \(\sigma_3^{-1}\tau_3\) & \(\sigma_3^{-1}\tau_3^{-1}\) \\
 d.23 &          1 & 1 & 1              &     \(0\) &      \(0\) & \(\sigma_3\) & \(\tau_3\) &                 \(1\) &                      \(1\) &         \(\tau_3\) &       \(\sigma_3^{-1}\) &   \(\sigma_3^{-1}\tau_3\) & \(\sigma_3^{-1}\tau_3^{-1}\) \\
 d.25 &          1 & 2 & 1              &     \(0\) &      \(0\) & \(\sigma_3\) & \(\tau_3\) &                 \(1\) &                      \(1\) &         \(\tau_3\) &       \(\sigma_3^{-1}\) &   \(\sigma_3^{-1}\tau_3\) & \(\sigma_3^{-1}\tau_3^{-1}\) \\
\hline                                  
 c.18 &          1 & 1 & 1              &    \(-1\) &      \(1\) & \(\sigma_3\) & \(\tau_3\) &                 \(1\) &            \(\tau_3^{-2}\) &       \(\tau_3^2\) & \(\sigma_3^{-1}\tau_3\) &         \(\sigma_3^{-1}\) & \(\sigma_3^{-1}\tau_3^{-1}\) \\
 E.6  &          1 & 1 & 1              &    \(-1\) &      \(1\) & \(\sigma_3\) & \(\tau_3\) &                 \(1\) &            \(\tau_3^{-2}\) &       \(\tau_3^2\) & \(\sigma_3^{-1}\tau_3\) &         \(\sigma_3^{-1}\) & \(\sigma_3^{-1}\tau_3^{-1}\) \\
 E.14 &          1 & 2 & 1              &    \(-1\) &      \(1\) & \(\sigma_3\) & \(\tau_3\) &                 \(1\) &            \(\tau_3^{-2}\) &       \(\tau_3^2\) & \(\sigma_3^{-1}\tau_3\) &         \(\sigma_3^{-1}\) & \(\sigma_3^{-1}\tau_3^{-1}\) \\
 H.4  &          1 & 8 & 9              &    \(-1\) &      \(1\) & \(\sigma_3\) & \(\tau_3\) &                 \(1\) &            \(\tau_3^{-2}\) &       \(\tau_3^2\) & \(\sigma_3^{-1}\tau_3\) &         \(\sigma_3^{-1}\) & \(\sigma_3^{-1}\tau_3^{-1}\) \\
\hline                                  
 c.21 &          1 & 1 & 1              &     \(0\) &      \(1\) & \(\sigma_3\) & \(\tau_3\) &            \(\tau_3\) &            \(\tau_3^{-1}\) &         \(\tau_3\) & \(\sigma_3^{-1}\tau_3\) & \(\sigma_3^{-1}\tau_3^2\) &            \(\sigma_3^{-1}\) \\
 E.8  &          1 & 1 & 1              &     \(0\) &      \(1\) & \(\sigma_3\) & \(\tau_3\) &            \(\tau_3\) &            \(\tau_3^{-1}\) &         \(\tau_3\) & \(\sigma_3^{-1}\tau_3\) & \(\sigma_3^{-1}\tau_3^2\) &            \(\sigma_3^{-1}\) \\
 E.9  &          1 & 2 & 1              &     \(0\) &      \(1\) & \(\sigma_3\) & \(\tau_3\) &            \(\tau_3\) &            \(\tau_3^{-1}\) &         \(\tau_3\) & \(\sigma_3^{-1}\tau_3\) & \(\sigma_3^{-1}\tau_3^2\) &            \(\sigma_3^{-1}\) \\
 G.16 &          1 & 8 & 9              &     \(0\) &      \(1\) & \(\sigma_3\) & \(\tau_3\) &            \(\tau_3\) &            \(\tau_3^{-1}\) &         \(\tau_3\) & \(\sigma_3^{-1}\tau_3\) & \(\sigma_3^{-1}\tau_3^2\) &            \(\sigma_3^{-1}\) \\
\hline
\end{tabular}
\end{center}
\end{table}

The order of the coset of \(g_i\in M_i\) with respect to \(\gamma_2(M_i)\)
is bounded from above by \(3\) if and only if
the third power \(g_i^3\) is contained in \(\gamma_2(M_i)\).

\begin{theorem}
\label{t:SecStr}
(TTT \(\tau(G)\) of groups \(G\) on coclass trees of \(\mathcal{G}(3,2)\))\\
Let \(K\) be a number field with \(3\)-class group
\(\mathrm{Cl}_3(K)\) of type \((3,3)\).
Suppose that the second \(3\)-class group \(G=\mathrm{Gal}(\mathrm{F}_3^2(K)\vert K)\)
of \(K\) is of order \(\lvert G\rvert=3^n\)
and of class \(\mathrm{cl}(G)=m-1\), where \(n\ge 6\) and \(m=n-1\),
i. e., that \(G\) is a vertex on one of the three coclass trees of coclass graph \(\mathcal{G}(3,2)\)
in Figure
\ref{fig:Typ33CoCl2},
with invariant \(e=3\).
In the case \(m=5\), \(n=6\) let \(\lbrack \chi_s(G),\gamma_e(G)\rbrack =1\).\\
Then the structure of the \(3\)-class groups \(\mathrm{Cl}_3(N_i)\)
of the four unramified cyclic cubic extensions \(N_i\) of \(K\)
is given by Table
\ref{tab:SecStr},
in dependence on the principalization type \(\varkappa\) of \(K\).
The invariant \(\varepsilon\) denotes the number of \(3\)-class groups
\(\mathrm{Cl}_3(N_i)\) of type \((3,3,3)\), for each principalization type.
Generally, the first two \(3\)-class groups
\(\mathrm{Cl}_3(N_1)\) and \(\mathrm{Cl}_3(N_2)\)
are nearly homocyclic.
\end{theorem}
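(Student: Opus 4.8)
The plan is to combine the Artin-reciprocity isomorphism $\mathrm{Cl}_3(N_i)\simeq M_i/\gamma_2(M_i)$ of Proposition \ref{prp:TwoStgTow} with the orders $\lvert\gamma_2(M_i)\rvert$ from \cite[Cor. 3.2, p. 480]{Ma1}, the reductions of \S\S\ \ref{sss:LowFst}--\ref{sss:LowTrd} condensed in Lemma \ref{l:LowExo}, and the explicit third powers and commutator generators tabulated in Table \ref{tab:SecTyp}. The whole argument is a finite verification once these tools are in place.

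First I would record that $\mathrm{Cl}_3(N_1)$ needs no new work. Since $G$ has coclass $2$, order $3^n$, class $m-1$, and invariant $e=n-m+2=3$ with $4\le m<n\le 2m-3$ — which holds both for $m=5,n=6$ and for all $m\ge 6$, $n=m+1$ — Theorem \ref{t:LowHom} applies and gives $\mathrm{Cl}_3(N_1)\simeq\mathrm{A}(3,m-1)$ if $k=0$ and $\mathrm{A}(3,m-2)$ if $k=1$; in either case it is nearly homocyclic. For $\mathrm{Cl}_3(N_2)$, equation (\ref{eqn:TwoStgTow}) together with \cite[Cor. 3.2, p. 480]{Ma1} gives $\lvert\mathrm{Cl}_3(N_2)\rvert=3^{(n-1)-(m-3)}=3^e=3^3$, so by Lemma \ref{l:LowExo} it is of type $(3,3,3)$ if and only if $x^3=\tau_3\in\gamma_2(M_2)$. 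Because $e=3$, the subgroup $\gamma_4(G)=\langle\sigma_4,\ldots,\sigma_{m-1}\rangle$ contains no $\tau$-generator, and modulo $\gamma_4(G)$ one has $\gamma_2(M_2)=\langle s_3,\sigma_4,\ldots,\sigma_{m-1}\rangle\equiv\langle\sigma_3^{-1}\tau_3^{\delta}\rangle$, a cyclic group of order $3$ whose generator carries $\sigma_3$ with exponent $-1\not\equiv 0\pmod 3$; hence $\tau_3$, having $\sigma_3$-exponent $0$, does not lie in it, the coset $\overline{x}$ has order $9$, and $\mathrm{Cl}_3(N_2)\simeq\mathrm{A}(3,3)$ is nearly homocyclic of type $(9,3)$ for every principalization type.

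Next I would settle $\mathrm{Cl}_3(N_3)$ and $\mathrm{Cl}_3(N_4)$, each of order $3^{(n-1)-(n-4)}=3^3$. By Lemma \ref{l:LowExo}, $\mathrm{Cl}_3(N_3)$ is of type $(3,3,3)$, resp.\ $(9,3)$, according as $(xy)^3\in\gamma_2(M_3)=\langle s_3t_3,\gamma_4(G)\rangle$ or not, and similarly for $N_4$ with $(xy^{-1})^3$ and $\gamma_2(M_4)=\langle s_3t_3^{-1},\gamma_4(G)\rangle$. Working modulo $\gamma_4(G)$ with the congruences of \S\ref{ss:Sec}, it remains only to run through the three parameter pairs $(\beta,\delta)$ occurring in Table \ref{tab:SecTyp}. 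For $(\beta,\delta)=(0,0)$ (types b.10, d.19, d.23, d.25) both $\overline{(xy)^3}$ and $\overline{(xy^{-1})^3}$ equal $1$ and hence lie in the corresponding commutator subgroups, so $\mathrm{Cl}_3(N_3)\simeq\mathrm{Cl}_3(N_4)\simeq(3,3,3)$. For $(\beta,\delta)=(-1,1)$ (types c.18, E.6, E.14, H.4) one has $\overline{(xy)^3}=1\in\gamma_2(M_3)$ but $\overline{(xy^{-1})^3}=\tau_3^{-2}\notin\langle\sigma_3^{-1}\tau_3^{-1}\rangle\equiv\gamma_2(M_4)$, so $\mathrm{Cl}_3(N_3)\simeq(3,3,3)$ and $\mathrm{Cl}_3(N_4)\simeq(9,3)$. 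For $(\beta,\delta)=(0,1)$ (types c.21, E.8, E.9, G.16) both $\overline{(xy)^3}=\tau_3\notin\langle\sigma_3^{-1}\tau_3^{2}\rangle\equiv\gamma_2(M_3)$ and $\overline{(xy^{-1})^3}=\tau_3^{-1}\notin\langle\sigma_3^{-1}\rangle\equiv\gamma_2(M_4)$, so $\mathrm{Cl}_3(N_3)\simeq\mathrm{Cl}_3(N_4)\simeq(9,3)$. In every sub-case the non-membership is immediate: the displayed generator of $\gamma_2(M_i)$ carries $\sigma_3$ to a unit power mod $3$, whereas the relevant third power is a pure $\tau_3$-power. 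Tabulating these outcomes together with $\mathrm{Cl}_3(N_1)$, $\mathrm{Cl}_3(N_2)$ and $\varepsilon=\#\{\,i:\mathrm{Cl}_3(N_i)\simeq(3,3,3)\,\}$ reproduces Table \ref{tab:SecStr}; the entry $\mathrm{Cl}_3(\mathrm{F}_3^1(K))\simeq\gamma_2(G)$, where present, is obtained exactly as in the proof of Theorem \ref{t:SmpSecStr} from \cite[Satz 4.2.4, p. 131]{Ne1}.

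I expect the only real difficulty to be bookkeeping rather than any isolated deep step: one must be sure that reducing modulo $\gamma_4(G)$ discards nothing relevant — which is precisely what the hypothesis $e=3$ secures, since then $\gamma_4(G)$ is generated purely by $\sigma$-powers and cannot absorb a stray $\tau_3$ — and that Lemma \ref{l:LowExo} really does collapse the whole type question down to the single coset $\overline{g_i}$, so that the only thing left to check is the membership $g_i^3\in\gamma_2(M_i)$. Once those two reductions are in hand, the proof is the finite table check sketched above.
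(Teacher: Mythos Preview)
Your proposal is correct and follows essentially the same approach as the paper's own proof: invoke Theorem \ref{t:LowHom} for \(\mathrm{Cl}_3(N_1)\), and for \(i=2,3,4\) reduce via Lemma \ref{l:LowExo} to the membership test \(g_i^3\in\gamma_2(M_i)\), which is then read off from Table \ref{tab:SecTyp} by working modulo \(\gamma_4(G)\). You have simply spelled out the finite checks that the paper leaves implicit; note only that Table \ref{tab:SecStr} has no \(\mathrm{Cl}_3(\mathrm{F}_3^1(K))\) column, so your closing remark about it is unnecessary.
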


\begin{table}[ht]
\caption{\(3\)-class groups of type \((3,3,3)\) for \(m\ge 5\), \(n\ge 6\), \(e=3\)}
\label{tab:SecStr}
\begin{center}
\begin{tabular}{|lc|cccc|c|}
\hline
 Type & \(\varkappa\) &         \(\mathrm{Cl}_3(N_1)\) & \(\mathrm{Cl}_3(N_2)\) & \(\mathrm{Cl}_3(N_3)\) & \(\mathrm{Cl}_3(N_4)\) & \(\varepsilon\) \\
\hline
 b.10 & \((0043)\)    &   A\((3,m-1)\) or A\((3,m-2)\) &              \((9,3)\) &            \((3,3,3)\) &            \((3,3,3)\) &           \(2\) \\
 d.19 & \((4043)\)    &                   A\((3,m-1)\) &              \((9,3)\) &            \((3,3,3)\) &            \((3,3,3)\) &           \(2\) \\
 d.23 & \((1043)\)    &                   A\((3,m-1)\) &              \((9,3)\) &            \((3,3,3)\) &            \((3,3,3)\) &           \(2\) \\
 d.25 & \((2043)\)    &                   A\((3,m-1)\) &              \((9,3)\) &            \((3,3,3)\) &            \((3,3,3)\) &           \(2\) \\
\hline
 c.18 & \((0313)\)    &                   A\((3,m-1)\) &              \((9,3)\) &            \((3,3,3)\) &              \((9,3)\) &           \(1\) \\
 E.6  & \((1313)\)    &                   A\((3,m-1)\) &              \((9,3)\) &            \((3,3,3)\) &              \((9,3)\) &           \(1\) \\
 E.14 & \((2313)\)    &                   A\((3,m-1)\) &              \((9,3)\) &            \((3,3,3)\) &              \((9,3)\) &           \(1\) \\
 H.4  & \((3313)\)    &   A\((3,m-1)\) or A\((3,m-2)\) &              \((9,3)\) &            \((3,3,3)\) &              \((9,3)\) &           \(1\) \\
\hline
 c.21 & \((0231)\)    &                   A\((3,m-1)\) &              \((9,3)\) &              \((9,3)\) &              \((9,3)\) &           \(0\) \\
 E.8  & \((1231)\)    &                   A\((3,m-1)\) &              \((9,3)\) &              \((9,3)\) &              \((9,3)\) &           \(0\) \\
 E.9  & \((2231)\)    &                   A\((3,m-1)\) &              \((9,3)\) &              \((9,3)\) &              \((9,3)\) &           \(0\) \\
 G.16 & \((4231)\)    &   A\((3,m-1)\) or A\((3,m-2)\) &              \((9,3)\) &              \((9,3)\) &              \((9,3)\) &           \(0\) \\
\hline
\end{tabular}
\end{center}
\end{table}

\begin{proof}
The structure of the first \(3\)-class group \(\mathrm{Cl}_3(N_1)\)
is given here only for the sake of completeness
and is contained in the statement of Theorem
\ref{t:LowHom}
already.\\
Similarly as in the proof of Theorem
\ref{t:SmpSecStr},
the structure of the other \(3\)-class groups
\(\mathrm{Cl}_3(N_i)\simeq M_i/\gamma_2(M_i)\) with \(2\le i\le 4\)
is a consequence of table
\ref{tab:SecTyp},
when the preparations in the \S\S\
\ref{sss:LowSnd}--\ref{sss:LowTrd}, resp. in Lemma
\ref{l:LowExo},
are taken into consideration.
\end{proof}

\begin{corollary}
\label{c:SecTree}
(\(\varepsilon\) as a tree invariant)\\
All metabelian groups \(G\) on the coclass tree
\(\mathcal{T}(\langle 729,40\rangle)\),
resp. \(\mathcal{T}(\langle 243,6\rangle)\),
resp. \(\mathcal{T}(\langle 243,8\rangle)\),
of coclass graph \(\mathcal{G}(3,2)\) in Figure
\ref{fig:Typ33CoCl2}
are characterized by the value
\(\varepsilon=2\),
resp. \(\varepsilon=1\),
resp. \(\varepsilon=0\).
\end{corollary}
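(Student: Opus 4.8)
The plan is to obtain the corollary by reading the value of $\varepsilon$ off the structure results for coclass $2$ already proved. By Theorem~\ref{t:SecStr} --- supplemented by Theorem~\ref{t:SmpSecStr} for the two roots $\langle 243,6\rangle$, $\langle 243,8\rangle$, which have $m=4$, $n=5$ --- the structure of the four $3$-class groups $\mathrm{Cl}_3(N_i)$ attached to a metabelian $3$-group $G$ of coclass $\mathrm{cc}(G)=2$ with invariant $e=3$ that lies on one of the three coclass trees in Figure~\ref{fig:Typ33CoCl2}, and in particular whether each $\mathrm{Cl}_3(N_i)$ with $2\le i\le 4$ is of type $(9,3)$ or of type $(3,3,3)$, is a function of the transfer kernel type $\varkappa(G)$ alone. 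Inspecting Tables~\ref{tab:SmpSecStr} and~\ref{tab:SecStr}, this function is in fact constant on each of the three relevant sections of transfer kernel types: $\varepsilon=2$ on $\mathrm{b.10},\mathrm{d.19},\mathrm{d.23},\mathrm{d.25}$; $\varepsilon=1$ on $\mathrm{c.18},\mathrm{E.6},\mathrm{E.14},\mathrm{H.4}$; and $\varepsilon=0$ on $\mathrm{c.21},\mathrm{E.8},\mathrm{E.9},\mathrm{G.16}$ --- the entries $\mathrm{c.18}$ and $\mathrm{c.21}$ of Table~\ref{tab:SmpSecStr}, needed for the two roots of order $3^5$, agreeing with those of Table~\ref{tab:SecStr}. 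Hence it remains only to show that every metabelian vertex of $\mathcal{T}(\langle 729,40\rangle)$ carries a type from the first section, every metabelian vertex of $\mathcal{T}(\langle 243,6\rangle)$ a type from the second, and every metabelian vertex of $\mathcal{T}(\langle 243,8\rangle)$ a type from the third.

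First I would coarsen the dependence still further. The reductions of the third powers $g_i^3$ and of the generators of $\gamma_2(M_i)$ modulo $\gamma_4(G)$ carried out in \S\S\ \ref{sss:LowSnd}--\ref{sss:LowTrd}, and collected in Table~\ref{tab:SecTyp}, show that the triple of answers ``$\mathrm{Cl}_3(N_i)\simeq(9,3)$ or $(3,3,3)$'', $i=2,3,4$ --- and therefore the number $\varepsilon$ --- depends only on the relational parameters $\beta,\delta$ of Nebelung's presentation $G_\rho^{m,n}(\alpha,\beta,\gamma,\delta)$, the apparent dependence on $\alpha,\gamma,\rho$ disappearing. Concretely $(\beta,\delta)=(0,0)$ produces exactly the section $\{\mathrm{b.10},\mathrm{d.19},\mathrm{d.23},\mathrm{d.25}\}$ and $\varepsilon=2$; $(\beta,\delta)=(-1,1)$ produces $\{\mathrm{c.18},\mathrm{E.6},\mathrm{E.14},\mathrm{H.4}\}$ and $\varepsilon=1$; and $(\beta,\delta)=(0,1)$ produces $\{\mathrm{c.21},\mathrm{E.8},\mathrm{E.9},\mathrm{G.16}\}$ and $\varepsilon=0$.

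Next I would invoke Nebelung's description \cite{Ne1} of the metabelian part of $\mathcal{G}(3,2)$ restricted to groups of type $(3,3)$ with invariant $e=3$: the coclass tree $\mathcal{T}(\langle 729,40\rangle)$ consists precisely of the groups $G_\rho^{m,n}(\alpha,\beta,\gamma,\delta)$ with $n=m+1$ and $(\beta,\delta)=(0,0)$ (these have $m\ge 5$), the tree $\mathcal{T}(\langle 243,6\rangle)$ of those with $(\beta,\delta)=(-1,1)$ (with $m\ge 4$), and the tree $\mathcal{T}(\langle 243,8\rangle)$ of those with $(\beta,\delta)=(0,1)$ (with $m\ge 4$); equivalently, passing from a vertex $G$ of nilpotency class $m-1$ to its parent $G/\gamma_{m-1}(G)$ leaves $\beta$ and $\delta$ unchanged, so $(\beta,\delta)$ is a tree invariant, determined by the root. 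The root of each tree is then identified by the type labelled in Figure~\ref{fig:Typ33CoCl2}: $\mathrm{b.10}$ for $\langle 729,40\rangle$ (order $3^6$, $m=5$, $n=6$, defect $k=0$, covered by Theorem~\ref{t:SecStr}), $\mathrm{c.18}$ for $\langle 243,6\rangle$ and $\mathrm{c.21}$ for $\langle 243,8\rangle$ (both of order $3^5$, $m=4$, $n=5$, covered by Theorem~\ref{t:SmpSecStr}), while the vertices of class at least $5$ are covered by Theorem~\ref{t:SecStr} --- at $m=5$ the vertices lying on any of these three trees have $k=0$. Combining this with the two previous paragraphs yields $\varepsilon=2$, $\varepsilon=1$, $\varepsilon=0$ on the three trees, respectively.

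The hard part is not the algebra --- that is already packaged in Theorems~\ref{t:SmpSecStr} and~\ref{t:SecStr} and in Tables~\ref{tab:SecTyp}, \ref{tab:SmpSecStr}, \ref{tab:SecStr} --- but the structural input that, for each of the three trees, pins down the parameter pair $(\beta,\delta)$, equivalently the section of transfer kernel types realized by its metabelian vertices; this has to be imported from \cite{Ne1} and should be cross-checked against the classification of types in \cite{Ma2} and the labelling of Figure~\ref{fig:Typ33CoCl2}. One must also observe that the groups carrying types $\mathrm{G.19}$ or $\mathrm{H.4}$ with defect $k=1$ and $m=5$, $n=6$ form the finite trees with roots $\langle 729,57\rangle$ and $\langle 729,45\rangle$, and are therefore not vertices of any of the three coclass trees under consideration --- so Theorem~\ref{t:SmlSecStr} does not enter the argument. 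Once this tree/parameter dictionary is secured, the corollary is a bookkeeping exercise with the three tables.
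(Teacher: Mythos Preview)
Your approach is correct and coincides with the paper's own proof, which simply cites Theorem~\ref{t:SecStr} together with Nebelung's diagram \cite[p.~189~ff.]{Ne1} for the tree/TKT correspondence. You are in fact more careful than the paper in explicitly invoking Theorem~\ref{t:SmpSecStr} for the two roots $\langle 243,6\rangle$ and $\langle 243,8\rangle$ at $m=4$, $n=5$ (which lie outside the range $m\ge 5$ of Theorem~\ref{t:SecStr}), and in isolating $(\beta,\delta)$ as the governing parameter pair; the paper leaves both points implicit.
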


\begin{proof}
This is a consequence of Theorem
\ref{t:SecStr}
and the diagram
\cite[p. 189 ff.]{Ne1}.
See also \cite[Thm. 3.16--3.17]{Ma4}.
\end{proof}

\begin{corollary}
\label{c:SecRul}
As before, for these isomorphism classes of the second \(3\)-class group \(G\),
the extensions with \(3\)-class group of type \((3,3,3)\)
satisfy the condition \((\mathrm{B})\) of Taussky
\cite{Ta},
i. e., they have a partial principalization without fixed point.
However, here only the following weaker statement without admissible inversion is true:
if a \(3\)-class group \(\mathrm{Cl}_3(N_i)\), \(3\le i\le 4\),
is of type \((3,3,3)\), then
the norm class group \(\mathrm{Norm}_{N_i\vert K}(\mathrm{Cl}_3(N_i))\)
becomes principal in either two or three of the extensions
\(N_\ell\), \(1\le\ell\le 4\).
\end{corollary}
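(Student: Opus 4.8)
The plan is to deduce the corollary, exactly as in the proofs of Corollaries~\ref{c:SmpSecRul} and~\ref{c:SmlSecRul}, directly from the transfer kernel types $\varkappa$ listed in Table~\ref{tab:SecStr}. The preliminary step is to fix the dictionary between norm class groups, principalization kernels and the type $\varkappa$. Let $\mathrm{H}_\ell\le\mathrm{Cl}_3(K)$, for $1\le\ell\le 4$, denote the subgroup of order $3$ which corresponds, under the Artin isomorphism $\mathrm{Cl}_3(K)\simeq\mathrm{Gal}(\mathrm{F}_3^1(K)\vert K)$ of \cite{Ar1}, to $\mathrm{Gal}(\mathrm{F}_3^1(K)\vert N_\ell)$, so that $N_\ell$ is the class field of $\mathrm{H}_\ell$. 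By the Artin reciprocity law, and since $N_i\vert K$ is unramified cyclic of degree $3$, the norm class group is $\mathrm{Norm}_{N_i\vert K}(\mathrm{Cl}_3(N_i))=\mathrm{H}_i$. Hence $\mathrm{Norm}_{N_i\vert K}(\mathrm{Cl}_3(N_i))$ becomes principal in $N_\ell$ if and only if $\mathrm{H}_i\le\ker(\mathrm{j}_{N_\ell\vert K})$, that is, if and only if $\varkappa(\ell)\in\lbrace 0,i\rbrace$, where the entry $0$ records the total principalization $\ker(\mathrm{j}_{N_\ell\vert K})=\mathrm{Cl}_3(K)$. I would also use that condition $(\mathrm{B})$ of Taussky~\cite{Ta} for $N_i\vert K$ (a partial principalization without fixed point) is equivalent to $\varkappa(i)\notin\lbrace 0,i\rbrace$.

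Granted this, the first assertion is a short inspection of Table~\ref{tab:SecStr}. By Theorem~\ref{t:SecStr} an elementary abelian $3$-class group $\mathrm{Cl}_3(N_i)\simeq(3,3,3)$ can occur only for $i\in\lbrace 3,4\rbrace$ and only in the rows with $\varepsilon\ge 1$; reading those rows off, one has $\varkappa(3)\in\lbrace 1,4\rbrace$ throughout, and $\varkappa(4)=3$ in each of the rows with $\varepsilon=2$. Thus $\varkappa(i)\notin\lbrace 0,i\rbrace$ for the index in question, which is condition $(\mathrm{B})$, in accordance with Heider and Schmithals~\cite[Satz 7, p. 11]{HeSm}.

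For the second assertion I would run through the eight rows with $\varepsilon\ge 1$ and, for each $i\in\lbrace 3,4\rbrace$ with $\mathrm{Cl}_3(N_i)\simeq(3,3,3)$, count $\#\lbrace 1\le\ell\le 4\mid\varkappa(\ell)\in\lbrace 0,i\rbrace\rbrace$, the number of extensions $N_\ell$ in which $\mathrm{H}_i=\mathrm{Norm}_{N_i\vert K}(\mathrm{Cl}_3(N_i))$ becomes principal. This count comes out as $3$ for $\varkappa=(0043)$ with either $i$, for $\varkappa=(4043)$ with $i=4$, and for $\varkappa=(0313)$ and $\varkappa=(3313)$ with $i=3$; it comes out as $2$ in the remaining cases, namely $\varkappa=(4043)$ with $i=3$, $\varkappa=(1043)$ and $\varkappa=(2043)$ with either $i$, and $\varkappa=(1313)$ and $\varkappa=(2313)$ with $i=3$. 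In every case the value lies in $\lbrace 2,3\rbrace$, which is the claim.

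Since the whole argument reduces to reading a finite table, there is no serious obstacle; the only delicate points are the identification $\mathrm{Norm}_{N_i\vert K}(\mathrm{Cl}_3(N_i))=\mathrm{H}_i$ and the bookkeeping convention $\varkappa(\ell)=0$. What this corollary records, and the reason for the phrase ``without admissible inversion'', is that the sharp equivalence of Corollaries~\ref{c:SmpSecRul} and~\ref{c:SmlSecRul} --- where the count lies in $\lbrace 0,3\rbrace$ exactly for the type-$(3,3,3)$ extensions --- no longer holds here: the count can equal $2$ (for instance type d.19 with $i=3$), and even the converse implication breaks down (for type c.21 the count for $N_3$ is $2$ while $\mathrm{Cl}_3(N_3)\simeq(9,3)$), so only the one-directional ``two or three'' statement survives.
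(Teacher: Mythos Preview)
Your proof is correct and follows exactly the same approach as the paper, namely a direct inspection of the transfer kernel types $\varkappa$ in Table~\ref{tab:SecStr}; the paper's proof is the single sentence ``This follows by evaluating the principalization type $\varkappa$ of the isomorphism classes.'' Your write-up is more explicit (spelling out the dictionary $\mathrm{Norm}_{N_i\vert K}(\mathrm{Cl}_3(N_i))=\mathrm{H}_i$ and doing the case-by-case count), and your closing remark about why the equivalence of Corollaries~\ref{c:SmpSecRul} and~\ref{c:SmlSecRul} degenerates here to a one-directional statement is a useful clarification not made in the paper.
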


\begin{proof}
This follows by evaluating the principalization type \(\varkappa\)
of the isomorphism classes.
\end{proof}

\begin{corollary}
\label{c:SecQdr}
If \(K\) is a quadratic base field with \(G\) on a coclass tree of \(\mathcal{G}(3,2)\),
then the four total principalization types
\(\mathrm{b.10}\), \(\mathrm{d.19}\), \(\mathrm{d.23}\), \(\mathrm{d.25}\)
with \(\varepsilon=2\), i. e., \(G\in\mathcal{T}(\langle 729,40\rangle)\),
are impossible, due to class number relations.
The remaining eight principalization types cannot be determined
uniquely by the invariant \(\varepsilon\) alone .

\begin{enumerate}
\item
The total principalization types
\(\mathrm{c.18}\) with \(\varepsilon=1\) and \(\mathrm{c.21}\) with \(\varepsilon=0\)
are characterized by an even \(3\)-exponent \(u\) of the first \(3\)-class number
\(\mathrm{h}_3(N_1)=3^u\).
\item
The partial principalization types
\(\mathrm{E.6}\), \(\mathrm{E.14}\) with \(\varepsilon=1\) and 
\(\mathrm{E.8}\), \(\mathrm{E.9}\) with \(\varepsilon=0\)
are determined by an odd \(3\)-exponent \(w\)
of the \(3\)-class number
\(\mathrm{h}_3(\mathrm{F}_3^1(K))=3^w\).
\item
The partial principalization types
\(\mathrm{H.4}\) with \(\varepsilon=1\) and
\(\mathrm{G.16}\) with \(\varepsilon=0\)
are characterized by an even \(3\)-exponent \(w\)
of the \(3\)-class number
\(\mathrm{h}_3(\mathrm{F}_3^1(K))=3^w\),
provided the weak leaf conjecture
\ref{cnj:SmpSecQdr}
holds.
\end{enumerate}
Here, as before, \(\mathrm{F}_3^1(K)\) denotes the first Hilbert \(3\)-class field of \(K\).
\end{corollary}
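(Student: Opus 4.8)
Set $u=v_3(\mathrm{h}_3(N_1))$ and $w=v_3(\mathrm{h}_3(\mathrm{F}_3^1(K)))$ throughout. The plan is to read off Table~\ref{tab:SecStr} and combine it with the parity law for $S_3$-fields over a quadratic base field that already drove the proof of Corollary~\ref{c:SmpSecQdr}. First I would record, from Theorem~\ref{t:SecStr}, that the twelve principalization types realised on the coclass trees of $\mathcal{G}(3,2)$ split by the value of $\varepsilon$ into the tree $\mathcal{T}(\langle 729,40\rangle)$ with $\varepsilon=2$ (types $\mathrm{b.10},\mathrm{d.19},\mathrm{d.23},\mathrm{d.25}$), the tree $\mathcal{T}(\langle 243,6\rangle)$ with $\varepsilon=1$ (types $\mathrm{c.18},\mathrm{E.6},\mathrm{E.14},\mathrm{H.4}$), and the tree $\mathcal{T}(\langle 243,8\rangle)$ with $\varepsilon=0$ (types $\mathrm{c.21},\mathrm{E.8},\mathrm{E.9},\mathrm{G.16}$). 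Since each $\varepsilon$-class contains four types, $\varepsilon$ alone fixes the tree but not the type, which is the non-uniqueness assertion; it then remains to prove the exclusion of the $\varepsilon=2$ types and the three refinements.

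For the exclusion I would observe that $\mathrm{b.10}=(0043)$, $\mathrm{d.19}=(4043)$, $\mathrm{d.23}=(1043)$, $\mathrm{d.25}=(2043)$ all have $\varkappa(2)=0$, so the whole of $\mathrm{Cl}_3(K)$ principalizes in $N_2$. Exactly as in the proof of Corollary~\ref{c:SmpSecQdr}, over a quadratic base field a total principalization in $N_2$ forces $K$ to be real and $N_2$ to be an $S_3$-field of Taussky type $\alpha$, whence the $3$-exponent of $\mathrm{h}_3(N_2)$ is even by \cite[Prop.~4.3--4.4, pp.~484--485]{Ma1}. But Table~\ref{tab:SecStr} gives $\mathrm{Cl}_3(N_2)\simeq(9,3)=\mathrm{A}(3,3)$, of $3$-class number $3^3$ with odd $3$-exponent --- a contradiction. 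I would argue via $N_2$ rather than $N_1$ precisely because $\mathrm{Cl}_3(N_2)=(9,3)$ holds uniformly in all four rows, whereas $\mathrm{Cl}_3(N_1)$ depends on the defect of $G$.

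For the eight surviving types I would note first that $\mathrm{Cl}_3(\mathrm{F}_3^1(K))\simeq\gamma_2(G)$ has order $3^{n-2}$ and that $n=m+1$ on a coclass-$2$ tree, so $w=m-1$; by Theorem~\ref{t:LowHom}, $u=m-1$ if $G$ has defect $k=0$ and $u=m-2$ if $k=1$. Among the eight types precisely $\mathrm{c.18}=(0313)$ and $\mathrm{c.21}=(0231)$ have $\varkappa(1)=0$; for them $N_1$ is of type $\alpha$, so $u$ is even, and for the other six $\varkappa(1)\ne 0$, so $N_1$ is of type $\beta$ and $u$ is odd --- this uses the biconditional form of \cite[Prop.~4.3--4.4]{Ma1}, that over a quadratic $K$ one has $v_3(\mathrm{h}_3(N_i))$ even iff $\varkappa(i)=0$, and it establishes part~(1), with $\varepsilon$ separating $\mathrm{c.18}$ from $\mathrm{c.21}$. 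Of the remaining six, Table~\ref{tab:SecStr} gives $\mathrm{Cl}_3(N_1)=\mathrm{A}(3,m-1)$ for $\mathrm{E.6},\mathrm{E.14},\mathrm{E.8},\mathrm{E.9}$, so $u=m-1=w$ and hence $w$ is odd, which is part~(2); and for $\mathrm{H.4}$ and $\mathrm{G.16}$, whose $\mathrm{Cl}_3(N_1)$ could a priori be $\mathrm{A}(3,m-1)$ or $\mathrm{A}(3,m-2)$, I would invoke the weak leaf conjecture~\ref{cnj:SmpSecQdr} (in the form \cite[Thm.~3.14--3.17]{Ma4}) to the effect that over a quadratic field these two types are realised only at a terminal vertex of defect $k=1$, so that $u=m-2$ is odd and $w=u+1=m-1$ is even, which is part~(3). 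In parts~(2) and~(3), $\varepsilon$ again isolates the individual type, up to the intrinsic indistinguishability of $\mathrm{E.6}$ from $\mathrm{E.14}$ and of $\mathrm{E.8}$ from $\mathrm{E.9}$ by class-group structure alone.

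The main obstacle is this last step: without the leaf conjecture one cannot exclude a realisation of $\mathrm{H.4}$ or $\mathrm{G.16}$ on the mainline with $k=0$, for which $u=w=m-1$ would be odd, collapsing the ``$w$ even'' criterion of part~(3) into the ``$w$ odd'' criterion of part~(2); this is why part~(3) must remain conditional. The two parity inputs --- that a total principalization in $N_i$ over a quadratic $K$ forces $K$ real with $v_3(\mathrm{h}_3(N_i))$ even, and conversely that a partial principalization in $N_i$ forces $v_3(\mathrm{h}_3(N_i))$ odd --- are the substance of \cite[Prop.~4.3--4.4]{Ma1}; they rest on genus theory and on the $S_3$-field class-number relation $\mathrm{h}_3(N_i)=\mathrm{h}_3(L_i)^2$ or $\tfrac13\mathrm{h}_3(L_i)^2$, and I would use them here as established facts rather than reprove them.
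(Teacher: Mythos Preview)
Your proposal is correct and follows essentially the same route as the paper's own proof: exclusion of the $\varepsilon=2$ types via $\varkappa(2)=0$ and the parity contradiction at $N_2$, then the parity of $u$ determined by whether $\varkappa(1)=0$ via \cite[Prop.~4.3--4.4]{Ma1}, combined with $w=n-2=m-1$ and the defect $k$ (unconditionally $k=0$ for the E-types, conjecturally $k=1$ for $\mathrm{H.4}$ and $\mathrm{G.16}$) to read off the parity of $w$. The paper additionally cites \cite[Thm.~5.2--5.3]{Ma1} for the link between defect and principalization type over quadratic fields, which is what underlies your appeal to Table~\ref{tab:SecStr} for the $k$-values of the E-types; your explicit remark on why the $\varepsilon=2$ exclusion must go through $N_2$ rather than $N_1$ is a nice clarification the paper leaves implicit.
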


\begin{proof}
For the principalization types
\(\mathrm{b.10}\), \(\mathrm{d.19}\), \(\mathrm{d.23}\), and \(\mathrm{d.25}\)
having \(\varkappa(2)=0\),
the entire \(3\)-class group \(\mathrm{Cl}_3(K)\) becomes principal in \(N_2\).
Therefore a quadratic base field \(K\) must be real
and the unramified cyclic cubic extension \(N_2\)
must be an \(S_3\)-field of type \(\alpha\)
with even \(3\)-exponent of the \(3\)-class number \(\mathrm{h}_3(N_2)\),
in contradiction to \(\mathrm{Cl}_3(N_2)\simeq\mathrm{A}(3,3)\).
See
\cite[Prop. 4.3--4.4, p. 484--485]{Ma1}.

The parity of the \(3\)-exponent of the first \(3\)-class number
\(\mathrm{h}_3(N_1)=3^u\) turns out
to be even, \(u=m-1\equiv 0\pmod{2}\),
for the total principalization types \(\mathrm{c.18}\) and \(\mathrm{c.21}\)
with \(\varkappa(1)=0\),
to be odd, \(u=m-1\equiv 1\pmod{2}\),
for the partial principalization types
\(\mathrm{E.6}\), \(\mathrm{E.14}\) and \(\mathrm{E.8}\), \(\mathrm{E.9}\)
with \(k=0\) and thus \(w=n-2\) odd,
and to be odd, \(u=m-2\equiv 1\pmod{2}\),
for the partial principalization types \(\mathrm{H.4}\) and \(\mathrm{G.16}\)
with conjectural \(k=1\) (see Conjecture
\ref{cnj:SmpSecQdr})
and thus \(w=n-2\) even.

Here, we use the relation \(3^w=\mathrm{h}_3(\mathrm{F}_3^1(K))=\lvert\gamma_2(G)\rvert=3^{n-2}\)
and
\cite[Thm. 5.2--5.3, p. 492--493]{Ma1}.
\end{proof}

\begin{example}
\label{ex:SecQdr}
The first occurrences of second \(3\)-class groups \(G=\mathrm{Gal}(\mathrm{F}_3^2(K)\vert K)\)
on the coclass trees \(\mathcal{T}(\langle 243,6\rangle)\) and \(\mathcal{T}(\langle 243,8\rangle)\)
with invariants \(m\ge 5\), \(n\ge 6\), \(e=3\) and
bicyclic center, \(k=0\), resp. cyclic center, \(k=1\),
over quadratic base fields \(K=\mathbb{Q}(\sqrt{D})\)
with \(3\)-class group of type \((3,3)\)
and discriminant \(-10^6<D<0\), resp. \(0<D<10^7\),
are summarised in Table
\ref{tab:SecExp}.
Here, \(\lvert D\rvert\) denotes the smallest absolute value
of the discriminant of a complex quadratic field \(K\),
and \(D\) the smallest discriminant of a real quadratic field \(K\),
of the corresponding principalization type.
The earlier computations by
Scholz and Taussky
\cite{SoTa},
Heider and Schmithals
\cite{HeSm},
and Brink
\cite{Br},
are confirmed.
Cases without references,
in particular all cases with real quadratic base fields,
were unknown up to now.

\begin{table}[ht]
\caption{Examples for groups \(G\) on coclass trees with \(m\ge 5\), \(n\ge 6\), \(e=3\)}
\label{tab:SecExp}
\begin{center}
\begin{tabular}{|lc|rrr|rc|r|}
\hline
 Type           & \(\varkappa\) & \(m\) &  \(n\) & \(k\) & \(\lvert D\rvert\) & ref.        &           \(D\) \\
\hline
 c.18              & \((0313)\) & \(5\) &  \(6\) & \(0\) &         impossible &             &    \(534\,824\) \\
 E.6               & \((1313)\) & \(6\) &  \(7\) & \(0\) &        \(15\,544\) & \cite{HeSm} & \(5\,264\,069\) \\
 E.6\(\uparrow\)   & \((1313)\) & \(8\) &  \(9\) & \(0\) &       \(268\,040\) &             &       unknown   \\
 E.14              & \((2313)\) & \(6\) &  \(7\) & \(0\) &        \(16\,627\) & \cite{HeSm} & \(3\,918\,837\) \\
 E.14\(\uparrow\)  & \((2313)\) & \(8\) &  \(9\) & \(0\) &       \(262\,744\) &             &       unknown   \\
 H.4\(\uparrow\)   & \((3313)\) & \(7\) &  \(8\) & \(1\) &        \(21\,668\) & \cite{Br,Ma}& \(1\,162\,949\) \\
 H.4\(\uparrow^2\) & \((3313)\) & \(9\) & \(10\) & \(1\) &       \(446\,788\) &             &       unknown   \\
\hline
 c.21              & \((0231)\) & \(5\) &  \(6\) & \(0\) &         impossible &             &    \(540\,365\) \\
 c.21\(\uparrow\)  & \((0231)\) & \(7\) &  \(8\) & \(0\) &         impossible &             & \(1\,001\,957\) \\
 E.8               & \((1231)\) & \(6\) &  \(7\) & \(0\) &        \(34\,867\) &             & \(6\,098\,360\) \\
 E.8\(\uparrow\)   & \((1231)\) & \(8\) &  \(9\) & \(0\) &       \(370\,740\) &             &       unknown   \\
 E.9               & \((2231)\) & \(6\) &  \(7\) & \(0\) &         \(9\,748\) & \cite{SoTa} &    \(342\,664\) \\
 E.9\(\uparrow\)   & \((2231)\) & \(8\) &  \(9\) & \(0\) &       \(297\,079\) &             &       unknown   \\
 G.16              & \((4231)\) & \(7\) &  \(8\) & \(1\) &        \(17\,131\) & \cite{HeSm} & \(8\,711\,453\) \\
 G.16\(\uparrow\)  & \((4231)\) & \(9\) & \(10\) & \(1\) &       \(819\,743\) &             &       unknown   \\
\hline
\end{tabular}
\end{center}
\end{table}

\noindent
Whereas the parameters \(m=4\), \(n=5\)
for the principalization types \(\mathrm{D.10}\), \(\mathrm{D.5}\)
in example
\ref{ex:SmpSecQdr},
and the parameters \(m=5\), \(n=6\)
for the \textit{ground state} of principalization types \(\mathrm{G.19}\), \(\mathrm{H.4}\)
in example
\ref{ex:SmlSecQdr},
are determined uniquely,
we now have infinite coclass families of second \(3\)-class groups
\(G=\mathrm{Gal}(\mathrm{F}_3^2(K)\vert K)\)
with strictly increasing nilpotency class \(\mathrm{cl}(G)=m-1\),
sharing the same principalization type.
For this reason, we define \textit{excited states} of principalization types,
denoted by arrows \(\uparrow\),\(\uparrow^2\), and so on.
The index \(m\) of nilpotency can take
all odd values \(m\ge 5\)
for the principalization types \(\mathrm{c.18}\), \(\mathrm{c.21}\),
all even values \(m\ge 6\)
for the principalization types \(\mathrm{E.6}\), \(\mathrm{E.14}\), \(\mathrm{E.8}\), \(\mathrm{E.9}\), 
and all odd values \(m\ge 7\)
for principalization types \(\mathrm{H.4}\), \(\mathrm{G.16}\).

A special feature of the principalization types \(\mathrm{c.18}\) and \(\mathrm{c.21}\) 
is the location of their second \(3\)-class groups
\(G=\mathrm{Gal}(\mathrm{F}_3^2(K)\vert K)\)
as infinitely capable vertices on mainlines of coclass trees
\(\mathcal{T}(\langle 243,6\rangle)\) and \(\mathcal{T}(\langle 243,8\rangle)\).

Concrete numerical realisations are known
for the ground state with minimal index of nilpotency \(m\),
for each of these infinite coclass families.
Realisations for excited states with higher values of \(m\)
are known for the principalization types 
\(\mathrm{E.6}\), \(\mathrm{E.14}\), \(\mathrm{E.8}\), \(\mathrm{E.9}\),
\(\mathrm{H.4}\), and \(\mathrm{G.16}\)
of complex quadratic fields and
for the principalization type \(\mathrm{c.21}\)
of real quadratic fields.
\end{example}


\subsection{Groups \(G\) of coclass \(\mathrm{cc}(G)\ge 3\) with \(m\ge 5\), \(n\ge 7\)}
\label{ss:Low}
This section corresponds to the fifth and sixth row of Table
\ref{tab:LowUnk}.
Here we must investigate the abelianizations of only two maximal subgroups \(M_i\), \(3\le i\le 4\).
The third powers of the generators of these groups satisfy the following general relations,
according to
\cite[Lem. 3.4.11, p. 105]{Ne1}.

\begin{eqnarray*}
\label{e:LowGnrRel}
(xy)^3&=&\sigma_{m-2}^{\rho(\beta+\delta)}\sigma_{m-1}^{\alpha+\gamma+\rho(\beta+\delta)}\tau_e^{\beta+\delta},\\
(xy^{-1})^3&=&\sigma_{m-2}^{\rho(\delta-\beta)}\sigma_{m-1}^{\alpha-\gamma+\rho\beta}\tau_e^{\beta-\delta}.
\end{eqnarray*}

\noindent
Therefore the order
of the coset of \(xy\in M_3\) with respect to \(\gamma_2(M_3)\) and
of the coset of \(xy^{-1}\in M_4\) with respect to \(\gamma_2(M_4)\)
is certainly bounded from above by \(3\),
when \(m\ge 6\) and \(e\ge 4\),
and thus \(\sigma_{m-2},\sigma_{m-1},\tau_e\in\gamma_4(G)<\gamma_2(M_i)\), for \(3\le i\le 4\).

It remains to investigate the \(15\) isomorphism classes of groups
with \(m=5\), \(n=7\), \(e=4\),
for which \(\lbrack\chi_s(G),\gamma_e(G)\rbrack=1\) and thus \(\rho=0\)
\cite[pp. 34--35]{Ne2}.
For these isomorphism classes, third powers of the generators are given by

\begin{eqnarray*}
\label{e:LowSpcRel}
(xy)^3&=&\sigma_4^{\alpha+\gamma}\tau_4^{\beta+\delta},\\
(xy^{-1})^3&=&\sigma_4^{\alpha-\gamma}\tau_4^{\beta-\delta},
\end{eqnarray*}

\noindent
and therefore \(\mathrm{ord}(\overline{xy})\le 3\) and \(\mathrm{ord}(\overline{xy^{-1}})\le 3\),
independently from all parameters \(\alpha,\beta,\gamma,\delta\),
since \(\sigma_4,\tau_4\in\gamma_4(G)<\gamma_2(M_i)\), for \(3\le i\le 4\).

\begin{theorem}
\label{t:LowStr}
Let \(K\) be a number field
with \(3\)-classgroup \(\mathrm{Cl}_3(K)\) of type \((3,3)\)
and with second \(3\)-class group \(G=\mathrm{Gal}(\mathrm{F}_3^2(K)\vert K)\)
of order \(\lvert G\rvert=3^n\), \(n\ge 7\),
and of coclass \(\mathrm{cc}(G)\ge 3\), \(m\le n-2\), \(e\ge 4\).\\
Then the structure of the \(3\)-class groups
of the four unramified cyclic cubic extensions \(N_i\) of \(K\) is
nearly homocyclic for \(\mathrm{Cl}_3(N_1)\), \(\mathrm{Cl}_3(N_2)\),
and elementary abelian of type \((3,3,3)\) for
\(\mathrm{Cl}_3(N_3)\), \(\mathrm{Cl}_3(N_4)\),
independently from the principalization type \(\varkappa\) of \(K\).
The number of \(3\)-class groups \(\mathrm{Cl}_3(N_i)\) of type \((3,3,3)\)
is always given by \(\varepsilon=2\).
The extensions with \(3\)-class group of type \((3,3,3)\)
satisfy  Taussky's condition \((\mathrm{B})\)
\cite{Ta},
that is, they have a partial principalization without fixed point.
\end{theorem}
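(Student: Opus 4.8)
The plan is to read all four class groups off results already in place; the hypothesis \(\mathrm{cc}(G)\ge 3\) makes the computation essentially parameter-free. Observe first that the standing assumptions \(4\le m<n\le 2m-3\) on a metabelian \(3\)-group of coclass \(\ge 2\) with abelianization of type \((3,3)\) give \(e=n-m+2\le m-1\), so \(e\ge 4\) forces \(m\ge 5\). For \(\mathrm{Cl}_3(N_1)\) and \(\mathrm{Cl}_3(N_2)\) I would simply quote Theorem \ref{t:LowHom}: since \(e\ge 4\) it yields \(\mathrm{Cl}_3(N_2)\simeq\mathrm{A}(3,e)\), and \(\mathrm{Cl}_3(N_1)\simeq\mathrm{A}(3,m-1)\) when \(k=0\) (any \(m\ge 5\)) or \(\mathrm{A}(3,m-2)\) when \(k=1\) (any \(m\ge 6\)); in the boundary case \(m=5\), \(n=7\), \(e=4\) one has \(\rho=0\), hence \(k=0\), so the first branch applies. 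In all cases these two groups are nearly homocyclic abelian of \(3\)-rank two, in particular not of type \((3,3,3)\).

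Next I would treat \(\mathrm{Cl}_3(N_3)\) and \(\mathrm{Cl}_3(N_4)\). From \cite[Cor. 3.2, p. 480]{Ma1}, as already used in the proof of Theorem \ref{t:LowHom}, \(\lvert\gamma_2(M_i)\rvert=3^{n-4}\) for \(3\le i\le 4\); since each \(M_i\) has index \(3\) in \(G\), this gives \(\lvert\mathrm{Cl}_3(N_i)\rvert=\lvert M_i/\gamma_2(M_i)\rvert=3^{n-1}/3^{n-4}=27\). It remains to pin down the exponent. Lemma \ref{l:LowExo} already bounds by \(3\) the orders modulo \(\gamma_2(M_i)\) of the generators \(s_2,\sigma_3,\ldots,\sigma_{m-1},\tau_3,\ldots,\tau_e\) of \(\gamma_2(G)\), so only the remaining generator \(g_i\) of \(M_i=\langle g_i,\gamma_2(G)\rangle\) needs attention. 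Here I would invoke the cube relations collected at the start of \S\ \ref{ss:Low}: for \(m\ge 6\), \(e\ge 4\) the elements \(\sigma_{m-2},\sigma_{m-1},\tau_e\) lie in \(\gamma_4(G)<\gamma_2(M_i)\); in the case \(m=5\), \(n=7\), \(e=4\) one has \(\rho=0\) and \((xy)^3=\sigma_4^{\alpha+\gamma}\tau_4^{\beta+\delta}\), \((xy^{-1})^3=\sigma_4^{\alpha-\gamma}\tau_4^{\beta-\delta}\) with \(\sigma_4,\tau_4\in\gamma_4(G)<\gamma_2(M_i)\); either way \(g_i^3\in\gamma_2(M_i)\). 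Hence \(M_i/\gamma_2(M_i)\) is abelian of order \(27\) and exponent \(3\), so \(\mathrm{Cl}_3(N_i)\simeq C_3\times C_3\times C_3\), independently of the parameters \(\alpha,\beta,\gamma,\delta,\rho\) and therefore of the principalization type \(\varkappa\).

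Combining, exactly the two extensions \(N_3,N_4\) have \(3\)-class group of type \((3,3,3)\), so \(\varepsilon=2\). For the last assertion I would invoke Heider and Schmithals \cite[Satz 7, p. 11]{HeSm} (see \S\ \ref{s:MaxLowExo}): an elementary abelian \(3\)-class group of order \(27\) can occur only for an unramified cyclic cubic extension having a partial principalization without fixed point, so \(N_3\vert K\) and \(N_4\vert K\) satisfy Taussky's condition \((\mathrm{B})\).

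I expect the only delicate point to be the base case \(m=5\), \(n=7\), \(e=4\): there the generic containments \(\gamma_{m-2}(G),\gamma_{m-1}(G)\le\gamma_4(G)\) collapse, so the exponent bound for \(g_3,g_4\) must be obtained from \(\rho=0\) together with the explicit shapes of \((xy)^3\) and \((xy^{-1})^3\), and Theorem \ref{t:LowHom} must be applied in its \(k=0\) form, which in turn needs \([\chi_s(G),\gamma_e(G)]=1\) in that range. Everything else is bookkeeping on top of Lemma \ref{l:LowExo}, Theorem \ref{t:LowHom}, and the cube computations of \S\ \ref{ss:Low}.
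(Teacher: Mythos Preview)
Your proposal is correct and follows essentially the same route as the paper: reduce \(\mathrm{Cl}_3(N_1),\mathrm{Cl}_3(N_2)\) to Theorem~\ref{t:LowHom}, and for \(i=3,4\) combine Lemma~\ref{l:LowExo} with the cube computations at the start of \S\ref{ss:Low} to force exponent \(3\), treating \(m=5,\,n=7,\,e=4\) separately via \(\rho=0\). You actually make two points more explicit than the paper does: the observation that \(e\ge 4\) together with \(e\le m-1\) forces \(m\ge 5\) (so that Theorem~\ref{t:LowHom} is indeed available for \(\mathrm{Cl}_3(N_1)\) with \(k=0\)), and the appeal to \cite[Satz 7]{HeSm} for Taussky's condition~(B), which the paper's own proof leaves implicit.
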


\begin{proof}
The nearly homocyclic structure of the first and second \(3\)-class group,
\(\mathrm{Cl}_3(N_1)\), \(\mathrm{Cl}_3(N_2)\),
is contained in the statement of Theorem
\ref{t:LowHom}
already.\\
Similarly as in the proof of Theorem
\ref{t:SmpSecStr},
the elementary abelian structure of the third and fourth \(3\)-class group,
\(\mathrm{Cl}_3(N_i)\simeq M_i/\gamma_2(M_i)\), \(3\le i\le 4\),
is a consequence of the considerations at the begin of this section,
if we take into account the preparation in \S\
\ref{sss:LowTrd}.
\end{proof}

\begin{example}
\label{ex:LowQdr}
The first occurrences of second \(3\)-class groups
\(G=\mathrm{Gal}(\mathrm{F}_3^2(K)\vert K)\)
of coclass \(\mathrm{cc}(G)\ge 3\)
with invariants \(m\ge 6\), \(n\ge 8\), \(e\ge 4\),
and bicyclic center, \(k=0\), resp. cyclic center, \(k=1\),
over quadratic fields \(K=\mathbb{Q}(\sqrt{D})\)
with \(3\)-class group of type \((3,3)\)
and discriminant \(-10^6<D<0\), resp. \(0<D<10^7\),
are summarized in Table
\ref{tab:LowExp}.
Here, \(\lvert D\rvert\) denotes the smallest absolute value of the discriminant
of a complex quadratic field \(K\)
and \(D\) the smallest discriminant of a real quadratic field \(K\)
of the corresponding principalization type.
The earlier computations by Brink
\cite{Br}
are confirmed.

\begin{table}[ht]
\caption{Examples for groups \(G\) with \(\mathrm{cc}(G)\ge 3\), \(m\ge 6\), \(n\ge 8\), \(e\ge 4\)}
\label{tab:LowExp}
\begin{center}
\begin{tabular}{|lc|rrrr|c|rc|r|}
\hline
 Type             & \(\varkappa\) & \(m\) &  \(n\) & \(e\) & \(k\) & \(\mathrm{Cl}_3(\mathrm{F}_3^1(K))\) & \(\lvert D\rvert\) & ref.      &           \(D\) \\
\hline
 b.10             & \((0043)\)    & \(6\) &  \(8\) & \(4\) & \(1\) &                        \((9,9,3,3)\) &         impossible &           &    \(710\,652\) \\
\hline
 d.19             & \((4043)\)    & \(6\) &  \(8\) & \(4\) & \(0\) &                        \((9,9,3,3)\) &         impossible &           & \(2\,328\,721\) \\
 d.23             & \((1043)\)    & \(6\) &  \(8\) & \(4\) & \(0\) &                        \((9,9,3,3)\) &         impossible &           & \(1\,535\,117\) \\
 d.25\({}^\ast\)  & \((0143)\)    & \(7\) & \(10\) & \(5\) & \(0\) &                       \((27,9,9,3)\) &         impossible &           & \(8\,491\,713\) \\
\hline
 F.7              & \((3443)\)    & \(6\) &  \(9\) & \(5\) & \(0\) &                        \((9,9,9,3)\) &       \(124\,363\) &           &       unknown   \\
 F.7\(\uparrow\)  & \((3443)\)    & \(8\) & \(11\) & \(5\) & \(0\) &                      \((27,27,9,3)\) &       \(469\,816\) &           &       unknown   \\
 F.11             & \((1143)\)    & \(6\) &  \(9\) & \(5\) & \(0\) &                        \((9,9,9,3)\) &        \(27\,156\) &\cite{Br,Ma}&      unknown   \\
 F.11\(\uparrow\) & \((1143)\)    & \(8\) & \(11\) & \(5\) & \(0\) &                      \((27,27,9,3)\) &       \(469\,787\) &           &       unknown   \\
 F.12             & \((1343)\)    & \(6\) &  \(9\) & \(5\) & \(0\) &                        \((9,9,9,3)\) &        \(31\,908\) & \cite{Br} &       unknown   \\
 F.12\(\uparrow\) & \((1343)\)    & \(8\) & \(11\) & \(5\) & \(0\) &                      \((27,27,9,3)\) &       \(249\,371\) &           &       unknown   \\
 F.12\(\uparrow^2\)&\((1343)\)    & \(8\) & \(13\) & \(7\) & \(0\) &                     \((27,27,27,9)\) &       \(423\,640\) &           &       unknown   \\
 F.13             & \((3143)\)    & \(6\) &  \(9\) & \(5\) & \(0\) &                        \((9,9,9,3)\) &        \(67\,480\) & \cite{Br} & \(8\,321\,505\) \\
 F.13\(\uparrow\) & \((3143)\)    & \(8\) & \(11\) & \(5\) & \(0\) &                      \((27,27,9,3)\) &       \(159\,208\) &           & \(8\,127\,208\) \\
\hline
 G.16r            & \((1243)\)    & \(7\) & \(10\) & \(5\) & \(1\) &                       \((27,9,9,3)\) &       \(290\,703\) &           &       unknown   \\
 G.16i            & \((1243)\)    & \(7\) & \(10\) & \(5\) & \(1\) &                        \((9,9,9,9)\) &       \(135\,059\) &           &       unknown   \\
 G.19r            & \((2143)\)    & \(7\) & \(10\) & \(5\) & \(1\) &                       \((27,9,9,3)\) &        \(96\,827\) &           &       unknown   \\
 G.19r\(\uparrow\)& \((2143)\)    & \(9\) & \(12\) & \(5\) & \(1\) &                      \((81,27,9,3)\) &       \(509\,160\) &           &       unknown   \\
 G.19i            & \((2143)\)    & \(7\) & \(10\) & \(5\) & \(1\) &                        \((9,9,9,9)\) &       \(199\,735\) &           &       unknown   \\
 H.4r             & \((3343)\)    & \(7\) & \(10\) & \(5\) & \(1\) &                       \((27,9,9,3)\) &       \(256\,935\) &           &       unknown   \\
 H.4r\(\uparrow\) & \((3343)\)    & \(9\) & \(12\) & \(5\) & \(1\) &                      \((81,27,9,3)\) &       \(678\,804\) &           &       unknown   \\
 H.4i             & \((3343)\)    & \(7\) & \(10\) & \(5\) & \(1\) &                        \((9,9,9,9)\) &       \(186\,483\) &           &       unknown   \\
\hline
\end{tabular}
\end{center}
\end{table}

\noindent
Similarly as in example
\ref{ex:SecQdr},
these principalization types belong to infinite families of
second \(3\)-class groups \(G=\mathrm{Gal}(\mathrm{F}_3^2(K)\vert K)\).
Here, additionally to the nilpotency class \(\mathrm{cl}(G)=m-1\),
the coclass \(\mathrm{cc}(G)=e-1=n-m+1\) can also take infinitely many values.

For the principalization types
\(\mathrm{F.7}\), \(\mathrm{F.11}\), \(\mathrm{F.12}\), \(\mathrm{F.13}\)
with defect \(k=0\),
all even values \(m\ge 6\) and all odd values \(5\le e\le m-1\) are possible.

The very rare principalization types
\(\mathrm{d.19}\), \(\mathrm{d.23}\), \(\mathrm{d.25}\),
also having \(k=0\),
play a unique exceptional role
\cite[Thm. 3.4, p. 491]{Ma2},
since their second \(3\)-class groups
\(G=\mathrm{Gal}(\mathrm{F}_3^2(K)\vert K)\)
can appear either as terminal metabelian vertices (leaves)
with even values \(m\ge 6\) and even values \(4\le e\le m-1\)
or as infinitely capable vertices on mainlines of coclass trees
with odd values \(m\ge 7\) and odd values \(5\le e\le m-1\).
 
Assuming the weak leaf conjecture
\ref{cnj:SmpSecQdr},
the following types show up with defect \(k=1\) only. 
For the principalization type \(\mathrm{b.10}\)
all even values \(m\ge 6\) and all even values \(4\le e\le m-2\) can occur,
and for the principalization types
\(\mathrm{G.16}\), \(\mathrm{G.19}\), and \(\mathrm{H.4}\)
all odd values \(m\ge 7\) and all odd values \(5\le e\le m-2\) are admissible.

Concrete numerical realizations by complex quadratic fields are known
for the ground state of these principalization types with minimal index of nilpotency \(m\),
but only partially by real quadratic base fields.
\end{example}


\section{Implementing the principalization algorithm for quadratic fields}
\label{s:CompTech}

In this section we describe the computational techniques
used to achieve the numerical results presented in \S\
\ref{s:NumerTab}.
The new principalization algorithm via class group structure
has been implemented for quadratic fields,
having a \(3\)-class group of type \((3,3)\),
with the aid of program scripts
written for the number theoretical computer algebra system PARI/GP
\cite{Be,PARI}.
We refer to the relevant methods of this software package
by printing their names in \texttt{typewriter} font with trailing parentheses.

\subsection{Generating polynomials for non-Galois cubic fields \(L\)}
\label{ss:GenPol}

\subsubsection{Simply real cubic fields}
\label{sss:GenPolSRC}

Generating polynomials \(p(X)=X^3-bX^2+cX-d\) of third degree
for simply real cubic fields \(L\) of signature \((1,1)\)
are obtained in the following manner.
Suppose the intended upper bound
for the absolute value of the field discriminant is
\(\lvert \mathrm{d}(L)\rvert\le U\).
Then candidates for the coefficients \(b\), \(c\), and \(d\)
run over three nested loops
\(1\le b\le\lfloor 3+2\root{4}\of{U}\rfloor\),
\(1\le c\le\lfloor\frac{b^2+\sqrt{U}}{3}\rfloor\),
and
\(1\le d\le\lfloor\frac{b^2-3+2\sqrt{U}}{6}\rfloor\),
with bounds due to Godwin and Angell,
given by Fung and Williams
\cite[(2.5), p. 315]{FuWi}.
For each triplet \((b,c,d)\)
the following tests are performed.

\begin{enumerate}

\item
Reducible polynomials \(p(X)\) are eliminated with the aid of
\texttt{polisirreducible()}.

\item
For irreducible polynomials
the discriminant \(\mathrm{d}(L)\) of the cubic field \(L=\mathbb{Q}(\vartheta)\),
generated by the real zero \(\vartheta\) of \(p(X)\),
is calculated by means of
\texttt{nfdisc()}
and tested for \(-U\le \mathrm{d}(L)<0\).

\item
By
\texttt{poldisc()}
the discriminant \(\mathrm{d}(p)\) of the polynomial is computed
and its index \(\mathrm{i}(p)\) is determined, using the formula
\(\mathrm{d}(p)=\mathrm{i}(p)^2\cdot \mathrm{d}(L)\).
Polynomials with indices bigger than the bound
\[
\frac{\sqrt{124b^2+432b+4\sqrt{U}+729}}{3\sqrt{3}}
\]
are skipped, thus discouraging superfluous isomorphic fields
\cite[(2.6), p. 315]{FuWi}.

\item
Field discriminants \(\mathrm{d}(L)=f^2\cdot\mathrm{d}(K)\)
\cite[\S\ 1, p. 832]{Ma3}
are restricted to fundamental discriminants with conductor \(f=1\) by
\texttt{isfundamental()}.

\item
Finally, the \(3\)-class group \(\mathrm{Cl}_3(K)\) of the complex quadratic subfield \(K\)
of the Galois closure \(N\) of \(L\),
which is unramified with conductor \(f=1\) over \(K\)
and has discriminant \(\mathrm{d}(N)=\mathrm{d}(K)^3\)
\cite[Abstract, p. 831]{Ma3},
is restricted to the type \((3,3)\)
with the aid of
\texttt{quadclassunit()},
thereby eliminating the numerous cyclic \(3\)-class groups.

\end{enumerate}


\subsubsection{Totally real cubic fields}
\label{sss:GenPolTRC}

Trace free generating polynomials \(p(X)=X^3-cX-d\)
for totally real cubic fields \(L\) of signature \((3,0)\)
are collected in the following way.
If the desired upper bound for the field discriminant is
\(\mathrm{d}(L)\le U\), then
candidates for the coefficients \(c\) and \(d\)
run over two nested loops
\(1\le c\le\lfloor\sqrt{U}\rfloor\)
and
\(1\le d\le\lfloor\sqrt{\frac{4c^3}{27}}\rfloor\),
with bounds given by Llorente and Quer
\cite[\S~3, p. 586]{LlQu}.
For each pair \((c,d)\)
the following tests are performed.

\begin{enumerate}

\item
Reducible polynomials \(p(X)\) are skipped with the aid of
\texttt{polisirreducible()}.

\item
For irreducible polynomials
the discriminant \(\mathrm{d}(L)\) of the cubic field \(L=\mathbb{Q}(\xi)\),
generated by a zero \(\xi\) of \(p(X)\),
is calculated by means of
\texttt{nfdisc()}
and checked for \(0<\mathrm{d}(L)\le U\).

\item
A further bound
\cite[Thm. 3, p. 584]{LlQu}
is imposed on the linear coefficient
\[c\le
\begin{cases}
\lfloor\frac{\sqrt{\mathrm{d}(L)}}{3}\rfloor, & \text{ if } 27\mid \mathrm{d}(L),\\
\lfloor\sqrt{\mathrm{d}(L)}\rfloor,           & \text{ otherwise}.
\end{cases}
\]

\item
By
\texttt{poldisc()}
the discriminant \(\mathrm{d}(p)\) of the polynomial is computed
and its index \(\mathrm{i}(p)\) is determined, using the formula
\(\mathrm{d}(p)=\mathrm{i}(p)^2\cdot \mathrm{d}(L)\).
Polynomials with index bigger than the bound
\[
\begin{cases}
\lfloor 2\sqrt{\frac{c}{3}}\rfloor, & \text{ if } 27\mid \mathrm{d}(L),\\
\lfloor 2\sqrt{c}\rfloor,           & \text{ otherwise},
\end{cases}
\]
are eliminated
\cite[Thm. 3, p. 584]{LlQu}.

\item
Field discriminants \(\mathrm{d}(L)\) are restricted to fundamental discriminants by
\texttt{isfundamental()}.

\item
Finally, the \(3\)-class group \(\mathrm{Cl}_3(K)\) of the real quadratic subfield \(K\)
of the Galois closure \(N\) of \(L\)
is restricted to the type \((3,3)\)
with the aid of
\texttt{quadclassunit()}.

\end{enumerate}


\subsection{Structure of \(3\)-class groups \(\mathrm{Cl}_3(N)\) of \(S_3\)-fields \(N\)}
\label{ss:ClsGrpStr}

The generating polynomials of \S\
\ref{ss:GenPol}
are stored as quadruplets \((D,b,c,d)\) for \(D<0\),
ordered by descending discriminants \(D\),
resp. as triplets \((D,c,d)\) for \(D>0\),
ordered by ascending discriminants \(D\).
The bounds in
\cite{FuWi},
resp.
\cite{LlQu},
provide a warranty that,
for each discriminant \(D=\mathrm{d}(K)=\mathrm{d}(L)\),
generating polynomials for all four non-isomorphic cubic fields \(L\)
sharing the same discriminant \(\mathrm{d}(L)=D\)
\cite[Cor. 3.1, p. 838]{Ma3}
are contained in the list.
Now the polynomials are iterated through the list in a single loop
and for each of them the following steps are executed.

\begin{enumerate}

\item
The regulator \(\mathrm{R}(L)\) and the class number \(\mathrm{h}(L)\) of \(L\)
are calculated with the aid of
\texttt{bnfinit()}
using the flag \(1\),
which ensures that a fundamental system of units of \(L\) is determined.
An indicator is stored,
if the class number \(\mathrm{h}(L)\) is divisible by \(9\) or \(27\).

\item
By means of
\texttt{polcompositum()},
applied to the cubic polynomial
\(p(X)=X^3-bX^2+cX-d\), resp. \(p(X)=X^3-cX-d\),
and the quadratic polynomial \(q(X)=X^2-D\),
a generating polynomial \(s(X)\) of sixth degree for the Galois closure \(N\) of \(L\)
is calculated.

\item
The polynomial \(s(X)\) of sixth degree is used to determine
the structure of the class group \(\mathrm{Cl}(N)\) of the normal field \(N\)
with the aid of
\texttt{bnfinit()},
where the flag is set to \(1\).
An indicator is stored,
if the first three abelian type invariants \((n_1,n_2,n_3,\ldots)\) of
the class group structure are all divisible by \(3\), that is,
if the \(3\)-class group  \(\mathrm{Cl}_3(N)\) of \(N\)
is elementary abelian of type \((3,3,3)\).

\end{enumerate}

\noindent
The results are evaluated in the following way.
The structures of the \(3\)-class groups
\(\mathrm{Cl}_3(N_i)\), \(1\le i\le 4\),
of quadruplets \(N_1,\ldots,N_4\) of \(S_3\)-fields
sharing the same discriminant \(D^3\),
form the transfer target type \(\tau(K)\) and
determine the number \(\varepsilon(K)\) of groups of type \((3,3,3)\).
If some \(3\)-class group \(\mathrm{Cl}_3(N_i)\) is of type \((3,3)\),
that is, isomorphic to \(\mathrm{A}(3,2)\),
then the second \(3\)-class group \(G\) of \(K\) 
is of coclass \(\mathrm{cc}(G)=1\), by Theorem
\ref{t:MaxHom},
and the algorithm can be terminated.
Otherwise \(G\) is of coclass \(\mathrm{cc}(G)\ge 2\) 
and the algorithm can be terminated
only if no indicator of \(9\mid\mathrm{h}(L_i)\) has been stored,
for any \(1\le i\le 4\).
The transfer target type \(\tau(K)\) and the invariant \(\varepsilon(K)\) determine
the principalization type (transfer kernel type) \(\varkappa(K)\)
and the structure of the second \(3\)-class group
\(G=\mathrm{G}_3^2(K)=\mathrm{Gal}(\mathrm{F}_3^2(K)\vert K)\)
of the quadratic field \(K\) in two important special cases:
uniquely for sporadic \(G\) of coclass \(\mathrm{cc}(G)=2\),
according to Theorems
\ref{t:SmpSecStr}
and
\ref{t:SmlSecStr},
and up to separation of types \(\mathrm{a.2}\) and \(\mathrm{a.3}\)
for \(\mathrm{cc}(G)=1\),
according to Theorem
\ref{t:MaxExo}.


\subsection{First Hilbert \(3\)-class field \(\mathrm{F}_3^1(K)\) of \(K\)}
\label{ss:FstHilb}

For the quadruplets \(L_1,\ldots,L_4\) of cubic fields,
sharing the same discriminant \(D\),
which have been marked by an indicator of \(9\mid\mathrm{h}(L_i)\),
for some \(1\le i\le 4\),
a further step must be appended, when the second \(3\)-class group
\(G=\mathrm{Gal}(\mathrm{F}_3^2(K)\vert K)\) of the quadratic field \(K\)
is of coclass \(\mathrm{cc}(G)\ge 2\).
A generating polynomial \(f(X)\) of eighteenth degree for the
first Hilbert \(3\)-class field \(\mathrm{F}_3^1(K)=N_i\cdot L_j\) of \(K\)
is calculated by means of
\texttt{polcompositum()},
applied to
the generating polynomial \(s(X)\) of sixth degree of \(N_i\)
and the cubic generating polynomial \(p(X)\) of \(L_j\),
for some \(1\le i\ne j\le 4\).
The polynomial \(f(X)\) of eighteenth degree is used to determine
the structure of the class group \(\mathrm{Cl}(\mathrm{F}_3^1(K))\) of
the first Hilbert \(3\)-class field \(\mathrm{F}_3^1(K)\) of the quadratic field \(K\)
with the aid of
\texttt{bnfinit()},
where the flag is set to \(1\).
Whereas computations for fields of third and sixth degree
are usually a matter of less than a second in PARI/GP,
the CPU time for a field of degree \(18\) may reach a few minutes, occasionally.

Together with the transfer target type \(\tau(K)\)
and the number \(\varepsilon(K)\) of groups of type \((3,3,3)\),
the structure of the \(3\)-class group \(\mathrm{Cl}_3(\mathrm{F}_3^1(K))\)
determines the principalization type \(\varkappa(K)\)
and the structure of the second \(3\)-class group
\(G=\mathrm{Gal}(\mathrm{F}_3^2(K)\vert K)\)
of the quadratic field \(K\) in the following manner:
up to separation of types \(\mathrm{E.6}\), \(\mathrm{E.14}\),
resp. \(\mathrm{E.8}\), \(\mathrm{E.9}\),
for \(\mathrm{cc}(G)=2\) by means of Theorem
\ref{t:SecStr},
and up to separation of types
\(\mathrm{F.7}\), \(\mathrm{F.11}\), \(\mathrm{F.12}\), \(\mathrm{F.13}\),
resp. \(\mathrm{G.16}\), \(\mathrm{G.19}\),\(\mathrm{H.4}\),
resp. \(\mathrm{d.19}\), \(\mathrm{d.23}\),\(\mathrm{d.25}\),
for \(\mathrm{cc}(G)\ge 3\) by means of Theorem
\ref{t:LowStr}
and
\cite[Thm. 5.1--5.3, pp. 491--494]{Ma1}.

In our paper
\cite{Ma1},
we have indicated another computational technique,
trying to avoid the use of the highly sophisticated system PARI/GP.
The invariants of the second \(3\)-class group
\(G=\mathrm{Gal}(\mathrm{F}_3^2(K)\vert K)\),
namely the class \(\mathrm{cl}(G)=m-1\), coclass \(\mathrm{cc}(G)=e-1\),
and the order \(\lvert G\rvert=3^n\), \(n=\mathrm{cl}(G)+\mathrm{cc}(G)\),
are determined with the aid of \(3\)-class numbers \(\mathrm{h}(L_i)\), \(1\le i\le 4\),
of cubic fields, using
\cite[Thm. 5.1--5.3]{Ma1}.
These class numbers can be computed by means of our own implementation
of the classical Voronoi algorithms for calculating
integral bases
\cite{Vo1}
and fundamental systems of units
\cite{Vo2}
for simply or totally real cubic fields
and subsequent application of
the analytic class number formula and Euler product method.
The transfer kernel type \(\varkappa\) of \(G\) 
is assumed as an experimental input data in
\cite{Ma1},
produced by some unspecified principalization algorithm,
for instance the classical algorithm in \cite{SoTa,HeSm,Br,Ma}.
Consequently, the difficult determination of the defect \(k\)
with the aid of the Hilbert \(3\)-class field \(\mathrm{F}_3^1(K)\) of \(K\)
is circumvented,
when the weak leaf conjecture
\ref{cnj:SmpSecQdr}
is assumed to hold.
We have successfully applied this cumbersome classical procedure
to the restricted range \(-10^5<D<10^6\) of quadratic discriminants,
for which we needed \(7\) years from \(2003\) to \(2009\).
However, the extension to \(-10^6<D<10^7\) definitely requires
the high speed performance of PARI/GP or MAGMA
and the automatizability of our new principalization algorithm.
Including the manual evaluation it was done within \(5\) months in \(2010\). 
CPU time added up to a total of a few weeks.


\section{Numerical results on second \(3\)-class groups of \(4\,596\) quadratic fields}
\label{s:NumerTab}

By means of the principalization algorithm,
implemented in the PARI/GP programs of \S\
\ref{s:CompTech},
the principalization type \(\varkappa(K)\)
\cite[Tbl. 6--7, pp. 492--493]{Ma2}
and the structure of the second \(3\)-class group
\(G=\mathrm{G}_3^2(K)=\mathrm{Gal}(\mathrm{F}_3^2(K)\vert K)\)
\cite{Ma1}
has been determined for
the \(2\,020\) complex quadratic fields \(K=\mathbb{Q}(\sqrt{D})\)
with discriminant \(-10^6<D<0\) and for
the \(2\,576\) real quadratic fields \(K=\mathbb{Q}(\sqrt{D})\)
with discriminant \(0<D<10^7\),
having a \(3\)-class group \(\mathrm{Cl}_3(K)\) of type \((3,3)\).
The results of these extensive computations
reveal reliable statistical tendencies concerning
the distribution of the groups \(G\) on
the sporadic part and various coclass trees of the coclass graphs
\(\mathcal{G}(3,r)\), \(1\le r\le 6\),
\cite{LgNm,LgMk,EkLg}
for a total of \(4\,596\) quadratic base fields.

Each of the following tables gives the characterizing
\textit{transfer target type} (TTT) \(\tau\),
as a novelty which was unknown up to now,
and the minimal discriminant
and absolute frequency
of various transfer kernel types (TKTs) \(\varkappa\).
Here, \(11\) tables are arranged according to the sign of the discriminant \(D\)
and the \textit{graph theoretic location} of the second \(3\)-class group \(G\),
either on sporadic parts or on branches of coclass trees,
which constitute the coclass graphs,
thus providing necessary information
for drawing graphical diagrams of coclass graphs in the subsequent paper
\cite{Ma4}.
In contrast, the four Tables 2--5 in our previous paper
\cite[\S\ 6, pp. 496--499]{Ma1}
were arranged according to the number \(\nu\)
of total principalizations.

\small{

\renewcommand{\arraystretch}{1.0}
\begin{table}[ht]
\caption{Principalization types with \(G\in\mathcal{G}(3,1)\) for \(D>0\)}
\label{tab:RealMax}
\begin{center}
\begin{tabular}{|lc|c|c|cc|c|r|c|lr|}
\hline
 type            & \(\varkappa\) & \(j\) & \(\mathrm{h}_3(L_1)\) & \(\mathrm{Cl}_3(N_1)\) & \(\varepsilon\) & \(\mathrm{Cl}_3(\mathrm{F}_3^1(K))\) &      min. \(D\) & ref.        & \multicolumn{2}{|c|}{freq.}                                     \\
\hline
 a.2             & \((1000)\)    & \(3\) &                 \(3\) &              \((9,3)\) &           \(0\) &                            \((3,3)\) &     \(72\,329\) & \cite{HeSm} & \multirow{2}{*}{\(\Bigr\rbrace\)} & \multirow{2}{*}{\(1\,386\)} \\
 a.3             & \((2000)\)    & \(3\) &                 \(3\) &              \((9,3)\) &           \(0\) &                            \((3,3)\) &     \(32\,009\) & \cite{HeSm} &                                   &                             \\
\cline{10-11}
 a.3\({}^\ast\)  & \((2000)\)    & \(3\) &                 \(3\) &            \((3,3,3)\) &           \(1\) &                            \((3,3)\) &    \(142\,097\) & \cite{Ma0}  &                                   &                     \(697\) \\
\hline
 a.1             & \((0000)\)    & \(5\) &                 \(9\) &              \((9,9)\) &           \(0\) &                            \((9,9)\) &     \(62\,501\) & \cite{HeSm} &                                   &                     \(147\) \\
\cline{10-11}
 a.2\(\uparrow\) & \((1000)\)    & \(5\) &                 \(9\) &             \((27,9)\) &           \(0\) &                            \((9,9)\) &    \(790\,085\) &             & \multirow{2}{*}{\(\Bigr\rbrace\)} &     \multirow{2}{*}{\(72\)} \\
 a.3\(\uparrow\) & \((2000)\)    & \(5\) &                 \(9\) &             \((27,9)\) &           \(0\) &                            \((9,9)\) &    \(494\,236\) &             &                                   &                             \\
\hline
 a.1\(\uparrow\) & \((0000)\)    & \(7\) &                \(27\) &            \((27,27)\) &           \(0\) &                          \((27,27)\) & \(2\,905\,160\) &             &                                   &                       \(1\) \\
\hline
\multicolumn{9}{|r|}{total:}																																																																	&  & \(2\,303\) \\
\hline
\end{tabular}
\end{center}
\end{table}

}
\normalsize

Table
\ref{tab:RealMax}
characterizes the most frequent types \(\varkappa\)
of real quadratic fields with \(G\) of coclass \(1\)
by means of \(\mathrm{h}_3(L_1)\) and \(\mathrm{Cl}_3(N_1)\).
For the other fields with \(2\le i\le 4\), we always have
\(\mathrm{h}_3(L_i)=3\) and \(\mathrm{Cl}_3(N_i)\) of type \((3,3)\).
The second \(3\)-class group \(G\) is a vertex of depth \(1\)
on an odd branch \(\mathcal{B}(j)\), \(j\in\lbrace 3,5,7\rbrace\),
of the unique coclass tree \(\mathcal{T}(\mathrm{C}_3\times\mathrm{C}_3)\)
of \(\mathcal{G}(3,1)\) in Figure
\ref{fig:CoCl1}.
Mainline groups of depth \(0\) do not occur
and are probably impossible for quadratic fields,
as stated in our Conjecture
\ref{cnj:MaxTrm}.
The types a.2 and a.3, resp. a.2\(\uparrow\) and a.3\(\uparrow\),
can only be separated with the aid of the classical principalization algorithm
\cite{SoTa,HeSm}.
Among the \(2\,576\) real quadratic fields \(K=\mathbb{Q}(\sqrt{D})\)
with discriminant \(0<D<10^7\)
and \(3\)-class group \(\mathrm{Cl}_3(K)\) of type \((3,3)\),
the dominating part of \(2\,303\) fields, that is \(89.4\%\),
has a second \(3\)-class group \(G\) of coclass \(\mathrm{cc}(G)=1\).
Branch \(\mathcal{B}(3)\) is populated most densely
by \(\frac{697}{2303}=30.3\%\) groups of type a.3\({}^\ast\) and
\(\frac{1386}{2303}=60.2\%\) groups of types a.2 and a.3.

\small{

\begin{table}[ht]
\caption{Principalization types with \(G\in\mathcal{T}(\langle 2187,64\rangle)\subset\mathcal{G}(3,3)\) for \(D>0\)}
\label{tab:RealMix}
\begin{center}
\begin{tabular}{|lc|cc|ccccc|c|r|r|}
\hline
 type  & \(\varkappa\) & \(\mathrm{h}_3(L_1)\) & \(\mathrm{h}_3(L_2)\) & \(\mathrm{Cl}_3(N_1)\) & \(\mathrm{Cl}_3(N_2)\) & \(\mathrm{Cl}_3(N_3)\) & \(\mathrm{Cl}_3(N_4)\) & \(\varepsilon\) & \(\mathrm{Cl}_3(\mathrm{F}_3^1(K))\) &      min. \(D\) &    fr. \\
\hline
 b.10  & \((0043)\)    &                 \(9\) &                 \(9\) &              \((9,9)\) &              \((9,9)\) &            \((3,3,3)\) &            \((3,3,3)\) &           \(2\) &                        \((9,9,3,3)\) &    \(710\,652\) &  \(8\) \\
 d.19  & \((4043)\)    &                 \(9\) &                 \(9\) &             \((27,9)\) &              \((9,9)\) &            \((3,3,3)\) &            \((3,3,3)\) &           \(2\) &                        \((9,9,3,3)\) & \(2\,328\,721\) &  \(1\) \\
 d.23  & \((1043)\)    &                 \(9\) &                 \(9\) &             \((27,9)\) &              \((9,9)\) &            \((3,3,3)\) &            \((3,3,3)\) &           \(2\) &                        \((9,9,3,3)\) & \(1\,535\,117\) &  \(1\) \\
\hline
\multicolumn{11}{|r|}{total:}																																																				 	                                    & \(10\) \\
\hline
\end{tabular}
\end{center}
\end{table}

}
\normalsize

Table
\ref{tab:RealMix}
characterizes the rare types \(\varkappa\)
of real quadratic fields with \(G\) of coclass \(3\)
by means of \(\mathrm{h}_3(L_i)\), \(1\le i\le 2\),
and \(\mathrm{Cl}_3(N_i)\), \(1\le i\le 4\).
For these cases we have \(\mathrm{h}_3(L_i)=3\), for \(3\le i\le 4\),
and the second \(3\)-class group \(G\)
is a vertex of depth \(1\) on the odd branch \(\mathcal{B}(7)\)
of the tree with root \(\langle 2187,64\rangle\simeq G_0^{5,7}(0,0,0,0)\)
\cite[p. 189 ff.]{Ne1}
of \(\mathcal{G}(3,3)\).
The types d.19, d.23, d.25
\cite[Thm. 3.4, p. 491]{Ma2}
can only be separated by the classical principalization algorithm \cite{SoTa,HeSm}.
Among the \(2\,576\) real quadratic fields under investigation,
only a negligible part of \(10\) fields, that is \(0.4\%\),
has a second \(3\)-class group \(G\) of coclass \(\mathrm{cc}(G)=3\).

Whereas groups on \(\mathcal{G}(3,1)\), \(\mathcal{G}(3,3)\),
and generally groups with odd coclass,
are impossible for complex quadratic fields
\cite[Thm. 4.2, p. 489]{Ma1},
we now proceed to coclass graphs
which are populated by second \(3\)-class groups of
quadratic fields with either sign of the discriminant.

\small{

\begin{table}[ht]
\caption{Principalization types with sporadic \(G\in\mathcal{G}(3,2)\) for \(D>0\)}
\label{tab:RealPartSpor}
\begin{center}
\begin{tabular}{|lc|cc|ccccc|c|r|lr|}
\hline
 type & \(\varkappa\) & \(\mathrm{h}_3(L_1)\) & \(\mathrm{h}_3(L_2)\) & \(\mathrm{Cl}_3(N_1)\) & \(\mathrm{Cl}_3(N_2)\) & \(\mathrm{Cl}_3(N_3)\) & \(\mathrm{Cl}_3(N_4)\) & \(\varepsilon\) & \(\mathrm{Cl}_3(\mathrm{F}_3^1(K))\) &      min. \(D\) & \multicolumn{2}{|c|}{freq.}                                 \\
\hline                                                                                                                                                                     
 G.19 & \((2143)\)    &                 \(3\) &                 \(3\) &              \((9,3)\) &              \((9,3)\) &              \((9,3)\) &              \((9,3)\) &           \(0\) &                        \((3,3,3,3)\) &    \(214\,712\) &                                   &                  \(11\) \\
 D.10 & \((2241)\)    &                 \(3\) &                 \(3\) &              \((9,3)\) &              \((9,3)\) &            \((3,3,3)\) &              \((9,3)\) &           \(1\) &                          \((3,3,3)\) &    \(422\,573\) &                                   &                  \(93\) \\
 D.5  & \((4224)\)    &                 \(3\) &                 \(3\) &            \((3,3,3)\) &              \((9,3)\) &            \((3,3,3)\) &              \((9,3)\) &           \(2\) &                          \((3,3,3)\) &    \(631\,769\) &                                   &                  \(47\) \\
 H.4  & \((4443)\)    &                 \(3\) &                 \(3\) &            \((3,3,3)\) &            \((3,3,3)\) &              \((9,3)\) &            \((3,3,3)\) &           \(3\) &                          \((9,3,3)\) &    \(957\,013\) &                                   &                  \(27\) \\
\hline
\multicolumn{11}{|r|}{total:}																																																				 	  &  & \(178\) \\
\hline
\end{tabular}
\end{center}
\end{table}

}
\normalsize

The second \(3\)-class groups \(G\) for types \(\varkappa\) in Table
\ref{tab:RealPartSpor}
are sporadic vertices on \(\mathcal{G}(3,2)\) in Figure 
\ref{fig:Typ33CoCl2}.
They are determined uniquely by \(\varepsilon\) (Cor.
\ref{c:SmpSecQdr}).
Among the \(263\) groups \(G\) of even coclass for real quadratic fields,
a contribution of \(178\) groups, that is \(67.7\%\),
is sporadic of coclass \(\mathrm{cc}(G)=2\).
This is the adequate kind of relative frequencies
for comparison with complex quadratic fields.

\small{

\begin{table}[ht]
\caption{Principalization types with sporadic \(G\in\mathcal{G}(3,2)\) for \(D<0\)}
\label{tab:CompSpor}
\begin{center}
\begin{tabular}{|lc|cc|ccccc|c|r|lr|}
\hline
 type               & \(\varkappa\) & \(\mathrm{h}(L_1)\) & \(\mathrm{h}(L_2)\) & \(\mathrm{Cl}_3(N_1)\) & \(\mathrm{Cl}_3(N_2)\) & \(\mathrm{Cl}_3(N_3)\) & \(\mathrm{Cl}_3(N_4)\) & \(\varepsilon\) & \(\mathrm{Cl}_3(\mathrm{F}_3^1(K))\) & min.\(\lvert D\rvert\) & \multicolumn{2}{|c|}{freq.}                                   \\
\hline
 G.19               & \((2143)\)    &                 \(3\) &                 \(3\) &              \((9,3)\) &              \((9,3)\) &              \((9,3)\) &              \((9,3)\) &           \(0\) &                        \((3,3,3,3)\) &             \(12\,067\) &                                        &                  \(94\)  \\
 D.10               & \((2241)\)    &                 \(3\) &                 \(3\) &              \((9,3)\) &              \((9,3)\) &            \((3,3,3)\) &              \((9,3)\) &           \(1\) &                          \((3,3,3)\) &              \(4\,027\) &                                        &                  \(667\) \\
 D.5                & \((4224)\)    &                 \(3\) &                 \(3\) &            \((3,3,3)\) &              \((9,3)\) &            \((3,3,3)\) &              \((9,3)\) &           \(2\) &                          \((3,3,3)\) &             \(12\,131\) &                                        &                  \(269\) \\
 H.4                & \((4443)\)    &                 \(3\) &                 \(3\) &            \((3,3,3)\) &            \((3,3,3)\) &              \((9,3)\) &            \((3,3,3)\) &           \(3\) &                          \((9,3,3)\) &              \(3\,896\) &                                        &                 \(297\)  \\
\hline
\multicolumn{11}{|r|}{total:}																																																				 	                              &      & \(1\,327\) \\
\hline
\end{tabular}
\end{center}
\end{table}

}
\normalsize

Table
\ref{tab:CompSpor}
is the complex counterpart of Table
\ref{tab:RealPartSpor}.
Among the \(2\,020\) complex quadratic fields \(K=\mathbb{Q}(\sqrt{D})\)
with discriminant \(-10^6<D<0\)
and \(3\)-class group \(\mathrm{Cl}_3(K)\) of type \((3,3)\)
a considerable part of \(1\,327\) fields, that is \(65.7\%\),
has a sporadic second \(3\)-class group \(G\) of coclass \(\mathrm{cc}(G)=2\).
Type D.10 with a relative frequency of \(\frac{667}{2020}=33.0\%\)
is the absolute high-champ for complex quadratic fields.

\small{

\begin{table}[ht]
\caption{Principalization types with \(G\in\mathcal{T}(\langle 243,6\rangle)\subset\mathcal{G}(3,2)\) for \(D>0\)}
\label{tab:RealTreeQ}
\begin{center}
\begin{tabular}{|lc|c|cc|ccccc|c|r|lr|}
\hline
 type            & \(\varkappa\) & \(j\) & \(\mathrm{h}_3(L_1)\) & \(\mathrm{h}_3(L_2)\) & \(\mathrm{Cl}_3(N_1)\) & \(\mathrm{Cl}_3(N_2)\) & \(\mathrm{Cl}_3(N_3)\) & \(\mathrm{Cl}_3(N_4)\) & \(\varepsilon\) & \(\mathrm{Cl}_3(\mathrm{F}_3^1(K))\) &      min. \(D\) & \multicolumn{2}{|c|}{freq.}                                 \\
\hline                                                                                                                                                                     
 c.18            & \((0313)\)    & \(6\) &                 \(9\) &                 \(3\) &              \((9,9)\) &              \((9,3)\) &            \((3,3,3)\) &              \((9,3)\) &           \(1\) &                          \((9,3,3)\) &    \(534\,824\) & & \(29\) \\
\cline{13-14}                                                     
 E.6             & \((1313)\)    & \(6\) &                 \(9\) &                 \(3\) &             \((27,9)\) &              \((9,3)\) &            \((3,3,3)\) &              \((9,3)\) &           \(1\) &                          \((9,9,3)\) & \(5\,264\,069\) & \multirow{2}{*}{\(\Bigr\rbrace\)} &  \multirow{2}{*}{\(7\)} \\
 E.14            & \((2313)\)    & \(6\) &                 \(9\) &                 \(3\) &             \((27,9)\) &              \((9,3)\) &            \((3,3,3)\) &              \((9,3)\) &           \(1\) &                          \((9,9,3)\) & \(3\,918\,837\) &                                   &                         \\
\cline{13-14}                                                     
 H.4\(\uparrow\) & \((3313)\)    & \(6\) &                 \(9\) &                 \(3\) &             \((27,9)\) &              \((9,3)\) &            \((3,3,3)\) &              \((9,3)\) &           \(1\) &                         \((27,9,3)\) & \(1\,162\,949\) &                                   &                   \(3\) \\
\hline
\multicolumn{12}{|r|}{total:}																																																				 	  &  & \(39\) \\
\hline
\end{tabular}
\end{center}
\end{table}

}
\normalsize

The second \(3\)-class groups \(G\) for types \(\varkappa\) in Table
\ref{tab:RealTreeQ}
are vertices on the coclass tree with root
\(\langle 243,6\rangle\simeq G_0^{4,5}(0,-1,0,1)\) of \(\mathcal{G}(3,2)\).
\(G\) is a vertex
of depth \(0\) for type c.18,
of depth \(1\) for types E.6, E.14, and
of depth \(2\) for type H.4,
on the even branch \(\mathcal{B}(6)\),
\cite[Tbl. 2, p. 266]{AHL},
\cite[Fig. 4.8, p. 76]{As},
\cite[p. 189 ff.]{Ne1}.
The types E.6, E.14
can only be separated by the classical principalization algorithm
\cite{SoTa,HeSm}.
Type H.4 can be identified either by the first Hilbert \(3\)-class field
or by the classical principalization algorithm.
Among the \(263\) groups of even coclass for real quadratic fields,
a fraction of \(39\) groups, that is \(14.8\%\),
populates this tree.

\small{

\begin{table}[ht]
\caption{Principalization types with \(G\in\mathcal{T}(\langle 243,6\rangle)\subset\mathcal{G}(3,2)\) for \(D<0\)}
\label{tab:CompTreeQ}
\begin{center}
\begin{tabular}{|lc|c|cc|ccccc|c|r|lr|}
\hline
 type               & \(\varkappa\) & \(j\) & \(\mathrm{h}(L_1)\) & \(\mathrm{h}(L_2)\) & \(\mathrm{Cl}_3(N_1)\) & \(\mathrm{Cl}_3(N_2)\) & \(\mathrm{Cl}_3(N_3)\) & \(\mathrm{Cl}_3(N_4)\) & \(\varepsilon\) & \(\mathrm{Cl}_3(\mathrm{F}_3^1(K))\) & min.\(\lvert D\rvert\) & \multicolumn{2}{|c|}{freq.}                                   \\
\hline
 E.6                & \((1313)\)    & \(6\) &                 \(9\) &                 \(3\) &             \((27,9)\) &              \((9,3)\) &            \((3,3,3)\) &              \((9,3)\) &           \(1\) &                          \((9,9,3)\) &             \(15\,544\) & \multirow{2}{*}{\(\Bigr\rbrace\)}      & \multirow{2}{*}{\(186\)} \\
 E.14               & \((2313)\)    & \(6\) &                 \(9\) &                 \(3\) &             \((27,9)\) &              \((9,3)\) &            \((3,3,3)\) &              \((9,3)\) &           \(1\) &                          \((9,9,3)\) &             \(16\,627\) &                                        &                          \\
\cline{13-14}                                                                                                                                               
 H.4\(\uparrow\)    & \((3313)\)    & \(6\) &                 \(9\) &                 \(3\) &             \((27,9)\) &              \((9,3)\) &            \((3,3,3)\) &              \((9,3)\) &           \(1\) &                         \((27,9,3)\) &             \(21\,668\) &                                        &                   \(63\) \\
\hline
 E.6\(\uparrow\)    & \((1313)\)    & \(8\) &                \(27\) &                 \(3\) &            \((81,27)\) &              \((9,3)\)  &            \((3,3,3)\) &              \((9,3)\) &           \(1\) &                        \((27,27,3)\) &            \(268\,040\) & \multirow{2}{*}{\(\Bigr\rbrace\)}      & \multirow{2}{*}{\(15\)}  \\
 E.14\(\uparrow\)   & \((2313)\)    & \(8\) &                \(27\) &                 \(3\) &            \((81,27)\) &              \((9,3)\) &            \((3,3,3)\) &              \((9,3)\) &           \(1\) &                        \((27,27,3)\) &            \(262\,744\) &                                        &                          \\               
\cline{13-14}
 H.4\(\uparrow^2\)  & \((3313)\)    & \(8\) &                \(27\) &                 \(3\) &            \((81,27)\) &              \((9,3)\) &            \((3,3,3)\) &              \((9,3)\) &           \(1\) &                        \((81,27,3)\) &            \(446\,788\) &                                        &                    \(6\) \\
\hline
\multicolumn{12}{|r|}{total:}																																																				 	  &  & \(270\) \\
\hline
\end{tabular}
\end{center}
\end{table}

}
\normalsize

Again, we continue opposing the complex analog to Table
\ref{tab:RealTreeQ}
in Table
\ref{tab:CompTreeQ}.
Among the \(2\,020\) complex quadratic fields under investigation,
a considerable fraction of \(270\) fields, that is \(13.4\%\),
has a second \(3\)-class group \(G\) on the coclass tree \(\mathcal{T}(\langle 243,6\rangle)\) of \(\mathcal{G}(3,2)\).
However, the groups now populate two even branches \(\mathcal{B}(j)\), \(j\in\lbrace 6,8\rbrace\), of this tree.

\small{

\begin{table}[ht]
\caption{Principalization types with \(G\in\mathcal{T}(\langle 243,8\rangle)\subset\mathcal{G}(3,2)\) for \(D>0\)}
\label{tab:RealTreeU}
\begin{center}
\begin{tabular}{|lc|c|cc|ccccc|c|r|lr|}
\hline
 type             & \(\varkappa\) & \(j\) & \(\mathrm{h}_3(L_1)\) & \(\mathrm{h}_3(L_2)\) & \(\mathrm{Cl}_3(N_1)\) & \(\mathrm{Cl}_3(N_2)\) & \(\mathrm{Cl}_3(N_3)\) & \(\mathrm{Cl}_3(N_4)\) & \(\varepsilon\) & \(\mathrm{Cl}_3(\mathrm{F}_3^1(K))\) &      min. \(D\) & \multicolumn{2}{|c|}{freq.}                                 \\
\hline                                                                                                                                                                     
 c.21             & \((0231)\)    & \(6\) &                 \(9\) &                 \(3\) &              \((9,9)\) &              \((9,3)\) &              \((9,3)\) &              \((9,3)\) &           \(0\) &                          \((9,3,3)\) &    \(540\,365\) & & \(25\) \\
\cline{13-14}                                                     
 E.8              & \((1231)\)    & \(6\) &                 \(9\) &                 \(3\) &             \((27,9)\) &              \((9,3)\) &              \((9,3)\) &              \((9,3)\) &           \(0\) &                          \((9,9,3)\) & \(6\,098\,360\) & \multirow{2}{*}{\(\Bigr\rbrace\)} & \multirow{2}{*}{\(14\)} \\
 E.9              & \((2231)\)    & \(6\) &                 \(9\) &                 \(3\) &             \((27,9)\) &              \((9,3)\) &              \((9,3)\) &              \((9,3)\) &           \(0\) &                          \((9,9,3)\) &    \(342\,664\) &                                   &                         \\
\cline{13-14}                                                     
 G.16             & \((4231)\)    & \(6\) &                 \(9\) &                 \(3\) &             \((27,9)\) &              \((9,3)\) &              \((9,3)\) &              \((9,3)\) &           \(0\) &                         \((27,9,3)\) & \(8\,711\,453\) &                                   &                   \(2\) \\
\hline                                                    
 c.21\(\uparrow\) & \((0231)\)    & \(8\) &                \(27\) &                 \(3\) &            \((27,27)\) &              \((9,3)\) &              \((9,3)\) &              \((9,3)\) &           \(0\) &                         \((27,9,3)\) & \(1\,001\,957\) & &  \(2\) \\
\hline                                                                                                                                                                
\multicolumn{12}{|r|}{total:}																																																				 	  &  & \(43\) \\
\hline
\end{tabular}
\end{center}
\end{table}

}
\normalsize

The second \(3\)-class groups \(G\) for types \(\varkappa\) in Table 
\ref{tab:RealTreeU}
are vertices on the coclass tree with root
\(\langle 243,8\rangle\simeq G_0^{4,5}(0,0,0,1)\) of \(\mathcal{G}(3,2)\).
\(G\) is a vertex
of depth \(0\) for type c.21,
of depth \(1\) for types E.8, E.9, and
of depth \(2\) for type G.16,
on the even branches \(\mathcal{B}(j)\), \(j\in\lbrace 6,8\rbrace\),
\cite[Tbl. 2, p. 266]{AHL}, \cite[Fig. 4.8, p. 76]{As}, \cite[p. 189 ff.]{Ne1}.
The types E.8, E.9
can only be separated by the classical principalization algorithm 
\cite{SoTa,HeSm}.
Type G.16 can be identified either by the first Hilbert \(3\)-class field
or by the classical principalization algorithm.
Among the \(263\) groups of even coclass,
a fraction of \(43\) groups, that is \(16.3\%\),
populates this tree.

\small{

\begin{table}[ht]
\caption{Principalization types with \(G\in\mathcal{T}(\langle 243,8\rangle)\subset\mathcal{G}(3,2)\) for \(D<0\)}
\label{tab:CompTreeU}
\begin{center}
\begin{tabular}{|lc|c|cc|ccccc|c|r|lr|}
\hline
 type               & \(\varkappa\) & \(j\) & \(\mathrm{h}(L_1)\) & \(\mathrm{h}(L_2)\) & \(\mathrm{Cl}_3(N_1)\) & \(\mathrm{Cl}_3(N_2)\) & \(\mathrm{Cl}_3(N_3)\) & \(\mathrm{Cl}_3(N_4)\) & \(\varepsilon\) & \(\mathrm{Cl}_3(\mathrm{F}_3^1(K))\) & min.\(\lvert D\rvert\) & \multicolumn{2}{|c|}{freq.}                                   \\
\hline
 E.8                & \((1231)\)    & \(6\) &                 \(9\) &                 \(3\) &             \((27,9)\) &              \((9,3)\) &              \((9,3)\) &              \((9,3)\) &           \(0\) &                          \((9,9,3)\) &             \(34\,867\) & \multirow{2}{*}{\(\Bigr\rbrace\)}      & \multirow{2}{*}{\(197\)} \\
 E.9                & \((2231)\)    & \(6\) &                 \(9\) &                 \(3\) &             \((27,9)\) &              \((9,3)\) &              \((9,3)\) &              \((9,3)\) &           \(0\) &                          \((9,9,3)\) &              \(9\,748\) &                                        &                          \\
\cline{13-14}                                                                                                                                               
 G.16               & \((4231)\)    & \(6\) &                 \(9\) &                 \(3\) &             \((27,9)\) &              \((9,3)\) &              \((9,3)\) &              \((9,3)\) &           \(0\) &                         \((27,9,3)\) &             \(17\,131\) &                                        &                  \(79\)  \\
\hline
 E.8\(\uparrow\)    & \((1231)\)    & \(8\) &                \(27\) &                 \(3\) &            \((81,27)\) &              \((9,3)\) &              \((9,3)\) &              \((9,3)\) &           \(0\) &                        \((27,27,3)\) &            \(370\,740\) & \multirow{2}{*}{\(\Bigr\rbrace\)}      & \multirow{2}{*}{\(13\)}  \\
 E.9\(\uparrow\)    & \((2231)\)    & \(8\) &                \(27\) &                 \(3\) &            \((81,27)\) &              \((9,3)\) &              \((9,3)\) &              \((9,3)\) &           \(0\) &                        \((27,27,3)\) &            \(297\,079\) &                                        &                          \\               
\cline{13-14}
 G.16\(\uparrow\)   & \((4231)\)    & \(8\) &                \(27\) &                 \(3\) &            \((81,27)\) &              \((9,3)\) &              \((9,3)\) &              \((9,3)\) &           \(0\) &                        \((81,27,3)\) &            \(819\,743\) &                                        &                    \(2\) \\
\hline
\multicolumn{12}{|r|}{total:}																																																				 	  &  & \(291\) \\
\hline
\end{tabular}
\end{center}
\end{table}

}
\normalsize

Table
\ref{tab:CompTreeU}
is the complex analog of Table
\ref{tab:RealTreeU}.
Among the \(2\,020\) complex quadratic fields under investigation,
a considerable fraction of \(291\) fields, that is \(14.4\%\),
has a second \(3\)-class group \(G\) on the coclass tree \(\mathcal{T}(\langle 243,8\rangle)\) of \(\mathcal{G}(3,2)\).

\small{

\begin{table}[ht]
\caption{Principalization types with \(G\in\mathcal{G}(3,4)\) for \(D>0\)}
\label{tab:RealLow}
\begin{center}
\begin{tabular}{|lc|cc|ccccc|c|r|r|}
\hline
 type             & \(\varkappa\) & \(\mathrm{h}_3(L_1)\) & \(\mathrm{h}_3(L_2)\) & \(\mathrm{Cl}_3(N_1)\) & \(\mathrm{Cl}_3(N_2)\) & \(\mathrm{Cl}_3(N_3)\) & \(\mathrm{Cl}_3(N_4)\) & \(\varepsilon\) & \(\mathrm{Cl}_3(\mathrm{F}_3^1(K))\) &      min. \(D\) &    fr. \\
\hline
 F.13             & \((3143)\)    &                 \(9\) &                 \(9\) &             \((27,9)\) &             \((27,9)\) &            \((3,3,3)\) &            \((3,3,3)\) &           \(2\) &                        \((9,9,9,3)\) & \(8\,321\,505\) &                   \(1\) \\
 F.13\(\uparrow\) & \((3143)\)    &                \(27\) &                 \(9\) &            \((81,27)\) &             \((27,9)\) &            \((3,3,3)\) &            \((3,3,3)\) &           \(2\) &                      \((27,27,9,3)\) & \(8\,127\,208\) &                   \(1\) \\
\hline
 d.25\({}^\ast\)  & \((0143)\)    &                \(27\) &                 \(9\) &            \((27,27)\) &             \((27,9)\) &            \((3,3,3)\) &            \((3,3,3)\) &           \(2\) &                       \((27,9,9,3)\) & \(8\,491\,713\) &  \(1\) \\
\hline
\multicolumn{11}{|r|}{total:}																																																				 	                                    & \(3\) \\
\hline
\end{tabular}
\end{center}
\end{table}

}
\normalsize

The extremely rare second \(3\)-class groups \(G\) for types \(\varkappa\) in Table
\ref{tab:RealLow}
are vertices on the coclass graph \(\mathcal{G}(3,4)\).
The group \(G\) is either
a sporadic vertex outside of coclass trees for type F.13,
or a vertex on the even branch \(\mathcal{B}(10)\)
of one of the five metabelian coclass trees of \(\mathcal{G}(3,4)\),
a vertex of depth \(1\) for type F.13\(\uparrow\),
and a main line vertex of depth \(0\) for type d.25\({}^\ast\)
\cite[p. 189 ff.]{Ne1},
\cite[Thm. 3.4, p. 491]{Ma2}.
The types F.13, F.13\(\uparrow\), and d.25\({}^\ast\)
must be identified by the classical principalization algorithm
\cite{SoTa,HeSm}.
Among the \(263\) groups of even coclass for real quadratic fields,
a very exotic fraction of only \(3\) groups, that is \(1.1\%\),
populates \(\mathcal{G}(3,4)\).
The population sets in with remarkable delay at \(D>8\cdot 10^6\).

\small{

\begin{table}[ht]
\caption{Principalization types with \(G\in\mathcal{G}(3,4)\) for \(D<0\)}
\label{tab:CompLow}
\begin{center}
\begin{tabular}{|lc|cc|ccccc|c|r|lr|}
\hline
 type               & \(\varkappa\) & \(\mathrm{h}(L_1)\) & \(\mathrm{h}(L_2)\) & \(\mathrm{Cl}_3(N_1)\) & \(\mathrm{Cl}_3(N_2)\) & \(\mathrm{Cl}_3(N_3)\) & \(\mathrm{Cl}_3(N_4)\) & \(\varepsilon\) & \(\mathrm{Cl}_3(\mathrm{F}_3^1(K))\) & min.\(\lvert D\rvert\) & \multicolumn{2}{|c|}{freq.}                                   \\
\hline
\cline{12-13}
 F.7                & \((3443)\)    &                 \(9\) &                 \(9\) &             \((27,9)\) &             \((27,9)\) &            \((3,3,3)\) &            \((3,3,3)\) &           \(2\) &                        \((9,9,9,3)\) &            \(124\,363\) & \multirow{4}{*}{\Huge\(\Bigr\rbrace\)} & \multirow{4}{*}{\(78\)}  \\
 F.11               & \((1143)\)    &                 \(9\) &                 \(9\) &             \((27,9)\) &             \((27,9)\) &            \((3,3,3)\) &            \((3,3,3)\) &           \(2\) &                        \((9,9,9,3)\) &             \(27\,156\) &                                        &                          \\
 F.12               & \((1343)\)    &                 \(9\) &                 \(9\) &             \((27,9)\) &             \((27,9)\) &            \((3,3,3)\) &            \((3,3,3)\) &           \(2\) &                        \((9,9,9,3)\) &             \(31\,908\) &                                        &                          \\
 F.13               & \((3143)\)    &                 \(9\) &                 \(9\) &             \((27,9)\) &             \((27,9)\) &            \((3,3,3)\) &            \((3,3,3)\) &           \(2\) &                        \((9,9,9,3)\) &             \(67\,480\) &                                        &                          \\
\cline{12-13}                                                                                                                                                                          
 F.7\(\uparrow\)    & \((3443)\)    &                \(27\) &                 \(9\) &            \((81,27)\) &             \((27,9)\) &            \((3,3,3)\) &            \((3,3,3)\) &           \(2\) &                      \((27,27,9,3)\) &            \(469\,816\) & \multirow{4}{*}{\Huge\(\Bigr\rbrace\)} & \multirow{4}{*}{\(14\)}  \\
 F.11\(\uparrow\)   & \((1143)\)    &                \(27\) &                 \(9\) &            \((81,27)\) &             \((27,9)\) &            \((3,3,3)\) &            \((3,3,3)\) &           \(2\) &                      \((27,27,9,3)\) &            \(469\,787\) &                                        &                          \\
 F.12\(\uparrow\)   & \((1343)\)    &                \(27\) &                 \(9\) &            \((81,27)\) &             \((27,9)\) &            \((3,3,3)\) &            \((3,3,3)\) &           \(2\) &                      \((27,27,9,3)\) &            \(249\,371\) &                                        &                          \\
 F.13\(\uparrow\)   & \((3143)\)    &                \(27\) &                 \(9\) &            \((81,27)\) &             \((27,9)\) &            \((3,3,3)\) &            \((3,3,3)\) &           \(2\) &                      \((27,27,9,3)\) &            \(159\,208\) &                                        &                          \\
\cline{12-13}                                                                                                                                                                  
\hline
 G.16r              & \((1243)\)    &                 \(9\) &                 \(9\) &             \((27,9)\) &             \((27,9)\) &            \((3,3,3)\) &            \((3,3,3)\) &           \(2\) &                       \((27,9,9,3)\) &            \(290\,703\) & \multirow{3}{*}{\(\Biggr\rbrace\)}     & \multirow{3}{*}{\(19\)}  \\
 G.19r              & \((2143)\)    &                 \(9\) &                 \(9\) &             \((27,9)\) &             \((27,9)\) &            \((3,3,3)\) &            \((3,3,3)\) &           \(2\) &                       \((27,9,9,3)\) &             \(96\,827\) &                                        &                          \\
 H.4r               & \((3343)\)    &                 \(9\) &                 \(9\) &             \((27,9)\) &             \((27,9)\) &            \((3,3,3)\) &            \((3,3,3)\) &           \(2\) &                       \((27,9,9,3)\) &            \(256\,935\) &                                        &                          \\
\cline{12-13}                       
 G.16i              & \((1243)\)    &                 \(9\) &                 \(9\) &             \((27,9)\) &             \((27,9)\) &            \((3,3,3)\) &            \((3,3,3)\) &           \(2\) &                        \((9,9,9,9)\) &            \(135\,059\) & \multirow{3}{*}{\(\Biggr\rbrace\)}     & \multirow{3}{*}{\(15\)}  \\
 G.19i              & \((2143)\)    &                 \(9\) &                 \(9\) &             \((27,9)\) &             \((27,9)\) &            \((3,3,3)\) &            \((3,3,3)\) &           \(2\) &                        \((9,9,9,9)\) &            \(199\,735\) &                                        &                          \\
 H.4i              & \((3343)\)    &                 \(9\) &                 \(9\) &             \((27,9)\) &             \((27,9)\) &            \((3,3,3)\) &            \((3,3,3)\) &           \(2\) &                        \((9,9,9,9)\) &            \(186\,483\) &                                        &                          \\
\hline                                                                                                                                                                          
 G.19r\(\uparrow\)  & \((2143)\)    &                \(27\) &                 \(9\) &            \((81,27)\) &             \((27,9)\) &            \((3,3,3)\) &            \((3,3,3)\) &           \(2\) &                      \((81,27,9,3)\) &            \(509\,160\) &                                        &                    \(2\) \\
 H.4r\(\uparrow\)   & \((3343)\)    &                \(27\) &                 \(9\) &            \((81,27)\) &             \((27,9)\) &            \((3,3,3)\) &            \((3,3,3)\) &           \(2\) &                      \((81,27,9,3)\) &            \(678\,804\) &                                        &                    \(3\) \\
\hline
\multicolumn{11}{|r|}{total:}																																																				 	  &  & \(131\) \\
\hline
\end{tabular}
\end{center}
\end{table}

}
\normalsize

In contrast, Table
\ref{tab:CompLow}
shows that the coclass graph \(G\in\mathcal{G}(3,4)\) accommodates
the second \(3\)-class groups \(G\) of quite a notable portion of \(131\), that is \(6.5\%\),
among the \(2\,020\) complex quadratic fields under investigation.
The group \(G\) is either
a sporadic vertex outside of coclass trees,
namely an isolated metabelian top vertex of \(\mathcal{G}(3,4)\) for the types F.7, F.11, F.12, F.13
and a terminal metabelian vertex of depth \(1\) on a finite tree for the types G.16r, G.19r, H.4r, G.16i, G.19i, H.4i,
or a vertex on the even branch \(\mathcal{B}(10)\)
of one of the five metabelian coclass trees of \(\mathcal{G}(3,4)\),
namely a vertex of depth \(1\) for the types F.7\(\uparrow\), F.11\(\uparrow\), F.12\(\uparrow\), F.13\(\uparrow\),
and a vertex of depth \(2\) for the types G.19r\(\uparrow\), H.4r\(\uparrow\).
We point out that the classical principalization algorithm
\cite{SoTa,HeSm},
which must be used to separate the types F.7, F.11, F.12, F.13, resp. G.16, G.19, H.4,
is not able to reveal that
the types \(\mathrm{G}.16\), \(\mathrm{G}.19\), and \(\mathrm{H}.4\)
with odd index of nilpotency \(m=7\) and order \(3^n\), \(n=10=2m-4\), can appear in
a regular variant (r) with \(\mathrm{Cl}_3(\mathrm{F}_3^1(K))\) of type \((27,9,9,3)\)
and an irregular variant (i) with \(\mathrm{Cl}_3(\mathrm{F}_3^1(K))\) of type \((9,9,9,9)\)
\cite[Satz 4.2.4, p. 131]{Ne1}.

\small{

\begin{table}[ht]
\caption{Principalization type with \(G\in\mathcal{G}(3,6)\) for \(D<0\)}
\label{tab:CompExot}
\begin{center}
\begin{tabular}{|lc|cc|ccccc|c|r|lr|}
\hline
 type               & \(\varkappa\) & \(\mathrm{h}(L_1)\) & \(\mathrm{h}(L_2)\) & \(\mathrm{Cl}_3(N_1)\) & \(\mathrm{Cl}_3(N_2)\) & \(\mathrm{Cl}_3(N_3)\) & \(\mathrm{Cl}_3(N_4)\) & \(\varepsilon\) & \(\mathrm{Cl}_3(\mathrm{F}_3^1(K))\) & min.\(\lvert D\rvert\) & \multicolumn{2}{|c|}{freq.}                                   \\
\hline
 F.12\(\uparrow^2\) & \((1343)\)    &                \(27\) &                \(27\) &            \((81,27)\) &            \((81,27)\) &            \((3,3,3)\) &            \((3,3,3)\) &           \(2\) &                     \((27,27,27,9)\) &            \(423\,640\) &                                        &                    \(1\) \\
\hline
\multicolumn{11}{|r|}{total:}																																																				 	  &  & \(1\) \\
\hline
\end{tabular}
\end{center}
\end{table}

}
\normalsize

Finally, in Table
\ref{tab:CompExot},
type F.12\(\uparrow^2\) with a relative frequency of \(\frac{1}{2020}=0.05\%\)
is the absolute low-champ for complex quadratic fields.
The corresponding second \(3\)-class group \(G\)
is of the biggest order \(3^{13}\), known until now.
It is the unique sporadic vertex outside of coclass trees
which appeared on the coclass graph \(\mathcal{G}(3,6)\).
Type F.12\(\uparrow^2\)
must be identified by the classical principalization algorithm
\cite{SoTa,HeSm}.

\begin{remark}
Based on the statistical evaluation of all numerical results,
we are able to conclude that the new principalization algorithm
is significantly more efficient for real quadratic fields.
The time consuming third step, \S\
\ref{ss:FstHilb},
of the algorithm
can be avoided for \(\frac{2548}{2576}=98.9\%\) of the real quadratic fields
(Tables
\ref{tab:RealMax},
\ref{tab:RealMix},
\ref{tab:RealPartSpor}
entirely and additionally the \(29\) cases of type \(\mathrm{c.18}\) in Table
\ref{tab:RealTreeQ},
the \(27\) cases of type \(\mathrm{c.21}\) in Table
\ref{tab:RealTreeU},
and the single case of type \(\mathrm{d.25}^\ast\) in Table
\ref{tab:RealLow}),
but only for \(\frac{1327}{2020}=65.7\%\) of the complex quadratic fields
(Table \ref{tab:CompSpor}).
\end{remark}


\section{Acknowledgements}
\label{s:Final}

The author expresses his gratitude to
Mike F. Newman, ANU, Canberra,
for precious aid in unifying the various approaches
\cite{AHL,As,As1,Bg,Bl,Ef,GAP,Hl,Jm,Ne1,Sr2}
to the classification of finite metabelian \(3\)-groups
and in discovering the last statement of Corollary
\ref{c:SmlSecQdr}.
Further he would like to thank
Karim Belabas, Univ. Bordeaux, and
Claus Fieker, Univ. Kaiserslautern,
for illuminating discussions concerning
the computation of arithmetical invariants of
Hilbert \(3\)-class fields over quadratic fields \cite{Be,Fi},
using the computer algebra systems PARI/GP and MAGMA.
Finally, the author is endebted to the anonymous referee
for valuable suggestions to improve the exposition.




\section{Appendix}
\label{s:Appendix}

\noindent
For the convenience of the reader,
we literally cite some original results of Blackburn
\cite{Bl},
and as a service to the mathematical community,
we give a succinct survey of Nebelung's thesis
\cite{Ne1,Ne2}.
Both of these works are used essentially in our article.

\subsection{A theorem by Blackburn}
\label{ss:Blackburn}

\noindent
Let \(p\) be an arbitrary prime and
observe that Blackburn denotes our two-step centralizer \(\chi_2(G)\)
of a \(p\)-group \(G\) (see \S\
\ref{ss:MaxHom})
by \(\gamma_1(G)\),
whereas his usage of \(\gamma_i(G)\) for \(i\ge 2\) coincides with ours,
denoting the members of the lower central series.
The following theorem shows
that all subgroups \(\gamma_i(G)\), \(1\le i\le m-p+1\), of a \(p\)-group \(G\)
of coclass \(\mathrm{cc}(G)=1\) and class \(\mathrm{cl}(G)=m-1>p\)
are regular and have the same invariants as
the nearly homocyclic abelian \(p\)-group \(\mathrm{A}(p,m-i)\) of type
\(\left(\overbrace{p^{q_i+1},\ldots,p^{q_i+1}}^{r_i\text{ times}},\overbrace{p^{q_i},\ldots,p^{q_i}}^{p-1-r_i\text{ times}}\right)\),
where \(m-i=q_i(p-1)+r_i\)
by Euclidean division with quotient \(q_i>0\) and remainder \(0\le r_i<p-1\).
In particular, if \(G\) is metabelian,
then \(\gamma_i(G)\simeq\mathrm{A}(p,m-i)\), for \(2\le i\le m-p+1\),
and if \(G\) is metabelian with defect \(k(G)=0\),
then also \(\gamma_1(G)\simeq\mathrm{A}(p,m-1)\).

\begin{theorem} 
\label{thm:Blackburn}
(see \cite[Thm. 3.4, p. 68]{Bl})
If \(G\) is a group of order \(p^m\) and class \(m-1\),
where \(m>3\),
then \(\gamma_1(G)\) is a regular \(p\)-group.
If \(m>p+1\), and for each \(i=1,2,\ldots,m-p+1\),
we write \(m-i=(p-1)q_i+r_i\) \((0\le r_i<p-1)\),
then \(\gamma_i(G)\) has
\(r_i\) invariants equal to \(q_i+1\) and
\(p-r_i-1\) invariants equal to \(q_i\).
\end{theorem}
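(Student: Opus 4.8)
The plan is to follow the classical route through the structure theory of $p$-groups of maximal class. Recall that $G$ has order $p^m$ and class $m-1$, so the lower central series $G>\gamma_2(G)>\cdots>\gamma_m(G)=1$ is a chief series below $\gamma_2(G)$, each factor $\gamma_i(G)/\gamma_{i+1}(G)$ with $i\ge 2$ being cyclic of order $p$, and $G$ acts uniserially on $\gamma_2(G)$, that is $[\gamma_i(G),G]=\gamma_{i+1}(G)$. In Blackburn's notation $\gamma_1(G)=\chi_2(G)$ is the two-step centralizer: for $m\ge 4$ it is the unique maximal subgroup containing $\gamma_2(G)$ with $[\gamma_1(G),\gamma_2(G)]\le\gamma_4(G)$, it is characteristic, and $\lvert G:\gamma_1(G)\rvert=p$. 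The proof splits into two parts: (a) regularity of $\gamma_1(G)$, valid already for $m>3$, and (b) the computation of the invariants of each $\gamma_i(G)$, where the hypothesis $m>p+1$ is exactly what forces the $p$-th power map to shift the lower central series down by $p-1$ steps.

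For part (a): by Miech's theorem on the defect of commutativity there is $k=k(G)\in\{0,1\}$ with $[\chi_2(G),\gamma_2(G)]=\gamma_{m-k}(G)$. Writing $\gamma_1(G)=\langle y\rangle\gamma_2(G)$ with $y\in\chi_2(G)\setminus\gamma_2(G)$, the standard commutator expansion ($\gamma_2(G)$ being normal in $\gamma_1(G)$ and $[y,y]=1$) gives $\gamma_1(G)'\le[\gamma_1(G),\gamma_2(G)]=\gamma_{m-k}(G)$, which is cyclic of order at most $p$. For odd $p$ a $p$-group with cyclic derived subgroup is regular; for $p=2$ one has $k=0$ here, so $\gamma_1(G)$ is abelian, hence regular. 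Thus $\gamma_1(G)$ is regular, and since $\gamma_i(G)\le\gamma_2(G)\le\gamma_1(G)$ for every $i\ge 2$, each $\gamma_i(G)$ with $i\ge 1$ is a regular $p$-group.

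For part (b), the key claim is the identity $\mho^1(\gamma_i(G))=\gamma_{i+p-1}(G)$ for $1\le i\le m-p+1$, with the convention $\gamma_j(G)=1$ for $j\ge m$. The inclusion ``$\le$'' follows from regularity of $\gamma_1(G)$ together with the power-commutator relations of maximal class groups (the relations for $p$-th powers of the lower-central generators $s_j$ and for $x^p,y^p$ recalled in the text), which push $p$-th powers $p-1$ steps down the series; the reverse inclusion ``$\ge$'' is a counting consequence, since the chief factors are of order $p$ and $\mho^1$ cannot drop the order further than this shift permits. Granting it, $\Phi(\gamma_i(G))=\gamma_i(G)'\,\mho^1(\gamma_i(G))$ equals $\gamma_{i+p-1}(G)$, the commutator part being absorbed since $\gamma_i(G)'\le\gamma_1(G)'\le\gamma_{m-k}(G)$ lies deep enough; hence $d(\gamma_i(G))=\log_p\lvert\gamma_i(G):\gamma_{i+p-1}(G)\rvert=p-1$, so $\gamma_i(G)$ has exactly $p-1$ invariants. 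Iterating gives $\mho^{j}(\gamma_i(G))=\gamma_{i+j(p-1)}(G)$, so the orders $\lvert\mho^{j}(\gamma_i(G))\rvert$ run through $p^{m-i},p^{m-i-(p-1)},\ldots,p^{r_i},1$, where $m-i=(p-1)q_i+r_i$. By P. Hall's theory of regular $p$-groups the type of $\gamma_i(G)$ is the partition conjugate to $(\underbrace{p-1,\ldots,p-1}_{q_i},\,r_i)$, namely $r_i$ invariants equal to $q_i+1$ and $(p-1)-r_i$ invariants equal to $q_i$, which is precisely the type of $\mathrm{A}(p,m-i)$; the metabelian refinement for $\gamma_1(G)$ is the case $k=0$ handled in part (a).

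The hard part is step (b), and within it the identity $\mho^1(\gamma_i(G))=\gamma_{i+p-1}(G)$ near the top of the series --- in particular the assertion (the case $i=m-p+1$, where $q_i=1$ and $r_i=0$) that $\gamma_{m-p+1}(G)$ is elementary abelian of rank $p-1$. Proving this uniformly, and checking that the commutator contribution to $\Phi(\gamma_i(G))$ is genuinely absorbed for all $i$ in range, needs the detailed power-commutator calculus of groups of maximal class rather than the soft arguments used for (a); this is exactly the material developed in the earlier portion of Blackburn's \S\S\,3--4 and summarised through the relations recalled in the excerpt.
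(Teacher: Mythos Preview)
The paper does not give its own proof of this theorem; it is quoted verbatim from Blackburn's 1958 paper in the Appendix ``for the convenience of the reader.'' So there is no in-paper proof to compare against --- Blackburn's original argument (his \S\S\,2--3) is the only reference.

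Your outline of part (b) is essentially Blackburn's strategy: establish $\mho^1(\gamma_i(G))=\gamma_{i+p-1}(G)$ via the power--commutator calculus, deduce the Frattini quotients, and read off the invariants through Hall's theory of regular $p$-groups. That part is fine as a roadmap, and you correctly flag that the hard work sits there.

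Part (a), however, has a genuine gap. You invoke Miech's bound $k\le 1$ to conclude that $\gamma_1(G)'\le\gamma_{m-1}(G)$ is cyclic of order at most $p$. Miech's result is proved for \emph{metabelian} $p$-groups of maximal class; Blackburn's Theorem~3.4 is stated for arbitrary $p$-groups of maximal class, and for $p\ge 5$ these are not all metabelian. In the non-metabelian case one has $\gamma_1(G)'\supseteq\gamma_2(G)'=G''$, and $G''$ can have order exceeding $p$: Blackburn's degree-of-commutativity theorem (for $m>p+1$ one has $l\ge 1$) yields only $[\gamma_1(G),\gamma_2(G)]\le\gamma_4(G)$, not $\le\gamma_{m-1}(G)$. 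So the step ``$\gamma_1(G)'$ cyclic $\Rightarrow$ regular'' is unavailable in general, and part (b) inherits the same gap where you absorb the commutator contribution to $\Phi(\gamma_i(G))$ via the same bound. Blackburn's actual route to regularity is different: for $m\le p+1$ it follows from $\lvert\gamma_1(G)\rvert\le p^p$, while for $m>p+1$ he develops the power structure first (his Lemmas~3.2--3.3) and derives regularity from that, rather than from a smallness bound on $\gamma_1(G)'$. For the paper's intended application ($p=3$, where all maximal-class groups are metabelian by \cite{Bl1}) your argument would go through, but not for the theorem as stated.
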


\subsection{Main theorems of Nebelung's thesis}
\label{ss:Nebelung}

Brigitte Nebelung completed her thesis
\cite{Ne1,Ne2}
in 1989 under supervision of Wolfram Jehne at Cologne.
She had been introduced to computational group theory
by Charles R. Leedham-Green and Joachim Neub\"user.
Furthermore, she had studied the details concerning
\(3\)-groups of coclass \(2\) with two generators
in Judith A. Ascione's thesis
\cite{As},
written under supervision of Mike F. Newman.

Nebelung determined explicit parametrized presentations
for all isomorphism classes of
metabelian \(3\)-groups with abelianization of type \((3,3)\)
and arbitrary coclass greater than or equal to \(2\),
thereby extending Blackburn's results for coclass \(1\) in
\cite{Bl}.
The lower and upper central series of all these groups were shown
to have \(3\)-elementary cyclic or bicyclic factors,
the number of the latter being equal to the coclass.
For all members of the lower central series,
in particular for the abelian derived subgroup,
the structure was given by abelian type invariants.
Based on a computer calculation of a
complete and irredundant set of isomorphism classes,
listed in volume \(2\)
\cite{Ne2}
of her thesis,
Nebelung proved that the metabelian coclass trees
and their branches are arranged in periodic patterns.
Finally, the number theoretic capitulation problem
was solved for arbitrary number fields
with 3-class group of type \((3,3)\)
by calculating
the transfer kernels of any metabelian \(3\)-group
having abelianization of type \((3,3)\)
with respect to its maximal subgroups.

Throughout the sequel,
let \(4\le m\le n\) be integers and
\(G\) be a metabelian \(3\)-group
of order \(\lvert G\rvert=3^n\) and class \(\mathrm{cl}(G)=m-1\)
with derived quotient \(G/G^\prime\simeq (3,3)\).

Nebelung's starting point for analyzing the
structure of the lower and upper central series of \(G\)
was the following inconspicuous result
which proves to be very powerful.
Observe that Nebelung denotes
the members \(\gamma_j(G)\) of the lower central series by \(G_j\),
for \(j\ge 1\).

\begin{theorem} 
\label{thm:NebelungPowerBasis}
(see \cite[Satz 3.1.11, p. 57]{Ne1})
There exist generators \(x,y\) of \(G=\langle x,y\rangle\)
such that \(\gamma_3(G)=\langle x^3,y^3,\gamma_4(G)\rangle\).
\end{theorem}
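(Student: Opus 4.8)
The plan is to pass to the quotient \(\bar G=G/\gamma_4(G)\) and reduce everything to the behaviour of the cube map. Fix a generating pair \(x,y\) of \(G\) and write \([x,y]\) for the main commutator and \([x,y,x]=[[x,y],x]\), \([x,y,y]=[[x,y],y]\) for the two weight-three commutators. Since \(G/G^\prime\) has rank two, \(\gamma_2(G)/\gamma_3(G)\) is cyclic, generated by the image of \([x,y]\); combining this with \([x,y]^3\equiv[x^3,y]\pmod{\gamma_3(G)}\) and \(x^3\in G^3\le\gamma_2(G)\) one gets that \(\gamma_2(G)/\gamma_3(G)\) has exponent \(3\), hence order exactly \(3\). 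Iterating the same commutator–power identity shows that every factor \(\gamma_j(G)/\gamma_{j+1}(G)\) is elementary abelian; in particular \(\gamma_3(G)/\gamma_4(G)\) is an \(\mathbb{F}_3\)-vector space of dimension at most two, central in \(\bar G\), and spanned by the images of \([x,y,x]\) and \([x,y,y]\). So the inclusion \(\supseteq\) in the assertion will hold as soon as we know \(x^3,y^3\in\gamma_3(G)\), and \(\subseteq\) amounts to those two cubes spanning \(\gamma_3(G)/\gamma_4(G)\).

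The first key step is to prove \(G^3\le\gamma_3(G)\), equivalently that \(G/\gamma_3(G)\) is the Heisenberg group of order \(27\) and not the modular group of order \(27\). By the Hall--Petrescu collection formula modulo \(\gamma_3(G)\), the assignment \(g\mapsto g^3\gamma_3(G)\) induces a well-defined \(\mathbb{F}_3\)-linear map \(\phi\colon G/\gamma_2(G)\to\gamma_2(G)/\gamma_3(G)\). If \(\phi\) were nonzero then \(G/\gamma_3(G)\) would be the modular group, and one could choose a generating pair with \(x^3\equiv[x,y]\pmod{\gamma_3(G)}\); transporting this one level deeper via \([x^3,y]\equiv[x,y]^3\pmod{\gamma_4(G)}\), the metabelian law \([\gamma_2(G),\gamma_2(G)]=1\), and a careful accounting of the correction terms modulo \(\gamma_4(G)\), one would force both \([x,y,x]\) and \([x,y,y]\) into \(\gamma_4(G)\), i.e. \(\gamma_3(G)=\gamma_4(G)\), contradicting \(\mathrm{cl}(G)=m-1\ge 3\). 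Hence \(\phi=0\), so \(g^3\in\gamma_3(G)\) for every \(g\in G\), and in particular \(x^3,y^3\in\gamma_3(G)\) for every generating pair.

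It remains to choose the pair so that \(x^3,y^3\) already generate \(\gamma_3(G)\) modulo \(\gamma_4(G)\). For this one computes \(g^3\bmod\gamma_4(G)\) for \(g\) among the standard generators \(x,y,xy,xy^{-1}\) by the collection formula; the relevant output is that \((xy)^3\) and \((xy^{-1})^3\) differ from \(x^3y^3\), respectively \(x^3y^{-3}\), by the weight-three commutators \([x,y,x]\) and \([x,y,y]\) modulo \(\gamma_4(G)\). Let \(W\le\gamma_3(G)/\gamma_4(G)\) be the span of the images of \(x^3,y^3\). If \(W\) is proper, then not both \([x,y,x]\) and \([x,y,y]\) lie in \(W\), so (by the formulas just quoted) the cube of \(xy\) or of \(xy^{-1}\) lies outside \(W\); replacing \((x,y)\) by \((x,xy)\), \((xy,y)\) or \((x,xy^{-1})\) strictly enlarges \(W\), and since \(\dim\gamma_3(G)/\gamma_4(G)\le 2\) at most two such replacements yield a generating pair with \(W=\gamma_3(G)/\gamma_4(G)\). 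Lifting back gives generators with \(\gamma_3(G)=\langle x^3,y^3,\gamma_4(G)\rangle\). The main obstacle is the second paragraph: ruling out the modular quotient \(G/\gamma_3(G)\), where the bookkeeping of the collection and commutator–power correction terms modulo \(\gamma_4(G)\) must be done precisely and where the hypothesis \(m\ge 4\) is essential — for \(m=3\) the statement genuinely fails and one must instead impose that \(y\) have order \(3\), exactly the caveat attached to Theorem~\ref{t:MaxExo}.
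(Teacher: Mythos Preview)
The paper does not give its own proof of this statement; it is quoted from Nebelung's thesis as a cited result in the Appendix. So there is no ``paper's proof'' to compare against, and your attempt must stand on its own. Your overall strategy is sound: reduce modulo \(\gamma_4(G)\), show \(G^3\le\gamma_3(G)\) by ruling out the modular quotient \(G/\gamma_3(G)\), and then adjust the generating pair so that the cubes span \(\gamma_3(G)/\gamma_4(G)\). Both steps can be made to work, but each needs tightening.

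In the second paragraph you write that the modular case ``would force both \([x,y,x]\) and \([x,y,y]\) into \(\gamma_4(G)\)'' after ``a careful accounting''. That accounting is the actual content and should be displayed. With \(y\) chosen so that \(y^3\in\gamma_3(G)\) and \(x^3\equiv [x,y]\pmod{\gamma_3(G)}\), write \(x^3=[x,y]\,g\) with \(g\in\gamma_3(G)\); then \(1=[x^3,x]=[x,y,x]\cdot[g,x]\) gives \([x,y,x]\in\gamma_4(G)\), while \([x^3,y]\equiv[x,y,y]\pmod{\gamma_4(G)}\) together with the direct expansion \([x^3,y]\equiv[x,y]^3\pmod{\gamma_4(G)}\) gives \([x,y,y]\equiv[x,y]^3\). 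Finally \(y^3\in\gamma_3(G)\) yields \([y^3,x]\in\gamma_4(G)\), and expanding \([y^3,x]\equiv[y,x]^3\pmod{\gamma_4(G)}\) forces \([x,y]^3\in\gamma_4(G)\), hence \([x,y,y]\in\gamma_4(G)\). Now \(\gamma_3=\gamma_4\), contradicting \(m\ge 4\).

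In the third paragraph there are two imprecisions. First, the collection formula does not say that \((xy)^3\) differs from \(x^3y^3\) by ``\([x,y,x]\)'' and \((xy^{-1})^3\) from \(x^3y^{-3}\) by ``\([x,y,y]\)''. The precise identities modulo \(\gamma_4(G)\), with \(s_3=[y,x,x]\), \(t_3=[y,x,y]\), are
\[
(xy)^3\equiv x^3y^3\,s_3t_3^{-1},\qquad (xy^{-1})^3\equiv x^3y^{-3}\,s_3^{-1}t_3^{-1},
\]
and what you actually need is that the two correction terms \(s_3t_3^{-1}\) and \(s_3^{-1}t_3^{-1}\) together generate \(\langle s_3,t_3\rangle\); this is what gives ``one of \((xy)^3,(xy^{-1})^3\notin W\)''. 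Second, ``replacing \((x,y)\) by \((x,xy)\), \((xy,y)\) or \((x,xy^{-1})\) strictly enlarges \(W\)'' is false as stated: the new span need not contain the old one, and the wrong replacement can leave the dimension unchanged. For instance, if \(x^3\in\gamma_4(G)\) but \(y^3\notin\gamma_4(G)\) and \((xy)^3\notin W\), then passing to \((x,xy)\) gives \(W'=\langle (xy)^3\rangle\) of dimension \(1=\dim W\). The correct statement is: if \(z\in\{xy,xy^{-1}\}\) satisfies \(z^3\notin W\), then at least one of the pairs \((x,z)\) and \((z,y)\) has cube–span of strictly larger dimension than \(W\) (keep the generator whose cube is already nonzero in \(W\)); with this refinement your two–step iteration terminates. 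These are repairs rather than fatal gaps, and with them your argument goes through.
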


\noindent
Among the generators of maximal subgroups \(M_i<G\) modulo \(G^\prime\),
\(x\) and \(y\) are indeed distinguished,
in comparison to \(xy\) and \(xy^{-1}\),
provided the class of \(G\) is not too small.
In the lattice of normal subgroups of \(G\),
the third powers
\(x^3\) and \(y^3\) are lying near the top,
namely in \(\gamma_3(G)\setminus\gamma_4(G)\),
whereas \((xy)^3\) and \((xy^{-1})^3\) are lying near the bottom,
namely in the second centre \(\zeta_2(G)\),
and for groups with defect \(k(G)=0\)
even in the centre \(\zeta_1(G)\) of \(G\),
as we shall see in Theorem
\ref{thm:NebelungPowers}.

In the following results concerning the central series
of a metabelian \(3\)-group \(G\) with \(G/G^\prime\) of type \((3,3)\),
Nebelung's usage of the invariants \(e\) and \(s\) coincides with ours in \S\
\ref{ss:LowHom},
but she denotes our two-step centralizers \(\chi_j(G)\) by \(C_j\),
for \(j\ge 1\).
We recall that the coclass of \(G\)
in dependence on the invariant \(e\ge 3\) is given by
\(\mathrm{cc}(G)=e-1\ge 2\).

\begin{theorem} 
\label{thm:NebelungLowerCentralSeries}
(see \cite[Satz 3.3.7, p. 70]{Ne1})

\begin{enumerate}

\item
The single-step factors of the lower central series of \(G\) are given by
\[
\gamma_j(G)/\gamma_{j+1}(G)\simeq
\begin{cases}
(3,3) & \text{ for } j=1 \text{ and } 3\le j\le e, \\
(3)   & \text{ for } j=2 \text{ and } e+1\le j\le m-1.
\end{cases}
\]

\item
Consequently, the exponent \(n\) of the order \(\lvert G\rvert=3^n\)
is related to the index of nilpotency \(m\) by the inequalities
\(m\le n\le 2m-3\)
and the invariant \(e\) is given by \(e=n+2-m\).

\item
The two-step factors \(\gamma_j(G)/\gamma_{j+2}(G)\) of the lower central series of \(G\)
are \(3\)-elementary abelian, since \(\gamma_j(G)^3\le\gamma_{j+2}(G)\), for \(j\ge 1\).

\end{enumerate}

\end{theorem}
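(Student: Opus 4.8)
The plan is to work inside the module structure of the abelian group $\gamma_2(G)=G'$ over the group ring $R=\mathbb{Z}[G/G']$ and to exploit the privileged power basis furnished by Theorem~\ref{thm:NebelungPowerBasis}. Since $G$ is metabelian one has the reordering identity $[u,a,b]=[u,b,a]$ for all $u\in G'$, $a,b\in G$, so in any left-normed commutator $[u,g_1,\dots,g_k]$ with $u\in G'$ the entries $g_i$ may be freely permuted. Fix generators $x,y$ with $\gamma_3(G)=\langle x^3,y^3,\gamma_4(G)\rangle$, set $s_2=[y,x]$, and introduce the two power-commutator chains $\sigma_3=y^3$, $\sigma_j=[\sigma_{j-1},x]$ and $\tau_3=x^3$, $\tau_j=[\tau_{j-1},y]$. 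Because $[y^3,y]=[x^3,x]=1$, reordering gives $[\sigma_j,y]=[\tau_j,x]=1$ for $j\ge 3$; substituting this into $\gamma_{j+1}(G)=[\gamma_j(G),G]$ and starting the induction from $\gamma_3(G)=\langle x^3,y^3,\gamma_4(G)\rangle$ yields $\gamma_2(G)=\langle s_2,\gamma_3(G)\rangle$ and $\gamma_j(G)=\langle\sigma_j,\tau_j,\gamma_{j+1}(G)\rangle$ for all $j\ge 3$. Hence every lower central factor is generated by at most two elements, and moreover $\gamma_2(G)/\gamma_3(G)$ is cyclic since the commutator pairing on $G/G'$ is alternating and thus factors through $\wedge^2(G/G')\cong C_3$.

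The exponent statement~(3) would be proved by downward induction on $j$. The quotient $\gamma_j(G)/\gamma_{j+2}(G)$ is abelian (as $[\gamma_j,\gamma_j]\le\gamma_{2j}\le\gamma_{j+2}$ for $j\ge 2$), and $\gamma_{m-1}(G)$ is central of exponent $3$, being a homomorphic image of $\gamma_{m-2}(G)/\gamma_{m-1}(G)\otimes G/G'$; this starts the induction. For the inductive step one uses the metabelian collection identities $[\sigma_{j-1},x^3]=\sigma_j^3\sigma_{j+1}^3\sigma_{j+2}$ and $[\tau_{j-1},y^3]=\tau_j^3\tau_{j+1}^3\tau_{j+2}$ (valid for $j\ge 4$, coming from the action of $1+x+x^2\equiv(x-1)^2\pmod 3$ on the relevant commutator), together with the analogous identity for the chain $s_j=[s_{j-1},x]$; since $x^3,y^3\in\gamma_3(G)$, every left-hand side lies in $[\gamma_{j-1}(G),\gamma_3(G)]\le\gamma_{j+2}(G)$, and combined with the inductive hypothesis this forces the cubes of all of $\sigma_j,\tau_j,\sigma_{j+1},\tau_{j+1}$ into $\gamma_{j+2}(G)$. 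The genuinely delicate cases are the low ones: $x^9,y^9\in\gamma_5(G)$ (handled by rewriting $x^3,y^3$ through $\gamma_3(G)=\langle s_3,t_3,\gamma_4(G)\rangle$ and reducing to $s_3^3,t_3^3\in\gamma_5(G)$), $s_2^3\in\gamma_4(G)$, and $G^3\le\gamma_3(G)$ (a class-$2$ computation in $G/\gamma_3(G)$). This power-structure bookkeeping is the step I expect to be the main obstacle: it is exactly where, for coclass $\ge 2$, one must reproduce by hand the regularity phenomenon that Blackburn's Theorem~\ref{thm:Blackburn} delivers for coclass $1$.

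With rank $\le 2$ (from the first step) and exponent $3$ (from~(3)) in hand, each $\gamma_j(G)/\gamma_{j+1}(G)$ is of type $(3,3)$, $(3)$, or trivial, and it is trivial only when both chains have collapsed modulo the next term. The $\sigma$-chain cannot collapse before level $m-1$: if $\sigma_{j_0}\in\gamma_{j_0+1}(G)$ then $\sigma_j\in\gamma_{j+1}(G)$ for all $j\ge j_0$, so $\sigma_{m-1}\in\gamma_m(G)=1$ and $\gamma_{m-1}(G)=\langle\sigma_{m-1},\tau_{m-1}\rangle$ would be trivial, against $\mathrm{cl}(G)=m-1$ (after interchanging $x$ and $y$, which swaps the two chains, we may assume it is this chain that survives in $\gamma_{m-1}(G)$). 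The $\tau$-chain, once it collapses, stays collapsed: if $\tau_{j_0}\in\langle\sigma_{j_0},\gamma_{j_0+1}(G)\rangle$ then $\tau_{j_0+1}=[\tau_{j_0},y]\in[\langle\sigma_{j_0}\rangle,y]\,\gamma_{j_0+2}(G)=\gamma_{j_0+2}(G)$, since $[\sigma_{j_0},y]=1$. Therefore the rank-$2$ factors among $j\ge 3$ form an initial interval $3\le j\le e$, where $e$ is precisely the invariant fixed by $e+1=\min\{3\le j\le m:\lvert\gamma_j(G)/\gamma_{j+1}(G)\rvert\le 3\}$; on that interval the factor is $(3,3)$ by the definition of $e$ and~(3), for $e+1\le j\le m-1$ it is $(3)$ (nontrivial since $\mathrm{cl}(G)=m-1$, rank $\le 1$ by the collapse argument), and together with $\gamma_1(G)/\gamma_2(G)\cong(3,3)$ and $\gamma_2(G)/\gamma_3(G)\cong(3)$ this is~(1). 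Summing $\mathbb{F}_3$-dimensions, $n=2+1+2(e-2)+(m-1-e)=m+e-2$, so $e=n+2-m$; finally $e\le m-1$ because $\gamma_m(G)/\gamma_{m+1}(G)$ is trivial and hence lies in the defining set for $e$, giving $n\le 2m-3$, while $e\ge 2$ gives $m\le n$, which is~(2).
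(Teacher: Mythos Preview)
The paper does not supply its own proof of this theorem: it is stated in the Appendix purely as a citation of Nebelung's Satz~3.3.7, as part of a survey of results from \cite{Ne1}. So there is no ``paper's proof'' to compare against beyond the attribution.

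That said, your sketch is essentially correct and is organized along the natural lines one would expect Nebelung's argument to follow. The key structural input is exactly Theorem~\ref{thm:NebelungPowerBasis}, which you use as intended: from $\gamma_3(G)=\langle x^3,y^3,\gamma_4(G)\rangle$ together with the metabelian reordering $[u,a,b]=[u,b,a]$ for $u\in G'$ you correctly deduce $[\sigma_j,y]=[\tau_j,x]=1$ and hence $\gamma_j(G)=\langle\sigma_j,\tau_j,\gamma_{j+1}(G)\rangle$ for $j\ge 3$. The rank bound, the cyclicity of $\gamma_2/\gamma_3$ via the alternating pairing, the downward induction for~(3) using $[\sigma_{j-1},x^3]=\sigma_j^3\sigma_{j+1}^3\sigma_{j+2}\in[\gamma_{j-1},\gamma_3]\le\gamma_{j+2}$, and the ``once collapsed, stays collapsed'' argument for the $\tau$-chain are all sound. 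The order count for~(2) is routine once~(1) is in place.

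Two small points are worth tightening. First, your ``WLOG swap $x,y$'' is fine for the statement of the theorem (which is symmetric), but note that once one chain becomes dependent on the other at level $e+1$, your own commutation relations $[\sigma_j,y]=[\tau_j,x]=1$ already force the dependent chain into $\gamma_{j+1}$ at all higher levels and the surviving chain to keep generating; so the nontriviality of $\gamma_j/\gamma_{j+1}$ for $e+1\le j\le m-1$ follows without any relabelling, simply because $\gamma_j=\gamma_{j+1}$ for some $j\le m-1$ would contradict $\mathrm{cl}(G)=m-1$. Second, you rightly flag the low-index cases of~(3), in particular $s_2^3\in\gamma_4(G)$ and $G^3\le\gamma_3(G)$, as the genuinely delicate steps; these are where Nebelung's argument does real work (and where the presentation relation $s_2^3=\sigma_4\sigma_{m-1}^{-\rho\beta}\tau_4^{-1}$ eventually encodes the outcome), so in a full write-up those cases deserve explicit treatment rather than a pointer.
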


\noindent
There are two extreme cases of the relations \(4\le m\le n\le 2m-3\).
For the smallest possible exponent of the order, \(n=m\), with given class \(m-1\),
we have \(e=2\) and \(G\) is a CF-group of coclass \(\mathrm{cc}(G)=1\)
with \textit{cyclic factors} \(\gamma_j(G)/\gamma_{j+1}(G)\), except \(G/G^\prime\).
For the biggest possible exponent of the order, \(n=2m-3\), with given class \(m-1\),
we have \(e=m-1\) and \(G\) is a BF-group of coclass \(\mathrm{cc}(G)=m-2\ge 2\)
with \textit{bicyclic factors} \(\gamma_j(G)/\gamma_{j+1}\), except \(G^\prime/\gamma_3(G)\).
BF-groups play the role of \textit{interface} groups
at the border between different coclass graphs in
\cite[\S\ 3.3.5, Dfn. 3.3 and Thm. 3.11]{Ma5}.
All groups, including the extreme cases, are BCF-groups
with \textit{bicyclic or cyclic factors}.
(Nebelung uses the terminology ZEF-groups, which means
\textit{Rang Zwei oder Eins Faktoren}.)

From now on, we exclude groups of coclass \(\mathrm{cc}(G)=1\),
and we permanently rely on the following assumptions
without repeating them explicitly.
Let \(G\) be a metabelian \(3\)-group
with commutator factor group \(G/G^\prime\) of type \((3,3)\).
Assume that \(G\) has order \(\lvert G\rvert=3^n\),
class \(\mathrm{cl}(G)=m-1\), and invariant \(e=n+2-m\ge 3\),
where \(4\le m<n\le 2m-3\).
Let generators \(x,y\) of \(G=\langle x,y\rangle\) be selected such that
\(\gamma_3(G)=\langle x^3,y^3,\gamma_4(G)\rangle\),
\(x\in G\setminus\chi_s(G)\) if \(s<m-1\),
and \(y\in\chi_s(G)\setminus G^\prime\).
(Nebelung calls such a couple \((x,y)\)
an \textit{admissible} pair of \textit{normal} generators.)
Let commutators of \(G\) be declared by
\(s_2=t_2=\lbrack y,x\rbrack\in\gamma_2(G)\)
and recursively by
\(s_j=\lbrack s_{j-1},x\rbrack\), \(t_j=\lbrack t_{j-1},y\rbrack\in\gamma_j(G)\)
for \(j\ge 3\).
Starting with the powers \(\sigma_3=y^3\), \(\tau_3=x^3\in\gamma_3(G)\), let
\(\sigma_j=\lbrack\sigma_{j-1},x\rbrack\), \(\tau_j=\lbrack\tau_{j-1},y\rbrack\in\gamma_j(G)\)
for \(j\ge 4\).

\begin{theorem} 
\label{thm:NebelungTwoStepCentralizers}
(see \cite[Satz 3.3.7, p. 70]{Ne1})

\begin{enumerate}

\item
The two-step centralizers of the lower central series of \(G\) are usually given by
\[
\chi_j(G)=
\begin{cases}
G^\prime                  & \text{ for } 1\le j\le e-1, \\
\langle y,G^\prime\rangle & \text{ for } e\le j\le m-2, \\
G                         & \text{ for } j\ge m-1.
\end{cases}
\]
except in the special case that \(e=m-2\) and \(\lbrack\chi_s(G),\gamma_e(G)\rbrack=\gamma_{m-1}(G)\), where
\[
\chi_j(G)=
\begin{cases}
G^\prime                  & \text{ for } 1\le j\le m-2, \\
G                         & \text{ for } j\ge m-1.
\end{cases}
\]

\item
For the invariants \(s\) and \(e\)
it follows that usually \(s=e\),
except in the case \(e=m-2\), \(\lbrack\chi_s(G),\gamma_e(G)\rbrack=\gamma_{m-1}(G)\), where \(s=m-1>e\).

\end{enumerate}

\end{theorem}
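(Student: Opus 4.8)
\noindent
The plan is to extract both statements from the explicit description of the lower central series in Theorem~\ref{thm:NebelungLowerCentralSeries} together with the parametrized relations~(\ref{eqn:LowRel}), working throughout in the associated graded Lie algebra $L=\mathrm{gr}(G)=\bigoplus_{j\ge1}\gamma_j(G)/\gamma_{j+1}(G)$ over $\mathbb{F}_3$, whose homogeneous components are the vector spaces listed there --- two--dimensional for $j=1$ and $3\le j\le e$, one--dimensional for $j=2$ and $e+1\le j\le m-1$, with distinguished generators the classes $\overline{\sigma_j},\overline{\tau_j},\overline{s_2}$. Since $\chi_j(G)$ is the preimage in $G$ of the centraliser of $\gamma_j(G)/\gamma_{j+2}(G)$ in $G/\gamma_{j+2}(G)$, it is a characteristic subgroup containing $G^\prime$ (as $G$ is metabelian, $[G^\prime,\gamma_j(G)]\le\gamma_{j+2}(G)$ for all $j$), and since $G/G^\prime$ is of type $(3,3)$ the only candidates for $\chi_j(G)$ are $G^\prime$, one of the four maximal subgroups $\langle g_i,G^\prime\rangle$, and $G$.

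First I would dispose of the extreme ranges and of the generators $x,xy,xy^{-1}$. For $j\ge m-1$ one has $[G,\gamma_j(G)]=\gamma_{j+1}(G)\le\gamma_{j+2}(G)$ (both eventually trivial), so $\chi_j(G)=G$; for $j\le m-2$, $\gamma_{j+1}(G)/\gamma_{j+2}(G)\ne1$ forces $[G,\gamma_j(G)]\not\le\gamma_{j+2}(G)$, so $\chi_j(G)\ne G$. For $j\le m-2$ the generator $x$ never centralises $\gamma_j(G)/\gamma_{j+2}(G)$, because $[\sigma_j,x]=\sigma_{j+1}$ has nonzero class in $L_{j+1}$; and, once the induction of the next paragraph gives $[\sigma_j,y]\in\gamma_{j+2}(G)$, the products $[\sigma_j,xy^{\pm1}]$ are congruent to $\sigma_{j+1}\not\equiv0$ modulo $\gamma_{j+2}(G)$, so $xy$ and $xy^{-1}$ are excluded as well. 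Hence for $j\le m-2$ everything reduces to deciding whether $y\in\chi_j(G)$, i.e.\ whether $[y,\sigma_j]\in\gamma_{j+2}(G)$ and, when $3\le j\le e$, whether $[y,\tau_j]\in\gamma_{j+2}(G)$ (the class of $\gamma_{j+1}(G)$ being automatically deep enough).

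The $\tau$--part is read off directly from~(\ref{eqn:LowRel}): $[y,\tau_j]=\tau_{j+1}^{-1}$ has nonzero class in $L_{j+1}$ for $3\le j\le e-1$, so there $y\notin\chi_j(G)$ and $\chi_j(G)=G^\prime$; for $j=e$ the last relation of~(\ref{eqn:LowRel}) gives $[y,\tau_e]=\tau_{e+1}^{-1}=\sigma_{m-1}^{\rho}$, which lies in $\gamma_{e+2}(G)$ precisely when $m-1\ge e+2$ or $\rho=0$. For the $\sigma$--part I would prove $[y,\sigma_j]\in\gamma_{j+2}(G)$ for all $j\ge3$ by induction, starting from $[y,\sigma_3]=[y,y^3]=1$ and propagating along $\sigma_{j+1}=[\sigma_j,x]$ via the graded Jacobi identity $[[\sigma_j,x],y]=[[\sigma_j,y],x]+[\sigma_j,[x,y]]$; the correction terms are killed using the level--$\gamma_3$ expressions for $s_3$ and $t_3$ coming from~(\ref{eqn:LowRel}), the third--power relations $\sigma_{j+1}^3\sigma_{j+2}^3\sigma_{j+3}=\tau_{j+1}^3\tau_{j+2}^3\tau_{j+3}=1$, and $\gamma_i(G)^3\le\gamma_{i+2}(G)$ from Theorem~\ref{thm:NebelungLowerCentralSeries}(3), the admissibility of the pair $(x,y)$ and Nebelung's normalisation of the parameters being exactly what makes the surviving terms vanish. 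Granting this: in the usual case ($\rho=0$ or $e\le m-3$) we obtain $\chi_j(G)=G^\prime$ for $1\le j\le e-1$ and $\chi_j(G)=\langle y,G^\prime\rangle$ for $e\le j\le m-2$ --- the latter range being empty, with $s=e=m-1$, in the degenerate situation $e=m-1$; in the special case $e=m-2$ and $\rho=\pm1$, where $[\chi_s(G),\gamma_e(G)]=\gamma_{m-k}(G)=\gamma_{m-1}(G)$, i.e.\ $k=1$, the single remaining index $j=e=m-2$ also has $[y,\tau_e]=\sigma_{m-1}^{\rho}=\sigma_{e+1}^{\rho}\notin\gamma_{e+2}(G)$, so $y\notin\chi_j(G)$ and $\chi_j(G)=G^\prime$ throughout $1\le j\le m-2$. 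Statement (2) then follows at once from $s(G)=\min\{j\ge2:\chi_j(G)>G^\prime\}$: this is $e$ in the usual case and $m-1$ in the special case.

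I expect the main obstacle to be the $\sigma$--side induction: controlling $[y,\sigma_j]$ modulo $\gamma_{j+2}(G)$ requires genuine bookkeeping of the correction terms produced by the Jacobi identity and by the failure of the third--power map to commute with the bracket, and it is here that one must appeal to the precise relations of Nebelung's presentation and to admissibility rather than argue with a generic metabelian group; the $\tau$--part, the extreme ranges, and the reductions to the maximal subgroups are by comparison routine.
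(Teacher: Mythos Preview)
The paper does not prove this theorem at all: it appears in the Appendix as a citation of Nebelung's thesis \cite[Satz 3.3.7]{Ne1}, with no argument supplied. So there is no ``paper's own proof'' to compare against; your proposal stands or falls on its own merits.

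Your overall strategy is sound, but you substantially overestimate the difficulty of the $\sigma$--side induction. Because $G$ is metabelian, $G^\prime$ is an abelian group on which $G/G^\prime$ acts by conjugation, and since $[x,y]\in G^\prime$ acts trivially on $G^\prime$, the operators ``commutate with $x$'' and ``commutate with $y$'' literally commute on $G^\prime$. Hence for $u\in G^\prime$ one has the exact identity $[[u,x],y]=[[u,y],x]$, with no Jacobi correction term whatsoever (the term $[\sigma_j,[x,y]]$ you worry about is a commutator of two elements of the abelian group $G^\prime$ and is therefore trivial). From $[\sigma_3,y]=[y^3,y]=1$ you then get $[\sigma_{j+1},y]=[[\sigma_j,y],x]\in\gamma_{j+3}(G)$ by a one-line induction; no appeal to third-power relations, admissibility, or parameter normalisation is needed at this step. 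Your anticipated ``main obstacle'' is thus a non-issue.

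Two minor gaps remain. First, you never treat $j=1$ and $j=2$, where $\sigma_j$ and $\tau_j$ are not defined; these are easy ($[y,x]=s_2\notin\gamma_3(G)$ and $[y,s_2]=t_3^{-1}\equiv\tau_3^{-1}\pmod{\gamma_4(G)}$, using the third relation of~(\ref{eqn:LowRel})), but should be said. Second, your exclusion of $x$, $xy$, $xy^{-1}$ uses that $\sigma_{j+1}\notin\gamma_{j+2}(G)$ for $3\le j\le m-2$; this is true, but it is a structural fact about Nebelung's basis of $\gamma_j(G)/\gamma_{j+1}(G)$ that you should cite (it is implicit in Theorem~\ref{thm:NebelungLowerCentralSeries} together with the description $\gamma_j(G)=\langle\sigma_j,\tau_j,\gamma_{j+1}(G)\rangle$ for $3\le j\le e$ and $\gamma_j(G)=\langle\sigma_j,\gamma_{j+1}(G)\rangle$ for $e+1\le j\le m-1$).
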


\noindent
Let the \textit{upper central series} of \(G\) be defined recursively by
\(\zeta_0(G)=1\) and \(\zeta_j(G)/\zeta_{j-1}(G)=\mathrm{Centre}(G/\zeta_{j-1}(G))\), for \(j\ge 1\).
In particular, \(\zeta_1(G)\) is the usual centre of \(G\).
Nebelung denotes \(\zeta_j(G)\) by \(Z_j\).
The following theorem shows that the bicyclic factors of the upper central series of \(G\)
are located near the bottom of \(G\),
whereas the bicyclic factors of the lower central series of \(G\)
appear in a position near the top of \(G\), by Theorem
\ref{thm:NebelungLowerCentralSeries}.

\begin{theorem} 
\label{thm:NebelungUpperCentralSeries}
(see \cite[Satz 3.3.18, p. 82]{Ne1})

\begin{enumerate}

\item
If \(\lbrack\chi_s(G),\gamma_e(G)\rbrack=1\), then
the factors of the upper central series of \(G\) are given by
\[
\zeta_j(G)/\zeta_{j-1}(G)\simeq
\begin{cases}
(3,3) & \text{ for } 1\le j\le e-2 \text{ and } j=m-1, \\
(3)   & \text{ for } e-1\le j\le m-2.
\end{cases}
\]
In particular,
the centre \(\zeta_1(G)=\langle\sigma_{m-1}\rangle\times\langle\tau_{e}\rangle\) of \(G\) is
bicyclic of type \((3,3)\).

\item
If \(\lbrack\chi_s(G),\gamma_e(G)\rbrack=\gamma_{m-1}(G)\), then
the factors of the upper central series of \(G\) are given by
\[
\zeta_j(G)/\zeta_{j-1}(G)\simeq
\begin{cases}
(3,3) & \text{ for } 2\le j\le e-1 \text{ and } j=m-1, \\
(3)   & \text{ for } j=1 \text{ and } e\le j\le m-2.
\end{cases}
\]
In particular,
the centre \(\zeta_1(G)=\langle\sigma_{m-1}\rangle\) of \(G\) is
cyclic of order \(3\).

\end{enumerate}

\end{theorem}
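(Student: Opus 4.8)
\emph{Proof proposal.} The plan is to carry out the computation of the upper central series directly in the parametrized presentation of $G$, using as input the lower central series structure of Theorem~\ref{thm:NebelungLowerCentralSeries}, the two-step centralizer description of Theorem~\ref{thm:NebelungTwoStepCentralizers}, and the relations \eqref{eqn:LowRel}. Since $G$ is metabelian, $\gamma_2(G)=G'$ is abelian with generating system $s_2,\sigma_3,\dots,\sigma_{m-1},\tau_3,\dots,\tau_e$, and the conjugation action of the admissible generators $x$ and $y$ on this system is pinned down by the defining recursions $s_j=[s_{j-1},x]$, $\tau_j=[\tau_{j-1},y]$, by the relations for third powers of higher commutators, and by the four relations in \eqref{eqn:LowRel}; the essential link between the $\sigma$-part and the $\tau$-part is $\tau_{e+1}=\sigma_{m-1}^{-\rho}$, which is exactly where the defect $k$ (equivalently the sign of $\rho$) makes itself felt.

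First I would determine $\zeta_1(G)$. One shows $\zeta_1(G)\le\gamma_2(G)$: if $z=x^ay^bu$ with $u\in\gamma_2(G)$ and $(a,b)\not\equiv(0,0)\pmod 3$, then $[z,x]\equiv s_2^{\,b}$ and $[z,y]\equiv s_2^{-a}\pmod{\gamma_3(G)}$, so $z$ is non-central because $G/\gamma_2(G)$ is of type $(3,3)$ and $G$ is non-abelian (the same style of argument, walking down the lower central series, also gives $\zeta_{m-2}(G)=\gamma_2(G)$). Inside $\gamma_2(G)$ the centre is the set of common fixed points of the $x$- and $y$-actions; expanding a general element in the listed generators and reducing the two fixed-point equations modulo successive terms $\gamma_j(G)$ turns this into a triangular linear system over $\mathbb{F}_3$. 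The generator $\sigma_{m-1}\in\gamma_{m-1}(G)$ is always fixed, since $[\gamma_{m-1}(G),G]\le\gamma_m(G)=1$; the decisive case is the generator $\tau_e$, with $[\tau_e,y]=\tau_{e+1}=\sigma_{m-1}^{-\rho}$. If $[\chi_s(G),\gamma_e(G)]=1$ then $\rho=0$, $\tau_e$ survives, and $\zeta_1(G)=\langle\sigma_{m-1}\rangle\times\langle\tau_e\rangle$ is of type $(3,3)$; if $[\chi_s(G),\gamma_e(G)]=\gamma_{m-1}(G)$ then $\rho=\pm1$, the relation $\tau_{e+1}=\sigma_{m-1}^{-\rho}\neq1$ removes $\tau_e$, and the fixed-point space collapses to $\langle\sigma_{m-1}\rangle\simeq C_3$.

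For $j\ge 2$ I would argue by induction, passing to $\bar G=G/\zeta_1(G)$. One checks that $\bar G$ is again a metabelian $3$-group with abelianization of type $(3,3)$ and class $m-2$, that the images of $x,y$ remain admissible generators, and that the lower central series and two-step centralizers of $\bar G$ are the images of those of $G$; the order count then gives invariant $\bar e=e-1$ when $k=0$ (as $\lvert\zeta_1(G)\rvert=9$) and $\bar e=e$ when $k=1$ (as $\lvert\zeta_1(G)\rvert=3$), so the induction hypothesis applies to $\bar G$. Since $\zeta_j(G)/\zeta_1(G)=\zeta_{j-1}(\bar G)$, reading off the factors of the upper central series of $\bar G$ yields those of $G$ for $j\ge 2$, with the one-step shift between the two displayed cases reflecting precisely $\bar e=e-1$ versus $\bar e=e$; the top factor $\zeta_{m-1}(G)/\zeta_{m-2}(G)\simeq G/\gamma_2(G)$ of type $(3,3)$ comes for free once $\zeta_{m-2}(G)=\gamma_2(G)$ is known. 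Alternatively one may skip the induction and guess $\zeta_j(G)$ explicitly as $\langle\sigma_{m-j},\dots,\sigma_{m-1}\rangle$ together with the appropriate truncation of the $\tau$-chain (ending at $\tau_e$, with a cut-off depending on $k$), then verify $[\zeta_j(G),G]\le\zeta_{j-1}(G)$ and the maximality of $\zeta_j(G)$ with this property by the same fixed-point computation used for $\zeta_1(G)$, one lower-central-series layer at a time.

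The part I expect to be most delicate is the bookkeeping of the ``cross'' commutators $[\tau_j,x]$ and $[\sigma_j,y]$, which are not among the defining generators and must be expressed through the relations \eqref{eqn:LowRel} and the third-power relations while carefully tracking how the parameters $\alpha,\beta,\gamma,\delta,\rho$ get absorbed into higher terms $\gamma_{j+2}(G)$; this is the computation that forces the $\tau$-chain to have length $e$ and fixes the positions of the bicyclic factors. A further technical point is the exceptional sub-case $e=m-2$ of Theorem~\ref{thm:NebelungTwoStepCentralizers}, where $s=m-1>e$: it must be matched up with the dichotomy $[\chi_s(G),\gamma_e(G)]\in\{1,\gamma_{m-1}(G)\}$ so that both the centre computation and the defect comparison for $\bar G$ stay valid there as well.
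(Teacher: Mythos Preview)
The paper does not supply its own proof of this theorem: it appears in the Appendix as a citation of Nebelung's thesis \cite[Satz 3.3.18, p.\ 82]{Ne1}, stated without argument. So there is no in-paper proof to compare your proposal against.

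On its own merits, your outline is the natural one and is essentially correct. The determination of $\zeta_1(G)$ as the common fixed-point set of the $x$- and $y$-actions on $G'$, pivoting on the relation $\tau_{e+1}=\sigma_{m-1}^{-\rho}$, is exactly the right computation, and the inductive passage to $\bar G=G/\zeta_1(G)$ with the bookkeeping $(\bar m,\bar e)=(m-1,e-1)$ for $k=0$ and $(\bar m,\bar e)=(m-1,e)$ for $k=1$ recovers the stated shift between the two cases. Two points deserve explicit attention when you write this out. First, in the $k=1$ step you need to check that $\bar G=G/\gamma_{m-1}(G)$ actually falls into case~(1), i.e.\ has defect $0$; this is true because in $\bar G$ the image of $\tau_{e+1}$ vanishes, but it should be stated rather than assumed. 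Second, the induction bottoms out when $\bar e$ drops to $2$ (which happens at $e=3$ in case~(1)), at which point $\bar G$ has coclass $1$ and you must invoke the coclass-$1$ structure (Blackburn/Miech) rather than the present theorem; your write-up should flag this boundary explicitly. The ``cross'' commutators $[\tau_j,x]$, $[\sigma_j,y]$ that you single out are indeed the main computational burden, and are handled in Nebelung's thesis by the relations preceding Satz~3.3.18; your acknowledgement of this difficulty is appropriate.
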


\noindent
The following main theorem gives \textit{parametrized presentations}
for all metabelian \(3\)-groups \(G\) of coclass \(\mathrm{cc}(G)\ge 2\)
having abelianization \(G/G^\prime\) of type \((3,3)\).

\begin{theorem} 
\label{thm:NebelungPresentations}
(see \cite[Thm. 3.4.5, p. 94]{Ne1})
There exists a unique matrix
\(
\begin{pmatrix}
\alpha & \beta \\
\gamma & \delta
\end{pmatrix}
\)
with entries in the finite field \(\mathbb{F}_3\),
and a unique element \(\rho\in\mathbb{F}_3\),
such that the following relations are satisfied

\begin{equation}
\label{eqn:LowRelApp}
\begin{array}{rcl}
t_3^{-1}\tau_3\tau_4                      &=& \sigma_{m-1}^\alpha\sigma_{m-2}^{\rho\delta}\tau_e^\beta,\\
s_3\sigma_3\sigma_4                       &=& \sigma_{m-1}^\gamma\sigma_{m-2}^{\rho\beta}\tau_e^\delta,\\
\tau_{e+1}                                &=& \sigma_{m-1}^{-\rho},\\
\lbrack s_3,y\rbrack=\lbrack t_3,x\rbrack &=& \sigma_{m-1}^{-\rho\delta},\\
s_2^3(s_3t_3)^3s_4t_4                     &=& \sigma_{m-1}^{\rho\beta},\\
s_2^3                                     &=& \sigma_4\sigma_{m-1}^{-\rho\beta}\tau_4^{-1}.
\end{array}
\end{equation}

\noindent
In the first two relations, put \(\sigma_{m-2}=1\) in the case of \(m=4\).\\
Concerning the parameter \(\rho\),
let \(\lbrack\chi_s(G),\gamma_e(G)\rbrack=\gamma_{m-k}(G)\) with \(0\le k\le 1\).
Then the defect of \(G\) is \(k=k(G)=0\) if and only if \(\rho=0\).
In particular, a group \(G\) with \(e=m-1\) must have \(\rho=0\).\\
The parameters of a group \(G\) of coclass \(\mathrm{cc}(G)=2\) are subject to various constraints:

\begin{enumerate}

\item
If \(m=4\), \(n=5\), then \(\rho=0\) and
\(
\begin{pmatrix}
\alpha   & \beta-1 \\
\gamma-1 & \delta
\end{pmatrix}
\in\mathrm{GL}_2(\mathbb{F}_3).
\)

\item
If \(m=5\), \(n=6\), then
\(
\begin{pmatrix}
\rho\delta  & \beta-1 \\
\rho\beta-1 & \delta
\end{pmatrix}
\in\mathrm{GL}_2(\mathbb{F}_3).
\)

\item
If \(5<m=n-1\), then
\(\beta-1\in\mathbb{F}_3\setminus\lbrace 0\rbrace\).

\end{enumerate}

\end{theorem}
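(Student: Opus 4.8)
The plan is to work entirely inside the abelian normal subgroup $G'=\gamma_2(G)$ — abelian because $G$ is metabelian — regarded as an abelian group acted on by $G/\gamma_2(G)\simeq(3,3)$, and to run the commutator collection process level by level down the lower central series. By Theorem~\ref{thm:NebelungPowerBasis} I may fix admissible generators $x,y$ with $\gamma_3(G)=\langle x^3,y^3,\gamma_4(G)\rangle$, and Theorem~\ref{thm:NebelungLowerCentralSeries} supplies the shape of every quotient $\gamma_j(G)/\gamma_{j+1}(G)$: bicyclic of type $(3,3)$ for $j=1$ and $3\le j\le e$, cyclic of order $3$ for $j=2$ and $e+1\le j\le m-1$, with all two-step quotients $3$-elementary. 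The first step is to prove, by induction on $j$, that modulo $\gamma_{j+1}(G)$ every commutator of weight $j$ in $x,y$ reduces to a power product of $\sigma_j=[\sigma_{j-1},x]$ and $\tau_j=[\tau_{j-1},y]$ (a single one of these once $j>e$). The reduction of the mixed commutators $[\sigma_{j-1},y]$, $[\tau_{j-1},x]$, $s_j$, $t_j$ rests on the elementary fact that for $u\in G'$ one has $[u,xy]=[u,y]\,[u,x]\,[[u,x],y]$, and since $G'$ is abelian the correction term lies one step deeper; hence $[u,xy]\equiv[u,x][u,y]$ and $[u,xy^{-1}]\equiv[u,x][u,y]^{-1}$ modulo $\gamma_{j+2}(G)$. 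This is precisely the mechanism behind the $(xy)^3$, $(xy^{-1})^3$ formulas used repeatedly below and recorded in \cite[Lem.~3.4.11]{Ne1}.

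The five parameters then surface as the structure constants of this reduction at the few places where the lower central series changes character. At the first cyclic level, $\tau_{e+1}=[\tau_e,y]$ must lie in the cyclic tail, so after collecting all the way down it equals $\sigma_{m-1}^{-\rho}$ for a unique $\rho\in\mathbb{F}_3$; this is the third relation. Inside $\gamma_3(G)$, the collected forms of $t_3^{-1}\tau_3\tau_4$ and $s_3\sigma_3\sigma_4$ have — because of the bicyclic structure of $\gamma_3(G)/\gamma_4(G)$ and the $3$-elementary two-step quotients — no component along $\sigma_3$ or $\tau_3$, hence are forced into $\langle\sigma_{m-1},\sigma_{m-2},\tau_e\rangle$; reading off the exponents produces the first two relations, with a unique matrix $\begin{pmatrix}\alpha&\beta\\\gamma&\delta\end{pmatrix}$ over $\mathbb{F}_3$ (with $\sigma_{m-2}=1$ when $m=4$). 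The relation $[s_3,y]=[t_3,x]=\sigma_{m-1}^{-\rho\delta}$ follows by one more application of the metabelian Jacobi identity to the first two, and the two power relations come from comparing $(yx)^3$ with $y^3x^3=\sigma_3\tau_3$, using that cubes of elements of $\gamma_j(G)$ land in $\gamma_{j+2}(G)$; Hall's collection formula for cubes then yields $s_2^3=\sigma_4\sigma_{m-1}^{-\rho\beta}\tau_4^{-1}$ and, after cubing and collecting once more, $s_2^3(s_3t_3)^3s_4t_4=\sigma_{m-1}^{\rho\beta}$. Uniqueness of $(\alpha,\beta,\gamma,\delta,\rho)$ is then automatic: each right-hand side lies in a factor that is cyclic or bicyclic of exponent $3$ relative to a fixed basis, so the exponents are well defined modulo $3$.

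For the defect statement I would invoke Theorem~\ref{thm:NebelungTwoStepCentralizers}: generically $\chi_s(G)=\langle y,G'\rangle$, and in the exceptional case $e=m-2$ one has $\chi_s(G)=G'$, so in either case $[\chi_s(G),\gamma_e(G)]$ is generated modulo $\gamma_{e+2}(G)$ by $[y,\gamma_e(G)]$, whose topmost term is exactly $\tau_{e+1}=\sigma_{m-1}^{-\rho}$. Hence $[\chi_s(G),\gamma_e(G)]=\gamma_{m-1}(G)$ when $\rho\ne 0$, i.e.\ $k=1$, and $[\chi_s(G),\gamma_e(G)]=1=\gamma_m(G)$ when $\rho=0$, i.e.\ $k=0$; if in addition $e=m-1$ then $\tau_{e+1}=\tau_m=1$, which forces $\rho=0$. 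Finally, for coclass $2$ one has $e=3$ and $n=m+1$, and I would specialise to $m=4$, $m=5$, $m\ge 6$ and check in each case which parameter values are compatible with the prescribed order $3^n$ and class $m-1$. Here the key is that $\gamma_3(G)=\langle s_3,t_3\rangle$ (resp.\ the corresponding statement one level down for $m=5$) must have the rank dictated by Theorem~\ref{thm:NebelungLowerCentralSeries}; expressing $s_3,t_3$ through the basis $\sigma_3,\tau_3$ via the relations just derived, this rank condition is exactly the nonsingularity of $\begin{pmatrix}\alpha&\beta-1\\\gamma-1&\delta\end{pmatrix}$ for $m=4$, of $\begin{pmatrix}\rho\delta&\beta-1\\\rho\beta-1&\delta\end{pmatrix}$ for $m=5$, and it degenerates to $\beta-1\ne 0$ once $m\ge 6$, the tail being one-dimensional there.

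The main obstacle is the bookkeeping underlying the first two paragraphs: proving the weight-$j$ reduction lemma for all $j$ and then pinning down the precise exponents in the power relations for $s_2$ and $s_3t_3$, which requires careful use of the collection formula for cubes together with the $3$-elementary two-step quotients; within the coclass-$2$ analysis the case $m=5$, where $\rho$ may be $\pm 1$ and the structure of $\gamma_4(G)$ intervenes, is the fussiest point. None of this is conceptually deep, but it is exactly the kind of computation that in \cite{Ne1,Ne2} is organised with computer assistance, and an honest write-up should be expected to rely on a similarly bounded, mechanical collection check.
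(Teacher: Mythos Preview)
The paper does not contain a proof of this theorem: it is stated in the Appendix explicitly as a citation of \cite[Thm.~3.4.5, p.~94]{Ne1}, part of what the author calls ``a succinct survey of Nebelung's thesis,'' and no argument is supplied. So there is nothing in the paper to compare your proposal against.

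That said, your outline is a faithful sketch of the kind of argument Nebelung carries out: working inside the abelian commutator subgroup, performing commutator collection down the lower central series using the BCF structure of Theorem~\ref{thm:NebelungLowerCentralSeries}, and reading off the five parameters as structure constants at the transition levels. Your treatment of the defect via $\tau_{e+1}=\sigma_{m-1}^{-\rho}$ and of the coclass-$2$ constraints via rank conditions on $\gamma_3(G)/\gamma_4(G)$ is on target. One small slip: in the exceptional case $e=m-2$ with $[\chi_s(G),\gamma_e(G)]=\gamma_{m-1}(G)$, Theorem~\ref{thm:NebelungTwoStepCentralizers} gives $s=m-1$ and hence $\chi_s(G)=G$, not $G'$; your claim that $[\chi_s(G),\gamma_e(G)]$ is generated modulo $\gamma_{e+2}(G)$ by $[y,\gamma_e(G)]$ alone then needs the additional observation that $[x,\gamma_e(G)]\le\gamma_{e+1}(G)$ already lies in the cyclic tail, so the $y$-contribution $\tau_{e+1}$ still controls whether one reaches $\gamma_{m-1}(G)$. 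You are also right to flag that the exponent bookkeeping in the two power relations for $s_2^3$ is the genuinely laborious part; Nebelung's thesis devotes substantial space to exactly this collection process, and your honest acknowledgement that a full write-up would rely on a mechanical check is appropriate.
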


\noindent
Consequently, a group \(G\) which satisfies the assumptions of the preceding theorem
is exactly the representative
\(G_\rho^{m,n}(\alpha,\beta,\gamma,\delta)\) 
of an isomorphism class of
metabelian \(3\)-groups \(G\) with \(G/G^\prime\) of type \((3,3)\),
which satisfies the relations
(\ref{eqn:LowRelApp})
with a fixed system of parameters
\(-1\le\alpha,\beta,\gamma,\delta,\rho\le 1\).

By means of a computer search for a
complete and irredundant set of isomorphism classes of
metabelian \(3\)-groups \(G\) with \(G/G^\prime\) of type \((3,3)\),
Nebelung successively determined all admissible families
\((\alpha,\beta,\gamma,\delta,\rho)\)
of parameters in the relations
for groups of fixed coclass \(e-1\) and class \(m-1\),
letting the invariant \(e\ge 3\) increase independently
and incrementing the index of nilpotency \(m\ge e+1\)
in dependence on each fixed value of \(e\).
This revealed a \textit{double periodicity}
with respect to both, coclass and class,
of primitive length \(6\) resp. \(2\).
Consequently, it was possible to list
the representatives
\(G_\rho^{m,n}(\alpha,\beta,\gamma,\delta)\) 
of isomorphism classes
in volume \(2\)
\cite{Ne2}
of Nebelung's thesis
in the form of finitely many parametrized presentations
of infinite periodic coclass sequences
where only \(m\) and \(n=e+m-2\) vary indefinitely
for fixed coclass \(e-1\).
Using the periodicity,
Nebelung proved that the metabelian coclass trees
and their branches are arranged in periodic patterns.

\begin{theorem} 
\label{thm:NebelungPowers}
(see \cite[Lem. 3.4.11, p. 105]{Ne1})
The third powers of \(xy\) and \(xy^{-1}\) are given by
\[
(xy)^3=\sigma_{m-2}^{\rho(\beta+\delta)}\sigma_{m-1}^{\alpha+\gamma+\rho(\beta+\delta)}\tau_e^{\beta+\delta}
\]
and
\[
(xy^{-1})^3=\sigma_{m-2}^{\rho(\delta-\beta)}\sigma_{m-1}^{\alpha-\gamma+\rho\beta}\tau_e^{\beta-\delta}.
\]
In contrast to the powers \(y^3=\sigma_3\) and \(x^3=\tau_3\),
which are elements of \(\gamma_3(G)\setminus\gamma_4(G)\),
the powers \((xy)^3\) and \((xy^{-1})^3\) are contained in the second centre
\(\langle\sigma_{m-1}\rangle\times\langle\sigma_{m-2}\rangle\times\langle\tau_{e}\rangle=\zeta_2(G)\)
and for \(\rho=0\) even in the centre
\(\langle\sigma_{m-1}\rangle\times\langle\tau_{e}\rangle=\zeta_1(G)\).
However, a uniform warranty that \((xy)^3,(xy^{-1})^3\in\gamma_4(G)\)
can only be given for a group \(G\) of coclass \(\mathrm{cc}(G)\ge 3\).
\end{theorem}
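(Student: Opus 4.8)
The plan is to obtain both power formulae by a direct collection computation inside the metabelian \(3\)-group \(G=\langle x,y\rangle\), using the admissible pair of generators together with Nebelung's relations, and then to read off the asserted subgroup memberships from the structure of the lower and upper central series proved above.

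First I would expand \((xy)^3=xyxyxy\) by repeatedly moving a leading \(x\) to the left and a trailing \(y\) to the right, applying the elementary identities \(ab=ba\lbrack a,b\rbrack\) and \(a^b=a\lbrack a,b\rbrack\) together with the fact that \(G^\prime=\gamma_2(G)\) is abelian, so that all factors collected in \(G^\prime\) commute freely. Recording everything in terms of \(s_2=t_2=\lbrack y,x\rbrack\) and the iterated commutators \(s_j=\lbrack s_{j-1},x\rbrack\), \(t_j=\lbrack t_{j-1},y\rbrack\), \(\sigma_j=\lbrack\sigma_{j-1},x\rbrack\), \(\tau_j=\lbrack\tau_{j-1},y\rbrack\), one arrives after this purely mechanical step at an expression of the form \(x^3y^3\cdot s_2^{3}\cdot(\text{weight-three correction in }s_3,t_3)\cdot(\text{element of }\gamma_5(G))\), the exponents in the correction being those of the Hall--Petrescu third-power term. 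The computation for \((xy^{-1})^3\) is entirely analogous (alternatively one substitutes \(y\mapsto y^{-1}\) and uses \(\lbrack y^{-1},x\rbrack\equiv\lbrack y,x\rbrack^{-1}\) modulo higher terms).

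The decisive step is then the substitution of the full set of relations from \cite{Ne1}: the six relations (\ref{eqn:LowRelApp}) — in particular \(s_2^3=\sigma_4\sigma_{m-1}^{-\rho\beta}\tau_4^{-1}\), together with the forms \(s_3=\sigma_3^{-1}\sigma_4^{-1}\sigma_{m-1}^{\gamma}\sigma_{m-2}^{\rho\beta}\tau_e^{\delta}\) and \(t_3=\tau_3\tau_4\sigma_{m-1}^{-\alpha}\sigma_{m-2}^{-\rho\delta}\tau_e^{-\beta}\) obtained by solving the first two of them, the relation \(\tau_{e+1}=\sigma_{m-1}^{-\rho}\), and the commutator relations \(\lbrack s_3,y\rbrack=\lbrack t_3,x\rbrack=\sigma_{m-1}^{-\rho\delta}\) — supplemented by the third-power relations for the \(\sigma_j\) and \(\tau_j\) (and the analogous ones for \(s_j,t_j\)) recorded in \cite{Ma1}. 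The powers \(\sigma_3=y^3\) and \(\tau_3=x^3\) introduced through \(s_3\) and \(t_3\) have to cancel against the ones produced by collection, so that only a monomial in \(\sigma_{m-2},\sigma_{m-1},\tau_e\) survives; the invariants \(\alpha,\gamma\) then appear only in the \(\sigma_{m-1}\)-exponent, while \(\beta,\delta\) enter all three. Repeated appeal to Theorem \ref{thm:NebelungLowerCentralSeries}(3), i.e. \(\gamma_j(G)^3\le\gamma_{j+2}(G)\), discards the spurious higher-weight tails, and the two closed forms emerge.

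The memberships are then immediate: both powers lie in \(\langle\sigma_{m-2}\rangle\times\langle\sigma_{m-1}\rangle\times\langle\tau_e\rangle\), which equals \(\zeta_2(G)\) by Theorem \ref{thm:NebelungUpperCentralSeries} in both defect cases, and for \(\rho=0\) the \(\sigma_{m-2}\)-term drops, leaving the powers in \(\langle\sigma_{m-1}\rangle\times\langle\tau_e\rangle=\zeta_1(G)\). For the last clause, note that \(\tau_e\in\gamma_e(G)\subseteq\gamma_4(G)\) exactly when \(e\ge 4\), i.e. \(\mathrm{cc}(G)=e-1\ge 3\); in that range either \(m\ge 6\), so that \(\sigma_{m-2},\sigma_{m-1}\in\gamma_4(G)\) as well, or \(m=5\), where \(e=m-1\) forces \(\rho=0\) by Theorem \ref{thm:NebelungPresentations} and the \(\sigma_{m-2}\)-term is absent; either way \((xy)^{\pm 3}\in\gamma_4(G)\). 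For \(\mathrm{cc}(G)=2\), i.e. \(e=3\), the surviving factor \(\tau_3^{\beta\pm\delta}=x^{3(\beta\pm\delta)}\) lies in \(\gamma_3(G)\setminus\gamma_4(G)\) whenever \(\beta\pm\delta\not\equiv 0\), so no uniform warranty is possible. I expect the bookkeeping in the collection step to be the main obstacle: the third-power correction and the conjugations generate a long tail of iterated commutators spread across the layers \(\gamma_3(G),\dots,\gamma_{m-1}(G)\), and one must track every exponent modulo \(3\) while invoking the relations and the elementary-abelian two-step factors to collapse them; pinning down the signs and binomial coefficients of the Hall--Petrescu term correctly is where an error is most likely to creep in.
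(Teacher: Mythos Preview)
The paper does not actually prove this theorem: it appears in the Appendix, which is explicitly a survey of results cited from Nebelung's thesis \cite{Ne1}, and Theorem~\ref{thm:NebelungPowers} is simply quoted with the reference \cite[Lem.~3.4.11, p.~105]{Ne1} and no accompanying argument. So there is no paper proof to compare against.

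That said, your sketch is a sound outline of how the original proof must go. The collection of \((xy)^3\) in a metabelian group, substitution of the relations (\ref{eqn:LowRelApp}) to eliminate \(s_3,t_3,s_2^3\) in favour of \(\sigma_j,\tau_j\), and the cancellation of \(\sigma_3,\tau_3\) against \(x^3,y^3\) is exactly the mechanism one expects, and your derivation of the membership statements from Theorems~\ref{thm:NebelungUpperCentralSeries} and~\ref{thm:NebelungPresentations} is correct, including the case split \(m\ge 6\) versus \(m=5,\ e=m-1,\ \rho=0\) for the \(\gamma_4(G)\) claim. Your honest flag that the Hall--Petrescu bookkeeping is the error-prone part is apt; if you actually carry this out you will want to work modulo \(\gamma_4(G)\) first to pin down the \(\tau_e\)-exponent and then lift, rather than tracking the full tail in one pass.
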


\begin{theorem} 
\label{thm:NebelungDerivedSubgroup}
(see \cite[Satz 4.2.4, p. 131]{Ne1})
The structure of the abelian commutator subgroup \(G^\prime\) of \(G\)
is given by the following direct product of nearly homocyclic abelian \(3\)-groups.
\[
G^\prime=
\begin{cases}
\mathrm{A}(3,m-3)\times\mathrm{A}(3,m-3) & \text{ in the irregular case,} \\
\mathrm{A}(3,m-2)\times\mathrm{A}(3,e-2) & \text{ otherwise.}
\end{cases}
\]
The irregular case is characterized by \(n=2m-4\), \(e=m-2\), \(m\equiv 1\mod{2}\), and
\[
\begin{cases}
0\ne\rho=\beta-1 & \text{ for } m=5, \\
\rho=-1          & \text{ for } m\ge 7.
\end{cases}
\]
\end{theorem}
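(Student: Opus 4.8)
The statement is a purely group-theoretic assertion about the abelian derived subgroup $G'=\gamma_2(G)$ of a metabelian $3$-group $G$ satisfying the standing hypotheses preceding Theorem \ref{thm:NebelungPresentations}, and the plan is to reconstruct Nebelung's computation from the parametrized presentation. First I would fix admissible generators $x,y$ of $G=\langle x,y\rangle$ as in Theorem \ref{thm:NebelungPresentations}, together with the commutators $s_j,t_j$ and the power chains $\sigma_3=y^3,\sigma_4,\ldots$ and $\tau_3=x^3,\tau_4,\ldots$, so that $G'=\langle s_2,\sigma_3,\ldots,\sigma_{m-1},\tau_3,\ldots,\tau_e\rangle$ is abelian; note that $\tau_j$ for $j\ge e+1$ is a power of $\sigma_{m-1}$ via $\tau_{e+1}=\sigma_{m-1}^{-\rho}$, and $\sigma_j=1$ for $j\ge m$. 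From Theorem \ref{thm:NebelungLowerCentralSeries} I would record that $\gamma_j(G)/\gamma_{j+1}(G)$ is bicyclic with basis $\{\bar\sigma_j,\bar\tau_j\}$ for $3\le j\le e$ and cyclic with basis $\{\bar\sigma_j\}$ for $e+1\le j\le m-1$, and that $\gamma_j(G)^3\le\gamma_{j+2}(G)$; from (\ref{eqn:LowRelApp}) I would extract the cubing relations $\sigma_j^3\sigma_{j+1}^3\sigma_{j+2}=1$ and $\tau_j^3\tau_{j+1}^3\tau_{j+2}=1$, the boundary data $\sigma_j=1$ for $j\ge m$ and $\tau_{e+1}=\sigma_{m-1}^{-\rho}$, and the coupling relation $s_2^3=\sigma_4\sigma_{m-1}^{-\rho\beta}\tau_4^{-1}$. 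Abelianizing, the structure of $G'$ is then the Smith normal form of this near-bidiagonal system of relations, which organizes itself into a ``$\sigma$-part'' $S=\langle s_2,\sigma_3,\ldots,\sigma_{m-1}\rangle$ and a ``$\tau$-part'' $T=\langle\tau_3,\ldots,\tau_e\rangle$.

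I would carry out the computation by induction on the nilpotency class. Passing to $\bar G=G/\gamma_{m-1}(G)$, the presentation theorem \ref{thm:NebelungPresentations} identifies $\bar G$ as a metabelian $3$-group of the same coclass, class $m-2$ and invariant $e$, so by the inductive hypothesis $\bar G'\simeq\mathrm{A}(3,m-3)\times\mathrm{A}(3,e-2)$ in the generic situation; the small-class cases $m=4,5$ and the interface case $e=m-1$ are verified directly from the explicit lists in \cite{Ne2}. Since $\gamma_{m-1}(G)$ is central of order $3$ (Theorem \ref{thm:NebelungLowerCentralSeries}, and for the two placements of the centre Theorem \ref{thm:NebelungUpperCentralSeries}), $G'$ is a central extension $1\to\gamma_{m-1}(G)\to G'\to\bar G'\to1$ of abelian groups, and the whole question is to which cyclic direct factor of $\bar G'$ the extra $C_3=\gamma_{m-1}(G)=\langle\sigma_{m-1}\rangle$ gets glued. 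Here I would telescope the cubing relations to write $\sigma_{m-1}$ as a power of $\sigma_3^{3^{\,r}}$, hence of $s_2^{3^{\,r'}}$ through the coupling relation; this shows $\gamma_{m-1}(G)$ lengthens the $\sigma$-factor, turning $\mathrm{A}(3,m-3)$ into $\mathrm{A}(3,m-2)$. In the generic case $\rho=0$, so $\tau_{e+1}=1$ and the $\tau$-chain is closed off before reaching $\gamma_{m-1}(G)$; hence $T\simeq\mathrm{A}(3,e-2)$ is untouched, $S\cap T=1$, and the order count $3^{m-2}\cdot3^{e-2}=3^{m+e-4}=3^{n-2}=\lvert G'\rvert$ closes the generic case, giving $G'\simeq\mathrm{A}(3,m-2)\times\mathrm{A}(3,e-2)$.

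The irregular case is precisely the resonance in which the telescoped cube of $\sigma_3$ and the telescoped cube of $\tau_3$ reach the socle $\gamma_{m-1}(G)$ simultaneously: with $\rho\ne0$ the relation $\tau_{e+1}=\sigma_{m-1}^{-\rho}$ forces the $\tau$-chain into $\gamma_{m-1}(G)$, and when in addition the lengths of the two chains match --- which, through $\gamma_j(G)^3\le\gamma_{j+2}(G)$ and the bicyclic/cyclic pattern of Theorem \ref{thm:NebelungLowerCentralSeries}, happens exactly for $n=2m-4$, $e=m-2$ and $m$ odd, with $\rho=-1$ for $m\ge7$ and $0\ne\rho=\beta-1$ for $m=5$ --- the central $C_3$ is absorbed half by each chain, so the structure flips from the would-be $\mathrm{A}(3,m-2)\times\mathrm{A}(3,m-4)$ to the balanced group $\mathrm{A}(3,m-3)\times\mathrm{A}(3,m-3)$; the identity $2(m-3)=2m-6=n-2$ confirms consistency, and one checks conversely that outside this regime the telescoping never produces the coincidence, so the characterization is sharp. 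The hard part is exactly this last bookkeeping: making the $3$-power telescoping through the twisted relations of (\ref{eqn:LowRelApp}) rigorous and extracting, as a function of the parities of $m$ and $e$ and of $\rho$, whether $\sigma_3^{3^{\lceil(m-3)/2\rceil}}$ and $\tau_3^{3^{\lceil(e-2)/2\rceil}}$ are trivial, equal to $\sigma_{m-1}^{\pm1}$, or independent in $\gamma_{m-1}(G)$ --- this is the core of Nebelung's analysis and where the condition $n=2m-4$, $e=m-2$, $m$ odd emerges.
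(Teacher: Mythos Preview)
The paper does not contain a proof of this theorem. It appears in the Appendix as part of a survey of results quoted from Nebelung's thesis, stated with the bare citation ``(see \cite[Satz 4.2.4, p.~131]{Ne1})'' and no argument whatsoever; the Appendix explicitly announces itself as a service summary of \cite{Ne1,Ne2}, not as an independent development. So there is no proof in the paper against which to compare your attempt.

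As a reconstruction of how Nebelung's argument might run, your outline is plausible: induction on class via the quotient $G/\gamma_{m-1}(G)$, then tracking how the central $C_3=\langle\sigma_{m-1}\rangle$ attaches through the telescoped cubing relations and the coupling $\tau_{e+1}=\sigma_{m-1}^{-\rho}$, with the irregular case arising exactly when the $\sigma$- and $\tau$-chains reach the socle simultaneously. You are honest that the final bookkeeping is where the content lies. One caution on the induction: when $e=m-1$ the quotient $\bar G=G/\gamma_{m-1}(G)$ has $\bar e=e=\bar m$, which falls outside the admissible range $3\le e\le m-1$ of Theorem~\ref{thm:NebelungPresentations}; you flag this as the ``interface case'' needing direct verification, but it is worth noting that this is a genuine structural boundary (the BF-groups of \cite[\S~3.3.5]{Ma5}) rather than merely a low-class anomaly, so the appeal to the tables in \cite{Ne2} there is doing real work.
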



\end{document}